\def\blfootnote{\xdef\@thefnmark{}\@footnotetext}
\newcommand\ccnote{
    \blfootnote{\copyright\,\, Nicola Gigli}
    \blfootnote{\ccLogo\, \ccAttribution\,\, Licensed under a \href{https://creativecommons.org/licenses/by/4.0/}{Creative Commons Attribution License (CC-BY)}.}
}
\numberwithin{equation}{section}
\renewcommand{\leq}{\leqslant}
\renewcommand{\geq}{\geqslant}
\renewcommand{\mathbb}{\varmathbb}
\newtheorem{theorem}{Theorem}[section]
\newtheorem{lemma}[theorem]{Lemma}
\newtheorem{proposition}[theorem]{Proposition}
\newtheorem{definition}[theorem]{Definition}
\newtheorem{remark}[theorem]{Remark}
\newcommand{\N}{\mathbb{N}}
\newcommand{\Q}{\mathbb{Q}}
\newcommand{\R}{\mathbb{R}}
\newcommand{\ppi}{{\mbox{\boldmath$\pi$}}}
\newcommand{\sppi}{{\mbox{\scriptsize\boldmath$\pi$}}}
\newcommand{\restr}[1]{\lower3pt\hbox{$|_{#1}$}}
\newcommand{\pr}{\mathscr P}
\newcommand{\e}{{\rm e}}
\newcommand{\X}{{\rm X}}
\newcommand{\Y}{{\rm Y}}
\newcommand{\sfd}{{\sf d}}
\newcommand{\mm}{{\mathfrak m}}
\renewcommand{\S}{{\rm S}}
\newcommand{\CD}{{\sf CD}}
\newcommand{\RCD}{{\sf RCD}}
\renewcommand{\d}{{\rm d}}
\newcommand{\lip}{{\rm lip}}
\newcommand{\Lip}{{\rm Lip}}
\newcommand{\weakto}{\rightharpoonup}
\newcommand{\nchi}{{\raise.3ex\hbox{$\chi$}}}
\newcommand{\supp}{{\rm supp}}
\newcommand{\lims}{\varlimsup}
\newcommand{\limi}{\varliminf}
\newcommand{\eps}{\varepsilon}
\newcommand{\Id}{{\rm Id}}
\newcommand{\loc}{{\rm loc}}
\newcommand{\osc}{{\sf Osc}}
\newcommand{\HS}{{\sf HS}}
\newcommand{\st}[1]{\noindent{\bf  #1}}  
\newcommand{\h}{{\sf h}} 
\newcommand{\E}{{\sf E^{KS}}} 
\newcommand{\fr}{\penalty-20\null\hfill$\blacksquare$}         
\DeclareMathOperator*{\esssup}{\rm ess-sup}
\DeclareMathOperator*{\essinf}{\rm ess-inf}
\newcommand{\Fl}{{\sf Fl}}
\renewcommand{\div}{{\rm div}}
\newcommand{\KS}{{\sf KS}^2}
\newcommand{\ks}{{\sf KS}}
\newcommand{\kse}{{\sf ks}}
\newcommand{\la}{\langle}
\newcommand{\ra}{\rangle}
\newcommand{\CAT}{{\sf CAT}}
\newcommand{\bd}{{\bf \Delta}}
\newcommand{\testi}{{\rm  Test}^\infty(\X)}
\newcommand{\cont}{{\sf Cont}}
\newcommand{\bs}{{\sf bs}}
\renewcommand{\b}{{\sf b}}
\newcommand{\acm}{{\mathcal AC}^-}
\renewcommand{\o}{{\sf o}}
\newcommand{\tilt}{{\sf tilt}}
\newcommand{\norm}{{\sf Norm}}
\newcommand{\tvar}{\widetilde{\rm Test}}
\address{Nicola Gigli, SISSA, via Bonomea 265, Trieste, Italy}
\email{ngigli@sissa.it}
\begin{document}

\thispagestyle{empty}

\begin{minipage}{0.28\textwidth}
\begin{figure}[H]
\includegraphics[width=2.5cm,height=2.5cm,left]{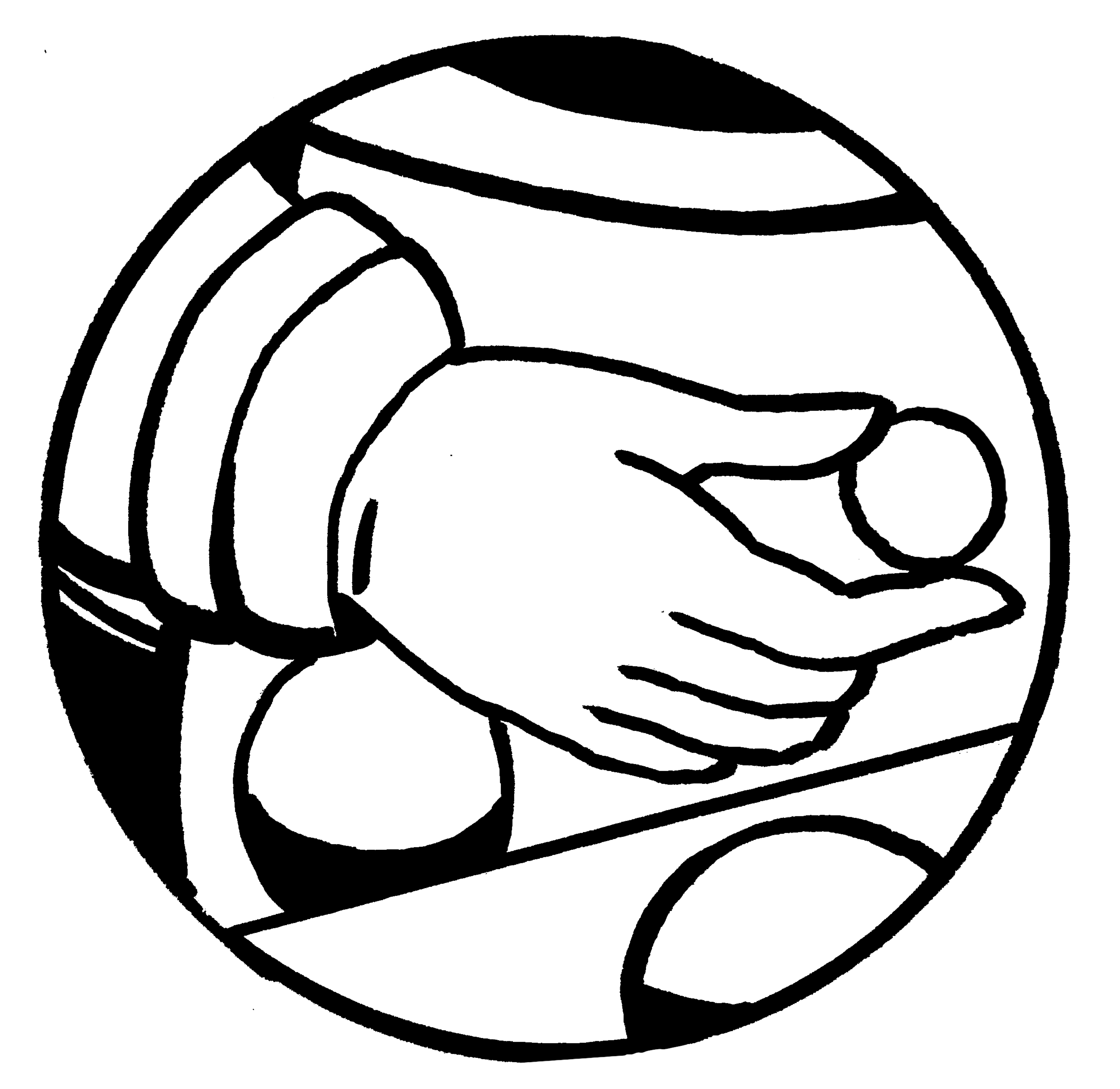}
\end{figure}
\end{minipage}
\begin{minipage}{0.7\textwidth} 
\begin{flushright}
Ars Inveniendi Analytica (2023), Paper No. 5, 55 pp.
\\
DOI 10.15781/sf2c-1y90
\\
ISSN: 2769-8505
\end{flushright}
\end{minipage}

\ccnote

\vspace{1cm}


\begin{center}
\begin{huge}
\textit{On the  regularity of harmonic maps from ${\RCD}(K,N)$ to ${\sf CAT}(0)$ spaces and related results}

\end{huge}
\end{center}

\vspace{0.4cm}


\begin{center}
{\large{\bf{Nicola Gigli}}} \\
\vskip0.15cm
\footnotesize{SISSA}
\end{center}

\vspace{0.4cm}


\begin{center}
\noindent \em{Communicated by Francesco Maggi}
\end{center}
\vspace{1cm}


\noindent \textbf{Abstract.} \textit{ 
For an harmonic map $u$ from a domain $U\subset\X$ in an $\RCD(K,N)$ space $\X$ to a $\CAT(0)$ space $\Y$ we prove the Lipschitz estimate}
\[
\Lip(u\restr B)\leq C(K^-R^2,N)\,r^{-1}\,\inf_{\o\in\Y}\,\sqrt{\fint_{2B}\sfd_\Y^2(u(\cdot),\o)\,\d\mm},\qquad\forall 2B\subset U,
\]
\textit{where $r\in(0,R)$ is the radius of $B$. This is obtained by combining classical Moser's iteration, a Bochner-type inequality that we derive (guided by recent works of Zhang-Zhu) together with a reverse Poincar\'e inequality that is also established here.  A direct consequence of our estimate   is a Lioville-Yau type theorem in the case $K=0$. Among the ingredients of the proof, a variational principle in general $\RCD$ spaces is particularly relevant. It can be roughly stated as: if $(\X,\sfd,\mm)$ is $\RCD(K,\infty)$ and $f\in C_b(\X)$ is so that $\Delta f\leq C$ for some constant $C>0$, then for every $t>0$ and $\mm$-a.e.\ $x\in\X$ there is a unique minimizer $F_t(x)$ for
$
y\ \mapsto\  f(y)+\frac{\sfd^2(x,y)}{2t}
$
and the map $F_t$ satisfies}
\[
(F_t)_*\mm\leq e^{t(C+2K^-\osc(f))}\mm,\qquad\text{where}\qquad\osc(f):=\sup f-\inf f.
\]
\textit{Here existence is in place without any sort of compactness assumption and uniqueness should be intended in a sense analogue to that in place for Regular Lagrangian Flows and Optimal Maps  (and is related to both these concepts). Finally, we also obtain  a  Rademacher-type result for Lipschitz maps between spaces as above.
}
\vskip0.3cm

\noindent \textbf{Keywords.} Harmonic maps, Lower Ricci bounds, Upper sectional bounds. 
\vspace{0.5cm}


\section{Introduction}

The study of harmonic maps and their regularity is a central topic in Geometric Analysis, with several ramifications depending on the kind of structure assumed on the  domains in considerations. In this paper, we are concerned with Lipschitz regularity in relation with lower Ricci curvature  bounds on the source space and upper sectional curvature bounds on the target. The inspiring work is the celebrated result \cite{ES64} by Eells-Sampson where among other things the authors noticed that in the smooth category  a negative curvature on the target domain plays `in favour' of Bochner inequality. Specifically, if the Ricci curvature of ${\rm M}$ is bounded from below by $K\in\R$, the sectional curvature of ${\rm N}$ is non-positive and $u:U\subset {\rm M}\to {\rm N}$ is sufficiently smooth and harmonic, they established what is  now-called Bochner-Eells-Sampson inequality, namely:
\begin{equation}
\label{eq:BES}\tag{BES}
\tfrac12\Delta |\d u|_\HS^2\geq K|\d u|^2_\HS,
\end{equation}
where $|\d u|_\HS$ is the Hilbert-Schmidt (also called Euclidean) norm of the differential $\d u$. Starting from this, it is not hard to derive quantitative Lipschitz regularity: from De Giorgi-Nash-Moser elliptic regularity theory (see e.g.\ \cite{GT01}, \cite{ACM18} for an overview) we see that the estimate
\begin{equation}
\label{eq:1}
\Lip(u\restr B)\leq C(K^-R^2,N)\sqrt{\fint_{2B}|\d u|_\HS^2\,\d \,{\rm vol} }
\end{equation}
holds if $B=B_r(x)$ is such that $2B=B_{2r}(x)$ is contained in $U$ and $r\in(0,R)$. Here $N$ is an upper bound on the dimension of ${\rm M}$ and the dependence of the constant on $K^-R^2$ and $N$ only can be seen by a scaling argument, recalling that  the constants in Moser's iteration only depend on those in the local doubling and Sobolev inequalities, and that these can be bounded in terms of a lower Ricci and upper dimension bounds (see e.g.\ \cite{Villani09} for a modern presentation of the topic, very related to the kind of discussion made in this paper).

At least in the scalar case ${\rm N}=\R$, one can then use the reverse Poincar\'e/Cacciopoli inequality
\begin{equation}
\label{eq:rpii}
\|\d u\|_{L^2(B)}\leq\frac {C(N)}r\|u\|_{L^2(2B)}
\end{equation}
to deduce from \eqref{eq:1} that
\begin{equation}
\label{eq:2}
\Lip(u\restr B)\leq\frac{C(K^-R^2,N)}{r}\sqrt{\fint_{4B}|u|^2\,\d \,{\rm vol}}
\end{equation}
provided $4B\subset U$. We notice that this last inequality provides a similar scaling, in terms of $r$ and the size of $u$, given by the celebrated Cheng-Yau's gradient estimate 
\[
\sup_{B}|\nabla \log f|\leq C(N)\Big(\sqrt K+\frac1r\Big)\qquad\text{ if $f> 0$ on $2B\subset U$}
\]
and, much like this one, it implies for $K=0$ that a bounded harmonic function on a manifold with non-negative Ricci curvature must be constant. For the case of maps with values on a simply connected manifold ${\rm N}$ with non-positive sectional curvature, a suitable gradient estimate and a consequent Lioville-Yau type of theorem has been obtained by Cheng in \cite{Cheng80}.

Let us underline that one of the many advantages of dealing with target spaces that are simply connected and with non-positive curvature is that the relevant energy is convex in a very natural sense. In particular a map is a critical point of such functional if and only if it is a minimizer: this basic fact allows to avoid more complicated taxonomies that  are otherwise present (see e.g.\ \cite[Chapter 7]{Jost17}) and suggests a variational way of defining harmonic maps in more singular settings.

\bigskip

Given the nature of the estimates and the kind of assumptions present, it is natural to wonder whether results like the above hold in the context of synthetic geometry: this sort of question has been raised several times in the literature, see e.g.\ \cite{Lin97}, \cite{Jost1997}. Here the counterpart for `lower Ricci and upper dimension bounds' on the source manifold is the $\RCD(K,N)$ condition (introduced in \cite{Gigli12}, after \cite{Lott-Villani09}, \cite{Sturm06I,Sturm06II}, \cite{AmbrosioGigliSavare11-2} - see the survey \cite{DGG} for more about the history of the topic), while that for `simply connected and non-positive sectional curvature bounds' on the target manifold is the $\CAT(0)$ notion (see e.g.\ \cite{Bacak14} for an overview on the topic).

In the last thirty years, studies in this kind of direction have attracted the interest of a number of mathematicians; without pretending to be  complete, we mention \cite{GS92}, \cite{KS93},  \cite{Jost1997},  \cite{Lin97}, \cite{Sturm97}, \cite{Jost98}, \cite{Sturm99}, \cite{Gregori98}, \cite{KS03}, \cite{KuwSt08}, \cite{CvS16}, \cite{Guo17v1}, \cite{ZZ18}, \cite{Freidin19}, \cite{ZZZ19}, see also the celebrated series of papers \cite{SU82}, \cite{SU82corr}, \cite{SU83}, \cite{SU84} for partial regularity results available without prescribed curvature bounds.  These papers contain key geometric/analytic insights but none of them cover the case of the - often more recent - theory of $\RCD$ spaces, to the point that even the existence of a suitable energy is a priori unclear. Notice in particular that the key `subpartition lemma' from \cite{KS93} may fail in this framework.
\bigskip

This motivated us to develop a research program \cite{GT20,GT18,DMGSP18,GPS18,gn20}, in collaboration with a diverse set of coauthors, aimed  at generalizing the above to its natural framework of $\RCD$ spaces.  In particular, in  \cite{GT20}, suitably adapting   Korevaar-Schoen's approach, we showed that the Dirichlet problem is well posed in this framework, so that at least the concept of `harmonic map from a domain in an $\RCD(K,N)$ space to a $\CAT(0)$ one' is well defined. The findings in \cite{GT20} have been used in an updated version \cite{Gu017v5} of \cite{Guo17v1} to show that such harmonic maps are H\"older continuous (inspired by ideas in  \cite{Jost1997},  \cite{Lin97} - in  \cite{Guo17v1} such regularity was obtained under a certain  technical assumption, shown in  \cite{Gu017v5}  to be satisfied  by $\RCD$ spaces using the results in \cite{GT20}). However, given that the arguments presented here would only be marginally affected by such continuity, in order to provide a more self-contained exposition we will not rely on such result.

With all this said, our main theorem is:
\begin{theorem}[See Theorem \ref{thm:main}]\label{thm:mainintro} Let $(\X,\sfd,\mm)$ be $\RCD(K,N)$, $(\Y,\sfd_\Y)$ be $\CAT(0)$, $U\subset\X$ open with $\mm(\X\setminus U)>0$ and $u:U\to \Y$ harmonic. Then the  Zhang-Zhong-Zhu inequality
\begin{equation}
\label{eq:ZZZ}\tag{ZZZ}
\Delta\frac{|\d u|^2}2\geq K|\d u|^2\qquad on\ U
\end{equation}
holds in the sense of distributions and  the map $u$ has a representative, still denoted by $u$, with
\begin{equation}
\label{eq:mainintro}
\Lip(u\restr B)\leq\frac {C(K^-R^2,N)}r\inf_{\o\in\Y}\,\sqrt{\fint_{2B}\sfd_\Y^2(u(\cdot),\o)\,\d\mm},
\end{equation}
for any ball $B=B_r(x)$ such that $2B=B_{2r}(x)\subset U$ and $r\in(0,R)$.
\end{theorem}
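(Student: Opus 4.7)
The plan is to mirror, in the synthetic setting, the three-step smooth argument \eqref{eq:1}--\eqref{eq:2} outlined in the introduction: first prove the Bochner-type inequality \eqref{eq:ZZZ}, then a reverse Poincar\'e inequality, and finally run Moser's iteration to combine them.

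For \eqref{eq:ZZZ}, my approach is a second-variation comparison in which $u$ is perturbed in two matched ways. On the source side, given a nonnegative test function $\phi$ on $\X$ with bounded Laplacian, I would invoke the variational principle stated in the abstract to produce the (almost) flow $F_t$ whose pushforward has the explicit Jacobian bound $(F_t)_*\mm\leq e^{t(C+2K^-\osc(\phi))}\mm$; this bound, together with the $K$-convexity on the source, is what brings out the $K|\d u|^2$ term when expanding the energy of $u\circ F_t$ to order $t^2$. On the target side, I would use the pointwise geodesic interpolations available in any $\CAT(0)$ space to connect $u$ to $u\circ F_t$, which via the $\CAT(0)$ convexity of the squared distance produces the $\Delta\tfrac{|\d u|^2}{2}$ term. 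Harmonicity kills the linear term, and comparing the two second-order expansions gives \eqref{eq:ZZZ} tested against $\phi$; density of such test functions upgrades this to the distributional statement.

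Next I would prove a reverse Poincar\'e inequality of the form
\[
\int_B|\d u|^2\,\d\mm\ \leq\ \frac{C}{r^2}\int_{2B}\sfd_\Y^2(u(\cdot),\o)\,\d\mm,\qquad\forall\o\in\Y,
\]
by a variational cutoff argument: with a Lipschitz cutoff $\eta\equiv 1$ on $B$, supported in $\tfrac32 B$ and with $|\d\eta|\leq C/r$, compare $u$ with the map sending $x$ to the point at parameter $\eta^2(x)$ along the geodesic from $u(x)$ to $\o$; energy minimality together with $\CAT(0)$ convexity of the energy density along such interpolations produces the estimate. Since by \eqref{eq:ZZZ} the function $v:=|\d u|^2$ satisfies $\Delta v\geq 2Kv$, and since the $\RCD(K,N)$ assumption delivers local doubling and $(1,2)$-Poincar\'e inequalities with constants controlled by $K^-R^2$ and $N$, standard Moser iteration yields
\[
\sup_B v\ \leq\ C(K^-R^2,N)\fint_{\frac32 B}v\,\d\mm.
\]
Chaining this with the reverse Poincar\'e inequality on $\tfrac32 B\subset 2B$ and taking the infimum over $\o\in\Y$ produces \eqref{eq:mainintro} for the Lipschitz representative of $u$.

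The main obstacle will be \eqref{eq:ZZZ}: both perturbations must be implemented rigorously in the metric setting --- the source-side one through the variational principle and a sharp $t^2$-expansion of $(F_t)_*\mm$, the target-side one through the Korevaar--Schoen-type differential calculus for maps into $\CAT(0)$ spaces --- and the leading-order cancellations must be controlled precisely enough that the $K|\d u|^2$ term survives with the correct sign and coefficient. The other two steps are essentially standard once the energy density framework from \cite{GT20} is adopted.
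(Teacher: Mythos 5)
Your overall skeleton (a Bochner-type inequality, a reverse Poincar\'e inequality, Moser iteration) matches the paper, and your second and third steps are essentially sound: your variational-cutoff proof of the reverse Poincar\'e inequality (interpolating $u$ towards $\o$ at parameter $\eta^2$) is a legitimate alternative to the paper's route, which instead deduces $|\d u|^2\mm\leq c(d)\,\bd(\sfd_\o^2\circ u)$ from the $2$-convexity of $\sfd_\o^2$ and harmonicity (Theorem \ref{thm:lapcomp}) and then integrates by parts against $\varphi^2$ (Proposition \ref{prop:invP}); the Moser step is the same, modulo the technical points of showing $\lip^2(u)\in W^{1,2}_{loc}$ and upgrading the $L^\infty$ gradient bound to an actual Lipschitz representative via the Sobolev-to-Lipschitz property.

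The genuine gap is in your proof of \eqref{eq:ZZZ}. A second-order expansion of the energy of $u\circ F_t$ is, by construction, an expansion of $\int\e_2[u]^2$, i.e.\ of the Hilbert--Schmidt quantity $|\d u|_\HS^2=(d+2)\e_2[u]^2$; if the matched source/target perturbation argument closed, it would produce a Bochner--Eells--Sampson inequality \eqref{eq:BES} for $|\d u|_\HS$, which the paper explicitly lists as an open problem in this setting, not the inequality \eqref{eq:ZZZ} for the operator norm $|\d u|=\lip(u)$ (the two are genuinely different objects for maps, and no direct link between them is known). Moreover, the mechanism you assign to the variational principle does not work: the bound $(F_t)_*\mm\leq e^{t(C+2K^-\osc\phi)}\mm$ is a one-sided, first-order compression estimate and cannot yield a signed $t^2$-coefficient in an energy expansion. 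In the paper the variational principle plays a completely different, Jensen-type role: it perturbs the minimization of $y\mapsto -\sfd_u(x,y)+\tfrac{\sfd^2(x,y)}{2t}+\tfrac1n\sfd^2(y,\bar z)$ so that for $\mm$-a.e.\ $x$ the minimizer lands in a prescribed full-measure set of good points (Lemma \ref{le:partle}). The curvature term $K|\d u|^2$ comes instead from the pointwise estimate $\tilde\Delta Q_tf(x)\leq\tilde\Delta f(y)-K\sfd^2(x,y)/t$ at minimizers (Lemma \ref{le:lappunt}, a consequence of the Bakry--\'Emery contraction of the Hopf--Lax semigroup under the heat flow), the $\CAT(0)$ geometry enters through the midpoint/parallelogram splitting $f(x,y)\leq f(\bar x,\bar y)+F_{\bar x,\bar y}(x)+F_{\bar y,\bar x}(y)$ with $\tilde\Delta F_{\bar x,\bar y}(\bar x)\leq 0$ (Lemma \ref{le:tdF}), and $\tfrac12\lip^2(u)$ emerges only at the end as the limit of $-f_t/t$ (Propositions \ref{prop:lapft} and \ref{prop:dualtilt}). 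As written, your step 1 neither reproduces this mechanism nor supplies a substitute that targets the operator norm, so the proof of \eqref{eq:ZZZ} is not established.
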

\noindent Some comments are in order:
\begin{itemize}[leftmargin=*]
\item[-] The first appearance of an inequality like \eqref{eq:ZZZ} is in the recent paper \cite{ZZZ19} (but see also \cite{ZZ18}), where it has been established for smooth domains and general $\CAT(\kappa)$, $\kappa>0$, target spaces under the - usual in this context - proviso that the image of $u$ is contained in a ball of radius $\frac{\pi}{2\sqrt \kappa}$ (to be more precise, in \cite{ZZZ19} also the non-negative term $|\nabla\lip u|^2$ appears at the right hand side). 

Notice that, even in the smooth category and for smooth maps, inequality \eqref{eq:ZZZ} is different from \eqref{eq:BES} as in the latter the Hilbert-Schmidt norm  $|\d u|_\HS$ of the differential $\d u$ appears, while in the former $|\d u|$  is the operator norm (see also \cite[Remark 1.5 - (1)]{ZZZ19}). It is unclear to us whether there is any direct link between the two. Let us just mention that they both reduce to the standard Bochner inequality $\Delta\frac{|\d f|^2}2\geq K|\d f|^2$ for harmonic functions in the case $\Y=\R$ and that at least for $\Y=\R^d$ they both can be deduced from this formula: \eqref{eq:BES} follows adding up Bochner inequality for the (harmonic) components of $u$, while for \eqref{eq:ZZZ} we apply the Bochner inequality to the harmonic functions $u\cdot {\sf v}$ for  ${\sf v}\in \S^{d-1}\subset\R^d$, notice that $|\d u|=\sup_{{\sf v}\in \S^{d-1}}|\d(u\cdot{\sf v})|$ and use the stability of lower bounds on the Laplacian under `sup' and the locality of the Laplacian to conclude (these technical arguments are valid even in the current context - see Lemmas \ref{le:inflap2}, \ref{le:lapcomb2} - and will be used in a somehow similar spirit).
\item[-] Starting from \eqref{eq:ZZZ} and via Moser's iteration it is immediate to deduce an inequality like \eqref{eq:1}. To achieve \eqref{eq:mainintro} we combine such argument with a reverse Poincar\'e/Cacciopoli type inequality, that to the best of our understanding has gone unnoticed even in the smooth category. It can be stated as: with the same notation of Theorem \ref{thm:mainintro} we have
\[
\int_B|\d u|^2\,\d\mm\leq \frac{C(N)}{r^2}\inf_{\o\in\Y}\int_{2 B}\sfd_\Y^2(u(x),\o)\,\d\mm.
\]
To prove this we follow ideas that are classical in the real-valued context. We recall that by the $\CAT(0)$ condition, for any $\o\in\Y$ the function $\sfd_o:=\sfd_\Y(\cdot,\o)$ is convex, thus  the harmonicity of $u$ implies $|\d u|^2\leq C(N)\Delta (\sfd_o^2\circ u)$ (by \cite{gn20} - see also \cite[Corollary 7.2,5]{Jost17}, \cite[Lemma 5]{Jost1997}). Multiplying this bound by $\varphi^2$, where $\varphi$ is a $\frac 2r$-Lipschitz map with support in $2B$ and identically 1 on $B$, integrating, integrating by parts and then using Young's inequality we conclude. See Proposition \ref{prop:invP} for the details.

\item[-] In \cite{ZZZ19} inequality \eqref{eq:ZZZ} is written with the (geo)metric quantity `local Lipschitz constant $\lip(u)$' in place of the analytically oriented `minimal weak upper gradient $|\d u|$'. We can state \eqref{eq:ZZZ} the way we did thanks to the Rademacher-like result:
\begin{equation}
\label{eq:radintro}
\text{for $\X,\Y,U$ as above and $v:U\to\Y$ locally Lipschitz we have }\lip(v)=|\d v|\quad\mm-a.e.,
\end{equation}
see Proposition \ref{prop:rad} and Remark \ref{re:rad}. This result should be compared to the analogous one obtained by Cheeger in \cite{Cheeger00} for real valued maps on PI spaces: here we trade more generality in the target space for more regularity on the source one.  Property \eqref{eq:radintro} is a quite direct consequence of the studies in \cite{GT20} and is related to both Kirchheim's notion of metric differential \cite{Kir94} and to the concept of differential of metric-valued maps introduced in \cite{GPS18}.

Because of \eqref{eq:radintro}, choosing whether to state \eqref{eq:ZZZ} with $\lip(u)$ or $|\d u|$ is mainly a matter of taste: we chose the latter because we believe it better fits the distributional nature of the inequality, but let us remark that the proof of such bound actually produces the quantity $\lip(u)$, and we use \eqref{eq:radintro} only at the very end of the proof to formulate \eqref{eq:mainintro} the way we did.
\item[-] Estimate   \eqref{eq:mainintro} immediately implies the following Liouville-Yau type of result:
\begin{theorem}
Let $\X$ be $\RCD(0,N)$ with $\supp(\mm)$ unbounded, $\Y$ be $\CAT(0)$  and $u:\X\to\Y$ be  harmonic with sublinear growth. Then $u$ is constant.

Here `harmonic' means that $u\restr U$ is harmonic for any $U\subset\X$ open bounded with $\mm(\X\setminus U)>0$ and the growth condition means that  for some $\bar x\in\X$, $\o\in\Y$ and function $f:\R^+\to\R^+$ with $\lim_{r\uparrow\infty}f(r)=0$ we have $\sup_{B_r(\bar x)}\sfd_\Y(u(x),\o)<rf(r)$ for every $r>0$. 
\end{theorem}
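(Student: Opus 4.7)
The plan is to apply the Lipschitz estimate \eqref{eq:mainintro} of Theorem \ref{thm:mainintro} in the case $K=0$, and exploit the sublinear growth of $u$ by letting the ball radius diverge. The crucial feature that makes this work is the precise dependence of the constant: with $K=0$ one has $K^-R^2=0$, so $C(K^-R^2,N)=C(0,N)=:C(N)$ does not depend on the scale $R$. This scale invariance is exactly the ingredient that allows the estimate to remain meaningful as we blow up the ball.

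Fix $\bar x\in\X$ and $\o\in\Y$ as in the growth hypothesis. For every $s>0$, since $\supp(\mm)$ is unbounded we may choose $R_s>2s$ large enough that $\mm(\X\setminus B_{R_s}(\bar x))>0$, and set $U_s:=B_{R_s}(\bar x)$: this is bounded, open, contains $B_{2s}(\bar x)$, and satisfies $\mm(\X\setminus U_s)>0$, so $u\restr{U_s}$ is harmonic by assumption. Applying Theorem \ref{thm:mainintro} to the ball $B=B_s(\bar x)$ inside $U_s$ (picking e.g.\ $R=2s>s$ in the statement) gives
\[
\Lip(u\restr{B_s(\bar x)})\;\leq\;\frac{C(N)}{s}\inf_{\o'\in\Y}\sqrt{\fint_{B_{2s}(\bar x)}\sfd_\Y^2(u(\cdot),\o')\,\d\mm}.
\]
Choosing $\o'=\o$ and using the growth bound $\sfd_\Y(u(x),\o)< 2s\,f(2s)$ for every $x\in B_{2s}(\bar x)$, the integrand is pointwise at most $4s^2 f(2s)^2$, and we conclude $\Lip(u\restr{B_s(\bar x)})\leq 2C(N)\,f(2s)$.

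To finish, fix an arbitrary $r>0$: for every $s\geq r$ we have $B_r(\bar x)\subset B_s(\bar x)$, hence $\Lip(u\restr{B_r(\bar x)})\leq 2C(N)\,f(2s)$. Letting $s\to\infty$ and recalling $f(2s)\to 0$, we obtain $\Lip(u\restr{B_r(\bar x)})=0$, so $u$ is constant on each ball $B_r(\bar x)$. Since $\supp(\mm)$ is a length space, it is connected, and a locally constant continuous map on a connected set is constant; hence $u$ is constant.

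I do not anticipate a genuine obstacle, as the hard work is already contained in Theorem \ref{thm:mainintro}. The only two points requiring minimal attention are the choice of the auxiliary bounded open sets $U_s$ fulfilling $\mm(\X\setminus U_s)>0$ (which is exactly where the unboundedness of $\supp(\mm)$ enters) and the careful bookkeeping of the dependence of the Moser-iteration constant on $K^-R^2$ rather than on $K$ and $R$ separately, without which the scale-blowup argument would not close.
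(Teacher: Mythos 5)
Your proof is correct and follows essentially the same route as the paper: apply the Lipschitz estimate \eqref{eq:mainintro} on balls $B_r(\bar x)$ with $r$ arbitrarily large, observe that for $K=0$ the constant is scale-independent, and bound the right-hand side by $C(0,N)\,f(2r)\to 0$. The extra bookkeeping you include (construction of the domains $U_s$ using unboundedness of $\supp(\mm)$, and the connectedness step) is implicit in the paper's one-line argument and is handled correctly.
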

\begin{proof} By the assumptions  we can pick $B=B_r(\bar x)$ with $r=R$ arbitrarily large in  \eqref{eq:mainintro}. We then conclude from the fact that the right hand side is bounded by $C(0,N)f(2r)$.
\end{proof}
\end{itemize}
The proof of \eqref{eq:ZZZ} builds on top of the geometric backbone coming from \cite{ZZ18}, \cite{ZZZ19}, in turn inspired by \cite{Petrunin03}, (we refer to Section \ref{se:zzz} for both an outline and the actual  argument), analytic machinery previously built and  tools that we develop here. Among the latter ones are: 
\begin{itemize}
\item[-] a variational principle, 
\item[-] a study  of the relation between Laplacian upper bounds and  Hopf-Lax formula, 
\item[-] a clearer picture of the interplay between `pointwise' and `distributional' Laplacian bounds. 
\end{itemize}
These results are related one to the other and will be studied on general $\RCD(K,\infty)$ spaces.  Let us give some details.

The variational principle we prove (see Theorem \ref{prop:varregf}) can be roughly formulated as follows: let $(\X,\sfd,\mm)$ be an $\RCD(K,\infty)$ space, $f\in C_b(\X)$ be with $\Delta f\leq C$ and $T>0$. Then for $\mm$-a.e.\ $x\in\X$ there is a unique minimizer $F(x)$ for $f(\cdot)+\frac{\sfd^2(x,\cdot)}{2T}$ and the map $F$ satisfies
\begin{equation}
\label{eq:estbase}
(F_T)_*\mm\leq e^{T(C+2K^-\osc(f)}\mm\qquad\text{ where }\osc(f):=\sup f-\inf f.
\end{equation}
The relevance of this statement for our purposes is in this latter estimate, that allows to  perform a perturbation argument very much in the spirit of the classical Jensen's maximum principle (adapted to the Alexandrov setting in \cite{Petrunin03}, \cite{ZZ18}). Indeed, at some point in the proof we will have  a set $E\subset\X$ of `nice points', where some relevant quantity is controlled, of full measure and  a function $f$ with Laplacian bounded from above. We would like $f$ to attain a minimum in $E$, but this is a priori not granted. We thus apply the above with $t\gg1$ to ensure that for $\mm$-a.e.\ $x$ (and thus for at least 1) the perturbed function  $f(\cdot)+\frac{\sfd^2(x,\cdot)}{2t}$  has minimum in $E$. Notice that by the Laplacian comparison estimates for the squared distance, when performed on finite dimensional spaces this procedure does not really disrupt the assumption on the Laplacian upper bound. 

We point out that on Riemannian manifolds, estimates like \eqref{eq:estbase} - typically stated as lower bound for appropriate integrals over the `contact set' - have  been investigated by the community working on elliptic PDEs and viscosity solutions:  the idea of perturbing with the squared  distance (as opposed to the affine perturbations used in $\R^d$ that are unavailable in the curved setting) goes back to the seminal paper \cite{Cabre97}, and the role of Ricci curvature has already been realized, see \cite{WZ13}. In comparison with existing literature, our result  seems the first to be derived in a genuine infinite dimensional setting  and  we are not aware of  works indicating the link between the variational problem above and the optimal transport problem that we discuss below, see \eqref{eq:mapott}. One of the things one learns from such link is that the minimum is $\mm$-a.e.\ unique (but, admittedly, this seems to be of little use in general).

The idea for proving the above variation principle is the following. Say $T=1$, put 
\[
Q_tf(x):=\inf_yf(y)+\frac{\sfd^2(x,y)}{2t},
\]
assume for a moment to be in the smooth category and for $x\in\X$ let $\gamma$ be  solving $$\gamma_t'=-\nabla Q_{1-t}f(\gamma_t)$$ and starting from $x$. Then the standard theory for Hamilton-Jacobi equation (and a simple computation if everything is smooth) ensures that $\gamma_1$ is a minimizer for $Q_1f(x)$, hence
\begin{equation}
\label{eq:F1fl}
\text{the map $F_1$ can obtained as flow of the vector fields $(-\nabla Q_{1-t}f)$}
\end{equation}
and thus the bound \eqref{eq:estbase} will follow if we show that
\begin{equation}
\label{eq:lapqtintro}
\Delta Q_tf\leq  \|(\Delta f)^+\|_{L^\infty}+2K^-\osc(f)\qquad\forall t>0.
\end{equation}
Also, the general theory of Optimal Transportation ensures that
\begin{equation}
\label{eq:mapott}
\text{the resulting map $F_1$ is optimal and $Q_1f$ is a Kantorovich potential relative to it}.
\end{equation}
With this in mind, we point out that in  the recent \cite{GTT22}, by passing to the limit in the viscous approximation of the Hamilton Jacobi equation (see also \cite{GigTam18})  it has been proved  that
\begin{equation}
\label{eq:GTT}
\Delta Q_tf\leq \|(\Delta f)^+\|_{L^\infty}+tK^-\Lip(f)^2\qquad\forall t>0,
\end{equation}
at least for sufficiently good functions, see Lemma \ref{prop:GTT}. For $K\geq0$ this bound is equivalent to  \eqref{eq:lapqtintro} and is therefore an indication that the strategy outlined above might work. Here it is natural to take inspiration from the theory of Regular Lagrangian Flows (see \cite{Ambrosio-Trevisan14} after  \cite{DiPerna-Lions89}, \cite{Ambrosio04}) that, very shortly and roughly said, is satisfactory provided two conditions are met.  The first is the presence of a lower bound on the divergence of the vector fields, that provides an upper bound on the compression constant of the flow, the second is a uniqueness statement for the associated continuity equation, typically obtained under some Sobolev-type assumption on the vector fields.

In our setting and at least if $K\geq 0$ we have the first ingredient (estimate \eqref{eq:GTT}, that leads to \eqref{eq:estbase}), but we cannot really hope for any sort of Sobolev regularity. Still,  the particular features of our minimization problem permit to rely on a different, more geometric, sort of uniqueness result without the need of PDE considerations. Indeed, the link between our problem and Optimal Transport noticed in \eqref{eq:mapott} remains valid also on $\RCD(K,\infty)$ spaces  and allows us to use the known results about optimal maps in this framework (\cite{Gigli12a}, \cite{RajalaSturm12}). 

Notice that in particular we produce minimizers  without really minimizing a functional, but rather following the flow of some vector fields: this partially explains why we do not need any compactness. Still, inspecting our proof one sees  that the tightness encoded in estimate \eqref{eq:estbase}, and thus ultimately in the upper Laplacian bound,  plays a crucial role - similar to the role that tightness has in producing Regular Lagrangian Flows or Optimal Maps in this setting.

For $K<0$ the bound \eqref{eq:GTT} is not sufficient for our purposes, as we do not have Lipschitz estimates for our function $f$. To overcome this issue we borrow an idea from the recent \cite{MS21}; there it was used in the finite dimensional setting, but the principle remains valid even in infinite dimension. Putting $\tilde\Delta f(x):=\lims_{t\downarrow0}\frac{\h_tf(x)-f(x)}t$, where $\h_t$ is the heat flow, we have
\begin{equation}
\label{eq:MSintro}
Q_tf(x)=f(y)+\frac{\sfd^2(x,y)}{2t}\qquad\Rightarrow\qquad\tilde\Delta Q_tf(x)\leq \tilde \Delta f(y)-K\frac{\sfd^2(x,y)}{t}.
\end{equation}
This is proved starting from the  known (see \cite[Lemma 3.4]{AmbrosioGigliSavare12} and its proof) bound 
\[
\h_sQ_tf(x)\leq \h_sf(y)+e^{-2Ks}\frac{\sfd^2(x,y)}{2t}
\]
valid for any couple of $x,y\in\X$ and $t,s\geq0$, and differentiating it in $s=0$  at points as in \eqref{eq:MSintro}, i.e.\ for which equality holds for $s=0$. We remark that a bound analogue to \eqref{eq:MSintro} was obtained in \cite{ZZ18} (see also \cite{Petrunin03}) and was crucial in their proof of Lipschitz regularity. Since we follow the strategy in \cite{ZZ18}, the estimate \eqref{eq:MSintro} is important for us, regardless of its application to the variational principle we are discussing: see the proof of Proposition \ref{prop:lapft} and notice that the $K$ appearing in \eqref{eq:ZZZ} is `the same' $K$ that appears in \eqref{eq:MSintro}.

Now observe that for  $x,y$ as in \eqref{eq:MSintro} we must have  $\frac{\sfd^2(x,y)}{2t}\leq\osc(f)$, hence the bound \eqref{eq:MSintro}   hints toward \eqref{eq:lapqtintro}.  Still, the natures of these inequalities are different: the former is a pointwise bound, while the latter should be intended in the weak sense of integration by parts (as in \cite{Gigli12}). Therefore part of the job we shall do here is to put in communication these a priori different notions. A result in this direction that we shall use frequently can roughly be stated as: 
\begin{quote} coupling a `bad' distributional bound with a `good' pointwise bound valid $\mm$-a.e.\ yields a `good' distributional bound,\end{quote} see Lemmas \ref{le:lapcomb}, \ref{le:lapcomb2} for the precise formulation. Using this principle we can combine the distributional bound \eqref{eq:GTT} with the pointwise bound \eqref{eq:MSintro} and deduce the distributional bound \eqref{eq:lapqtintro}, as desired.  We remark that this sort of analysis also permits to deduce the \eqref{eq:ZZZ} inequality from purely distributional-type consideration, avoiding the viscous/comparison arguments used in \cite{ZZ18}, \cite{ZZZ19}. Let us also point out that links between distributional/ pointwise/ viscous notions of Laplacian - well understood in the smooth category - have been investigated in the $\RCD$ framework in \cite{MS21}, with  results  mostly in the setting of non-collapsed $\RCD(K,N)$ spaces (\cite{GDP17}), see e.g.\ \cite[Theorems 1.3, 1.5]{MS21}; the analysis performed here strongly suggests that neither non-collapsing nor finite dimensionality are actually needed, see e.g.\ Lemma \ref{le:lapcomb} and Remark \ref{re:infdim}.

\medskip

We conclude noticing that Theorem \ref{thm:mainintro} leaves open some important questions, beside the expected generalization to harmonic maps in $\CAT(\kappa)$ spaces, $\kappa>0$. For instance: can one establish the Bochner-Eells-Sampson formula \eqref{eq:BES} in this setting? See e.g.\ \cite{Freidin19}, \cite{FY20} for partial results. More importantly: can one prove a version of this formula for non-harmonic maps? Notice that this would require in particular to give a meaning to the `gradient of the Laplacian' of any such map. A positive answer to this latter question seems necessary if one wants to develop  a regularity theory for the harmonic map heat flow in this setting.

\medskip

I wish to thank Prof. Hui-Chun Zhang for interesting comments on a preliminary version of this work and the referee for a very careful review and detailed comments.

\medskip

\emph{When the work on this paper was nearly completed I got knowledge of the related independent work \cite{MS22} containing partially overlapping results.}

\section{Preliminaries}
In this paper we study harmonic maps from $\RCD(K,N)$ spaces to $\CAT(0)$ spaces. Here we recall their definition and some basic calculus tool. More refined notions/results will be collected along the text.
\subsection{$\RCD$ spaces and calculus on them}\label{se:introRCD}

$\RCD(K,N)$ spaces are a non-smooth counterpart of the notion of Riemannian manifold with Ricci curvature $\geq K$ and dimension $\leq N$.  A key advantage of dealing with possibly non-smooth spaces, pointed out by Gromov in \cite{Gro81}, is that one gains natural compactness properties, the archetypical result being Gromov's precompactness theorem. Starting from this, Colding and Cheeger-Colding studied in the nineties limits of smooth Riemannian manifolds with uniform lower Ricci and upper dimension bounds (\cite{Colding97}, \cite{Cheeger-Colding96},  \cite{Cheeger-Colding97I}, \cite{Cheeger-Colding97II}, \cite{Cheeger-Colding97I}). 

Later, inspired by the results in \cite{CEMCS01}, \cite{OV00}, \cite{vRS05},  Lott-Villani \cite{Lott-Villani09} and Sturm \cite{Sturm06I}, \cite{Sturm06II} independently introduced the class of $\CD(K,N)$ spaces, these being metric measure spaces where the lower Ricci and the upper dimension bounds are interpreted via suitable convexity properties of entropy functionals defined on the Wasserstein space. This class of spaces contains Finsler manifolds, that are known to be not Ricci limit spaces thanks to Cheeger-Colding's almost splitting theorem. 

The need of finding a stable notion ruling out Finsler geometries was one of the motivations for which, under the influence of \cite{JKO98} and conversations with Sturm, I started studying the heat flow on  $\CD$ spaces. The resulting series of papers \cite{Gigli10}, \cite{Gigli-Kuwada-Ohta10}, \cite{AmbrosioGigliSavare11}, \cite{AmbrosioGigliSavare11-2} culminated in \cite{Gigli12} where I proposed to focus on $\CD$ spaces that are also infinitesimally Hilbertian, i.e.\ such that the Sobolev space $W^{1,2}(\X)$ of real valued functions is Hilbert (see also below). Proofs of concept of the geometric significance of this proposal came first in \cite{Gigli-Mosconi12} with the proof of the Abresch-Gromoll inequality (answering to an open criticism raised in \cite{Petrunin11} about $\CD$ spaces) and then in \cite{Gigli13} with the proof of the splitting theorem. See also \cite{AmbrosioGigliMondinoRajala12} for the reconciliation of this approach to the $\RCD$ definition with the original one made in \cite{AmbrosioGigliSavare11-2}   in the infinite dimensional case, and \cite{GMS15} for  stability results about the heat flow that, among other things, generalize those obtained in \cite{Gigli10} in the compact setting.

In a different direction, and much earlier, Bakry-\'Emery studied in the eighties Ricci curvature bounds for diffusion operators, in particular introducing the concept of Curvature-Dimension condition, see \cite{BakryEmery85} and the survey \cite{BakryGentilLedoux14}. Their approach was tailored to Dirichlet forms, but after  \cite{Gigli-Kuwada-Ohta10} and \cite{AmbrosioGigliSavare11-2}  it became viable in the metric-measure setting and was taken in \cite{AmbrosioGigliSavare12} as possible alternative `Eulerian' definition of lower Ricci curvature bounds, called ${\sf BE}(K,N)$ condition after Bakry-\'Emery. In  \cite{AmbrosioGigliSavare12} it has been also proved that $${\sf BE}(K,\infty)=\RCD(K,\infty)\,.$$ This result has been extended to the finite dimensional case in \cite{Erbar-Kuwada-Sturm13} (and later in \cite{AmbrosioMondinoSavare13}). In studying the interplay between $W_2$-convexity of entropies and Bochner inequality it is also useful to consider the so-called `reduced' curvature dimension condition $\CD^*(K,N)$ introduced in \cite{BacherSturm10} and that in a large class of spaces, including $\RCD$ ones, this notion has been proved to be equivalent to the $\CD(K,N)$ one in \cite{CavMil16} (at least for normalized spaces).

For an overview of the theory of $\RCD$ spaces see the surveys \cite{Villani2017}, \cite{AmbICM}, \cite{DGG}.

\medskip

We shall work with real-valued and metric-valued Sobolev maps defined on such space $\X$. Here I recall the concept of real-valued Sobolev function, see Section \ref{se:sobmet} for the metric case. Notions of higher order calculus (e.g.\ Laplacian, Hessian) will be recalled in the body of the work when needed. There are various possible equivalent approaches one can take to define first-order Sobolev functions in our setting (developed originally in \cite{Cheeger00} then in \cite{Shanmugalingam00}  and later in \cite{AmbrosioGigliSavare11}), here I shall recall one introduced in \cite{AmbrosioGigliSavare11} as it works better with the arguments in this manuscript.

Let $(\X,\sfd,\mm)$ be a complete and separable metric space equipped with a non-negative Borel measure finite on bounded sets. With $C([0,1],\X)$ I shall denote the complete and separable space of continuous curves on $[0,1]$ with values in $\X$, equipped with the `sup' distance. For $t\in[0,1]$, $\e_t:C([0,1],\X)\to\X$ denotes the \emph{evaluation map} sending $\gamma$ to $\gamma_t$.
\begin{definition}[Test plans]
A test plan is a Borel probability measure $\ppi$ on $C([0,1],\X)$ such that
\[
\begin{split}
(\e_t)_*\ppi&\leq C\mm\qquad\forall t\in[0,1],\\
\iint_0^1|\dot\gamma_t|^2\,\d t\,\d\ppi(\gamma)&<\infty,
\end{split}
\]
for some $C>0$.
\end{definition}
In the above,  the object $|\dot\gamma_t|$ is the so-called \emph{metric speed} of the absolutely continuous curve $\gamma$ (see \cite{Ambr90}), and it is part of the requirements that $\ppi$ is concentrated on absolutely continuous curves.

The concept of Sobolev function is given in duality with that of test plan:
\begin{definition}[Sobolev functions] A Borel function $f:\X\to\R$ belongs to the Sobolev class $S^2(\X)$ provided there is $G\in L^2(\X)$, $G\geq 0$ such that
\begin{equation}
\label{eq:defsob}
\int|f(\gamma_1)-f(\gamma_1)|\,\d\ppi(\gamma)\leq\iint_0^1G(\gamma_t)|\dot\gamma_t|\,\d t\,\d\ppi(\gamma)\qquad\forall\ppi\text{ test plan.}
\end{equation}
\end{definition}
It can be proved that there is a minimal $G$ in the $\mm$-a.e.\ sense: it will be denoted $|\d f|$ and called \emph{minimal weak upper gradient} of $f$. It satisfies natural calculus rules, mimicking those valid for the `modulus of the distributional differential of a Sobolev function' in the smooth world can be developed. These are better seen via the properties of the differential of Sobolev functions that we are now going to describe (still, notice that from a conceptual standpoint in order to prove the calculus rules for the differential, at least some knowledge of the calculus rules for the minimal weak upper gradient are needed). 

The Sobolev space $W^{1,2}(\X)$ is defined as $L^2(\X)\cap S^2(\X)$ and equipped with the norm
\begin{equation}
\label{eq:normw12}
\|f\|_{W^{1,2}}^2:=\|f\|_{L^2}^2+\||\d f|\|^2_{L^2}.
\end{equation}
It is always a Banach space and $(\X,\sfd,\mm)$ is called \emph{infinitesimally Hilbertian} provided $W^{1,2}(\X)$ is Hilbert (see \cite{Gigli12}).

We introduce the following concept (see \cite{Gigli14}):
\begin{definition}[$L^2$-normed $L^\infty$-module] An $L^2$-normed $L^\infty$-module on $\X$ is a Banach space $(\mathcal M,\|\cdot\|_{\mathcal M})$ that is also a module 
over the commutative ring with unity $L^\infty(\X,\mm)$ and which possesses a \emph{pointwise $L^2$ norm}, i.e.\ a map $|\cdot|:\mathcal M\to L^2(\X,\mm)$ such that
\[
\begin{split}
|v|&\geq 0,\\
|fv|&=|f|\,|v|,\\
\|v\|_{\mathcal M}^2&=\int|v|^2\,\d\mm,
\end{split}
\]
for every $v\in \mathcal M$ and $f\in L^\infty(\X,\mm)$, the first two identities being intended $\mm$-a.e.. We say that $\mathcal M$ is \emph{Hilbertian} if, when seens as Banach space, is an Hilbert space.
\end{definition}
It is not hard to check that $\mathcal M$ is Hilbertian if and only if
\[
2(|v|^2+|w|^2)=|v+w|^2+|v-w|^2\qquad\mm-a.e.,
\]
holds for any $v,w\in \mathcal M$ and in this case the natural polarization identity $\la v,w\ra:=\tfrac12(|v+w|^2-|v|^2-|w|^2)$ defines a pointwise scalar product, i.e.\ an $L^\infty$-bilinear map from $\mathcal M$ to $L^1(\X,\mm)$ satisfying the expected (pointwise a.e.) calculus rules.

Modules defined this way possess a natural dual $\mathcal M^*$, defined as the collection of $L^\infty$-linear and continuous maps $L:\mathcal M\to L^1(\X,\mm)$, equipped with the natural multiplication with functions in $L^\infty$ and with the pointwise norm
\[
|L|_*:=\esssup_{|v|\leq 1\ \mm-a.e.}|L(v)|.
\]
Hilbertian modules are canonically isomorphic to their duals via a Riesz' isomorphism: for any $L\in\mathcal M^*$ there is a unique $v\in\mathcal M$ such that $L(w)=\la v,w\ra$ $\mm$-a.e.\ for any $w\in \mathcal M$ and it holds $|v|=|L|_*$.

These structures are linked to Sobolev calculus via the following theorem:
\begin{theorem}
Let $(\X,\sfd,\mm)$ be a complete and separable metric space equipped with a non-negative and non-zero Borel measure giving finite mass to bounded sets. Then there is a unique, up to unique isomorphism, couple $(L^2(T^*\X),\d)$ with $L^2(T^*\X)$ module in the sense above and $\d:S^2(\X)\to L^2(T^*\X)$ linear and such that
\begin{itemize}
\item the pointwise norm of $\d f$ equals the minimal w.u.g.\ of $f$ for any $f\in S^2(\X)$,
\item $L^\infty$-linear combinations of elements in $\{\d f:f\in S^2(\X)\}$ are dense in $L^2(T^*\X)$.
\end{itemize}
\end{theorem}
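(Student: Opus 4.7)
The plan is to construct $(L^2(T^*\X),\d)$ explicitly as a completion of a space of "simple cotangent fields", and then derive uniqueness from a density and pointwise-norm matching argument.

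\textbf{Construction.} First I consider the "pre-cotangent" space $\mathcal{V}$ consisting of formal finite sums $v=\sum_{i=1}^n \chi_{A_i}\d f_i$, where $\{A_i\}_{i=1}^n$ is a Borel partition of $\X$ and $f_i\in S^2(\X)$. On $\mathcal{V}$ I define a candidate pointwise norm by
\[
|v|:=\sum_{i=1}^n \chi_{A_i}\,|\d f_i|\qquad\mm\text{-a.e.}
\]
The crucial preliminary step is to verify that this definition descends to the natural equivalence relation "two representations yield the same element". This rests on the locality property of the minimal weak upper gradient: if $f,g\in S^2(\X)$ coincide on a Borel set $A$, then $|\d f|=|\d g|$ $\mm$-a.e.\ on $A$, and more generally $|\d(\sum_i\chi_{A_i}f_i)|$ coincides piecewise with $|\d f_i|$ on $A_i$ (up to a suitable interpretation for functions that are only defined piecewise). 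Combining this with the convexity/sub-linearity of $|\d \cdot|$, one checks that $|\cdot|$ above is well-defined and that $\|v\|:=(\int|v|^2\,\d\mm)^{1/2}$ is a seminorm on $\mathcal{V}$.

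\textbf{Module structure and completion.} Quotienting $\mathcal{V}$ by the kernel of this seminorm and taking the completion produces a Banach space $L^2(T^*\X)$. The natural $L^\infty$-action $g\cdot (\chi_A\d f):=\chi_A (g\d f)$ on simple cotangent fields, combined with approximation of arbitrary $g\in L^\infty$ by simple functions, extends by continuity to an $L^\infty$-module structure, and one verifies the identities $|gv|=|g|\,|v|$ and $\|v\|^2=\int|v|^2\,\d\mm$ first on the dense subspace and then by continuity. The map $\d:S^2(\X)\to L^2(T^*\X)$ obtained by sending $f$ to its class satisfies $|\d f|=$ minimal weak upper gradient of $f$ by construction, and its image has $L^\infty$-linear combinations that are dense by definition of $\mathcal{V}$. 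Linearity of $\d$ follows from the subadditivity and homogeneity of the minimal weak upper gradient and the locality argument above.

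\textbf{Uniqueness.} Suppose $(\mathcal{M},\d')$ is another couple satisfying the listed properties. Define $\Phi:\mathcal{V}\to \mathcal{M}$ on simple elements by
\[
\Phi\Bigl(\sum_i \chi_{A_i}\d f_i\Bigr):=\sum_i \chi_{A_i}\d' f_i.
\]
The first property of $(\mathcal{M},\d')$ forces $|\Phi(v)|=|v|$ $\mm$-a.e.\ for every $v\in\mathcal{V}$, which in particular shows that $\Phi$ is well-defined on the quotient and is an isometry. By the density assumption, $\Phi$ extends uniquely to an isometric $L^\infty$-linear map from $L^2(T^*\X)$ into $\mathcal{M}$, whose image is dense and closed, hence all of $\mathcal{M}$. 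By construction $\Phi\circ\d=\d'$, which establishes the uniqueness up to unique isomorphism.

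\textbf{Main obstacle.} The delicate step is the well-posedness of the pointwise norm on $\mathcal{V}$: one must rule out the possibility that two different representations of the same formal element yield different values of $|v|$, and this requires a careful locality argument for the minimal weak upper gradient under finite patching. Once this locality is established, all remaining verifications (module axioms, completeness, uniqueness) are essentially formal.
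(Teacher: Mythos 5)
Your construction coincides with the one in \cite{Gigli14}, which is where the paper takes this theorem from (it is recalled in the preliminaries without proof): the pre-cotangent module of simple objects $\sum_i\chi_{A_i}\,\d f_i$ over Borel partitions, well-posedness of the pointwise norm via locality of minimal weak upper gradients, quotient and completion, extension of the $L^\infty$-action from simple functions, and the pointwise-isometry extension argument for uniqueness. The sketch is correct, and the delicate point you flag (locality under finite patching, supplied by the calculus rule $|\d f|=|\d g|$ $\mm$-a.e.\ on $\{f=g\}$ together with subadditivity) is indeed the step that carries the whole construction.
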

The module $L^2(T^*\X)$ is called \emph{cotangent module} and the operator $\d$ \emph{differential}. The differential satisfies
\begin{align*}
\d f&=\d g&&\mm-a.e.\ on\ \{f=g\}, \ \forall f,g\in S^2(\X),\\
\d(\varphi\circ f)&=\varphi'\circ f\d f,&&\forall f\in S^2(\X),\ \varphi\in\Lip(\R),\\
\d(f g)&=f\d g+g\d f&&\forall f,g\in S^2\cap L^\infty(\X).
\end{align*}
The dual of $L^2(T^*\X)$ is called \emph{tangent module} and denoted $L^2(T\X)$. It turns out that $(\X,\sfd,\mm)$ is infinitesimally hilbertian if and only if $L^2(T^*\X)$ is Hilbertian. In this case, for $f\in S^2(\X)$ we denote by $\nabla f\in L^2(T\X)$ the element corresponding to $\d f$ via the Riesz isomorphism. Elements of $L^2(T\X)$ are called \emph{vector fields} on $\X$. The (opposite of the) adjoint of the differential is called \emph{divergence}, in other words a vector field $v$ belongs to the domain of the divergence provided there is $h\in L^2(\X)$ such that
\[
\int fh\,\d\mm=-\int \d f(v)\,\d\mm\qquad\forall f\in W^{1,2}(\X).
\]
In this case it is easily seen that $h$ is uniquely defined: we shall denote it ${\rm div}(v)$. Then on infinitesimally Hilbertian spaces we define $D(\Delta)\subset W^{1,2}(\X)$ as the space of functions $f$ such that $\nabla f\in D({\rm div})$ and put $\Delta f:={\rm div}(\nabla f)$.

We also recall the definition of $W^{1,2}(U)$ for $U\subset\X$ open with $\mm(U)>0$:
\begin{definition}[The spaces $W^{1,2}(U)$ and $W^{1,2}_0(U)$] The space
$W^{1,2}(U)$ consists  of those functions $f\in L^2(U)$ for which there is $G\geq 0$, $G\in L^2(U)$ such that for any Lipschitz function $\varphi:\X\to\R$ with support bounded and contained in $U$ we have $\varphi f\in W^{1,2}(\X)$ with $|\d (\varphi f)|\leq G$ $\mm$-a.e.\ on $\{\varphi=1\}$. The least, in the $\mm$-a.e.\ sense, such function $G$ is denoted $|\d f|$ and then the $W^{1,2}(U)$-norm is defined as in \eqref{eq:normw12}.

The space $W^{1,2}_0(U)$ is the $W^{1,2}(U)$-completion of the space of functions $f\in W^{1,2}(U)$ with support at positive distance from $\partial U$ (i.e.\ $\inf_{x\in\supp(f),y\in\partial U}\sfd(x,y)>0$).
\end{definition}
Here the fact that $|\d f|$ is well defined and that $W^{1,2}(U)$, $W^{1,2}_0(U)$ are Banach spaces can easily be proved starting from the analogous properties of globally defined Sobolev functions.

\medskip

We conclude this introduction recalling the `Eulerian' approach to  $\RCD$ spaces (see the already mentioned \cite{Gigli-Kuwada-Ohta10}, \cite{AmbrosioGigliSavare11-2}, \cite{AmbrosioGigliSavare12}, \cite{Erbar-Kuwada-Sturm13}, \cite{AmbrosioMondinoSavare13}):
\begin{definition}
Let $K\in\R$, $N\in[1,\infty]$ and $(\X,\sfd,\mm)$ a complete and separable metric space equipped with a non-negative, and non-zero, Borel measure finite on bounded sets. We say that $\X$ is an $\RCD(K,N)$ space provided:
\begin{itemize}
\item[i)] For some $C>0$ and $\bar x\in\X$ we have $\mm(B_r(\bar x))\leq Ce^{Cr^2}$ for any $r\geq0$.
\item[ii)] Any $f\in W^{1,2}(\X)$ with $|\d f|\leq 1$ $\mm$-a.e.\ admits a 1-Lipschitz representative.
\item[iii)] The space is infinitesimally Hilbertian.
\item[iv)] For any $f,g\in D(\Delta)$ with $\Delta f\in W^{1,2}(\X)$, $g,\Delta g\in L^\infty(\X)$, $g\geq 0$ we have
\[
\tfrac12\int |\d f|^2\Delta g\,\d\mm\geq\int g\big(\tfrac1N(\Delta f)^2+\la\d f,\d\Delta f\ra+K|\d f|^2\big)\,\d\mm.
\]
\end{itemize}
\end{definition}

\subsection{$\CAT(0)$ spaces}

$\CAT(0)$ spaces are the non-smooth analogue of simply connected Riemannian manifolds with non-positive Sectional curvature. 

Recall that a metric space $(\Y,\sfd_\Y)$ is said \emph{geodesic} provided for any $x,y\in\Y$ there is a curve $\gamma$, called constant speed geodesic from $x$ to $y$, which satisfies 
\[
\sfd_\Y(\gamma_t,\gamma_s)=|s-t|\sfd_\Y(\gamma_0,\gamma_1),\quad\forall t,s\in[0,1],\qquad\gamma_0=x,\ \gamma_1=y.
\]
\begin{definition} A $\CAT(0)$ space is a complete geodesic space $(\Y,\sfd_\Y)$ such that for any geodesic $\gamma$ and any $z\in\Y$ we have
\[
\sfd_\Y^2(z,\gamma_t)\leq(1-t)\sfd_\Y^2(z,\gamma_0)+t\sfd^2_{\Y}(z,\gamma_1)-t(1-t)\sfd^2_\Y(\gamma_0,\gamma_1)\qquad\forall t\in[0,1].
\]
\end{definition}
The geometric structure of $\CAT(0)$ spaces is crucial for the development of a (Lipschitz) regularity theory for harmonic maps. This will be evident from the discussion in Chapter \ref{ch:main} and from the references collected therein. For an introduction to the geometric and analytic properties of $\CAT(0)$ spaces we refer to \cite{Bacak14}.

\section{A variant of the Hopf-Lax formula}\label{se:HLnew}
The content of this section is strongly inspired by the discussions in \cite{AmbrosioGigliSavare11}, \cite{NCS14}, \cite{ZZ18} and constitutes a simple variant of the results therein.

Here we shall always assume that   $(\X,\sfd)$  is  a complete length (i.e.\ the distance is realized as infimum of length of curves) metric space and $f:\X\times\X\to \R\cup\{+\infty\}$ is a lower semicontinuous function bounded from below satisfying the reverse triangle inequality
\begin{equation}
\label{eq:rti}
f(x,z)\geq f(x,y)+f(y,z)\qquad\forall x,y,z\in \X.
\end{equation}
In our application $f$ will (almost) be of the form $f(x,y)=-\sfd_\Y(u(x),u(y))$, where $u$ is a map from $\X$ to some other metric space $\Y$, see \eqref{eq:deff}, \eqref{eq:defdu}.
For $x,y\in\X$ and $t>0$ we define $F(t,x;y)\in\R$ as
\begin{equation}
\label{eq:defft}
F(t,x;y):=f(x,y)+\frac{\sfd^2(x,y)}{2t}\qquad \text{and then}\qquad f_t(x):=\inf_{y\in\X} F(t,x;y).
\end{equation}
We also put $f_0(x):=f(x,x)$ for every $x\in\X$. In other words, if  for given $x\in\X$ we  denote by $f^x:\X\to  \R\cup\{+\infty\}$ the function defined as $f^x(y):=f(x,y)$ and we recall the usual metric Hopf-Lax formula
\[
Q_tg(x):=\inf_{y\in\X}g(y)+\frac{\sfd^2(x,y)}{2t},\qquad\qquad Q_0g(x)=g(x),
\]
the above definition reads as
\begin{equation}
\label{eq:idqt}
f_t(x)=(Q_tf^x)(x).
\end{equation}
Our aim in this section is to study the map $(t,x)\mapsto f_t(x)$. To this aim, let us introduce the functions $D^{\pm}:\X\times(0,\infty)\to[0,\infty)$ as
\begin{equation}
\label{eq:defdpm}
\begin{split}
D^+_t(x)&:=\max_{(y_n)}\lims_{n\to\infty}\sfd(x,y_n),\\
D^-_t(x)&:=\min_{(y_n)}\limi_{n\to\infty}\sfd(x,y_n),
\end{split}
\end{equation}
where in both cases $(y_n)$ varies among minimizing sequences   of $F(t,x;\cdot)$. Standard diagonal arguments show that the $\max$ and $\min$ in the definitions of $D^\pm$ are achieved while the boundedness from below of $f$ grants that $D^\pm_t(x)<\infty$ for any $t,x$.

Identity \eqref{eq:idqt} and the know result concerning the metric Hopf-Lax formula immediately give the following:
\begin{proposition}\label{prop:paseHL}
With the above notation and assumptions, for every $x\in\X$ the following holds:
\begin{itemize}
\item[i)] The function $t\mapsto f_t(x)$ is continuous on $[0,\infty)$, locally Lipschitz and locally semiconcave on $(0,\infty)$ and satisfies
\begin{equation}
\label{eq:derdshl}
\frac{\d^-}{\d t}f_t(x)=-\frac{(D^-_t(x))^2}{2t^2}\qquad\qquad\frac{\d^+}{\d t}f_t(x)=-\frac{(D^+_t(x))^2}{2t^2}.
\end{equation}
\item[ii)] The function $t\mapsto D^+_t(x)$ (resp.\ $D^-_t(x)$) is upper (resp.\ lower) semicontinuous, converge to 0 as $t\downarrow0$ and satisfy
\begin{equation}
\label{eq:mondpm}
D^-_t(x)\leq D^+_t(x)\leq D^-_s(x)\qquad\forall 0\leq t< s.
\end{equation}
Moreover, the identity $D^+(t)=D^-(x)$ holds for every $t$ except at most a countable number.
\end{itemize}
\end{proposition}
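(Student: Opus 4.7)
The plan is to reduce everything to the classical metric Hopf-Lax formula via identity \eqref{eq:idqt}. Fix $x\in\X$ and consider $f^x:=f(x,\cdot):\X\to\R\cup\{+\infty\}$; this is lower semicontinuous in $y$ (specialise the joint lower semicontinuity of $f$ to constant $x$-sequences) and bounded below by $\inf f$, while the reverse triangle inequality \eqref{eq:rti} applied with $x=y=z$ forces $f(x,x)\le 0$, so in particular $f^x(x)\in\R$. By \eqref{eq:idqt} the map $t\mapsto f_t(x)=(Q_tf^x)(x)$ is precisely the value at $x$ of the classical Hopf-Lax semigroup applied to the admissible datum $f^x$, and the quantities $D^{\pm}_t(x)$ in \eqref{eq:defdpm} are nothing but the analogous ``$D^{\pm}$'' attached to that semigroup at the point $x$.

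Once this identification is in place, every assertion of (i) and (ii) that lives on the open interval $(0,\infty)$ --- continuity, local Lipschitz regularity, local semiconcavity, the one-sided derivative formulas \eqref{eq:derdshl}, upper/lower semicontinuity of $D^{\pm}$, and the monotonicity \eqref{eq:mondpm} --- transfers verbatim from the available theory, so I would simply invoke \cite{AmbrosioGigliSavare11, NCS14, ZZ18}. If a self-contained derivation of semiconcavity were preferred, I would use the reparametrisation $s=1/t$: the map $s\mapsto\inf_y[f^x(y)+s\,\sfd^2(x,y)/2]$ is the infimum of affine functions of $s$ and therefore concave, and semiconcavity of $f_t(x)$ in $t$ follows from this concavity by the smooth change of variable.

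The only assertion not directly covered by the black box is the behaviour at $t=0$. I would handle it by a short direct argument. The upper bound $f_t(x)\le F(t,x;x)=f(x,x)$ is trivial from $y=x$. For the converse, given any minimising sequence $y_n$ at times $t_n\downarrow 0$, the lower bound on $f$ together with $f_{t_n}(x)\le f(x,x)$ forces $\sfd^2(x,y_n)/t_n$ to stay bounded, so $\sfd(x,y_n)\to 0$; lower semicontinuity of $f^x$ at $x$ then yields $\liminf_n f(x,y_n)\ge f(x,x)$, which simultaneously produces the continuity of $t\mapsto f_t(x)$ at $0$ and, being valid for \emph{every} minimising sequence, the convergence $D^{\pm}_t(x)\to 0$. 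The residual claim that $D^+=D^-$ outside a countable set is a one-line consequence of the monotonicity of $D^-$ via the sandwich $D^-_t\le D^+_t\le\inf_{s>t}D^-_s$ applied at any right-continuity point of the non-decreasing function $D^-$.

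I do not expect any serious obstacle: the statement is essentially a bookkeeping result whose role is to import the standard metric Hopf-Lax calculus into the two-variable setting of this section. The most delicate point, and the one where the assumptions are genuinely used, is the passage at $t=0$; everywhere else one is only exploiting that $f^x$ is an admissible initial datum for the classical semigroup.
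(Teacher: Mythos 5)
Your proposal is correct and follows essentially the same route as the paper: the proof there also fixes $x$, sets $g:=f(x,\cdot)$, and imports items (i)--(ii) wholesale from the metric Hopf--Lax analysis of \cite{AmbrosioGigliSavare11} (Propositions 3.1 and 3.2 there), handling the convergence of $D^\pm_t(x)$ to $0$ exactly as you do, via the boundedness of $f$ from below and the bound \eqref{eq:stimadqt2}. Your extra details (the $t=0$ continuity argument and the concavity in $s=1/t$) are consistent fillings-in of what the paper delegates to the cited reference.
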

\begin{proof} Considering the function $g(\cdot):=f(x,\cdot)$, all the stated results follow from the analysis carried out in \cite[Section 3]{AmbrosioGigliSavare11}. Specifically, 
the monotonicity \eqref{eq:mondpm} and the last claim in item $(ii)$ are proved in \cite[Proposition 3.1]{AmbrosioGigliSavare11}, the semicontinuity in  \cite[Proposition 3.2]{AmbrosioGigliSavare11} and the convergence to 0 is obvious from the fact that $f$ is bounded from below (see also the explicit bound \eqref{eq:stimadqt2}.
%
\end{proof}
We now want to estimate $\frac{D^+_t(x)}{t}$ from above and to this aim  we recall that the  ascending/ descending slopes $|\partial^\pm g|(x)$ of the function $g:\X\to\R$ at the point $x\in\X$ are defined as
\[
|\partial^+g|(x):=\lims_{y\to x}\frac{(g(y)-g(x))^+}{\sfd(x,y)}\qquad\text{ and }\qquad |\partial^-g|(x):=\lims_{y\to x}\frac{(g(y)-g(x))^-}{\sfd(x,y)},
\]
where $z^+:=z\vee 0$, $z^-:=(-z)\vee0$ are the positive and negative parts of the real number $z$, respectively. We shall also define the `tilt' $\tilt (f)(x)$  of $f$ at $x$ as
\begin{equation}
\label{eq:tilt}
\tilt(f)(x):=\lims_{y\to x}\frac{(f(x,y))^-}{\sfd(x,y)}
\end{equation}
Rephrasing \cite[Sublemma 6.16]{ZZ18} we then have (notice that here the assumption \eqref{eq:rti} plays a role):
\begin{lemma}\label{le:stimalipa}
We have
\[
\frac{D^+_t(x)}t\leq |\partial^- f_t|(x)+\tilt(f)(x)\qquad\forall t>0,\ x\in\X.
\]
\end{lemma}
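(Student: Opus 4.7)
The plan is to produce a test sequence $z_k \to x$ along which the inequality $(f_t(x)-f_t(z_k))^+ /\sfd(x,z_k)$ (which in the limsup is dominated by $|\partial^- f_t|(x)$) exceeds $D^+_t(x)/t - \tilt(f)(x) - o(1)$. To this end, I would first fix a minimizing sequence $(y_n)$ for $F(t,x;\cdot)$ that realizes $D^+_t(x)$, i.e.\ with $\sfd(x,y_n)\to D^+_t(x)$; such a sequence exists by the very definition \eqref{eq:defdpm}. Set $\varepsilon_n:=F(t,x;y_n)-f_t(x)\to 0$. Combining the definition $f_t(z)\leq f(z,y_n)+\sfd^2(z,y_n)/(2t)$ with the reverse triangle inequality \eqref{eq:rti} applied to the triple $(x,z,y_n)$, which reads $f(z,y_n)\leq f(x,y_n)-f(x,z)$, and subtracting the identity $f_t(x)=f(x,y_n)+\sfd^2(x,y_n)/(2t)-\varepsilon_n$, one obtains the core inequality
\[
f_t(x)-f_t(z)\geq f(x,z)+\frac{\sfd^2(x,y_n)-\sfd^2(z,y_n)}{2t}-\varepsilon_n,\qquad \forall z\in\X,\ \forall n.
\]

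Next I would exploit the length-space hypothesis to choose $z$ ``in the direction of $y_n$''. Pick sequences $\eta_k\downarrow 0$, $\delta_k\downarrow 0$ with $\delta_k=o(\eta_k)$, and $n(k)$ large enough that $\varepsilon_{n(k)}=o(\eta_k)$ and $\sfd(x,y_{n(k)})\geq D^+_t(x)-1/k$. Since $\X$ is a length space, for any $\delta_k>0$ there is a curve from $x$ to $y_{n(k)}$ of length at most $\sfd(x,y_{n(k)})+\delta_k$; take $z_k$ on this curve at arc-length $\eta_k$ from $x$, so that $\sfd(x,z_k)=\eta_k$ and $\sfd(z_k,y_{n(k)})\leq \sfd(x,y_{n(k)})-\eta_k+\delta_k$. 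Squaring this bound yields
\[
\sfd^2(x,y_{n(k)})-\sfd^2(z_k,y_{n(k)})\geq 2\eta_k\,\sfd(x,y_{n(k)})-o(\eta_k),
\]
so that dividing the core inequality by $\eta_k=\sfd(x,z_k)$ gives
\[
\frac{f_t(x)-f_t(z_k)}{\sfd(x,z_k)}\geq \frac{f(x,z_k)}{\sfd(x,z_k)}+\frac{\sfd(x,y_{n(k)})}{t}+o(1).
\]

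Finally, I would pass to the limit as $k\to\infty$. Since $(\cdot)^+$ dominates the identity and $z_k\to x$, the limsup of the left-hand side is bounded above by $|\partial^- f_t|(x)$. On the right, the last term tends to $D^+_t(x)/t$. For the middle term, the elementary bound $f(x,z_k)\geq -(f(x,z_k))^-$ combined with the definition \eqref{eq:tilt} of the tilt gives
\[
\lims_{k\to\infty}\frac{f(x,z_k)}{\sfd(x,z_k)}\geq -\limi_{k\to\infty}\frac{(f(x,z_k))^-}{\sfd(x,z_k)}\geq -\tilt(f)(x).
\]
Rearranging the resulting inequality yields the claim.

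The main technical point is the joint choice of $\eta_k,\delta_k,n(k)$: all error terms must be sent to zero in the correct order, and the selection of $z_k$ on an approximate geodesic has to respect both the length-space slack $\delta_k$ and the minimizing-sequence error $\varepsilon_{n(k)}$ simultaneously. Everything else is essentially algebraic manipulation of the two defining inequalities \eqref{eq:defft} and \eqref{eq:rti}, plus the length-space property.
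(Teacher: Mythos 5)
Your argument is correct and is essentially the paper's own proof: both fix a minimizing sequence realizing $D^+_t(x)$, move a short distance from $x$ toward $y_n$ along an approximate geodesic, split $f(\cdot,y_n)$ via the reverse triangle inequality \eqref{eq:rti}, and compare the first-order expansions of the quadratic terms before diagonalizing in the two parameters (the paper just organizes this as a limsup in $n$ for each fixed $s$ followed by $s\downarrow 0$). The only points to tidy are the trivial case $D^+_t(x)=0$, which must be excluded since otherwise the construction of $z_k$ degenerates (but then the claim is immediate), and the assertion $\sfd(x,z_k)=\eta_k$, which should read $\eta_k-\delta_k\leq \sfd(x,z_k)\leq \eta_k$ -- harmless since the ratio tends to $1$.
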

\begin{proof}  If $D_t^+(x)=0$ there is nothing to prove, thus we assume $D_t^+(x)>0$. Let $x\in\X$, $t>0$ and $(y_n)\subset\X$ a minimizing sequence for $F(t,x;\cdot)$ such that $\sfd(x,y_n)\to D_t^+(x)$. For every $n\in\N$, let $\gamma_n:[0,1]\to\X$ be with constant speed equal to $\sfd(x,y_n)(1+\frac1n)$ such that $\gamma_n(0)=x$, $\gamma_n(1)=y_n$. Then $\sfd(\gamma_n(s),y_n)\leq (1-s)(1+\frac1n)\sfd(x,y_n)$ for every $s,n$ and thus
\[
\begin{split}
\lims_n  f_t(x)-f_t(\gamma_n(s))&\geq \lims_n \big(F(t,x;y_n)-F(t,\gamma_n(s);y_n)\big)\\
&=\lims_n\big( f(x,y_n)- f(\gamma_n(s),y_n)  +\frac1{2t}\big(\sfd^2(x,y_n)-\sfd^2(\gamma_n(s),y_n)\big)\big)\\
\text{(by \eqref{eq:rti})}\qquad&\geq \lims_n f(x,\gamma_n(s))  +\frac{\sfd^2(x,y_n)}{t}\big(s-\tfrac{s^2}2\big)\\
&\geq\limi_n f(x,\gamma_n(s))  +\frac{D_t^+(x)^2}{t}\big(s-\tfrac{s^2}2\big)\geq -\lims_n f(x,\gamma_n(s))^-  +\frac{D_t^+(x)^2}{t}\big(s-\tfrac{s^2}2\big)
\end{split}
\]
Rearranging, dividing by $sD_t^+(x)>0$ and recalling that $\sfd(x,\gamma_n(s))\leq s\sfd(x,y_n)(1+\frac1n)\to sD_t^+(x)$ we get
\[
\frac{D_t^+(x)}{t}\leq \lims_n\frac{ f_t(x)-f_t(\gamma_n(s))}{\sfd(x,\gamma_n(s))}+\lims_n \frac{f(x,\gamma_n(s))^- }{\sfd(x,\gamma_n(s))}+\frac s2\frac{D_t^+(x)}{t}\qquad\forall s\in(0,1).
\]
By diagonalization we can find sequences $n_j,m_j\uparrow+\infty$ such that
\[
\frac{D_t^+(x)}{t}\leq \lims_j\frac{ f_t(x)-f_t(\gamma_{n_j}(\tfrac1j))}{\sfd(x,\gamma_{n_j}(\tfrac1j))}+\lims_j \frac{f(x,\gamma_{m_j}(\tfrac1j))^- }{\sfd(x,\gamma_{m_j}(\frac1j))}\leq |\partial^- f_t|(x)+\tilt(f)(x),
\]
as desired.
\end{proof}
Collecting what proved so far we obtain (recall that $f_0(x):=f(x,x)$):
\begin{theorem}\label{thm:hlvar} For any $x\in\X$ and $t\geq 0$ we have: 
\[
|f_0(x)-f_t(x)|\leq \frac12\int_0^t\big( |\partial^- f_s|(x)+\tilt(f)(x)\big)^2\,\d s
\]
\end{theorem}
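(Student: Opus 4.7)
The plan is to combine Proposition \ref{prop:paseHL}(i), which gives explicit one-sided derivatives for $s\mapsto f_s(x)$, with the tilt estimate from Lemma \ref{le:stimalipa}, and then integrate over $[0,t]$. First I would note that $f_t(x)=\inf_y F(t,x;y)\le F(t,x;x)=f(x,x)=f_0(x)$, so $s\mapsto f_s(x)$ is non-increasing (which is also consistent with the sign of the one-sided derivatives in \eqref{eq:derdshl}), and hence $|f_0(x)-f_t(x)|=f_0(x)-f_t(x)\ge 0$.

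The core computation then runs as follows: for any $0<\eps<t$, Proposition \ref{prop:paseHL}(i) ensures that $s\mapsto f_s(x)$ is locally Lipschitz on $(0,\infty)$, hence absolutely continuous on $[\eps,t]$. By \eqref{eq:derdshl} combined with the fact (Proposition \ref{prop:paseHL}(ii)) that $D^+_s(x)=D^-_s(x)$ for all but countably many $s$, the two one-sided derivatives coincide a.e., and the classical derivative is $-(D^+_s(x))^2/(2s^2)$. Applying the fundamental theorem of calculus and then Lemma \ref{le:stimalipa} gives
\[
f_\eps(x)-f_t(x)=\int_\eps^t\frac{(D^+_s(x))^2}{2s^2}\,\d s\le\frac{1}{2}\int_\eps^t\bigl(|\partial^- f_s|(x)+\tilt(f)(x)\bigr)^2\,\d s.
\]

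The final step is to let $\eps\downarrow 0$. Continuity of $s\mapsto f_s(x)$ at $s=0$ (again from Proposition \ref{prop:paseHL}(i)) gives $f_\eps(x)\to f_0(x)$ on the left, while on the right the integrand is non-negative and the domain of integration increases to $[0,t]$, so monotone convergence applies and yields $\tfrac{1}{2}\int_0^t(|\partial^- f_s|(x)+\tilt(f)(x))^2\,\d s$ in the limit. The one point that requires a moment's attention is the behavior of the integrand near $s=0$: if the right-hand side is $+\infty$ the inequality is vacuous, and otherwise the estimate is preserved in the limit. Beyond this, the proof is essentially an integration of the pointwise bound already isolated in Lemma \ref{le:stimalipa}, so no new difficulty is expected.
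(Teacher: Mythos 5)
Your proposal is correct and follows essentially the same route as the paper: both rest on the regularity and derivative formula \eqref{eq:derdshl} from Proposition \ref{prop:paseHL}(i) to write $f_0(x)-f_t(x)=\int_0^t \frac{(D_s^+(x))^2}{2s^2}\,\d s$, and then conclude via the pointwise bound of Lemma \ref{le:stimalipa}. Your extra care with the $\eps$-truncation near $s=0$ and the a.e.\ coincidence of the one-sided derivatives is a harmless elaboration of what the paper leaves implicit.
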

\begin{proof} The inequality $f_0(x)\geq f_t(x)$ is obvious by the definitions, while from the regularity   established in item $(i)$ of Proposition \ref{prop:paseHL} we know that
\[
f_0(x)-f_t(x)= \int_0^t-\frac{\d^+}{\d s}f_s(x)\,\d s=\int_0^t\frac{D_s^+(x)^2}{2s^2}\,\d s.
\]
The  conclusion follows from Lemma \ref{le:stimalipa}.
\end{proof}

We conclude with the following `duality formula' for $\tilt(f)$, analogue of \cite[Lemma 3.1.5]{AmbrosioGigliSavare08}:
\begin{proposition}\label{prop:dualtilt} For any $x\in\X$ we have
\begin{equation}
\label{eq:dualtilt}
\frac12\tilt(f)^2(x)=\lims_{t\downarrow0}-\frac{f_t(x)}t=\lims_{t\downarrow0}\frac{D^\pm(t,x)^2}{2t^2}
\end{equation}
and a sequence $t_n\downarrow0$ realizes the $\lims$ of $-\frac{f_t(x)}t$ if and only if it does so for $\frac{D^\pm(t,x)^2}{2t^2}$.
\end{proposition}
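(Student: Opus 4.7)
My plan reduces to first proving $\lim_{t\downarrow 0}-f_t(x)/t=\tfrac12\tilt(f)^2(x)$ directly and then transferring this to $D^\pm_t(x)^2/(2t^2)$ via an AM--GM identity. I work under the assumption $f(x,x)=0$ (the natural case, e.g.\ when $f(x,y)=-\sfd_\Y(u(x),u(y))$); if $f(x,x)<0$ then lower semicontinuity forces $\tilt(f)(x)=+\infty$ and all three quantities diverge, making the statement vacuous.

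For the first step, for the upper bound fix $\epsilon>0$ and choose $\delta>0$ with $f(x,y)^-/\sfd(x,y)\le\tilt(f)(x)+\epsilon$ on $B_\delta(x)\setminus\{x\}$. On this ball, completing the square gives
\[
F(t,x;y)\ge -f(x,y)^-+\frac{\sfd^2(x,y)}{2t}\ge -\frac{(\tilt(f)(x)+\epsilon)^2\,t}{2},
\]
while outside the ball $\sfd^2/(2t)\ge\delta^2/(2t)$ dominates the bounded-below $f$ for small $t$; so $f_t(x)\ge -(\tilt(f)(x)+\epsilon)^2 t/2$ for small $t$, and letting $\epsilon\downarrow 0$ yields the upper bound. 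For the matching lower bound (assuming $\tilt(f)(x)>0$; else trivial) select $y_n\to x$, $y_n\ne x$, with $a_n:=f(x,y_n)^-/\sfd(x,y_n)\to\tilt(f)(x)$; eventually $f(x,y_n)=-a_n s_n$ where $s_n:=\sfd(x,y_n)$, and setting $t_n:=s_n/a_n\to 0$ a direct computation gives $F(t_n,x;y_n)=-a_n^2 t_n/2$, whence $-f_{t_n}(x)/t_n\ge a_n^2/2\to\tilt(f)^2(x)/2$.

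For the second step, for each $t>0$ with $D^+_t(x)>0$ introduce $\alpha_t:=D^+_t(x)/(2t)$ and $\beta_t:=-f_t(x)/D^+_t(x)$, both non-negative and satisfying the algebraic identity $\alpha_t\beta_t=-f_t(x)/(2t)\to\tilt(f)^2(x)/4$ by the first step. For a minimizing sequence $y_n^t$ of $F(t,x;\cdot)$, one has $f(x,y_n^t)\to f_t(x)-D^+_t(x)^2/(2t)\le 0$, so eventually $f(x,y_n^t)<0$ and
\[
\lim_n\frac{f(x,y_n^t)^-}{\sfd(x,y_n^t)}=\frac{-f_t(x)+D^+_t(x)^2/(2t)}{D^+_t(x)}=\alpha_t+\beta_t.
\]
A diagonalization, choosing $y_{n(k)}^{t_k}$ along $t_k\downarrow 0$ so that the above ratio approximates $\alpha_{t_k}+\beta_{t_k}$ within $1/k$ and using $\sfd(x,y_{n(k)}^{t_k})\le D^+_{t_k}(x)+1/k\to 0$ from Proposition \ref{prop:paseHL}(ii), gives $\lims_{t\downarrow 0}(\alpha_t+\beta_t)\le\tilt(f)(x)$. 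By AM--GM, $\alpha_t+\beta_t\ge 2\sqrt{\alpha_t\beta_t}\to\tilt(f)(x)$, so $\lim(\alpha_t+\beta_t)=\tilt(f)(x)$; combined with $\lim\alpha_t\beta_t=\tilt(f)^2(x)/4$ this forces $|\alpha_t-\beta_t|\to 0$ and hence $\alpha_t,\beta_t\to\tilt(f)(x)/2$. Therefore $D^+_t(x)^2/(2t^2)=2\alpha_t^2\to\tilt(f)^2(x)/2$, and the same holds for $D^-_t$ since $D^+=D^-$ off a countable set. As both $\lims$ in the statement are actually proper limits equal to $\tilt(f)^2(x)/2$, the iff claim on realizing sequences is automatic. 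The main technical obstacle will be the diagonal extraction yielding $\lims(\alpha_t+\beta_t)\le\tilt(f)(x)$ cleanly, since the double sequence $y_n^t$ depends on both $n$ and $t$.
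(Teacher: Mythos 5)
Your Step 1 is correct and is a pleasant alternative to the paper's route: the paper obtains $\lims_{t\downarrow0}(-f_t(x)/t)\le\frac12\tilt(f)^2(x)$ by estimating along almost-minimizers plus Young's inequality, and the reverse inequality via the computation of $\sup_{\tau\in(0,\eps)}\big(f(x,y)^-/\tau-\sfd^2(x,y)/(2\tau^2)\big)$; your $\eps$--$\delta$ upper bound together with the explicit competitors $t_n=s_n/a_n$ achieves the same more directly. Two side remarks: the paper does not reduce to $f(x,x)=0$, and your reason for discarding the case $f(x,x)<0$ is backwards --- lower semicontinuity bounds $f$ from \emph{below} near the diagonal, so it does not force $\tilt(f)(x)=+\infty$; this degenerate case is, admittedly, also glossed over in the paper's proof, where the almost-minimizers are implicitly taken distinct from $x$.

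The genuine gap is in Step 2, where you repeatedly promote $\lims$ to $\lim$. Step 1 only yields $\lims_{t\downarrow0}\alpha_t\beta_t=\frac14\tilt(f)^2(x)$, not convergence; $\limi_{t\downarrow0}(-f_t(x)/t)$ is not controlled from below, and neither $-f_t(x)/t$ nor $D^+_t(x)^2/(2t^2)$ need converge (this is precisely why the statement is phrased with $\lims$ and carries a clause about realizing sequences, so your closing assertion that ``both $\lims$ are proper limits, hence the iff is automatic'' cannot stand). Consequently ``$\alpha_t\beta_t\to\tilt^2/4$'', ``$2\sqrt{\alpha_t\beta_t}\to\tilt$'' and the squeeze $(\alpha_t-\beta_t)^2=(\alpha_t+\beta_t)^2-4\alpha_t\beta_t\to0$ are all unjustified; without them your diagonalization only gives the one-sided bound $\lims_t\alpha_t\le\tilt(f)(x)$, which is a factor $2$ away from the $\tilt(f)(x)/2$ you need. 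The repair is to run all of Step 2 along a sequence $t_k\downarrow0$ realizing $\lims(-f_t(x)/t)$: along it $\alpha_{t_k}\beta_{t_k}\to\tilt^2/4$ genuinely, your estimate $\lims_t(\alpha_t+\beta_t)\le\tilt(f)(x)$ combined with AM--GM then forces $\alpha_{t_k}+\beta_{t_k}\to\tilt(f)(x)$, hence $\alpha_{t_k},\beta_{t_k}\to\tilt(f)(x)/2$ and $D^{+}_{t_k}(x)^2/(2t_k^2)\to\frac12\tilt(f)^2(x)$. This recovers the implication that the paper extracts from the equality case of Young's inequality in the starred step of \eqref{eq:at} (namely $\sfd(x,y_{t_k})/t_k\to\tilt(f)(x)$ along realizing sequences), which is the content actually invoked later in the proof of Theorem \ref{thm:main}; the unrestricted identity $\lims_t D^{\pm}_t(x)^2/(2t^2)=\frac12\tilt(f)^2(x)$ and the reverse implication of the ``iff'' do not follow from your argument even after this repair, and you should not expect your global-limit claims to be salvageable.
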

\begin{proof} Letting $y_t$ be (almost) minimizers for $f_t(x)$ we get
\begin{equation}
\label{eq:at}
\begin{split}
0&\leq\lims_{t\downarrow0}-\frac{f_t(x)}{t}=\lims_{t\downarrow0}\Big(-\frac{f(x,y_t)}{t}-\frac{\sfd^2(x,y_t)}{2t^2}\Big)\\
&\leq \lims_{t\downarrow0}\underbrace{\Big(\tilt (f)(x)\frac{\sfd(x,y_t)}{t}-\frac{\sfd^2(x,y_t)}{2t^2}\Big)}_{=a_t}\stackrel *\leq\frac{\tilt(f)^2(x)}2,
\end{split}
\end{equation}
where the last inequality comes from Young's inequality (and we shall use at the end of the proof that $*$ is actually an equality). Similarly with the $\limi$. If $\tilt(f)(x)=0$ this gives   the first equality in \eqref{eq:dualtilt}, that  $a_t=-\frac{\sfd^2(x,y_t)}{2t^2}$ and that all the $\lims$ must be limits. Thus by  \eqref{eq:mondpm} we get the second inequality in  \eqref{eq:dualtilt}.

 We thus we assume $\tilt(f)(x)>0$ and  notice that for $a,c\geq 0$ the quantity  $\sup_{b>c}ab-\frac{b^2}2$ is equal to $\frac{a^2}2$ if $a\geq c$ and to $ac-\frac{c^2}2$ if $a\leq c$. It follows that for any $x,y\in\X$ and $\eps>0$ we have
\[
\sup_{\tau\in(0,\eps)}\Big(\frac{f(x,y)^-}{\sfd(x,y)}\frac{\sfd(x,y)}{\tau}-\frac12\frac{\sfd^2(x,y)}{\tau^2}\Big)=\left\{\begin{array}{ll}
\frac12\frac{(f(x,y)^-)^2}{\sfd^2(x,y)},&\qquad\text{ if }\frac{f(x,y)^-}{\sfd(x,y)}\geq\frac{\sfd(x,y)}\eps\\
&\\
\frac{f(x,y)^-}{\eps}-\frac{\sfd^2(x,y)}{2\eps^2},&\qquad\text{ if }\frac{f(x,y)^-}{\sfd(x,y)}\leq\frac{{\sfd(x,y)}}\eps\\
\end{array}\right.
\]
Taking $\lims_{y\to x}$  and noticing that by $\tilt(f)(x)>0$  we are eventually in the first case we obtain
\[
\begin{split}
\frac12\tilt(f)^2(x)&=\lims_{y\to x}\sup_{\tau\in(0,\eps)}\Big(\frac{f(x,y)^-}{\tau} -\frac{\sfd^2(x,y)}{2\tau^2}\Big)\leq \sup_{\tau\in(0,\eps)}\frac{\sup_{y\neq  x}\big(f(x,y)^- -\frac12\frac{\sfd^2(x,y)}{\tau}\big)}\tau\\
&= \sup_{\tau\in(0,\eps)}-\frac{\inf_{y\in\X}\big(f(x,y) +\frac12\frac{\sfd^2(x,y)}{\tau}\big)}\tau= \sup_{\tau\in(0,\eps)}-\frac{f_t(x)}{t}
\end{split}
\]
and the arbitrariness of $\eps>0$ gives $\frac12\tilt(f)^2(x)\leq \lims_{t\downarrow0}-\frac{f_t(x)}{t}$.
Hence the inequalities in  \eqref{eq:at} must all be equalities: then the first in \eqref{eq:dualtilt} follows, while the second is a consequence of the equality in the starred inequality in \eqref{eq:at} and of \eqref{eq:mondpm}. The last claim also follows from these arguments.
\end{proof}

\section{Laplacian bounds on $\RCD$ spaces}

\subsection{The infinite dimensional case}

We shall assume the reader familiar with calculus on $\RCD$ spaces as developed in \cite{AmbrosioGigliSavare11-2}, \cite{Gigli12}, \cite{Gigli14}. Here we collect some useful properties about Laplacian bounds, in particular concerning how they are affected by operations/limiting procedures and the relation between distributional bounds in the sense of   \cite{Gigli12} and pointwise upper bounds in terms of short time asymptotic of the heat flow considered in  \cite{MS21} (see also \cite[Definition 4.33]{Gigli12} for measure-valued upper bounds given in the same spirit). Symmetric statements for lower bounds are also trivially valid and will not be discussed.

Throughout this section, $(\X,\sfd,\mm)$ is an $\RCD(K,\infty)$ space, $K\in\R$, with $\supp(\mm)=\X$.

\bigskip

We shall denote by $\Lip_\b(\X)$ and $\Lip_\bs(\X)$ the spaces of real valued functions on $\X$ that are Lipschitz and bounded and, respectively,  Lipschitz, bounded and with bounded support.

Start recalling (\cite{AmbrosioGigliSavare11-2}, \cite{AmbrosioGigliMondinoRajala12}) that on $\RCD(K,\infty)$ spaces there is a well defined notion of heat flow $(\h_t)$ that is a strongly continuous semigroup of contractions in $L^p$ for every $p\in[1,\infty)$ and is weakly$^*$-continuous in $L^\infty$. The heat flow admits an heat kernel $\h_t\delta_x=\rho_t[x]\mm$ with symmetric transition probabilities, i.e\ $\rho_t[x](y)=\rho_t[y](x)$ for $\mm\times\mm$-a.e.\ $x,y$. The representation formula
\begin{equation}
\label{eq:reprht}
\h_tf(x)=\int f\,\d\h_t\delta_x
\end{equation}
holds for every $f\in L^p$, $p\in[1,\infty]$, and provides a canonical precise representative for the function $\h_tf$. In what follows, we shall always use this representative and notice that formula \eqref{eq:reprht} can be used to define $\h_t f$ for any $f\in L^1+L^\infty(\X)$ (this is the space of functions $f$ that can be written as $f_1+f_2$ for some $f_1\in L^1(\X)$ and $f_2\in L^\infty(\X)$ - notice that it is a Banach space when equipped with the norm $\|f\|:=\inf \|f_1\|_{L^1}+\|f_2\|_{L^\infty}$, the infimum being taken among all such writings). A consequence of \eqref{eq:reprht} is the {\bf weak maximum principle} for the heat flow, i.e.\ $\h_tf\leq C$ $\mm$-a.e.\ provided $f\leq C$ $\mm$-a.e.\ and similarly for lower bounds. Formula \eqref{eq:reprht} and the symmetry of the transition probabilities immediately imply
\begin{equation}
\label{eq:heattrans}
\int f\h_tg\,\d\mm=\int g\h_tf\,\d\mm\qquad\forall f\in L^p(\X),\ g\in L^q(\X),\ p,q\in[1,\infty],\ \tfrac1p+\tfrac1q=1.
\end{equation}
Properties of the heat flow that we shall frequently use are  the {\bf Bakry-\'Emery}  estimate \cite{Savare13}
\begin{equation}
\label{eq:BE}
|\d\h_tf|\leq e^{-Kt}\h_t(|\d f|)\qquad\mm-a.e.\ \forall f\in \Lip_b(\X),\ t>0,
\end{equation}
and the {\bf $L^\infty-\Lip$ regularization}:
\begin{equation}
\label{eq:lintylip}
\Lip(\h_tf)\leq C(K,t)\|f\|_{L^\infty},\qquad\forall f\in L^\infty(\X),\ t>0.
\end{equation}
These estimates are typically written for $f\in W^{1,2}(\X)$ and $f\in L^2\cap L^\infty(\X)$ respectively (see \cite{AmbrosioGigliSavare11-2}, \cite{AmbrosioGigliMondinoRajala12}), but the extension we wrote follows rather trivially via multiplication with a sequence of uniformly Lipschitz  cut-off functions with bounded support taking into account, for \eqref{eq:BE},  the lower semicontinuity of weak upper gradients \cite{Cheeger00}, \cite{AmbrosioGigliSavare11} (or, which is more or less the same,  the closure of the differential \cite{Gigli14}).  Notice that \eqref{eq:BE} also provides the metric information (see \cite{AmbrosioGigliSavare11-2})
\begin{equation}
\label{eq:BElip}
\Lip(\h_tf)\leq e^{-Kt}\Lip(f)\qquad\forall t\geq 0.
\end{equation}

We recall that $D(\Delta)\subset W^{1,2}(\X)$ is the space of functions $f$ for which there is $g\in L^2$ such that $\int gh\,\d\mm=-\int \d f\cdot\d h\,\d\mm$ for every $h\in W^{1,2}(\X)$. The density of $W^{1,2}$ in $L^2$ ensures that the function $g$ is unique: we will denote it  $\Delta f$.  Notice that this definition coincides with the one given in Section \ref{se:introRCD}.

We shall also work with a  regularization procedure that works better than the bare heat flow: fix once and for all $\kappa\in C^1_c(0,1)$ non-negative, with $\int_0^1\kappa=1$ and define
\begin{equation}
\label{eq:tildehn}
\tilde\h_nf:=n\int_0^{+\infty}\kappa(nt)\h_tf\,\d t=\int_0^{1}\kappa(t)\h_{t/n}f\,\d t,\qquad \forall n\in\N, \ n>0,\ f\in L^1+L^\infty(\X).
\end{equation}
The closure of $\Delta$ justifies the computation
\begin{equation}
\label{eq:deltatildehn}
\Delta\tilde\h_n f=n\int_0^{+\infty}\kappa( nt)\Delta\h_tf\,\d t=n\int_0^{+\infty}\kappa( nt)\partial_t\h_tf\,\d t=-n^2\int_0^{+\infty}\kappa'( nt)\h_tf\,\d t
\end{equation}
for any $f\in L^2(\X)$, $n\in\N$, $n>0$, where the integrals are intended in the Bochner sense. This bound, together with  the weak maximum principle and the $L^\infty-\Lip$ regularization imply
\begin{equation}
\label{eq:apprf}
\osc(\tilde \h_n f)+\Lip(\tilde \h_n f)+\|\Delta(\tilde \h_n f)\|_{L^\infty}\leq C(n,K)\osc(f)\qquad\forall n\in\N
\end{equation}
for every $f\in L^2(\X)$, where the oscillation $\osc(f)$ of $f:X\to\R$ is defined as
\begin{equation}
\label{eq:defosc}
\osc(f):=\sup f-\inf f.
\end{equation}
Another useful property that we shall occasionally use is (here and below convergence in $L^1_{loc}(\X)$ means convergence in $L^1(\X,\mm\restr B)$ for every bounded Borel set $B\subset\X$):
\begin{equation}
\label{eq:l1loc}
f\in L^\infty(\X)\qquad\Rightarrow\qquad\sup_n\|\tilde\h_nf\|_{L^\infty}<\infty\quad\text{ and }\quad  \tilde\h_nf\to f\quad\text{in }L^1_{loc}(\X).
\end{equation}
Indeed, the uniform $L^\infty$-bound is obvious, and for $L^1_{loc}$ convergence we  notice that
\begin{equation}
\label{eq:after}
\int_B|\tilde\h_nf-f|\,\d\mm\leq \|\tilde\h_n(\nchi_Bf)-\nchi_Bf\|_{L^1}+\int_B|\tilde\h_n(\nchi_{B^c}f)|\,\d\mm\quad\text{$\forall B\subset\X$ Borel bounded}.
\end{equation}
Now observe that \eqref{eq:reprht} yields $|\tilde\h_n(\nchi_{B^c}f)|\leq \tilde\h_n(|\nchi_{B^c}f|)$, thus taking also \eqref{eq:heattrans} into account we see that $\int_B|\tilde\h_n(\nchi_{B^c}f)|\,\d\mm\leq \int \tilde\h_n(\nchi_B)|\nchi_{B^c}f|\,\d\mm\leq \|f\|_{L^\infty} \int \tilde\h_n(\nchi_B)\nchi_{B^c}\,\d\mm$ and the claim follows by the strong continuity of $(\h_t) $ in $L^1(\X)$.

We want extend the domain of the Laplacian and propose two definitions of Laplacian bounds. To this aim we first introduce a suitable class of `test functions' where to `throw' derivatives. Recall that in \cite{Savare13}, \cite{Gigli14} it has been defined the space $\testi$ as
\[
\testi:=\big\{f\in \Lip_\b(\X)\cap D(\Delta)\ :\ \Delta f\in L^\infty\cap W^{1,2}(\X)\big\}.
\]
In what follows we shall mainly work with bounded functions, thus it is better to deal with test functions that are in $L^1$ and to this aim we define the space $\tvar(\X)\subset\testi$ as
\[
\begin{split}
\tvar(\X):=\big\{f\in L^1\cap\testi \ :\  |\d f|,\Delta f\in L^1\big\}.
\end{split}
\] 
Notice that since  $\testi$ is an algebra (see \cite{Savare13}) the same can easily be proved for $\tvar(\X)$.

Observe that the classical identity $\h_t\Delta f=\Delta\h_tf$ valid  for  $f\in D(\Delta)\supset\tvar(\X)$ and the continuity of $\h_t$ as a map from $L^p$ into itself for any $t\geq 0$ and $p\in[1,\infty]$ give
\begin{equation}
\label{eq:stabtvar}
\varphi\in\tvar(\X)\qquad\Rightarrow\qquad \h_t\varphi, \tilde\h_n\varphi\in \tvar(\X)\qquad\forall t,n>0,
\end{equation}
whereas from  identity \eqref{eq:deltatildehn}, the bound \eqref{eq:BE} and arguing as for  \eqref{eq:apprf} we easily obtain that
\begin{equation}
\label{eq:liptvar}
\varphi\in \Lip_\bs(\X)\qquad\Rightarrow\qquad \tilde\h_n\varphi\in\tvar(\X)\qquad\forall n\in\N,\ n>0.
\end{equation}
In particular, letting $\tvar^+(\X)\subset\tvar(\X)$ be the cone of non-negative functions, we see that
\begin{equation}
\label{eq:denstvar}
\varphi \in L^1\cap L^\infty(\X),\ \varphi\geq 0\qquad\Rightarrow\qquad \exists(\varphi_n)\subset\tvar^+(\X) \text{ with }\left\{
\begin{array}{l}
\varphi_n\to\varphi\qquad\text{ in }L^1(\X),\\
\sup_n\|\varphi_n\|_{L^\infty}<\infty.
\end{array}
\right.
\end{equation}
We are now ready to propose the following:
\begin{definition}[Laplacian and Laplacian bounds] Let $(\X,\sfd,\mm)$ be an $\RCD(K,\infty)$ space.

The space $D(\Delta_{loc})$ is the collection of functions  $f\in L^1+L^\infty(\X)$ for which there is $g\in L^1+ L^\infty(\X)$ such that
\begin{equation}
\label{eq:defdeltaloc}
\int  f\Delta \varphi\,\d\mm=\int g\,\varphi\,\d\mm\qquad\forall\varphi\in \tvar(\X).
\end{equation}
In this case the function $g$ (that is clearly unique by \eqref{eq:denstvar}) will be denoted $\Delta f$.

For $f,g\in L^1+L^\infty(\X)$ we   say that the Laplacian of $f$ is bounded above by $g$ if  
\begin{equation}
\label{eq:deflub}
\int f\,\Delta\varphi\,\d\mm\leq \int g\, \varphi\,\d\mm,\qquad\forall \varphi\in \tvar^+(\X).
\end{equation}
In this case we write $\bd f\leq g\mm$. Finally, for $f:\X\to\R$ bounded and Borel we define 
\begin{equation}
\label{eq:deftd}
\tilde\Delta f(x):=\lims_{t\downarrow0}\frac{\h_tf(x)-f(x)}{t}\qquad\forall x\in\X.
\end{equation}
\end{definition} 
Let us collect some properties of these notions. We start remarking that in writing $\bd f\leq g\mm$ we are not really defining who $\bd f$ is. In many relevant circumstances, $\bd f$ can be interpreted as a suitable polar measure (see \cite[Proposition 2.16, Chapter 2]{Peressini67}, \cite[III, 2.1]{MaRockner92}, \cite{Gigli12}, \cite{Savare13}) but for the discussion we are doing here this is not relevant.

A  direct consequence of the definition is the following stability result:
\begin{equation}
\label{eq:stablub}
\left.
\begin{array}{l}
f,f_n,g,g_n,F\in L^1+L^\infty(\X),\\
\bd f_n\leq g_n\mm,\quad\forall n\in\N,\\
f_n\to f\quad\mm-a.e.,\\
g_n\to g\quad\mm-a.e.,\\
|f_n|,|g_n|\leq F\quad\mm-a.e.
\end{array}
\right\}\qquad\qquad\qquad\Rightarrow\qquad\qquad\qquad \bd f\leq g\mm
\end{equation}
proved by passing to the limit in  \eqref{eq:deflub}: this is possible by dominate convergence thanks to the dominations for $f_n,g_n$ and the assumption   $\varphi,\Delta\varphi\in L^1\cap L^\infty(\X)$. Also,  from  \eqref{eq:stabtvar} we get
\begin{equation}
\label{eq:stabub2}
\bd f\leq g\mm\qquad\Rightarrow\qquad \bd\tilde\h_nf\leq \tilde \h_n g\mm
\end{equation}

There is a quite natural link between upper bounds on the Laplacian in the `distributional' sense  $\bd f\leq C\mm$ and in the pointwise sense $\tilde\Delta f\leq C$. 

For technical reasons that will be clear later on, part of the result is stated for  \emph{almost continuous functions}, i.e.\ those functions $f:\X\to\R$ such that
\begin{subequations}
\label{eq:defacm}
\begin{align}
\label{eq:Acont}
&\text{there is $A\subset \X$  Borel with $\mm(\X\setminus A)=0$ so that   $f\restr A$ is continuous and bounded,}\\
\label{eq:acmc}
&f(x)=\limi_{y\to x}f(y)\qquad\text{ holds for every }x\in\X.
\end{align}
\end{subequations}
The collection of such functions (that are clearly Borel) will be denoted  $\acm(\X)$. 
\begin{lemma}\label{le:lapcomb}
Let $(\X,\sfd,\mm)$ be  $\RCD(K,\infty)$  and assume that $\bd f\leq g\mm$ with $g\in L^\infty(\X)$. Then:
\begin{itemize}
\item[i)] For every $t\geq 0$ the bound $ \h_tf-f\leq \int_0^t\h_sg\,\d s$ holds $\mm$-a.e..
\item[ii)] It holds  $\bd f\leq\tilde\Delta f\mm$.
\item[iii)] If  $f\in\acm(\X)$ and $g$ is upper semicontinuous, then $\tilde\Delta f\leq g$ everywhere on $\X$.
\end{itemize}
\end{lemma}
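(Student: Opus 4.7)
The three claims form a natural chain: once (i) is established, both (ii) and (iii) follow from it by short limit arguments. The central tool for (i) is the self-adjointness \eqref{eq:heattrans} of the heat semigroup combined with the stability $\h_s(\tvar^+)\subset\tvar^+$, which follows from \eqref{eq:stabtvar} and positivity preservation. The main subtlety will lie in (iii), where an $\mm$-a.e.\ inequality must be upgraded to one holding at every point.

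For (i), I would fix $\varphi\in\tvar^+(\X)$ and rewrite, via \eqref{eq:heattrans}, the quantity $\int(\h_tf-f)\varphi\,\d\mm=\int f(\h_t\varphi-\varphi)\,\d\mm$. Since $\varphi\in D(\Delta)$ with $\Delta\varphi\in L^1\cap L^\infty$, the Bochner identity $\h_t\varphi-\varphi=\int_0^t\h_s\Delta\varphi\,\d s$ holds in $L^1$, and Fubini gives $\int f(\h_t\varphi-\varphi)\,\d\mm=\int_0^t\int f\,\Delta(\h_s\varphi)\,\d\mm\,\d s$. Now $\h_s\varphi\in\tvar^+(\X)$ for every $s\geq 0$ --- by \eqref{eq:stabtvar} and the weak maximum principle --- so the hypothesis $\bd f\leq g\mm$ applies pointwise in $s$ to yield $\int f\Delta(\h_s\varphi)\,\d\mm\leq\int g\,\h_s\varphi\,\d\mm=\int\h_sg\cdot\varphi\,\d\mm$. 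Collecting, $\int(\h_tf-f)\varphi\,\d\mm\leq\int\bigl(\int_0^t\h_sg\,\d s\bigr)\varphi\,\d\mm$ for every $\varphi\in\tvar^+(\X)$, and the density property \eqref{eq:denstvar} upgrades this to the pointwise $\mm$-a.e.\ bound of (i).

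For (ii), I would set $u_t:=(\h_tf-f)/t$, so by (i) and the weak maximum principle one has $u_t\leq\|g\|_{L^\infty}$ $\mm$-a.e. The same Fubini rewriting gives $\int u_t\varphi\,\d\mm=\frac1t\int_0^t\int\h_sf\cdot\Delta\varphi\,\d\mm\,\d s$, and using $\int\h_sf\cdot\Delta\varphi\,\d\mm=\int f\cdot\h_s\Delta\varphi\,\d\mm$ together with the strong continuity of $(\h_s)$ in $L^1$ applied to $\Delta\varphi\in L^1\cap L^\infty$, the inner integral is continuous at $s=0$ with value $\int f\Delta\varphi\,\d\mm$; hence $\int u_t\varphi\,\d\mm\to\int f\Delta\varphi\,\d\mm$ as $t\downarrow 0$. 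Reverse Fatou, applicable thanks to the uniform bound $u_t\varphi\leq\|g\|_{L^\infty}\varphi\in L^1(\X)$, then concludes
\[
\int f\Delta\varphi\,\d\mm=\lim_{t\downarrow 0}\int u_t\varphi\,\d\mm\leq\int\lims_{t\downarrow 0}u_t\cdot\varphi\,\d\mm=\int\tilde\Delta f\cdot\varphi\,\d\mm,
\]
i.e.\ $\bd f\leq\tilde\Delta f\,\mm$.

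For (iii), let $A'$ be the full-measure set where the inequality from (i) holds. Both $\h_tf$ (Lipschitz by \eqref{eq:lintylip}, since $f$ is bounded on $A$) and $x\mapsto\int_0^t\h_sg(x)\,\d s$ are continuous on $\X$. The plan is to propagate the bound $\h_tf-f\leq\int_0^t\h_sg\,\d s$ to every point $x_0\in\X$ by producing a sequence $y_n\in A\cap A'$ with $y_n\to x_0$ and $f(y_n)\to f(x_0)$: condition \eqref{eq:acmc} supplies a sequence $z_n\to x_0$ with $f(z_n)\to f(x_0)$, and a diagonal argument --- applying \eqref{eq:acmc} at each $z_n\notin A$, exploiting continuity of $f\restr A$ at the $z_n$'s in $A$, and using density of $A\cap A'$ in $\X$ (full measure plus $\supp\mm=\X$) --- produces such $(y_n)$. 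Passing to the limit in (i) at the $y_n$ gives the pointwise inequality at $x_0$. Dividing by $t$ and letting $t\downarrow 0$,
\[
\tilde\Delta f(x_0)\leq\lims_{t\downarrow 0}\frac{1}{t}\int_0^t\h_sg(x_0)\,\d s\leq g(x_0),
\]
where the last inequality combines the narrow convergence $\h_s\delta_{x_0}\to\delta_{x_0}$ as $s\downarrow 0$ (valid at every point in an $\RCD(K,\infty)$ space, via continuity of $\h_s$ acting on $C_b(\X)$) with the upper semicontinuity of $g$. The main obstacle is precisely this pointwise upgrade: it is a purely topological--measure-theoretic step, but it requires careful bookkeeping with the sets $A$ and $A'$ and with the liminf prescription \eqref{eq:acmc}.
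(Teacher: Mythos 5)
Your treatment of items (i) and (ii) is correct and coincides with the paper's argument: the same self-adjointness \eqref{eq:heattrans}, the stability \eqref{eq:stabtvar} of $\tvar^+(\X)$ under the heat flow, the density \eqref{eq:denstvar}, and for (ii) the reverse Fatou lemma made legitimate by the $L^\infty$-bound on $(\h_tf-f)/t$ coming from (i) and the weak maximum principle.

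For (iii) you follow the same overall strategy as the paper (propagate the $\mm$-a.e.\ inequality of (i) to every point using the continuity of $\h_tf$ and of $\int_0^t\h_sg\,\d s$, then divide by $t$ and invoke upper semicontinuity of $g$), and you correctly single out the pointwise upgrade as the delicate step; but the diagonal argument you sketch does not close. For $z_n\in A$ you can indeed pass to a nearby point of $A\cap A'$ via continuity of $f\restr A$ and density of $A\cap A'$; for $z_n\notin A$, however, applying \eqref{eq:acmc} at $z_n$ only produces further approximants that again need not lie in $A$, so the recursion never lands in $A\cap A'$, which is the only place where the inequality from (i) is known. What the argument actually needs is that $f(x_0)=\limi_{y\to x_0,\,y\in A}f(y)$, i.e.\ that the $\limi$ in \eqref{eq:acmc} is realized along points of the continuity set $A$; this is the operative reading under which the paper's own (equally compressed) proof goes through, and it does hold for the functions the lemma is later applied to, but it is not a formal consequence of \eqref{eq:defacm} by the route you describe. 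Indeed, on $\R$ the function equal to $0$ off a measure-zero perfect compact set $C$ and to $-1$ on $C$ satisfies both \eqref{eq:Acont} and \eqref{eq:acmc} with $A=\R\setminus C$, yet $\limi_{y\to x,\,y\in A}f(y)=0>-1=f(x)$ for $x\in C$ (and there $\tilde\Delta f=+\infty$, since $\h_tf\equiv0$). So the step is not merely a bookkeeping matter: you must either prove realizability of the $\limi$ within $A$ for the specific $f$ at hand, or adopt it as an explicit strengthening of \eqref{eq:acmc}.
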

\begin{proof}  For $(i)$ let $\varphi\in\tvar^+(\X)$ and recall  \eqref{eq:stabtvar} to get
\[
\int (\h_tf-f)\varphi=\int f(\h_t\varphi-\varphi)=\iint_0^tf\Delta\h_s\varphi\leq \iint_0^tg\h_s\varphi=\int\varphi\Big(\int_0^t\h_sg\,\d s\Big)\,\d\mm,
\]
so the claim   follows from the density result \eqref{eq:denstvar}.  For $(ii)$  we pick $\varphi\in \tvar^+(\X)$ and notice that
\[
\int f\Delta\varphi\,\d\mm=\lim_{t\downarrow0}\int f\frac{\h_t\varphi-\varphi}t\,\d\mm=\lim_{t\downarrow0}\int \frac{\h_tf-f}t\varphi\,\d\mm,
\]
thus  $(i)$, the weak maximum principle, the fact that $\varphi\in L^1(\X)$ and  (reverse) Fatou's lemma give 
\[
\int f\Delta\varphi\,\d\mm\leq \int \lims_{t\downarrow0} \frac{\h_tf-f}t\varphi\,\d\mm=\int \tilde\Delta f\,\varphi\,\d\mm,
\]
as desired. For $(iii)$, let $A\subset \X$ be as in \eqref{eq:Acont}. Then $(i)$ and the continuity of $\h_tf$ (recall \eqref{eq:lintylip}) and $ \int_0^t\h_sg\,\d s$ (that follows  from the representation formula \eqref{eq:reprht} and dominated convergence) ensure that $\h_t f(x)\leq f(x)+(\int_0^t\h_sg\,\d s)(x)$ for every $x\in A$. Then  \eqref{eq:acmc}, again the continuity of $\h_tf$ and $\lims_t\fint_0^t\h_s g\,\d s\leq g$ (from \eqref{eq:reprht} and upper semicontinuity) yield the  claim.
\end{proof}
We pass to study some properties of $D(\Delta_{loc})$ and notice that since $\h_t\Delta\varphi=\Delta\h_t\varphi=\partial_t\h_t\varphi$ holds for any $\varphi\in D(\Delta)\supset\tvar(\X)$, it is easy to see that formula \eqref{eq:deltatildehn} holds even for functions $f\in L^1+L^\infty(\X)$, meaning that   for any $\varphi\in\tvar(\X)$  we have $\int \tilde\h_n f\Delta\varphi\,\d\mm=-n^2\iint_0^{+\infty}\kappa'( nt)\h_tf\varphi\,\d t\,\d\mm.$ In particular, arguing as for   \eqref{eq:apprf} we see that
\begin{equation}
\label{eq:extappr}
\text{the estimate \eqref{eq:apprf} is valid even for $f\in L^\infty(\X).$}
\end{equation}
In defining $D(\Delta_{loc})$ we used $\tvar(\X)$ as class of test functions. If we have additional regularity on $f$, an equivalent approach (more convenient in deriving calculus rules) is possible:
\begin{equation}
\label{eq:equivdloc}
\begin{split}
&\text{for $f\in\Lip_\b(\X)$ and $g\in L^1+L^\infty(\X)$ we have:}\\
 &f\in D(\Delta_\loc)\text{ with }\Delta f=g \qquad\Leftrightarrow\qquad  \int\nabla f\cdot\nabla\psi\,\d\mm=-\int g\psi\,\d\mm\qquad\forall\psi\in \Lip_\bs(\X).
\end{split}
\end{equation}
The key observation to prove such result is that for $f\in\Lip_\b(\X)$ and $\varphi\in\tvar(\X)$ we have
\begin{equation}
\label{eq:intp}
\int f\Delta\varphi\,\d\mm=-\int\nabla f\cdot\nabla\varphi\,\d\mm.
\end{equation}
This can be established recalling that the integration by parts formula \cite[Chapter 4]{Gigli12} ensures that for any $\eta\in\Lip_\bs(\X)$ we have $\int \eta f\Delta\varphi\,\d\mm=\int-\eta \nabla f\cdot\nabla\varphi- f\nabla \eta\cdot\nabla\varphi\,\d\mm$, thus picking $\eta=\eta_n:=(1-\sfd(\cdot,B_n(\bar x)))^+$ for some $\bar x\in\X$ and letting $n\to\infty$ we conclude that \eqref{eq:intp} holds (here the assumption $|\d \varphi|\in L^1(\X)$ matters). The the implication $\Leftarrow$ in \eqref{eq:equivdloc} follows by a similar cut-off procedure, while for $\Rightarrow$, thanks to \eqref{eq:liptvar} it is enough to prove that $|\d\tilde\h_n\psi-\d\psi|\to 0$ in $L^1(\X)$. This will follow if we show that
\[
\begin{split}
\lims_{n\to\infty}\int\eta |\d\tilde\h_n\psi-\d\psi|\,\d\mm&=0\qquad\forall \eta\in\Lip_\bs(\X),\qquad\text{and}\qquad
\lim_{j\to\infty}\lims_{n\in\N}\int_{B_j(\bar x)^c}|\d\tilde\h_n\psi|\,\d\mm=0.
\end{split}
\]
The first of these is a consequence of $\int\eta |\d\tilde\h_n\psi-\d\psi|\,\d\mm\leq C\sqrt{\int |\d\tilde\h_n\psi-\d\psi|^2\,\d\mm}$ and the continuity of $[0,\infty)\ni t\mapsto\h_t\psi\in W^{1,2}(\X)$. For the second we recall \eqref{eq:BE} and  argue as after \eqref{eq:after}

A consequence of \eqref{eq:equivdloc} are the following calculus rules:
\begin{equation}
\label{eq:calclap}
\begin{split}
f,g\in \Lip_\b(\X)\cap D(\Delta_{loc})\qquad&\Rightarrow\qquad \Delta(fg)=f\Delta g+g\Delta f+2\nabla f\cdot\nabla g,\\
f\in \Lip_\b(\X)\cap D(\Delta_{loc})\qquad&\Rightarrow\qquad \Delta(u\circ f)=u'\circ f\Delta f+u''\circ f|\d f|^2,
\end{split}
\end{equation}
where in the second formula $u$ is a smooth function defined on a neighborhood of the image of $f$ and  it is part of the claim that  $fg,u\circ f\in D(\Delta_{loc})$. To check the first, by \eqref{eq:equivdloc} we need to prove that for any $\psi\in\Lip_\bs(\X)$ we have
\[
\begin{split}
-\int\nabla(fg)\cdot\nabla\psi\,\d\mm=\int\big(f\Delta g+g\Delta f+2\nabla f\cdot\nabla g\big)\psi\,\d\mm.
\end{split}
\]
In turn, this follows from the Leibniz rule for the gradient \cite[Equation (4.16)]{Gigli12}, the fact that $f\psi,g\psi\in\Lip_\bs(\X)$ and again \eqref{eq:equivdloc}. The formula for $\Delta(u\circ f)$ is proved  analogously.

An interesting consequence of the calculus rules \eqref{eq:calclap} is the following result about stability of upper bounds for the Laplacian under the `inf' operation:
\begin{lemma}\label{le:inflap}
Let $(\X,\sfd,\mm)$ be $\RCD(K,\infty)$, $K\in\R$, $I$ is a set of indexes, not necessarily countable, and $g\in L^1+L^\infty(\X)$. Assume that $\sup_{i\in I}\|f_i\|_{L^\infty}<\infty$, that $\bd f_i\leq g\mm$ for every $i\in I$  and let $\bar f:=\essinf_{i\in I} f_i$. Then $ \bd\bar f\leq g\mm$.

Assume also that for some $E\subset\X$ Borel with $\mm(\X\setminus E)=0$ we have that $f_i\restr E$ is upper semicontinuous for every $i\in I$.  Then the infimum $f:=\inf_i f_i$ coincides $\mm$-a.e.\ with  $\bar f$.
\end{lemma}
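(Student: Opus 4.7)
My plan is to first handle the distributional Laplacian bound by reducing to the case of a minimum of two functions via stability, and treat that case by a Kato-type chain-rule argument applied to a smooth regularization. The second assertion will follow by a Lindel\"of/separability argument. The delicate point is the chain rule for the non-smooth map $(\cdot)^+$, which I bypass via smooth convex approximation.

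For the first assertion, I note that the essential infimum of any family of measurable functions coincides $\mm$-a.e.\ with that of a countable subfamily, so without loss of generality $I=\N$. Setting $\bar f_N:=\min(f_1,\dots,f_N)$, one has $\bar f_N\searrow\bar f$ $\mm$-a.e.\ with uniform $L^\infty$ bound, so by \eqref{eq:stablub} it is enough to prove $\bd\bar f_N\le g\mm$ for every $N$; by induction the matter reduces to $N=2$. Next I regularize: by \eqref{eq:stabub2} one has $\bd\tilde\h_n f_i\le\tilde\h_n g\,\mm$, while by \eqref{eq:apprf}--\eqref{eq:extappr} each $\tilde\h_n f_i$ is Lipschitz, lies in $D(\Delta_{loc})$, and has $L^\infty$ Laplacian; via \eqref{eq:equivdloc} and the density \eqref{eq:denstvar}, the distributional inequality translates into the pointwise bound $\Delta\tilde\h_n f_i\le\tilde\h_n g$ $\mm$-a.e. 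I then write $\min(a,b)=a-(a-b)^+$ and approximate $(\cdot)^+$ by $\phi_\eps\in C^2(\R)$ nondecreasing and convex, with $\phi_\eps\to(\cdot)^+$ uniformly and $\phi_\eps'\in[0,1]$ (e.g.\ $\phi_\eps(s)=\tfrac12(s+\sqrt{s^2+\eps^2})$). Applying the calculus rule \eqref{eq:calclap} to the Lipschitz function $\tilde\h_n f_1-\tilde\h_n f_2\in D(\Delta_{loc})$, together with $\phi_\eps''\ge 0$, yields
\[
\Delta\bigl(\tilde\h_n f_1-\phi_\eps(\tilde\h_n f_1-\tilde\h_n f_2)\bigr)\le(1-\phi_\eps')\,\Delta\tilde\h_n f_1+\phi_\eps'\,\Delta\tilde\h_n f_2\le\tilde\h_n g\quad\text{$\mm$-a.e.,}
\]
the middle expression being a convex combination of upper bounds. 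Letting $\eps\downarrow0$ yields $\bd\min(\tilde\h_n f_1,\tilde\h_n f_2)\le\tilde\h_n g\,\mm$ via \eqref{eq:stablub}; letting $n\to\infty$ then concludes by direct passage to the limit in the defining inequality \eqref{eq:deflub}, using $L^1_{loc}$-convergence $\tilde\h_n f_i\to f_i$ (from \eqref{eq:l1loc}) and $\tilde\h_n g\to g$ (from \eqref{eq:l1loc} for the $L^\infty$ part of $g$ and strong $L^1$-continuity of the heat flow for its $L^1$ part).

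For the second assertion, set $f:=\inf_i f_i$. As a pointwise infimum of usc functions on $E$, $f\restr E$ is itself usc, hence Borel; since $f\le f_i$ pointwise, maximality of the essential infimum gives $f\le\bar f$ $\mm$-a.e. For the reverse, write $\{\bar f>f\}=\bigcup_{q\in\Q}\{\bar f\ge q\}\cap\{f<q\}$, so it suffices to show $\{\bar f\ge q\}\cap\{f<q\}$ is $\mm$-negligible for every rational $q$. Upper semicontinuity of each $f_i$ on $E$ makes $\{f_i<q\}\cap E$ relatively open in $E$, hence $\{f<q\}\cap E=\bigcup_{i\in I}\{f_i<q\}\cap E$ is an open subset of $E$. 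Since the separable metric space $E$ is Lindel\"of, the open cover admits a countable subcover $\{f_{i_n}<q\}\cap E$, $n\in\N$, and then
\[
\{f<q\}\cap\{\bar f\ge q\}\cap E\subset\bigcup_n\{f_{i_n}<\bar f\}\cap E.
\]
Every set on the right is $\mm$-null because $f_{i_n}\ge\bar f$ $\mm$-a.e.\ by definition of essential infimum; a countable union of null sets is null, and $\mm(\X\setminus E)=0$, concluding the proof.
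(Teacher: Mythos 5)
Your proof is correct and follows essentially the same route as the paper's: reduce to a countable family and then to two functions via the stability property \eqref{eq:stablub}, mollify with $\tilde\h_n$ to land in $\Lip_\b(\X)\cap D(\Delta_{loc})$, smooth the minimum and apply the chain rule \eqref{eq:calclap} exploiting convexity, then pass to the limit; the only differences are cosmetic (you smooth $\min(a,b)=a-(a-b)^+$ via a convex $C^2$ approximation of the positive part, where the paper uses the soft-min $-\tfrac1n\log(e^{-nf_1}+e^{-nf_2})$, and your Lindel\"of argument covers the sublevel sets $\{f_i<q\}$ rather than balls on which $f_{i_x}\le\eps+\sup_{B_\eps}f$). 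Both variants are sound, so there is nothing to fix.
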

\begin{proof}
Since the essential infimum can be realized as infimum of a suitably chosen countable subfamily, recalling the stability result \eqref{eq:stablub} it is sufficient to prove the claim for $I=\{1,2\}$. Thus let $f_1,f_2\in L^\infty(\X)$ be with $\bd f_1,\bd f_2\leq C\mm$ and  recall  \eqref{eq:l1loc} to see that $\tilde\h_nf_1\wedge \tilde\h_nf_2\to  f_1\wedge f_2$ and $\tilde\h_ng\to g$ in $L^1_{loc}(\X)$. Thus from \eqref{eq:stabub2} and \eqref{eq:stablub} (possibly after passing to a suitable $\mm$-a.e.\ converging sequences) we see that we can replace $f_i$ with $\tilde\h_nf_i$, $i=1,2$, in our claim. 

By \eqref{eq:extappr} we thus reduced to proving that for $f_1,f_2\in\Lip_\b(\X)\cap D(\Delta_{loc})$ with $\Delta f_1,\Delta f_2\leq g$ $\mm$-a.e.\ we have $\bd(f_1\wedge f_2)\leq g\mm$. Since the functions $h_n:=-\frac1n\log(e^{-nf_{1}}+e^{-nf_{2}})$ are uniformly bounded and pointwise converge to $f_1\wedge f_2$, by \eqref{eq:stablub} again it is sufficient to show that $\Delta h_n\leq g$ $\mm$-a.e.\ for every $n\in\N$. This follows  by direct computation,  as  \eqref{eq:calclap} gives
\[
\begin{split}
\Delta h_n&=-n\underbrace{\Big(\frac{e^{-nf_1}|\d f_1|^2+e^{-nf_2}|\d f_2|^2}{e^{-nf_1}+e^{-nf_2}}-\frac{|e^{-nf_1}\d f_1+e^{-nf_2}\d f_2|^2}{|e^{-nf_1}+e^{-nf_2}|^2}\Big)}_{\geq 0\quad\mm-a.e.}+\frac{e^{-nf_1}\Delta f_1+e^{-nf_2}\Delta f_2}{e^{-nf_1}+e^{-nf_2}},
\end{split}
\]
so that $\Delta f_1,\Delta f_2\leq g$ implies $\Delta h_n\leq g$ as well, as desired.

For the second claim notice that the inequality $f\leq \bar f$ is trivial. For the other start defining, for $\eps>0$, the function $f_\eps(x):=\eps+\sup_{B_\eps(x)}f$ and notice that since $f\restr E$ is upper semicontinuous (as infimum of upper semicontinuous functions) we have that $f_\eps\downarrow f$ as $\eps\downarrow0$ on $E$.

Now fix $\eps>0$ and for every $x\in E$ use the upper semicontinuity of the $f_i$'s to find $i_x\in I$ and $r_x\in(0,\eps)$ such that $f_{i_x}(y)< f(x)+\eps$ for every $y\in B_{r_x}(x)\cap E$. Thus we have $f_{i_x}\leq f_\eps$ on $ B_{r_x}(x)$. By the Lindelof property of $E$ (that is a separable, though not complete in general, metric space when equipped with the restricted distance -  see \cite[Chapter I, Thm. 15]{Kelley75}) there is a countable collection $(x_n)\subset E$ such that $\cup_nB_{r_{x_n}}(x_n)\supset E$. Since $\bar f\leq f_{i_{x_n}}\leq f_\eps$ $\mm$-a.e.\ on $B_{r_{x_n}}(x_n)$ for every $n\in\N$, we deduce that $\bar f\leq f_\eps$ $\mm$-a.e.. The conclusion follows from the arbitrariness of $\eps>0$. 
\end{proof}
\subsection{The finite dimensional case} Here we  study Laplacian bounds for functions defined on some open subset $U\subset\X$ of $\X$. In this case we certainly cannot use  $\tvar(\X)$ as space of test functions, we therefore introduce the space $\tvar(U)\subset \tvar(\X)$ made of those test functions $\varphi$ with $\supp(\varphi)\subset\subset  U$ (here and below  we write $A\subset\subset B$ to say that the closure of $A$ is compact and contained in $B$). Notice that in general $\RCD(K,\infty)$ spaces it is unclear whether $\tvar(U)$ contains anything beside the 0 function. On the other hand, an approximation argument based on the mollified heat flow $\tilde\h_n$, the Bakry-\'Emery estimate \eqref{eq:BE} and post-composition shows that 
\begin{equation}
\label{eq:tcut}
\begin{split}
&\text{suppose that $\X$ is proper and let  $K\subset U\subset\X$ be with $K$ compact and $U$ open,}\\
&\text{then there is $\varphi\in \tvar(\X)$ with values in $[0,1]$, support in $U$ identically 1 on $K$,}
\end{split}
\end{equation}
see e.g.\ the arguments in \cite[Lemma 6.7]{AmbrosioMondinoSavare13-2}.
With this said, we can give the following definition:

\begin{definition}[Local Laplacian and Laplacian bounds]\label{def:lapbU} Let $(\X,\sfd,\mm)$ be a proper $\RCD(K,\infty)$ space and $U\subset\X$ open.

The space $D(\Delta_{loc},U)$ is the collection of functions  $f\in L^1+L^\infty(U)$ for which there is $g\in L^1+ L^\infty(U)$ such that
\begin{equation}
\label{eq:defdeltalocU}
\int  f\Delta \varphi\,\d\mm=\int g\,\varphi\,\d\mm\qquad\forall\varphi\in \tvar(U).
\end{equation}
In this case the function $g$ (that is clearly unique by \eqref{eq:denstvar}, \eqref{eq:tcut}) will be denoted $\Delta f$.

For $f,g\in L^1+L^\infty(U)$ we   say that the Laplacian of $f$ is bounded above by $g$ on $U$, and write $\bd f\restr U\leq g\mm$ or $\bd f\leq g\mm$ on $U$,  provided
\begin{equation}
\label{eq:deflubU}
\int f\,\Delta\varphi\,\d\mm\leq \int \varphi\,g\,\d\mm,\qquad\forall \varphi\in \tvar^+(U).
\end{equation}
Finally, for $f:U\to\R$ bounded and Borel we define $\tilde\Delta f:U\to\R\cup\{\pm\infty\}$ as
\begin{equation}
\label{eq:deftdU}
\tilde\Delta f(x):=\lims_{t\downarrow0}\frac{\h_tf(x)-f(x)}{t},
\end{equation}
where in defining $\h_tf$ we are extending $f$ outside $U$  by setting it to 0.
\end{definition} 
Let us collect some comments. We start noticing that in defining $\tilde\Delta f(x)$ for $f$ defined only on $U$, we chose to extend $f$ by 0 outside $U$, but in fact the constant value chosen to extend it is not really relevant. This is a consequence of the limiting property $\lims_{t\downarrow0}t^{-1}\h_t\delta_x(B_r^c(x))=0$ valid for any $x\in\X$ and $r>0$. Such limiting property is in turn a direct consequence of the Large Deviations  upper bound for the heat kernel 
\begin{equation}
\label{eq:LDP}
\lims_{t\downarrow0}t\log\big(\h_t\delta_x(B_r^c(x))\big)\leq-\frac{r^2}{4},
\end{equation}
established in the recent \cite{GTT22}.  Also, a direct consequence of the definitions is that
\begin{equation}
\label{eq:stablubloc}
\text{the stability property \eqref{eq:stablub} holds even for local Laplacian bounds}
\end{equation}
and that Laplacian bounds are local, as expected, in the sense that
\begin{equation}
\label{eq:localappr}
\left.
\begin{array}{l}
U_i\subset\X,\quad\text{open},\\
\bd f\restr{U_i}\leq g\mm,\quad\forall i\in I,
\end{array}
\right\}\qquad\qquad\Rightarrow\qquad\qquad \bd f\restr{U}\leq g\mm\quad \text{where}\quad U:=\cup_iU_i.
\end{equation}
To see this latter property pick $\varphi\in\tvar^+(U)$ and use \eqref{eq:tcut} to find a partition of  unit $(\eta_i)$ of $\supp(\varphi)$ subordinate to the cover made by (a finite subcover of) the $U_i$'s and made of functions in $\tvar^+(\cup_iU_i)$. Since clearly $\Delta \varphi=\sum_i\Delta(\eta_i\varphi)$ and $\eta_i\varphi\in \tvar^+(U_i)$, the claim follows.

Another trivial observation is that if $f\in W^{1,2}_{loc}(U)$ (see e.g.\ \cite[Section 2.5]{Bjorn-Bjorn11} for the definition -  alternatively, see  \cite[Section 5.1]{GT20} for a presentation oriented towards the results presented here), then the integration by parts
\[
\int f\Delta\varphi\,\d\mm=-\int\nabla f\cdot\nabla\varphi\,\d\mm\qquad\forall\varphi\in\tvar(U)
\]
is justified by the very definition of $\Delta$ and the fact that $\varphi$ has compact support. Hence arguing as for \eqref{eq:equivdloc} it is easy to see that 
\begin{equation}
\label{eq:considerazioni}
\begin{split}
\bd f\restr U\leq g\mm\ \Leftrightarrow\  -\int\nabla  f\cdot\nabla \psi\,\d\mm&\leq \int g\,\psi\,\d\mm\quad\forall\psi\in \Lip_\bs(\X)\text{ s.t.  }\psi\geq 0,\,\supp(\psi)\subset U,
\end{split}
\end{equation}
and similarly for \eqref{eq:defdeltalocU}. Indeed the fact that functions in $\tvar(U)$ are in $\Lip_\bs(\X)$ and with support in $U$ provides one implication. For the other we mollify a given $\psi\in\Lip_\bs(\X)$ with support in $U$ by considering $\eta\tilde\h_n\psi$, where $\eta$ is as in \eqref{eq:tcut} with $K:=\supp(\psi)$.

In what follows we will need to extend a function defined on some open set with a local  bound on the Laplacian to a function defined on the whole space having a global bound on the Laplacian. This will be done via the following lemma. Here the assumption $f\in W^{1,2}_{loc}(U)$ is often redundant (by a Cacciopoli-type inequality) but for simplicity we keep it.
\begin{lemma}[Extension lemma]\label{le:ext}
Let $(\X,\sfd,\mm)$ be a proper $\RCD(K,\infty)$ space, $K\in\R$,  $U\subset\X$  open, $f\in W^{1,2}_{loc}(U)$ be with $\bd f\restr U\leq g\mm$ for some $g\in L^1(\X)$, $g\geq 0$, and so that  for some $c\in\R$ the set $V:=\{f\leq c\}$ is relatively compact in $U$.

Then $\bd(f\wedge c)\leq g\mm$ as well,  where the function $f\wedge c$ is intended to be equal to $c$ on $\X\setminus U$. 
\end{lemma}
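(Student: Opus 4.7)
Note that $f\wedge c\in W^{1,2}_{loc}(\X)$ with the stipulated extension, and $\nabla(f\wedge c)$ vanishes $\mm$-a.e.\ outside $V$ since $f\wedge c$ equals the constant $c$ there; in particular its support is contained in $\overline V\subset\subset U$. The plan is to first establish the local analog $\bd(f\wedge c)\restr U\leq g\mm$ on $U$ and then globalize via a cutoff argument exploiting this compact support.

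For the local bound, I would mimic the proof of Lemma \ref{le:inflap} directly, skipping the $\tilde\h_n$-regularization (which is not local). Set $u_n(t):=-\tfrac1n\log(e^{-nt}+e^{-nc})$, a smooth concave function with $u_n'\in(0,1]$, $u_n''\leq 0$, converging uniformly from below to $\min(\cdot,c)$ with rate $\log 2/n$. By the chain rule for Sobolev functions, $h_n:=u_n\circ f\in W^{1,2}_{loc}(U)$ with $\nabla h_n=u_n'(f)\nabla f$. For $\psi\in\Lip_\bs(\X)$ with $\psi\geq 0$ and $\supp(\psi)\subset\subset U$, the product $u_n'(f)\psi$ is bounded, non-negative and compactly supported in $U$, hence a legitimate test function for the local distributional bound on $\bd f$ via a $W^{1,2}$-density extension of \eqref{eq:considerazioni}. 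Expanding $\nabla(u_n'(f)\psi)$ by Leibniz and using the sign $-u_n''(f)\psi|\nabla f|^2\geq 0$ gives
\[
-\int\nabla h_n\cdot\nabla\psi\,\d\mm\leq\int g\,u_n'(f)\,\psi\,\d\mm\leq\int g\psi\,\d\mm,
\]
since $g,\psi\geq 0$ and $u_n'\leq 1$. Passing $n\to\infty$ by dominated convergence ($u_n'(f)\nabla f\to\chi_{\{f\leq c\}}\nabla f=\nabla(f\wedge c)$ in $L^2_{loc}$, invoking locality of the differential on $\{f=c\}$) yields $\bd(f\wedge c)\restr U\leq g\mm$.

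To globalize, use \eqref{eq:tcut} (properness of $\X$) to pick $\eta\in\tvar(\X)$ with values in $[0,1]$, identically $1$ on an open neighborhood of $\overline V$, and with $\supp(\eta)\subset\subset U$. Since $\nabla(f\wedge c)$ is supported in $\overline V\subset\{\eta\equiv 1\}$, where $\nabla\eta=0$, for any $\psi\in\Lip_\bs(\X)$ with $\psi\geq 0$ the Leibniz rule gives $\nabla(f\wedge c)\cdot\nabla(\eta\psi)=\nabla(f\wedge c)\cdot\nabla\psi$ $\mm$-a.e. Since $\eta\psi\in\Lip_\bs(\X)$ is non-negative with support in $\supp(\eta)\subset\subset U$, the local bound just proved applies and yields
\[
-\int\nabla(f\wedge c)\cdot\nabla\psi\,\d\mm=-\int\nabla(f\wedge c)\cdot\nabla(\eta\psi)\,\d\mm\leq\int g\,\eta\psi\,\d\mm\leq\int g\psi\,\d\mm,
\]
and the global form ($U=\X$) of \eqref{eq:considerazioni} concludes $\bd(f\wedge c)\leq g\mm$.

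The main obstacle is justifying that $u_n'(f)\psi$ is admissible as a test in \eqref{eq:considerazioni}, given that $u_n'(f)$ is only in $W^{1,2}_{loc}$ (not Lipschitz) when $f$ is; this requires a density argument extending \eqref{eq:considerazioni} from $\Lip_\bs^+$ to the class of bounded, non-negative, compactly supported $W^{1,2}$-functions, done via truncation and mollification. Once this point is settled, both the local min inequality and the globalization via cutoff are routine.
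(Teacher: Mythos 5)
Your proof is correct, but it reorganizes the argument compared to the paper. The paper first multiplies $f$ by a cut-off $\eta\in\tvar(\X)$ equal to $1$ near $\bar V$, so that the Leibniz rule yields a \emph{global} bound $\bd(\eta f)\leq g'\mm$ with the degraded function $g'=g\eta+f\Delta\eta+2\nabla f\cdot\nabla\eta$; it then applies the already-established global Lemma \ref{le:inflap} to the pair $\{\eta f, c\}$ (whose minimum coincides with $f\wedge c$), and finally uses the locality property \eqref{eq:localappr} twice to restore the bound $g$ on the interior of $\{\eta=1\}$ and the trivial bound on $\X\setminus\bar V$. You instead redo the core computation of Lemma \ref{le:inflap} directly in the local setting --- testing \eqref{eq:considerazioni} against $u_n'(f)\psi$ and exploiting $g\geq 0$ and $0<u_n'\leq 1$ so that the constant never degrades --- and only afterwards globalize, using that $\nabla(f\wedge c)$ is supported in $\bar V\subset\{\eta\equiv1\}$. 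Your route avoids the auxiliary $g'$ and the final locality patch-up; its cost is exactly the point you flag yourself: $u_n'(f)\psi$ is only a bounded, non-negative, compactly supported $W^{1,2}$ function, so \eqref{eq:considerazioni} must first be extended to this class of test functions (strong $W^{1,2}$-approximation by Lipschitz functions, truncation to preserve sign and boundedness, multiplication by a cut-off to keep the support in $U$). This is standard on infinitesimally Hilbertian spaces but does take a few lines; the paper's route sidesteps it because Lemma \ref{le:inflap} regularizes with $\tilde\h_n$, a global operation --- which is precisely why the cut-off there must be applied before, rather than after, taking the minimum. You also rightly refrain from invoking Lemma \ref{le:inflap2}, whose proof relies on the present lemma and on finite dimensionality; your direct local argument, like the paper's, stays within the $\RCD(K,\infty)$ proper setting.
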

\begin{proof} Replacing $f$ with $f-c$ we can assume that $c=0$. Then let $\eta$ be as in \eqref{eq:tcut} with $K\subset\subset U$ being a compact neighbourhood of $\bar V$ and let $\tilde f:=\eta f$. Then starting from  \eqref{eq:considerazioni} and arguing as for \eqref{eq:calclap} taking into account that  $\supp(\eta)\subset\subset U$  it is easy to prove that $ \bd(\eta f)\leq g'\mm$ on $U$ for $g':=g\eta+f\Delta \eta+2\nabla f\cdot\nabla\eta$.  Then letting $\eta f$ be 0 outside $U$ and using the locality property \eqref{eq:localappr} with $U_1:=U$ and $U_2=\X\setminus K$ we see that  $ \bd(\eta f)\leq g'\mm$  holds on $\X$ and thus  from Lemma \ref{le:inflap} that $\bd(f^+)\leq (g')^+\mm$. To replace $(g')^+$ with $g^+=g$  we use again \eqref{eq:localappr}, this time with   $U_1:=\{\text{interior of }\{\eta=1\}\}$ and $U_2:=\X\setminus \bar V$.
\end{proof}
The equivalence \eqref{eq:considerazioni} shows that the notion of Laplacian bound given in Definition \ref{def:lapbU} is compatible with the analogue given in \cite{Gigli12}.  In particular,  we get (see \cite[Theorem 5.14]{Gigli12}):
\begin{theorem}[Laplacian comparison for the squared distance]\label{thm:lapbd}
Let $(\X,\sfd,\mm)$ be an $\RCD(K,N)$ space with $K\in \R$ and $N<\infty$. Let $R>0$  and $\varphi:\X\to\R$  of the form $\varphi(x)=\inf_{y\in\X}\psi(y)+\frac{\sfd^2(x,y)}{2}$ for some $\psi:\X\to\bar \R$, so that  for any $x\in\X$ there is a minimizer $y$ for $\varphi(x)$ in  $B_R(x)$. Then
\[
\bd\varphi\leq C(K^-R^2,N)\mm.
\]
In particular $\bd\tfrac12\sfd^2(\cdot,\bar x)\leq C(K^-R^2,N)\mm$ on $B_R(\bar x)$ for every $ \bar x\in\X$.
\end{theorem}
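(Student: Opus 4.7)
The plan is to write $\varphi=\inf_{y\in\X}h_y$ with $h_y(x):=\psi(y)+\frac{\sfd^2(x,y)}{2}$ and combine (i) the classical Laplacian comparison for the squared distance, applied to each $h_y$, with (ii) the infimum-stability result of Lemma \ref{le:inflap}, after suitable localization and truncation.

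First I would establish the ``in particular'' statement $\bd\tfrac12\sfd^2(\cdot,\bar x)\leq C(K^-R^2,N)\mm$ on $B_R(\bar x)$, which is the classical Laplacian comparison for $\RCD(K,N)$ spaces proved in \cite[Theorem 5.14]{Gigli12} via optimal-transport / Jacobi-field analysis along $W_2$-geodesics; the constant $C(K^-R^2,N)$ comes out as an explicit function of the $\CD^*(K,N)$ distortion coefficients $\tau^{(\cdot)}_{K,N}$ evaluated at distances $\leq R$. Since $\psi(y)$ is a constant in $x$, this immediately gives $\bd h_y\leq C(K^-R^2,N)\mm$ on $B_R(y)$ for every $y\in\X$ with $\psi(y)<\infty$, and the hypothesis that the minimizer $y(x)$ of $\varphi(x)$ lies in $B_R(x)$ is precisely the condition ensuring that $x\in B_R(y(x))$, i.e.\ that the Laplacian bound on $h_{y(x)}$ is active at $x$.

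For the deduction of the general case, by the locality property \eqref{eq:localappr} it suffices to prove the bound on any small ball $V=B_r(x_0)$. Properness of $\X$ (from $N<\infty$) and the hypothesis confine the relevant $y$'s to the compact set $B':=\overline{B_{R+r}(x_0)}$, and simple upper and lower bounds on $\varphi$ on $V$ combined with the identity $\varphi(x)=\psi(y(x))+\sfd^2(x,y(x))/2$ show $\psi(y(x))\in[L',M]$ for finite $L',M$; restricting to $B'':=B'\cap\{\psi\in[L',M]\}$, we still have $\varphi=\inf_{y\in B''}h_y$ on $V$. For each such $y$, the classical comparison with the slightly larger radius $R+2r$ gives $\bd h_y\leq C(K^-(R+2r)^2,N)\mm$ on a neighborhood of $V$, and via the extension Lemma \ref{le:ext} each $h_y$ is turned into a globally defined, uniformly $L^\infty$-bounded $\tilde h_y$ with the same global Laplacian bound. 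Lemma \ref{le:inflap} then yields $\bd(\essinf_{y\in B''}\tilde h_y)\leq C(K^-(R+2r)^2,N)\mm$; continuity of each $\tilde h_y$ (plus the second clause of Lemma \ref{le:inflap}) identifies this essential infimum with the pointwise one, which coincides with $\varphi$ on $V$ by the choice of the truncation height. Patching via \eqref{eq:localappr} produces a global bound $\bd\varphi\leq C(K^-(R+2r)^2,N)\mm$, and letting $r\downarrow 0$ along a sequence via the stability property \eqref{eq:stablub} recovers the sharp constant $C(K^-R^2,N)$ (at least for $K\geq 0$, where the constant does not degrade under the enlargement of the radius).

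The main obstacle is that the strict inequality $\sfd(x,y(x))<R$ does not provide uniform slack over a ball $V$, so the truncation in Lemma \ref{le:ext} must accommodate an enlarged radius; when $K<0$ this enlargement degrades the constant, and recovering the sharp $C(K^-R^2,N)$ in that regime requires the full optimal-transport argument of \cite[Theorem 5.14]{Gigli12}, based on the $\CD^*(K,N)$ density-contraction estimate along the $W_2$-geodesic $\mu_0\to T_*\mu_0$ (with $T(x):=y(x)$ the optimal map, well-defined $\mm$-a.e.\ by \cite{RajalaSturm12}) paired by duality against $\phi\in\tvar^+(\X)$; the distortion coefficients $\tau^{(\cdot)}_{K,N}$ evaluated at the transport distance $<R$ then produce exactly $C(K^-R^2,N)$.
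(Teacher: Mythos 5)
Your argument is correct in substance, but it takes a genuinely different route from the paper, which in fact offers no proof at all: Theorem \ref{thm:lapbd} is presented there as a direct import of \cite[Theorem 5.14]{Gigli12} --- a result that already covers the general inf-convolution form $\varphi=\inf_y\big(\psi(y)+\tfrac12\sfd^2(\cdot,y)\big)$, not merely $\tfrac12\sfd^2(\cdot,\bar x)$ --- combined with the observation \eqref{eq:considerazioni} that the notion of Laplacian bound used here is compatible with the one of \cite{Gigli12}. You instead import only the single-point comparison and bootstrap to the general case by truncating each $h_y:=\psi(y)+\tfrac12\sfd^2(\cdot,y)$ via Lemma \ref{le:ext}, taking the essential infimum via Lemma \ref{le:inflap}, identifying it with the pointwise infimum by continuity, and patching with \eqref{eq:localappr}. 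This is a legitimate, self-contained alternative that uses exactly the mechanism the paper deploys elsewhere (compare Lemma \ref{le:inflap2} and the perturbation Lemma \ref{le:partle}), at the cost of some bookkeeping you should make explicit: the lower bound $\psi\geq \varphi(x_0)-\tfrac12(R+r)^2$ on $B_{R+r}(x_0)$ (coming from $\varphi(x_0)>-\infty$) that confines the relevant $y$'s and makes the truncated functions uniformly bounded; the choice of truncation level $c\geq\sup_{V}\varphi$ guaranteeing $\inf_y(h_y\wedge c)=\varphi$ on $V$ together with the inclusion $\{h_y\leq c\}\subset\subset B_{R+C r}(y)$ needed for Lemma \ref{le:ext}; and the replacement of the constant $g$ by $C\nchi_B$ for a large ball $B$, since Lemma \ref{le:ext} as stated requires $g\in L^1(\X)$. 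Finally, your worry about recovering the ``sharp'' constant when $K<0$ is moot: $C(K^-R^2,N)$ only denotes \emph{some} constant depending on $K^-R^2$ and $N$, so proving the bound with $C(K^-(R+2r_0)^2,N)$ for a fixed $r_0\leq R$ already yields a constant of the admissible form, and neither the limit $r\downarrow0$ nor a rerun of the optimal-transport argument is needed.
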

These bounds and the extension Lemma \ref{le:ext} allow to deduce appropriate `local' versions of `global' results previously obtained. An example is the following  local variant of Lemma \ref{le:inflap}:
\begin{lemma}\label{le:inflap2}
Let $(\X,\sfd,\mm)$ be $\RCD(K,N)$, $K\in\R$, $N<\infty$, $U\subset\X$ open, $I$ is a set of indexes, not necessarily countable, and $g\in L^1+L^\infty(\X)$. Assume that $\sup_{i\in I}\|f_i\|_{L^\infty(U)}<\infty$, that $f_i\in W^{1,2}_{loc}(U)$ with  $\bd f_i\restr U\leq g\mm$ for every $i\in I$ and let $\bar f:=\essinf_{i\in I} f_i$. Then $ \bd\bar f\leq g\mm$.

Assume also that for some $E\subset U$ Borel with $\mm(U\setminus E)=0$ we have that $f_i\restr U$ is upper  semicontinuous for every $i\in I$.  Then the infimum $f:=\inf_i f_i$ coincides $\mm$-a.e.\ with the $\mm$-essential infimum $\bar f$.
\end{lemma}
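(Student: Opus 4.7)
\emph{Plan.} The plan is to adapt the proof of Lemma \ref{le:inflap}, the only new ingredient being a compactly supported cutoff that converts the local Laplacian bound on $U$ into a global bound on $\X$, so that Lemma \ref{le:inflap} can be applied directly.

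\textbf{Reductions.} Exactly as in Lemma \ref{le:inflap}, the essential infimum $\bar f$ agrees $\mm$-a.e.\ with the essential infimum over a countable subfamily, and then the local stability \eqref{eq:stablubloc} applied to the uniformly bounded decreasing sequence of finite minima reduces the first claim by induction to the case $|I|=2$: given $f_1,f_2\in W^{1,2}_{\loc}(U)\cap L^\infty(U)$ with $\bd f_j\restr U\leq g\mm$, $j=1,2$, and $M:=\max_j\|f_j\|_{L^\infty(U)}$, show that $\bd(f_1\wedge f_2)\restr U\leq g\mm$.

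\textbf{Cutoff and globalization.} Fix $\varphi\in\tvar^+(U)$ and let $K:=\supp(\varphi)\subset\subset U$. By \eqref{eq:tcut} pick an open set $V$ with $K\subset V$, $\bar V\subset\subset U$, and $\eta\in\tvar(\X)$ with $\eta\equiv 1$ on $\bar V$ and $\supp(\eta)\subset\subset U$. Choose $L>M+1$ and set
\[
\tilde f_j:=\eta(f_j-L)+L\qquad\text{on }\X,
\]
so that $\tilde f_j\equiv L$ outside $\supp(\eta)$ and $\tilde f_j\in L^\infty(\X)$. A direct distributional Leibniz computation, based on \eqref{eq:considerazioni} and the compactness of $\supp(\eta)$ in $U$, shows that $\bd\tilde f_j\leq\tilde g_j\mm$ globally with
\[
\tilde g_j:=\eta g+(f_j-L)\Delta\eta+2\nabla\eta\cdot\nabla f_j\ \in\ L^1+L^\infty(\X).
\]
Since $\eta\equiv 1$ on $V$, one has $\nabla\eta=0$ and $\Delta\eta=0$ $\mm$-a.e.\ on $V$, so in particular $\tilde f_j=f_j$ and $\tilde g_j=g$ $\mm$-a.e.\ on $V$.

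\textbf{Conclusion and second claim.} Applying Lemma \ref{le:inflap} to the bounded, globally defined pair $\tilde f_1,\tilde f_2$ gives $\bd(\tilde f_1\wedge\tilde f_2)\leq(\tilde g_1\vee\tilde g_2)\mm$ on $\X$. Because $\Delta\varphi$ is distributionally supported in $\supp(\varphi)=K\subset V$ and on $V$ both $\tilde f_1\wedge\tilde f_2=f_1\wedge f_2$ and $\tilde g_1\vee\tilde g_2=g$, we get
\[
\int(f_1\wedge f_2)\Delta\varphi\,\d\mm=\int(\tilde f_1\wedge\tilde f_2)\Delta\varphi\,\d\mm\leq\int(\tilde g_1\vee\tilde g_2)\varphi\,\d\mm=\int g\varphi\,\d\mm,
\]
completing the first claim. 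The second claim then follows verbatim from the proof in Lemma \ref{le:inflap}, the finite dimensionality of $\X$ entering only through its properness (hence the Lindelöf property of $E$). The main technical point is the integration-by-parts identification of $\tilde g_j$ above, which requires a careful handling of the compactly supported cutoff $\eta$ against $f_j\in W^{1,2}_{\loc}(U)$, as we do not have the regularity to apply the chain/product rules \eqref{eq:calclap} directly.
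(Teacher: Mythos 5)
Your proof is correct, and it shares the overall architecture of the paper's argument: both routes funnel the local statement into the global Lemma \ref{le:inflap} after extending the $f_i$'s to functions on all of $\X$ carrying a global Laplacian bound, and then recover the local conclusion from the fact that the modification is invisible on a compactly contained subset. The difference lies in the extension device. The paper perturbs additively, setting $\tilde f_i:=f_i+C\sfd^2(\cdot,V)$ for $V\subset\subset U$ and truncating at a level $c$ via the extension Lemma \ref{le:ext}; by the comparison Theorem \ref{thm:lapbd} the extra Laplacian contribution is a constant $C'$ \emph{independent of $i$}, so the resulting global bound $\bd(\tilde f_i\wedge c)\leq(\nchi_Vg+\nchi_{V^c}(g+C'))\mm$ is uniform over $I$ and Lemma \ref{le:inflap} can be applied to the whole, possibly uncountable, family in one shot. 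Your multiplicative cutoff instead produces error terms $(f_j-L)\Delta\eta+2\nabla\eta\cdot\nabla f_j$ that depend on $j$ and are not uniformly dominated over $I$, which is precisely why you must first pass to pairs; that reduction (countable subfamily realizing the essential infimum, finite minima, stability \eqref{eq:stablubloc}) is sound and is the same one used inside the proof of Lemma \ref{le:inflap} itself, and the lattice property keeps the finite minima in $W^{1,2}_{\loc}(U)$ so the induction closes. The Leibniz-type identification of $\tilde g_j$ that you flag as the main technical point is exactly the computation the paper performs in the proof of Lemma \ref{le:ext} (``arguing as for \eqref{eq:calclap}'' via \eqref{eq:considerazioni}), so it is available at the stated regularity; note only that you should invoke the locality of gradient and Laplacian to see that $\tilde g_j$ vanishes a.e.\ outside $\supp(\eta)$ before gluing with the constant extension via \eqref{eq:localappr}. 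What the paper's route buys is the one-shot treatment of arbitrary $I$ and the reuse of Lemma \ref{le:ext}; what yours buys is that it bypasses the Laplacian comparison Theorem \ref{thm:lapbd} entirely, finite dimensionality entering only through the properness needed for the cutoffs \eqref{eq:tcut}.
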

\begin{proof}
The second statement follows as in Lemma \ref{le:inflap}. For the first we notice that by the locality property \eqref{eq:localappr}  we can assume that $U$ is bounded. Then fix $V\subset\subset U$ open and notice that for $C>0$ sufficiently big we have: for the functions $\tilde f_i:=f_i+C\sfd^2(\cdot,V)$ there is $c>0$ independent on $i$ such that $V\subset \{\tilde f_i\leq c\}\subset\subset U$. Then Theorem  \ref{thm:lapbd}, Lemma  \ref{le:ext}  and  the locality of the Laplacian ensure that $\bd (\tilde f_i\wedge c)\leq (\nchi_Vg+(\nchi_{V^c}(g+C'))\mm$ for some $C'>0$.  Hence from Lemma  \ref{le:inflap} we deduce  that $\bd(\essinf (\tilde f_i\wedge c))\leq (\nchi_Vg+(\nchi_{V^c}(g+C'))\mm$ as well and since $\tilde f_i\wedge c=f_i$ on $V$ and $V\subset\subset U$  was arbitrary, the conclusion follows.
\end{proof}
$\RCD(K,N)$ spaces, $N<\infty$, also admit \emph{Gaussian estimates for the heat kernel} 
\begin{equation}
\label{eq:gauss}
\rho_t[x](y)\leq \frac{C(K,N)}{\mm(B_{\sqrt t}(y))}\exp\Big({-\frac{\sfd^2(x,y)}{5t}+C(K,N)t}\Big),\\
\end{equation}
for every $x,y\in\X$ where $\rho_t[x]:=\frac{\d\h_t\delta_x}{\d\mm}$ (see \cite{Sturm96II} - these follow from the fact that these are doubling spaces supporting a Poincar\'e inequality \cite{Sturm06II}, \cite{Rajala12}) and \emph{growth estimates for the volume of balls} 
\begin{equation}
\label{eq:expgr}
\mm(B_R(x))\leq C(K,N)\mm(B_1(x))e^{C(K,N)R}\qquad\forall x\in\X,\ R>1
\end{equation}
(from Bishop-Gromov-Sturm inequality, see  \cite{Sturm06II}). Combining these it is possible to prove that:
\begin{equation}
\label{eq:lebheat}
\text{for $g:\X\to\R$  integrable we have $\h_tg(x)\to g(x)$ for any Lebesgue point $x$ of $g$,}
\end{equation}
see \cite[Lemma 2.40]{MS21}. This  and  Lemma \ref{le:ext}  give the following variant of Lemma \ref{le:lapcomb}:
\begin{lemma}\label{le:lapcomb2}
Let $(\X,\sfd,\mm)$ be an $\RCD(K,N)$ space with $K\in\R$, $N<\infty$, $U\subset\X$ open bounded and $f:\X\to\R$ be Borel, bounded, in $W^{1,2}(U)$ and so that $\bd f\restr U\leq  g\mm$ for some Borel integrable function $g:U\to\R$. Assume also that $A\subset\X$ is Borel with $\mm(U\setminus A)=0$ so that $f\restr A$ is continuous. 

Then for any  $x\in A$  Lebesgue point of $g$ we have
\begin{equation}
\label{eq:lebtd}
\tilde\Delta f(x)\leq g(x).
\end{equation}
Conversely, if $g$ is bounded we have $\bd f\leq \tilde\Delta f\mm$ on $U$. Finally, if $f_1,f_2\in \acm(\X)\cap  W^{1,2}(\X)$ are so that $\bd f_i\restr U\leq g_i\mm$ for some $g_i\in L^\infty(U)$, then  $\bd(f_1\wedge f_2)\restr U\leq (\nchi_{\{f_1\leq f_2\}}g_1+\nchi_{\{f_2<f_1\}}g_2)\mm$.
\end{lemma}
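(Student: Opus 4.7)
The plan is to localize the global Lemma \ref{le:lapcomb} to $U$ via a cutoff-plus-decay argument. Fix nested neighborhoods $V\subset\subset W\subset\subset U$ (of the relevant point or of $\supp\varphi$) and $\eta\in\tvar^+(U)$ with $\eta\equiv 1$ on $W$ and $\supp\eta\subset\subset U$, and set $\tilde f:=\eta f$ (extended by $0$). The Leibniz identity $\Delta(\eta f)=\eta\Delta f+f\Delta\eta+2\nabla\eta\cdot\nabla f$ combined with $\bd f\restr U\leq g\mm$ gives the global bound $\bd\tilde f\leq\tilde g\mm$ with $\tilde g=\eta g+f\Delta\eta+2\nabla\eta\cdot\nabla f$; note that $\tilde g\equiv g$ on $W$ and has compact support in $\supp\eta$, but $\tilde g$ is only in $L^1(\X)$ in general since $\nabla f\cdot\nabla\eta$ need not be bounded. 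Lemma \ref{le:lapcomb}(i) (whose proof is valid with $\tilde g\in L^1+L^\infty$) then produces $\h_t\tilde f-\tilde f\leq\int_0^t\h_s\tilde g\,\d s$ $\mm$-a.e.

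For \emph{Part (1)} the previous inequality upgrades to pointwise at $x\in A$, since both sides are continuous in $x$ (continuity of $f\restr A$ and \eqref{eq:lintylip} for the LHS; continuity of $\h_s\tilde g$ for $s>0$ for the RHS). Dividing by $t$ and taking $\lims_{t\downarrow 0}$, the Large Deviations bound \eqref{eq:LDP} (applied with the separation $\sfd(x,W^c)>0$) shows that $\tilde\Delta f(x)=\tilde\Delta\tilde f(x)\leq\lims_t t^{-1}\int_0^t\h_s\tilde g(x)\,\d s$. Split $\tilde g=g\nchi_W+(\tilde g-g\nchi_W)$: the first summand contributes $g(x)$ by the Lebesgue-point property \eqref{eq:lebheat}, and the second is supported at distance $\geq\sfd(x,W^c)$ from $x$, so by the Gaussian bound \eqref{eq:gauss} its averaged heat-flow contribution at $x$ is $O(t^{-N/2}e^{-c/t})=o(1)$.

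For \emph{Part (2)}, given $\varphi\in\tvar^+(U)$ I write $\int f\Delta\varphi\,\d\mm=\lim_t\int(\h_tf-f)/t\cdot\varphi\,\d\mm$ and apply reverse Fatou. To justify this I need a uniform-in-$t$ upper bound on $(\h_tf-f)/t$ on $\supp\varphi$. Take $\supp\varphi\subset V_1\subset\subset V_2\subset\subset U$ with $\eta\equiv 1$ on $V_2$: the part of $\int_0^t\h_s\tilde g(x)\,\d s$ coming from $g\nchi_{V_2}$ is $\leq t\|g\|_\infty$ by the weak maximum principle, while the part from $\tilde g-g\nchi_{V_2}$, supported in $V_2^c$ at distance $\geq\sfd(V_1,V_2^c)>0$ from $V_1$, is $o(t)$ by \eqref{eq:gauss}. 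Combining this with \eqref{eq:LDP} to absorb $\h_t\tilde f-\h_tf=o(t)$ on $V_1$ yields $(\h_tf-f)(x)/t\leq\|g\|_\infty+o(1)$ uniformly on $\supp\varphi$, and reverse Fatou delivers $\int f\Delta\varphi\,\d\mm\leq\int\tilde\Delta f\cdot\varphi\,\d\mm$ (the extension-by-$0$ subtlety in the definition of $\tilde\Delta f$ is absorbed in another application of \eqref{eq:LDP}).

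For \emph{Part (3)}, Lemma \ref{le:inflap2} gives $\bd(f_1\wedge f_2)\restr U\leq\max(g_1,g_2)\mm$, bounded, so Part (2) yields $\bd(f_1\wedge f_2)\restr U\leq\tilde\Delta(f_1\wedge f_2)\mm$, and it remains to prove the pointwise a.e.\ estimate $\tilde\Delta(f_1\wedge f_2)\leq\nchi_{\{f_1\leq f_2\}}g_1+\nchi_{\{f_2<f_1\}}g_2$. On $\{f_1<f_2\}$, the $\acm$-property yields a neighborhood of $x$ where $f_1\wedge f_2=f_1$, so \eqref{eq:LDP} and Part (1) for $f_1$ give $\tilde\Delta(f_1\wedge f_2)(x)=\tilde\Delta f_1(x)\leq g_1(x)$, and symmetrically on $\{f_2<f_1\}$; on $\{f_1=f_2\}$, the global $f_1\wedge f_2\leq f_1$ with equality at $x$ forces directly $\tilde\Delta(f_1\wedge f_2)(x)\leq\tilde\Delta f_1(x)\leq g_1(x)$. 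The main obstacle throughout is that the cutoff $\tilde f=\eta f$ produces only an $L^1$ global Laplacian bound $\tilde g$ (not $L^\infty$, as $\nabla f\cdot\nabla\eta\in L^2\setminus L^\infty$ a priori); overcoming this requires the finite-dimensionality of $\RCD(K,N)$ through the Gaussian heat-kernel decay \eqref{eq:gauss} and \eqref{eq:LDP}, which is precisely why this lemma is confined to the $N<\infty$ setting.
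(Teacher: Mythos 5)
Your overall architecture matches the paper's: extend $f$ to a function with a global Laplacian bound, invoke the global Lemma \ref{le:lapcomb}, and use the Gaussian decay \eqref{eq:gauss}/\eqref{eq:LDP} together with the Lebesgue-point property \eqref{eq:lebheat} to localize (the paper extends via $(f+C\sfd^2(\cdot,B))\wedge c$ through Lemma \ref{le:ext} rather than by a multiplicative cutoff, which has the minor advantage that the error $\tilde g-g$ is a bounded function rather than the merely square-integrable $2\nabla f\cdot\nabla\eta$, but both choices are workable). Parts (2) and (3) are essentially sound — note in (3) that your monotonicity argument $\h_s(f_1\wedge f_2)(x)\leq\h_s f_1(x)$ at points with $f_1(x)\leq f_2(x)$ already covers the whole set $\{f_1\leq f_2\}$, so the neighborhood argument on $\{f_1<f_2\}$ (which is itself shaky, since $f_i\in\acm(\X)$ does not give continuity on a full neighborhood and hence does not force $f_1\wedge f_2=f_1$ near $x$) is both unnecessary and best deleted.

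The genuine gap is in Part (1), in the step ``the previous inequality upgrades to pointwise at $x\in A$, since both sides are continuous in $x$.'' The right-hand side is $x\mapsto\int_0^t\h_s\tilde g(x)\,\d s$ with $\tilde g$ only in $L^1+L^\infty$ (in Part (1) the datum $g$ itself is merely integrable, so this is not an artifact of your cutoff). Continuity of $\h_s\tilde g$ for each fixed $s>0$ does not give continuity of the time integral: near $s=0$ one only has $\|\h_s\tilde g\|_{L^\infty}\lesssim s^{-N/2}\|\tilde g\|_{L^1}$, which is not integrable in $s$, and there is no uniform-in-$x$ domination allowing you to pass $\lim_{x_n\to x}$ inside $\int_0^t\ldots\d s$. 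Indeed $\int_0^t\h_s|\tilde g|(\cdot)\,\d s$ is a truncated Green potential of an $L^1$ function and can be infinite on a non-empty (null) set, so the map is genuinely not continuous; your argument only controls it \emph{at} the Lebesgue point $x$, not at the approximating points $x_n$ from the full-measure set where the a.e.\ inequality holds. The paper's fix is to first prove the two-parameter inequality $\h_t\tilde f-\h_\eps\tilde f\leq\int_\eps^t\h_s\tilde g\,\d s$ \emph{everywhere} for $\eps>0$ (now both sides really are continuous, since $\h_s\tilde g$ is continuous uniformly for $s\in[\eps,t]$), and only then send $\eps\downarrow0$ at the single fixed point $x$, using $\h_\eps\tilde f(x)\to\tilde f(x)$ (which follows from $\h_\eps\delta_x\ll\mm$, $\h_\eps\delta_x\rightharpoonup\delta_x$ and the continuity of $\tilde f\restr A$ with $\mm(\X\setminus A)=0$). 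You should insert this $\eps$-regularization; without it the passage from the $\mm$-a.e.\ inequality to the specific point $x$ is unjustified precisely in the unbounded-$g$ regime that the applications (e.g.\ Lemma \ref{le:tdF}, where $g=-2(d+2)\e_2[u]^2$) require.
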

\begin{proof} Let $x\in A$, $B\subset\subset U$ be an open neighbourhood of $x$ and notice that for $C>0$ big enough we have: for the function $ f':=f+C\sfd^2(\cdot,B)$ there is $c>0$ such that $B\subset\{f'\leq c\}\subset\subset U$. Then as in the proof of Lemma \ref{le:inflap2} above we have that  $\bd\tilde f\leq \tilde g\mm$, where $\tilde f:=f'\wedge c$ and $\tilde g:=\nchi_Bg+\nchi_{B^c}(g+C')$ for some $C'$. Up to subtract a constant to $f$ we can assume that $c=0$, so that $\tilde f$ has bounded support and thus is in $L^2(\X)$. Then $\h_tf\in D(\Delta)$ for every $t>0$ and, quite obviously, $\Delta\h_t\tilde f\leq\h_t\tilde g$ $\mm$-a.e.\ for every $t>0$, hence for $\eps\in(0,t)$ we have
\begin{equation}
\label{eq:teps}
\h_t\tilde f-\h_\eps\tilde  f=\int_\eps^t\Delta\h_s\tilde f\,\d s\leq \int_\eps^t\h_s\tilde g\,\d s
\end{equation}
holds $\mm$-a.e.. Then notice that by the Gaussian and volume growth estimates \eqref{eq:gauss}, \eqref{eq:expgr}, the $L^\infty-\Lip$-regularization \eqref{eq:lintylip} and the fact that $\tilde g$ is constant outside a bounded set it follows that $\h_s\tilde g$ is continuous uniformly on $s\in[\eps,t]$. Since so are $\h_t\tilde f,\h_\eps\tilde f$, we see that \eqref{eq:teps} holds everywhere on $\X$. Now  observe that the construction ensures that $\tilde f\restr A$ is continuous and the fact that $\mm(\X\setminus A)=0$, the absolute continuity $\h_t\delta_y\ll\mm$ and the fact that $\h_t\delta_y\weakto\delta_y$ easily imply that $\h_\eps\tilde f(y)\to\tilde f(y)$ for every $y\in A$. Thus we can pass to the limit in \eqref{eq:teps} evaluated at $x\in A$ as $\eps\downarrow0$, noticing that the right hand side is continuous in $\eps$, to deduce that
\[
\frac{\h_t\tilde f(x)-\tilde f(x)}{t}\leq \fint_0^t\h_s\tilde g(x)\,\d s.
\]
We then recall  \eqref{eq:lebheat} and notice   that $\eqref{eq:LDP}$ and the fact that $f,\tilde f$ coincide on $B$ and are both bounded grant that $\tilde \Delta f(x)=\tilde \Delta\tilde f(x)$: the inequality \eqref{eq:lebtd} follows.

The second claim follows from the extension argument just used and item $(ii)$ in Lemma \ref{le:lapcomb} while for  the last claim we notice that by the arguments just used and the locality of the Laplacian we can reduce to the case $\bd f_i\leq g_i$ on $\X$ for $g_i\in L^\infty(\X)$. Then by Lemma \ref{le:lapcomb}   it is sufficient to show that $\tilde\Delta(f_1\wedge f_2)\leq g$ $\mm$-a.e.. Thus let $x\in\X$ be so that $f_1(x)\leq f_2(x)$ and notice that the monotonicity of the heat flow (or equivalently \eqref{eq:reprht}) tells that $\h_s(f_1\wedge f_2)(x)\leq \h_sf_1(x)$. Subtracting the equality that holds for $s=0$ and dividing by $s$ we get $\tilde \Delta (f_1\wedge f_2)(x)\leq \tilde\Delta f_1(x)$. Recalling \eqref{eq:lebtd} and with an analogous argument for points $x$ such that $f_2(x)<f_1(x)$ we conclude.
\end{proof}
\section{A variational principle on $\RCD(K,\infty)$ spaces}

\subsection{Reminders: Hopf-Lax formula and Regular Lagrangian Flows}
We start recalling some  facts about the Hopf-Lax formula, valid on general length spaces. For $f:\X\to\R$ lower semicontinuous and bounded from below  and $t>0$  we put
\[
\begin{split}
Q_tf(x)&:=\inf_{y\in \X}f(y)+\frac{\sfd^2(x,y)}{2t},\qquad\forall x\in\X
\end{split}
\]
We shall also put $Q_0f=f$. Notice that trivially 
\begin{equation}
\label{eq:boundovvi2}
\inf f\leq Q_t f\leq \sup f\qquad\text{ and thus
}\qquad\osc(Q_tf)\leq\osc (f),
\end{equation}
where the oscillation is defined in \eqref{eq:defosc}. We recall that 
\begin{equation}
\label{eq:contqt}
(t,x)\mapsto Q_tf(x)\text{ is continuous on $(0,\infty)\times\X$ and lower semicontinuous on $[0,\infty)\times\X$,}
\end{equation}
see \cite[Lemma 3.1.2]{AmbrosioGigliSavare08} for the continuity. For lower semicontinuity in 0 let $t_n\downarrow0$, $x_n\to x$ and $(y_n)\subset \X$ so that $\limi_nQ_{t_n}f(x_n)=\limi_n f(y_n)+\frac{\sfd^2(x_n,y_n)}{2t_n}<\infty$. Since  $f$ is bounded from below we get $\limi_n\sfd(x_n,y_n)=0$. Hence $\limi_n\sfd(x,y_n)=0$ and the lower semicontinuity of $f$ gives $\limi_n f(y_n)+\frac{\sfd^2(x_n,y_n)}{2t_n}\geq \limi_n f(y_n)\geq f(x)$.

Now
using the fact that $x$ can be chosen as competitor for $Q_tf(x)$ we easily get that
\begin{equation}
\label{eq:stimadqt2}
\text{$(y_n)$  minimizing for $f(\cdot)+\frac{\sfd^2(\cdot,x)}{2t}$}\qquad\Rightarrow\qquad \lims_n\frac{\sfd^2(x,y_n)}{2t}\leq \lims_nf(x)-f(y_n)\leq \osc(f).
\end{equation}
We also claim that
\begin{equation}
\label{eq:lipQt}
\Lip(Q_tf)\leq \sqrt{2t\osc(f)}\qquad\forall t>0.
\end{equation}

To see why, let $\bar x\in\X$, $x\in B_\eps(\bar x)$ and use \eqref{eq:stimadqt2} to get  that any minimizing sequence for $Q_tf(x)$ must eventually belong to $B_{r+\eps}(x)\subset B_{r+2\eps}(\bar x)$ for $r:=\sqrt{2t\osc(f)}$. It follows that
\[
Q_tf=\inf_{y\in B_{r+2\eps}(\bar x)} \frac{\sfd^2(\cdot ,y)}{2t}+f(y)\qquad \text{on }B_\eps(\bar x) 
\]
and recalling the trivial bound  $\Lip(\sfd^2(\cdot,y)\restr{B_r(y)})\leq 2r$, we conclude that $\Lip(Q_tf\restr{B_\eps(\bar x)})\leq r+2\eps$. Then  the arbitrariness of $\eps$ in conjunction with the length property of $\X$ give the claim \eqref{eq:lipQt}.


The Hopf-Lax semigroup produces solutions of the Hamilton-Jacobi equation in the sense that:
 \begin{equation}
\label{eq:HLHJ2}
\begin{split}
&\text{for every $x\in\X$ the map $[0,\infty)\ni t\mapsto Q_tf(x)$ is  continuous and}\\
 &\text{locally absolutely continuous on $(0,\infty)$ with} \qquad\tfrac \d{\d t}Q_tf(x)+\tfrac12\lip^2(Q_tf)(x)=0\qquad  a.e.\ t,
 \end{split}
\end{equation}
see \cite[Theorem 3.5, Proposition 3.3]{AmbrosioGigliSavare11}, 
where the \emph{local Lipschitz constant} $\lip f:\X\to[0,\infty]$ of  the function $f$ is defined as
\[
\lip f(x):=\lims_{y\to x}\frac{|f(y)-f(x)|}{\sfd(x,y)}\qquad\forall x\in\X.
\]
In connection with Sobolev calculus we recall that
\begin{equation}
\label{eq:lipfdf}
\text{for $f:\X\to\R$ locally Lipschitz we have $|\d f|\leq \lip(f)$\ $\mm$-a.e.,}
\end{equation}
see e.g.\ \cite{Cheeger00} or \cite{AmbrosioGigliSavare11} for the proof.

\bigskip 

The evolution via Hopf-Lax semigroup will be used in conjunction with the concept of Regular Lagrangian Flow. The definition below as well as Theorem \ref{thm:AT} come from \cite{Ambrosio-Trevisan14} (see also \cite{AT15}), that in turn is the generalization of the analogous concept introduced by Ambrosio \cite{Ambrosio04} in the study of the DiPerna-Lions theory \cite{DiPerna-Lions89}.

In the forthcoming discussion, by $|\dot\gamma_t|:=\lim_{h\to 0}\frac{\sfd(\gamma_{t+h},\gamma_t)}{|h|}$ we denote the metric speed of the absolutely continuous curve $\gamma:[0,1]\to\X$ (see e.g.\ \cite[Theorem 1.1.2]{AmbrosioGigliSavare08}) and by $\frac12\int_0^{1}|\dot\gamma_t|^2\,\d t$ the kinetic energy of $\gamma\in C([0,1],\X)$, that is intended to be $+\infty$ if $\gamma$ is not absolutely continuous. It is easy to see that $\gamma\mapsto \frac12\int_0^{1}|\dot\gamma_t|^2\,\d t$ is lower semicontinuous (see e.g.\ \cite[Proposition 1.2.7]{GP19}).
\begin{definition}[Regular Lagrangian Flow]
\label{def:rlf}
Let $(\X,\sfd,\mm)$ be an ${\sf RCD}(K,\infty)$ space, $K\in\R$ and $(v_t)\in L^1([0,1], L^\infty(T\X))$.  Then $\Fl:[0,1]\times \X\to \X $  is said to be a \emph{Regular Lagrangian Flow} for $(v_t)$ provided it is Borel and the following properties are verified:
\begin{itemize}
\item[i)] For some  $C>0$  we have $(\Fl_t)_*\mm\leq C  \mm$ for any $t\in[0,1]$.
\item[ii)] For every $x\in\X$ the curve $[0,1]\ni t \mapsto \Fl_t(x)\in\X$
is continuous and starts from $x$.
\item[iii)] Given $f\in W^{1,2}(\X)$ and $t\in[0,1]$, one has that for $\mm$-a.e.\ $x\in\X$
the map $[0,1]\ni t\mapsto(f\circ \Fl_t)(x)\in\R$ belongs to $W^{1,1}(0,1)$ and satisfies
\begin{equation}\label{eq:RLF}
\frac{\d}{\d t}\,(f\circ \Fl_t)(x)=\d f(v_t)\big(\Fl_t(x)\big)
\quad\text{ for }\mathcal{L}^1\text{-a.e.\ }t\in[0,1].
\end{equation}
\end{itemize}
\end{definition}
We shall typically write $(\Fl_t)$ in place of $\Fl$ for Regular Lagrangian Flows (RLF in short). We notice that if $(\Fl_t)$ is a RLF of $(v_t)$, choosing a suitable countable family of 1-Lipschitz functions in $(iii)$ above we deduce that:  For $\mm$-a.e.\ $x\in\X$ the curve $t\mapsto \Fl_t(x)$ is absolutely continuous and
\begin{equation}
\label{eq:spRLF}
\text{the metric speed of $t\mapsto \Fl_t(x)$ coincides with $|v_t|(\Fl_t(x))$ for $\mm\times\mathcal L^1$}-a.e.\ (x,t)
\end{equation}
as can be proved closely following the arguments in \cite[Section 2.3.5]{Gigli14}.

The following crucial result establishes existence and uniqueness of RLFs:
\begin{theorem}\label{thm:AT}
Let $(\X,\sfd,\mm)$ be a ${\sf RCD}(K,\infty)$ space, $K\in\R$. Let $(v_t)\in L^1([0,1],L^\infty(T\X))$ be such that  $| v_t|\in L^\infty([0,1]\times\X)$. Assume that  for any $\varphi\in\Lip_{\bs}(\X)$ we have $\varphi v_t\in  D(\div)$ for a.e.\ $t\in[0,1]$ with  $\div (\varphi v_t)\in L^\infty([0,1]\times\X)$ and  $(\varphi v_t)\in L^1([0,1],W^{1,2}_C(T\X))$.

Then:
\begin{itemize}
\item[-]\underline{\rm Existence}  There exists a  Regular Lagrangian Flow (RLF, in short) ${(\Fl_t)}$ of  $(v_t)$. 

\item[-]\underline{\rm Uniqueness} Such flow is unique in the following sense:  if $(\tilde \Fl_t)$ is another
Regular Lagrangian Flow, then for  $\mm$-a.e.\ point $x\in\X$ it holds  $\tilde {\Fl}_t(x)=\Fl_t(x)$ for every $t\in[0,1]$.
\item[-]{\underline {\rm Regularity}} The flow $(\Fl_t)$ satisfies the bound
\begin{equation}
\label{eq:daRLF}
(\Fl_t)_*\mm\leq e^{C_t}\mm\qquad\text{ with }\qquad C_t:=\int_0^t\|(\div (v_s))^-\|_{L^\infty}\,\d s.
\end{equation}
\end{itemize}
\end{theorem}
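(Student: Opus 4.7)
\emph{Overall strategy.} The plan is to adapt the DiPerna-Lions scheme to the non-smooth $\RCD(K,\infty)$ setting via a superposition approach. There are three pieces: existence of a ``probabilistic'' flow obtained by approximation and compactness; uniqueness of this flow derived from uniqueness of solutions of the continuity equation, which in turn rests on a commutator lemma exploiting the assumed Sobolev regularity of $\varphi v_t$; and the compression bound \eqref{eq:daRLF}, which follows by controlling the density along the continuity equation via the assumed lower bound on $\div v_t$.

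\emph{Existence.} First I would regularize $(v_t)$, e.g.\ via a mollified heat flow on vector fields analogous to $\tilde\h_n$ on functions, to obtain approximating vector fields $v^n_t$ admitting classical Lipschitz flows $\Fl^n_t$. The uniform bounds $|v_t|\in L^\infty([0,1]\times\X)$ and on $\div(\varphi v_t)$ give uniform control on the metric speed and on the compression constant of $\Fl^n_t$. Forming the lifted measures $\eeta^n:=(\Fl^n_\cdot)_*\mm$ on $C([0,1],\X)$ one has tightness from Arzel\`a-Ascoli combined with $\RCD$ volume growth \eqref{eq:expgr}, so a narrow cluster point $\eeta$ exists. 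A standard superposition argument then shows that $t\mapsto(\e_t)_*\eeta$ has uniformly bounded densities and solves the continuity equation for $(v_t)$, and that $\eeta$ is concentrated on absolutely continuous curves with metric speed $|v_t|(\gamma_t)$. Disintegrating $\eeta=\int\eeta_x\,\d\mm(x)$ with respect to $\e_0$ gives the candidate flow by setting $\Fl_t(x)$ to be the (essentially unique) position at time $t$ of a curve drawn from $\eeta_x$, provided $\eeta_x$ is a Dirac mass for $\mm$-a.e.\ $x$.

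\emph{Uniqueness, the main obstacle.} If $\eeta_x$ failed to be Dirac on a set of positive measure, one could split $\eeta$ into two genuinely different superposition solutions starting from $\mm$, each producing a solution of the continuity equation with bounded density sharing the same initial datum. Uniqueness of such solutions, via the standard renormalization argument applied to a strictly convex entropy, reduces to showing that the commutator $\tilde\h_s(\d f(v_t))-\d(\tilde\h_sf)(v_t)$ tends to $0$ in a suitable norm as $s\downarrow 0$ for every $f\in\Lip_\bs(\X)$. This is where the hypothesis $\varphi v_t\in L^1([0,1],W^{1,2}_C(T\X))$ enters: the covariant $W^{1,2}$-regularity together with the Bakry-\'Emery contraction \eqref{eq:BE} let one rewrite the commutator as an integral involving the symmetrized covariant derivative of $v_t$ and close the estimate by integration by parts against $\div(\varphi v_t)\in L^\infty$. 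Building this commutator lemma without access to an explicit convolution structure --- available in $\R^n$ but not here --- is the technically most delicate part, since one must carefully track the interaction of the heat semigroup with the non-smooth covariant calculus on $\X$. Once uniqueness of the continuity equation solution is in place, uniqueness of the flow in the sense of the theorem follows by a standard coupling/disintegration argument applied to two candidate RLFs.

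\emph{Compression bound.} Once the density $\rho_t$ of $(\e_t)_*\eeta$ solves the continuity equation, testing against $\varphi\in\tvar^+(\X)$ and invoking the lower bound on $\div v_t$ gives, after mollification to justify the Leibniz rule, $\tfrac{\d}{\d t}\int\varphi\rho_t\,\d\mm\leq \|(\div v_t)^-\|_{L^\infty}\int\varphi\rho_t\,\d\mm$. Gronwall and the arbitrariness of $\varphi$ then yield $\rho_t\leq e^{C_t}$ $\mm$-a.e., which is exactly \eqref{eq:daRLF} for $(\Fl_t)_*\mm$.
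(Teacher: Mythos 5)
This theorem is not proved in the paper: it is quoted from Ambrosio--Trevisan \cite{Ambrosio-Trevisan14}, and the only argument the paper supplies is the reduction of the \emph{localized} hypotheses stated here (bounds on $\div(\varphi v_t)$ and $W^{1,2}_C$-regularity of $\varphi v_t$ for every $\varphi\in\Lip_\bs(\X)$, rather than on $v_t$ itself) to the global-hypothesis version in the literature, via locality of divergence and covariant derivative together with the finite speed of propagation coming from \eqref{eq:spRLF}. Your proposal instead tries to reprove the cited theorem from scratch, and while it correctly identifies the architecture of the DiPerna--Lions/Ambrosio--Trevisan scheme (continuity equation, superposition, commutator estimate, Gronwall), it has two genuine gaps. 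First, the existence step as you describe it would fail at the outset: on an $\RCD(K,\infty)$ space a ``mollified'' vector field does not admit a classical Lipschitz flow, because vector fields here are elements of an abstract $L^2$-module and there is no Cauchy--Lipschitz/ODE theory to invoke --- circumventing exactly this is the point of the RLF construction. The actual route is dual: one first produces solutions of the continuity equation with bounded densities (by a vanishing-viscosity approximation at the PDE level), then lifts them to a measure on path space via the superposition principle, and only the uniqueness theory for the continuity equation forces the disintegration $\eeta_x$ to be a Dirac mass, yielding the flow map.

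Second, the commutator lemma you defer to --- showing that $\tilde\h_s(\d f(v_t))-\d(\tilde\h_s f)(v_t)\to 0$ in a usable norm, with quantitative control in terms of the symmetric covariant derivative of $v_t$ --- \emph{is} the theorem; it occupies the bulk of \cite{Ambrosio-Trevisan14} and cannot be treated as a routine adaptation, precisely because (as you note) there is no convolution structure and the interaction of the heat semigroup with the covariant calculus must be tracked via the Bakry--\'Emery machinery. Acknowledging it as ``the technically most delicate part'' does not discharge it. Finally, your sketch nowhere addresses the passage from global to localized hypotheses, which is the one piece of proof content the paper actually carries out for this statement; your argument implicitly assumes $\div(v_t)$ and $\nabla v_t$ are globally controlled. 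If the intent is to match the paper, the honest proof is: cite \cite{Ambrosio-Trevisan14} for the global version, then localize using cut-offs, the locality of $\div$ and of the covariant derivative, and the uniform bound $|v_t|\in L^\infty$ which confines trajectories issued from a bounded set to a bounded set.
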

Let us notice that if $v\in L^2(T\X)$ is a vector field such that $\varphi v\in D(\div)$ for every $\varphi\in\Lip_{\bs}(\X)$ (see \cite[Definition 2.3.11]{Gigli14}), then by the locality of the divergence \cite[Equations (3.5.13), (3.5.14)]{Gigli14} the function $\div(v):\X\to\R$ is $\mm$-a.e.\ well-defined by
\begin{equation}
\label{eq:defdivloc}
\div(v)=\div(\varphi v)\qquad\mm-a.e.\text{ on the interior of } \{\varphi=1\},\qquad\forall \varphi\in \Lip_\bs(\X).
\end{equation}
This is the function appearing in \eqref{eq:daRLF}. 
We remark that in the literature  Theorem \ref{thm:AT} is presented under the assumptions  $\div ( v_t)\in L^1([0,1],L^\infty(\X))$ and  $( v_t)\in L^1([0,1],W^{1,2}_C(T\X))$ in place of the weaker  $\div (\varphi v_t)\in L^1([0,1],L^\infty(\X))$ and  $(\varphi v_t)\in L^1([0,1],W^{1,2}_C(T\X))$ for every $\varphi\in\Lip_{\bs}(\X)$ (as for what concerns $W^{1,2}_C(T\X)$, to be more precise in \cite{Ambrosio-Trevisan14} the authors speak about $(r,s)$-inequalities satisfied by the symmetric part of some sort of covariant derivative, but as is clear from \cite[Definition 3.4.1]{Gigli14} this condition is satisfied for $r=s=4$ if  $( v_t)\in L^1([0,1],W^{1,2}_C(T\X))$). We chose the version above because it better fits our purposes: it   can  be derived from the one with global integrability assumptions on $\div(v_t), \nabla v_t$ using  the locality of  both divergence and covariant derivative and the finite speed of propagation coming from  \eqref{eq:spRLF} and  assumption $(v_t)\in L^1([0,1],L^\infty(T\X))$.

We shall typically apply Theorem \ref{thm:AT} for vector fields as in the next statement:
\begin{lemma}\label{le:perRLF}
Let $(\X,\sfd,\mm)$ be an ${\sf RCD}(K,\infty)$ space and $(f_t)\subset\Lip_\bs(\X)\cap D(\Delta_{loc})$ be such that
\begin{equation}
\label{eq:assperRLF}
\sup_{t\in[0,1]}\Lip(f_t)+\|\Delta f_t\|_{L^\infty}<\infty.
\end{equation}
Then the vector fields $v_t:=\nabla f_t$ satisfy the assumptions of Theorem \ref{thm:AT} and $\div(v_t)=\Delta f_t$.
\end{lemma}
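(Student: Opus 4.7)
The plan is to verify in turn the three assumptions of Theorem \ref{thm:AT} for $v_t := \nabla f_t$ and then confirm $\div(v_t)=\Delta f_t$ via the locality \eqref{eq:defdivloc}. For the first assumption, the pointwise bound $|v_t|=|\d f_t|\leq\Lip(f_t)$ combined with \eqref{eq:assperRLF} gives $|v_t|\in L^\infty([0,1]\times\X)$, hence $(v_t)\in L^1([0,1],L^\infty(T\X))$.

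Next, for any $\varphi\in\Lip_\bs(\X)$ I would establish the Leibniz-type formula
\[
\div(\varphi\nabla f_t)=\varphi\,\Delta f_t+\nabla\varphi\cdot\nabla f_t.
\]
Starting from \eqref{eq:equivdloc}, we have $\int\nabla f_t\cdot\nabla\psi\,\d\mm=-\int(\Delta f_t)\,\psi\,\d\mm$ for all $\psi\in\Lip_\bs(\X)$. Since $f_t\in\Lip_\bs$, the gradient $\nabla f_t$ vanishes outside $\supp(f_t)$; testing against $\psi$'s supported away from $\supp(f_t)$ shows that $\Delta f_t$ has bounded support too, and in particular $\Delta f_t\in L^2$. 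By density of $\Lip_\bs$ in $W^{1,2}(\X)$ the identity extends to $\psi\in W^{1,2}$, and plugging in $\psi=\varphi g$ for $g\in W^{1,2}$ together with the Leibniz rule for $\nabla$ yields the formula. The uniform $L^\infty$-bound on $\div(\varphi v_t)$ in $t$ is immediate from \eqref{eq:assperRLF} and the Lipschitz-with-bounded-support nature of $\varphi$, and the identity $\div(v_t)=\Delta f_t$ then follows from \eqref{eq:defdivloc} upon choosing $\varphi\equiv 1$ on a neighborhood of an arbitrary point, since $\nabla\varphi=0$ $\mm$-a.e. on the interior of $\{\varphi=1\}$.

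The most delicate condition, and the step I expect to be the main obstacle, is $(\varphi v_t)\in L^1([0,1],W^{1,2}_C(T\X))$. Here I would invoke Gigli's $W^{2,2}$-theory on $\RCD(K,\infty)$ spaces: any $f\in D(\Delta)$ with $\Delta f\in L^2$ admits a Hessian in $L^2$ satisfying a Bochner-type $L^2$-estimate, so that $\nabla f\in W^{1,2}_C(T\X)$. Since \eqref{eq:assperRLF} provides no uniform control on $\supp(f_t)$, I would localize: fix a cut-off $\eta\in\Lip_\bs$ with $\eta\equiv 1$ on a neighborhood of $\supp(\varphi)$ and, for each $t$, replace $f_t$ by $\eta(f_t-c_t)$ with $c_t:=f_t(x_0)$ for some reference point $x_0\in\supp(\eta)$. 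The subtraction turns the uniform Lipschitz bound into a uniform $L^\infty$-bound on $\supp(\eta)$ (controlled by $\Lip(f_t)$ times the diameter of $\supp(\eta)$), so by the Leibniz rule for the Laplacian the truncated function has support in the fixed set $\supp(\eta)$ and Laplacian uniformly bounded in $L^\infty$; the $W^{2,2}$-theory then yields a uniform $L^2$-Hessian bound. Since $\varphi\nabla[\eta(f_t-c_t)]=\varphi\nabla f_t$, the Leibniz rule for the covariant derivative finally gives the required uniform $W^{1,2}_C$-bound.
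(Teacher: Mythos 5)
Your proof is correct in its overall architecture: the $L^\infty$ bound on $|v_t|$, the Leibniz identity $\div(\varphi\nabla f_t)=\varphi\,\Delta f_t+\nabla\varphi\cdot\nabla f_t$, and the identification $\div(v_t)=\Delta f_t$ via \eqref{eq:defdivloc} all match the paper's treatment. Where you genuinely diverge is the condition $(\varphi v_t)\in L^1([0,1],W^{1,2}_C(T\X))$. The paper never touches the Hessian of $f_t$: it notes that the $1$-form $\varphi\,\d f_t$ has exterior differential $\d\varphi\wedge\d f_t$ (by closure of the exterior differential and $\d(\d f_t)=0$), so that $\||\d(\varphi\,\d f_t)|\|_{L^2}\leq \Lip(f_t)\,\||\d\varphi|\|_{L^2}$, and then invokes \cite[Corollary 3.6.4]{Gigli14}, which upgrades uniform $L^2$ bounds on $\div(\varphi\nabla f_t)$ and on $\d(\varphi\,\d f_t)$ to a uniform $W^{1,2}_C$ bound — no cut-off beyond the given $\varphi$ is needed. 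Your route through the $W^{2,2}$ theory is also viable (that corollary is itself a byproduct of the Bochner/Hessian estimates), and your localization $f_t\mapsto\eta(f_t-c_t)$ correctly compensates for the absence of uniform control on $\supp(f_t)$, hence on $\|f_t\|_{L^2}$ and $\|\Delta f_t\|_{L^2}$. The one point you leave implicit, and should supply, is that $\eta$ cannot be an arbitrary Lipschitz cut-off: to apply the Leibniz rule for the Laplacian to $\eta(f_t-c_t)$ and land in $D(\Delta)$ you need $\eta\in D(\Delta)$ with, say, $\Delta\eta\in L^2\cap L^\infty$, identically $1$ on a neighbourhood of $\supp(\varphi)$ and with bounded support — and the paper's \eqref{eq:tcut} only provides such cut-offs on \emph{proper} spaces, whereas the lemma is stated for general $\RCD(K,\infty)$ spaces. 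Such cut-offs do exist in general (take a smooth truncation of $\tilde\h_n\phi$ for $\phi$ a distance-type cut-off, using the uniform bound $\h_t\delta_x(B_1^c(x))\leq C(K)t$ to ensure $\tilde\h_n\phi\geq\tfrac12$ on the relevant set), but this must be said; the paper's argument is shorter precisely because it sidesteps this extra localization.
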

\begin{proof}  Clearly we have $\sup_t\||v_t|\|_{L^\infty}<\infty$. Then notice that the definitions \eqref{eq:defdivloc},  \cite[Definition 2.3.11]{Gigli14} and the characterization \eqref{eq:equivdloc} trivially give $\div(v_t)=\Delta f_t$. Also, from the Leibniz rule \cite[2.3.13]{Gigli14} we see that 
\begin{equation}
\label{eq:divb1}
\div(\varphi v_t)=\varphi \Delta f_t+\d\varphi\cdot\d f_t\in L^\infty([0,1]\times\X)\qquad\forall \varphi\in \Lip_\bs(\X).
\end{equation}
Now recall that the identity $\d(g_1\d g_2)=\d g_1\wedge \d g_2$ holds for $g_1,g_2\in\testi$ (see \cite[Theorem 3.5.2 (iv)]{Gigli14}) and use the closure of the exterior differential (see \cite[Theorem 3.5.2 (ii)]{Gigli14}) to conclude that $\d (\varphi \d f_t)=\d \varphi\wedge \d f_t$ and thus that   $\||\d (\varphi \d f_t)|\|_{L^2}\leq \||\d f_t|\|_{L^\infty}\||\d\varphi\|_{L^2}$.  By  \cite[Corollary 3.6.4]{Gigli14}, this \eqref{eq:divb1} and assumption \eqref{eq:assperRLF} are  sufficient to conclude that   $\sup_{t}\|\varphi \nabla f_t\|_{W^{1,2}_C}<\infty$.
\end{proof}
We now turn to a characterization of RLFs that we shall often use in what comes next:
%
\begin{proposition}\label{prop:RLF}  Let $(\X,\sfd,\mm)$ be $\RCD(K,\infty)$, $(v_t)$  a Borel family of uniformly bounded vector fields and $\Fl:\X\times[0,1]\to\X$ a Borel map satisfying $(i),(ii)$ in Definition \ref{def:rlf}. 

Then  the following are equivalent:
\begin{itemize}
\item[a)]  $\Fl$ is a RLF of $(v_t)$,
\item[b)] For any  probability measure $\mu$ on $\X$ such that $\mu\leq C\mm$ for some $C>0$ and any $(f_t)\in AC([0,1],L^2(\X))\cap L^\infty([0,1],W^{1,2}(\X))$, putting $\mu_t:=(\Fl_t)_*\mu$ we have that the map $t\mapsto\int f_t\,\d\mu_t$ is absolutely continuous with
\begin{equation}
\label{eq:leibfl}
\frac{\d}{\d s}\Big(\int f_s\,\d\mu_s\Big)\restr{s=t}=\int \big(\frac\d{\d s}f_s\restr{s=t}\big)\,\d\mu_t+\int \d f_t(v_t)\,\d\mu_t \qquad a.e.\ t.
\end{equation}
\item[c)] Same as $(b)$ above, but with the regularity assumption and derivation formula for  $t\mapsto\int f_t\,\d\mu_t$  just for the case $f_t\equiv f\in W^{1,2}(\X)$ for every $t\in[0,1]$.
\end{itemize}
\end{proposition}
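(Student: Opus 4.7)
The plan is to establish $(a)\Leftrightarrow(c)$ directly, noting that $(b)\Rightarrow(c)$ is immediate by taking $f_t\equiv f$, and then to upgrade $(c)$ to $(b)$. A useful preliminary: since $\mu\leq C\mm$ and $(\Fl_t)_*\mm\leq C'\mm$ by (i), the density $\rho_t:=\d\mu_t/\d\mm$ satisfies $\|\rho_t\|_{L^\infty}\leq CC'$ uniformly in $t$, and since $\int\rho_t\,\d\mm=1$ one gets $\sup_t\|\rho_t\|_{L^2}\leq \sqrt{CC'}$. Moreover, continuity of $[0,1]\ni t\mapsto\Fl_t(x)$ for every $x$ yields, by dominated convergence against $\mu$, narrow continuity $\mu_s\rightharpoonup\mu_t$ as $s\to t$; combined with the uniform $L^2$ bound and density of $C_b(\X)\cap L^2(\mm)$ in $L^2(\mm)$, this promotes to weak $L^2$ continuity $\rho_s\rightharpoonup\rho_t$.

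For $(a)\Rightarrow(c)$, fix $f\in W^{1,2}(\X)$; by (iii) in the definition of RLF applied $\mu$-a.e.\ (possible since $\mu\ll\mm$), the identity $f(\Fl_t(x))=f(x)+\int_0^t\d f(v_s)(\Fl_s(x))\,\d s$ holds for $\mu$-a.e.\ $x$ and every $t\in[0,1]$. The integrand $(x,s)\mapsto\d f(v_s)(\Fl_s(x))$ is in $L^1(\mu\otimes\mathcal L^1)$, since $\||v_s|\|_\infty<\infty$ and Cauchy--Schwarz gives $\int|\d f|\,\d\mu_s\leq \|\d f\|_{L^2}\|\rho_s\|_{L^2}\leq\sqrt{CC'}\|\d f\|_{L^2}$; Fubini then delivers $(c)$. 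Conversely for $(c)\Rightarrow(a)$, given $f\in W^{1,2}(\X)$, apply $(c)$ with $\mu:=\mm(E)^{-1}\nchi_E\mm$ for arbitrary Borel $E$ of positive finite $\mm$-measure to deduce, for every $0\leq s<t\leq 1$,
\[
\int_E\Big(f(\Fl_t(x))-f(\Fl_s(x))-\int_s^t\d f(v_r)(\Fl_r(x))\,\d r\Big)\,\d\mm(x)=0.
\]
Arbitrariness of $E$ and countability of $\Q\cap[0,1]$ yield that for $\mm$-a.e.\ $x$ the bracketed integrand vanishes for all rational $s<t$ simultaneously, whence $t\mapsto f(\Fl_t(x))$ lies in $W^{1,1}(0,1)$ with the prescribed derivative, verifying (iii).

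For the passage from $(a)$ (equivalently $(c)$) to $(b)$, fix $(f_t)$ as in the hypothesis, put $g(t):=\int f_t\,\d\mu_t$ and write
\[
g(t+h)-g(t)=\int(f_{t+h}-f_t)\,\d\mu_{t+h}+\Big(\int f_t\,\d\mu_{t+h}-\int f_t\,\d\mu_t\Big).
\]
The first summand is bounded in modulus by $\sqrt{CC'}\|f_{t+h}-f_t\|_{L^2}$; by $(c)$ applied with the constant-in-time function $f_t$, the second equals $\int_t^{t+h}\int\d f_t(v_r)\,\d\mu_r\,\d r$, bounded by $|h|\,\|v\|_\infty\sqrt{CC'}\sup_r\|\d f_r\|_{L^2}$. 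Together these yield the absolute continuity of $g$. To identify $g'$, at a.e.\ $t$ which is simultaneously a differentiability point of $s\mapsto f_s$ in $L^2$ and a Lebesgue point of $s\mapsto\int\d f_s(v_s)\,\d\mu_s$, divide by $h$: the second piece converges to $\int\d f_t(v_t)\,\d\mu_t$ by Lebesgue differentiation, while the first piece converges to $\int\partial_t f_t\,\d\mu_t$ by coupling strong $L^2$ convergence $(f_{t+h}-f_t)/h\to\partial_tf_t$ with the weak $L^2$ convergence $\rho_{t+h}\rightharpoonup\rho_t$.

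The main obstacle lies in this final limit passage: it rests on the weak $L^2$ continuity of $\rho_t$, which does not follow directly from the pointwise-in-$x$ continuity of $\Fl_t$ but only after invoking narrow continuity of $\mu_t$ together with the uniform $L^2$ bound to pass from the dense class $C_b(\X)\cap L^2$ to arbitrary $L^2$ test functions; the coupling of strong and weak $L^2$ convergences, while standard, demands a careful selection of good Lebesgue points in $t$ for the time-dependent ingredients.
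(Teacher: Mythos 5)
Your overall architecture ($(a)\Rightarrow(c)\Rightarrow(b)$ plus the localization argument for $(c)\Rightarrow(a)$) is sound and, unlike the paper—which delegates the two nontrivial implications to \cite[Lemma 5.3]{GigTam18} and \cite[Proposition 2.7]{GR17}—is self-contained. However, the identification of the derivative in the passage to $(b)$ has a step whose stated justification does not work. After freezing $f_t$, the second piece is $\frac1h\int_t^{t+h}\Psi(t,r)\,\d r$ with $\Psi(t,r):=\int\d f_t(v_r)\,\d\mu_r$, and you claim it converges to $\Psi(t,t)$ at any Lebesgue point of $s\mapsto\int\d f_s(v_s)\,\d\mu_s$. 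That is the wrong function: the integrand pairs the \emph{frozen} form $\d f_t$ with the \emph{varying} measure-field $v_r\,\d\mu_r$, so being a Lebesgue point of the diagonal map $s\mapsto\Psi(s,s)$ gives no information. Nor does Lebesgue differentiation of $r\mapsto\Psi(t,r)$ for each fixed $t$ help, since one needs the point $r=t$ itself to be a Lebesgue point, and a "diagonal Lebesgue point" statement for a merely Borel family $(v_t)$ is false in general (consider $\Psi(t,r)=\nchi_{\{r>t\}}$). Under the hypotheses $(f_t)\in L^\infty([0,1],W^{1,2}(\X))$ one also cannot trade $\d f_t$ for $\d f_r$, as there is no continuity of $t\mapsto\d f_t$ in $L^2(T^*\X)$. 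The gap is repairable: writing $\Psi(t,r)=\langle \d f_t, \rho_r v_r\rangle$ with $\|\rho_r v_r\|_{L^2(T\X)}$ uniformly bounded, separability of $L^2(T\X)$ gives that a.e.\ $t$ is a Lebesgue point of $r\mapsto\langle e_k,\rho_r v_r\rangle$ simultaneously for a countable dense family $(e_k)$, hence a weak-$L^2$ Lebesgue point of $r\mapsto\rho_r v_r$, which is exactly what is needed to pair against the fixed element $\d f_t$. But as written, the step fails.

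A second, smaller issue sits in $(c)\Rightarrow(a)$: knowing the identity $f(\Fl_t(x))-f(\Fl_s(x))=\int_s^t\d f(v_r)(\Fl_r(x))\,\d r$ for all \emph{rational} $s<t$ does not by itself place $t\mapsto f(\Fl_t(x))$ in $W^{1,1}(0,1)$, because for $f\in W^{1,2}(\X)$ this map has no a priori continuity and could differ from its absolutely continuous candidate on an uncountable set of irrational times. The fix is to run your vanishing-integral argument for each \emph{fixed} $t\in[0,1]$ (with $s=0$), obtaining the identity for $\mm$-a.e.\ $x$ with a null set depending on $t$, and then apply Fubini on $\mm\otimes\mathcal L^1$ to conclude agreement for a.e.\ $t$ for $\mm$-a.e.\ $x$. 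The remaining parts—the weak $L^2$ continuity of $\rho_t$ from narrow continuity plus the uniform density bound, the strong-weak pairing for the first piece, and the absolute continuity estimate—are correct.
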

\begin{proof}\ \\
\st{$(a)\Rightarrow(b)$} By \eqref{eq:daRLF}, \eqref{eq:spRLF} and the assumption $|v_t|\in L^\infty_{t,x}$ we see that  $\ppi:=(\Fl_\cdot)_*\mu$ is a test plan (see \cite{AmbrosioGigliSavare11}). Then the conclusion  follows from \cite[Lemma 5.3]{GigTam18} combined  with formula \eqref{eq:RLF}.\\
\st{$(b)\Rightarrow(c)$} Obvious.

\st{$(c)\Rightarrow(a)$} Pick $f_t\equiv f\in W^{1,2}(\X)$ and then apply \cite[Proposition 2.7]{GR17}.
\end{proof}
\subsection{The variational principle}
The proof of our variational principle will come by combining suitable existence and uniqueness statements. We start with the former:
\begin{proposition}[``Existence"]\label{prop:exist}
Let $(\X,\sfd,\mm)$ be an $\RCD(K,\infty)$ space and $f:\X\to\R$ bounded and lower semicontinuous. Assume that 
for some $C>0$ we have
\begin{subequations}
\begin{align}
\label{eq:lapass}
\bd Q_tf&\leq C\mm,\qquad\qquad\forall t\in[0,1],\\
\label{eq:ugualass}
|\d Q_tf|(x)&=\lip(Q_tf)(x)\qquad\mm\times\mathcal L^1-a.e.\ (x,t)\in\X\times[0,1].
\end{align}
\end{subequations} 

Then for every bounded probability density $\rho$ with bounded support there is $\ppi\in\pr(C([0,1],\X))$ with $(\e_0)_*\ppi=\rho\mm$ such that
\begin{subequations}
\begin{align}
\label{eq:regestpi}
(\e_t)_*\ppi&\leq e^{tC}\|\rho\|_{L^\infty}\mm,\qquad\forall t\in[0,1]\\
\label{eq:daexist}
\int  f\,\d(\e_{1})_*\ppi+ \frac12 \iint_0^{1}|\dot\gamma_t|^2\,\d t\,\d\ppi(\gamma)&\leq\int Q_1f\,\d(\e_0)_*\ppi.
\end{align}\end{subequations}
\end{proposition}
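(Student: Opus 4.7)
I will realize $\ppi$ as the law of trajectories of a Regular Lagrangian Flow $\Fl$ for the vector field $v_t:=-\nabla Q_{1-t}f$: these are the characteristics of the Hamilton--Jacobi equation underlying the Hopf--Lax formula. Heuristically, Proposition \ref{prop:RLF}\,(b) applied with $\mu=\rho\mm$ and $\mu_t:=(\Fl_t)_*\mu$, combined with the Hamilton--Jacobi identity \eqref{eq:HLHJ2} and the hypothesis \eqref{eq:ugualass}, gives
\[
\frac{\d}{\d t}\int Q_{1-t}f\,\d\mu_t = \int\tfrac12\lip(Q_{1-t}f)^2\,\d\mu_t-\int|\d Q_{1-t}f|^2\,\d\mu_t = -\tfrac12\int|\d Q_{1-t}f|^2\,\d\mu_t.
\]
Integrating from $0$ to $1$ and identifying $|\d Q_{1-t}f|(\Fl_t(x))=|v_t|(\Fl_t(x))$ with the metric speed via \eqref{eq:spRLF} would in fact give equality in \eqref{eq:daexist}; the compression bound \eqref{eq:regestpi} follows from \eqref{eq:daRLF} together with $(\div v_t)^-=(\Delta Q_{1-t}f)^+\leq C$, which comes from \eqref{eq:lapass}.

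\textbf{Approximation and RLFs.} The vector field $v_t$ does not fit the regularity class of Lemma \ref{le:perRLF} verbatim: $Q_{1-t}f$ is a priori neither in $L^1$ nor globally in $D(\Delta_{loc})$. Fix $\bar x$ such that $\supp(\rho)\subset B_r(\bar x)$; by \eqref{eq:lipQt} any absolutely continuous curve of speed at most $\sqrt{2\osc(f)}$ starting in $\supp(\rho)$ is confined to $B_{R_0}(\bar x)$ for $R_0:=r+\sqrt{2\osc(f)}$. Pick a cutoff $\eta\in\tvar^+(\X)$ equal to $1$ on $\overline{B_{R_0+1}(\bar x)}$ with support in $B_{R_0+2}(\bar x)$, obtained via \eqref{eq:liptvar} from a bounded-support Lipschitz seed. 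For each $n\in\N$ set
\[
g^n_t:=\eta\cdot\tilde\h_n\bigl(Q_{1-t}f-\inf f\bigr).
\]
By \eqref{eq:extappr}, \eqref{eq:calclap} and the fact that $\eta\in\tvar^+(\X)$, $g^n_t\in\Lip_\bs(\X)\cap D(\Delta_{loc})$ with $\sup_t\bigl(\Lip(g^n_t)+\|\Delta g^n_t\|_{L^\infty}\bigr)<\infty$ for each $n$, so Lemma \ref{le:perRLF} and Theorem \ref{thm:AT} yield an RLF $\Fl^n$ of $v^n_t:=-\nabla g^n_t$. By \eqref{eq:stabub2} applied to \eqref{eq:lapass}, $\Delta\tilde\h_n(Q_{1-t}f-\inf f)\leq C$ $\mm$-a.e., so on the bulk $\{\eta\equiv 1\}$ the divergence of $v^n_t$ satisfies $(\div v^n_t)^-\leq C$. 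Since the trajectories of $\Fl^n$ starting in $\supp(\rho)$ never exit this bulk, the relevant part of the compression estimate \eqref{eq:daRLF} is controlled by $C$ uniformly in $n$.

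\textbf{Passage to the limit.} Set $\ppi^n:=(\Fl^n_\cdot)_*(\rho\mm)$. The densities of $(\e_t)_*\ppi^n$ are bounded by $e^{tC+o(1)}\|\rho\|_{L^\infty}$ and supported in $B_{R_0}(\bar x)$, hence tight by Ulam's theorem; together with the uniform H\"older bound $\sfd(\gamma_s,\gamma_t)\leq\sqrt{2\osc(f)}\,|t-s|^{1/2}$ for $\ppi^n$-a.e.\ $\gamma$ this yields tightness of $(\ppi^n)$ in $\pr(C([0,1],\X))$. Extract a weak subsequential limit $\ppi^n\weakto\ppi$: the properties $(\e_0)_*\ppi=\rho\mm$ and \eqref{eq:regestpi} pass to the limit, while \eqref{eq:daexist} follows from the lower semicontinuity of $\gamma\mapsto f(\gamma_1)$ (as $f$ is l.s.c.\ and bounded) and of $\gamma\mapsto\frac12\int_0^1|\dot\gamma_t|^2\,\d t$, provided we establish the approximate energy identity $\frac12\iint_0^1|\dot\gamma_s|^2\,\d s\,\d\ppi^n(\gamma)+\int f\,\d(\e_1)_*\ppi^n\leq\int Q_1f\,\rho\,\d\mm+o(1)$. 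This last step is the main technical obstacle: since $\tilde\h_n$ does not commute with the Hopf--Lax semigroup, $g^n_t$ does not literally satisfy Hamilton--Jacobi, and one must carefully control the error terms involving $|\d\tilde\h_n Q_{1-t}f|$ versus $\tilde\h_n|\d Q_{1-t}f|$ (via the Bakry--\'Emery estimate \eqref{eq:BE}) and $\lip(Q_{1-t}f)$ versus $|\d Q_{1-t}f|$ (via assumption \eqref{eq:ugualass}) when differentiating $s\mapsto\int g^n_s\,\d(\e_s)_*\ppi^n$ through Proposition \ref{prop:RLF}\,(b), using the equibounded compression of $(\e_s)_*\ppi^n$ to dominate uniformly in $s\in[0,1]$.
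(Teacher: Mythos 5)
Your overall strategy is the same as the paper's: build Regular Lagrangian Flows for mollified versions of $v_t=-\nabla Q_{1-t}f$, push $\rho\mm$ forward, prove tightness, and obtain \eqref{eq:daexist} in the limit from a Hamilton--Jacobi energy identity plus lower semicontinuity. However, the proof as written stops exactly at its crux. The approximate energy inequality
\[
\tfrac12\iint_0^{1}|\dot\gamma_s|^2\,\d s\,\d\ppi^n(\gamma)+\int f\,\d(\e_1)_*\ppi^n\ \leq\ \int Q_1f\,\rho\,\d\mm+o(1)
\]
is precisely where the hypotheses \eqref{eq:ugualass} and \eqref{eq:lapass} must be put to work, via the derivative formula of Proposition \ref{prop:RLF} applied to $s\mapsto\int \tilde\h_nQ_{1-s}f\,\d\mu^n_s$ and a proof that the error term $\int_0^{\cdot}\int\big(|\nabla\tilde\h_nQ_{1-t}f|^2-\tilde\h_n(\lip^2(Q_{1-t}f))\big)\,\d\mu^n_t\,\d t$ vanishes (using strong $L^2_{loc}$ convergence of the mollified gradients, Bakry--\'Emery, and the uniformly bounded densities and supports of $\mu^n_t$). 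You name this "the main technical obstacle" and list what "one must" control, but you do not carry it out; without it the proposition is not proved, since tightness alone does not even give finiteness of the kinetic energy of the limit plan.

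Two further concrete problems. First, the uniform speed bound $\sqrt{2\osc(f)}$ is not available: the Lipschitz constant of $Q_sf$ scales like $\sqrt{2\osc(f)/s}$ (test $f=\nchi_{(0,\infty)}$ on $\R$, where $Q_tf(x)=\min(x_+^2/(2t),1)$ has Lipschitz constant $\sqrt{2/t}$), so $|v_t|$ degenerates as $t\uparrow1$. Only the total length of trajectories is uniformly bounded. This degeneration is exactly why the paper runs the argument on $[0,1-\delta]$, where Proposition \ref{prop:RLF} applies and $\partial_t Q_{1-t}f=\tfrac12\lip^2(Q_{1-t}f)$ is bounded, and then diagonalizes in $\delta$, recovering the endpoint $t=1$ through the lower semicontinuity of $(t,x)\mapsto Q_tf(x)$ at $t=0$ and Fatou's lemma for weakly converging measures; your proposal has no mechanism for reaching $t=1$. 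Second, the cutoff $g^n_t=\eta\,\tilde\h_n(Q_{1-t}f-\inf f)$ breaks the divergence bound: $\div(v^n_t)=-\eta\Delta\tilde\h_nQ_{1-t}f-2\nabla\eta\cdot\nabla\tilde\h_nQ_{1-t}f-\tilde\h_n(Q_{1-t}f-\inf f)\Delta\eta$, whose negative part on $\{\nabla\eta\neq0\}$ is not controlled by $C$, so \eqref{eq:daRLF} (which takes the global $L^\infty$ norm of $(\div v^n_t)^-$) does not yield \eqref{eq:regestpi} with the constant $e^{tC}$. You would need a localized compression estimate along $\supp(\mu^n_t)$, which you assert but do not prove. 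Note that the paper's formulation of Theorem \ref{thm:AT} is already localized (it only asks $\varphi v_t\in D(\div)$ for cutoffs $\varphi$), precisely so that no cutoff of the potential is needed and $\div(v^n_t)=-\Delta\tilde\h_nQ_{1-t}f\geq-C$ holds globally via \eqref{eq:stabub2} and \eqref{eq:lapass}.
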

\begin{proof}
For $n\in \N$, $n>0$ and $t\in[0,1]$ we  define $f^n_t:=\tilde \h_n Q_tf$ (recall  \eqref{eq:tildehn}) and then $v^n_t:=-\nabla f^n_{1-t}$. It is  clear from \eqref{eq:boundovvi2}, \eqref{eq:extappr} and Lemma \ref{le:perRLF} that for every $n\in\N$ the vectors $(v^n_t)$ satisfy the assumptions of Theorem \ref{thm:AT} and  (using also \eqref{eq:lapass}, \eqref{eq:stabub2})  that $\div(v^n_t)\geq -C$ for any $t,n$. It follows that for every $n\in\N$ the vector fields $(v^n_t)$ admit a (unique) RLF $(\Fl^n_t)$ satisfying
\begin{equation}
\label{eq:qf}
(\Fl_t^n)_*\mm\leq e^{tC}\mm\qquad\forall t\in[0,1],\ n\in\N,\ n>0.
\end{equation}
Let  $\rho$ be as in the statement, put $\mu:=\rho\mm$ and $\ppi^n:=(\Fl^n_\cdot)_*\mu\in\pr(C([0,1],\X))$: the desired plan $\ppi$ will be found as weak limit of these. Start noticing that \eqref{eq:qf} gives
\begin{equation}
\label{eq:bdpinappr}
(\e_t)_*\ppi_n\leq   e^{tC}\|\rho\|_{L^\infty}\mm \qquad\forall t\in[0,1],\ n\in\N,\ n>0
\end{equation}
and that  for  $\delta\in(0,1)$ from \eqref{eq:lipQt} and \eqref{eq:BElip} we see that 
\begin{equation}
\label{eq:unifdelta}
\Lip(Q_{1-t}f),\Lip(f^n_{1-t}),\||v^n_t|\|_{L^\infty}\leq C(\delta)\qquad\forall t\in[0,1-\delta],\ n\in\N,\ n>0.
\end{equation}

For later use we also observe that
\begin{equation}
\label{eq:limvt}
\lim_{n\to\infty}\nchi_Bv^n_t= -\nchi_B\nabla Q_{1-t}f\qquad\text{ in }L^2(T\X),\text{  $\forall t\in[0,1)$ and $B\subset\X$ Borel and bounded.}
\end{equation}
Indeed, for $t\ni[0,1)$, and $\eta\in\Lip_\bs(\X)$ identically 1 on $B$ the functions $\eta f^n_{1-t}$ are uniformly Lipschitz (by  \eqref{eq:unifdelta}), have uniformly bounded supports and converge pointwise to $\eta Q_{1-t}f$ as $n\to\infty$. Thus the reflexivity of $L^2(T^*\X)$ and the closure of the differential ensure that $\d (\eta f^n_{1-t})\weakto \d (\eta Q_{1-t}f)$ in the weak topology of $L^2(T^*\X)$. By locality we deduce that $\nchi_Bv^n_t\weakto -\nchi_B\nabla Q_{1-t}f$ and since \eqref{eq:BE} implies $\lims_n\int_B|v^n_t|^2\,\d\mm\leq \int_B|\d Q_{1-t}f|^2\,\d\mm $ the claim \eqref{eq:limvt} follows.

Thus denoting by $\ppi^n_\delta$ the restriction  of $\ppi^n$ to $[0,1-\delta]$ (more precisely: the image of $\ppi^n$ under the restriction map that takes $\gamma\in C([0,1],\X)$ and returns $\gamma\restr{[0,1-\delta]}\in C([0,1-\delta],\X)$) we see from \eqref{eq:spRLF} and \eqref{eq:unifdelta} that the plans $\ppi^n_\delta$ are concentrated on equiLipschitz curves. In particular, from \eqref{eq:bdpinappr} and the boundedness of $\supp(\rho)$ we see that $\supp(\mu^n_t)\subset B(\delta)$ for some bounded Borel set $B(\delta)\subset\X$ and every $n\in\N$, $n>0$, $t\in[0,1-\delta]$. This is enough to prove that $(\ppi^n_\delta)_{n}$ is tight, indeed from the tightness of $\mm\restr{B(\delta)}$  there is a function  $\varphi:\X\to[0,\infty]$ with compact sublevels such that $\int_{B(\delta)}\varphi\,\d\mm<\infty$, then the function
\[
\Phi(\gamma):=\int_0^{1-\delta}\big(\varphi(\gamma_t)+|\dot\gamma_t|^2\big)\,\d t
\]
also has compact sublevels in $C([0,1-\delta],\X)$ (see \cite[Lemma 5.8]{GMS15}) and \eqref{eq:unifdelta},\eqref{eq:bdpinappr},\eqref{eq:spRLF} give
\[
\int\Phi\,\d\ppi^n_\delta=\int_0^{1-\delta} \int\varphi\,\d(\e_t)_*\ppi^n\,\d t+\iint_0^{1-\delta}|\dot\gamma_t|^2\,\d t\,\d\ppi^n(\gamma)\stackrel{}\leq e^{C}\|\rho\|_\infty\int_{B(\delta)}\varphi\,\d\mm+C(\delta)^2
\]
for every $n$. By Prokhorov's theorem this proves the tightness of $(\ppi^n_\delta)$ so that up to pass to a non-relabeled subsequence we can assume that it weakly converges to some $\ppi_\delta\in C([0,1-\delta],\X)$.

Now observe that   \eqref{eq:HLHJ2} and \eqref{eq:unifdelta} tell that for any $\eta\in\Lip_\bs(\X)$ the curve $t\mapsto \eta f^n_{1-t}$ is in  $AC([0,1-\delta],L^2(\X))\cap L^\infty([0,1-\delta],W^{1,2}(\X))$ and $\frac\d{\d t}(\eta f^n_{1-t})= \tfrac12\eta\,\tilde\h_n(\lip^2(Q_{1-t}f))$. Then Proposition \ref{prop:RLF} (applied in $[0,1-\delta]$ rather than $[0,1]$) gives that $[0,1-\delta]\ni t\mapsto \int\eta f^n_{1-t}\circ\e_t\,\d\ppi^n$ is  absolutely continuous and formula \eqref{eq:leibfl}  yields
\[
\begin{split}
\int \eta f^n_{\delta }\,\d\mu^n_{1-\delta}&-\int \eta f^n_{1}\,\d\mu^n_0\\
&=\int_{0}^{1-\delta}\bigg(\int \eta \Big(\tfrac12\tilde\h_n(\lip^2(Q_{1-t}f))-|\d f^n_{1-t}|^2\Big)-f^n_{1-t}\d\eta\cdot\d f^n_{1-t}\,\d\mu^n_t\bigg)\,\d t,
\end{split}
\]
where $\mu^n_t:=(\e_t)_*\ppi^n$.  Picking $\eta\equiv 1$ on $B(\delta)$ we get
\begin{equation}
\label{eq:integr}
\begin{split}
\int f^n_{1}\,\d\mu^n_0-\int f^n_{\delta}\,\d\mu^n_{1-\delta}&=\int_{0}^{1-\delta}\int |\d f^n_{1-t}|^2-\tfrac12\tilde\h_n(\lip^2(Q_{1-t}f))\,\d\mu^n_t\,\d t\\
\text{(by \eqref{eq:spRLF})}\qquad&=\tfrac12\iint_0^{1-\delta}|\dot\gamma_t|^2\,\d t\,\d\ppi^n(\gamma)+\tfrac12 \underbrace{\int_{0}^{1-\delta}\!\!\!\!\int |v^n_t|^2-\tilde\h_n(\lip^2(Q_{1-t}f))\,\d\mu^n_t\,\d t}_{=:A_n}.
\end{split}
\end{equation}
We wish to pass to the limit in this identity and notice that by  \eqref{eq:lipQt} and  \eqref{eq:l1loc}  we get that the functions $(\tilde\h_n(\lip^2(Q_{1-t}f)))_{n\in\N}$ are uniformly bounded and converge in $L^1_{loc}$  to $\lip^2(Q_{1-t}f)$ as $n\to\infty$. This, the convergence \eqref{eq:limvt}, the uniform bound \eqref{eq:unifdelta},  the assumption  \eqref{eq:ugualass} together with  the weak convergence of $\mu^{n}_t$ to $\mu_t:=(\e_t)_*\ppi_\delta$ and the fact that  these measures have uniformly bounded supports and densities (by \eqref{eq:bdpinappr}) give $A_n\to 0$ as $n\to\infty$.

Similarly, we have $\int f^{n}_{1-t}\,\d\mu^{n}_{t}\to \int Q_{1-t} f\,\d \mu_{t}$ for every $t\in[0,1]$, so that passing to the limit in the above taking into account the lower semicontinuity of the kinetic energy gives
\begin{equation}
\label{eq:ancheperunic}
\int Q_1f\,\d\mu_0-\int Q_\delta f\,\d \mu_{1-\delta}\geq\frac12\iint_0^{1-\delta}|\dot\gamma_t|^2\,\d t\,\d\ppi_\delta(\gamma).
\end{equation}
We now wish to send $\delta\downarrow 0$ and to this aim we apply the construction above to  $\delta=\delta_j\downarrow0$ and proceed by diagonalization to find plans $\ppi_{\delta_j}\in \pr(C([0,1-\delta_j],\X))$ so that $\ppi_{\delta_j}$ is the `restriction' - in the sense previously made rigorous - of $\ppi_{\delta_k}$ for $k>j$. From \eqref{eq:ancheperunic} and \eqref{eq:boundovvi2} we also deduce the uniform bound
\begin{equation}
\label{eq:percomp}
\iint_0^{1-\delta}|\dot\gamma_t|^2\,\d t\,\d\ppi_\delta(\gamma)\leq 2\int Q_1f(\gamma_0)-Q_\delta f(\gamma_{1-\delta})\,\d\ppi_\delta(\gamma)\leq 2\osc(f)
\end{equation}
and from this we infer that there is $\ppi\in C([0,1],\X)$ whose `restriction' to $[0,1-\delta_i]$ is $\ppi_{\delta_j}$ for every $j\in\N$: just `extend' each $\ppi_{\delta_j}$ to a probability measure on $C([0,1],\X)$ imposing that curves in the support are constant on $[1-\delta_j,1]$ and notice that the resulting sequence has uniformly bounded  kinetic energy and tight marginals (by \eqref{eq:bdpinappr} and the uniform bound on second moments that follows from \eqref{eq:percomp}). Then weak compactness follows from the same arguments previously used.

Finally, we recall the lower semicontinuity property in \eqref{eq:contqt}  and Fatou's lemma for weakly converging sequences of measures (see e.g.\ \cite[Lemma 2.5]{GDP17}) to see that $\limi_{j}\int Q_{\delta_j} f\,\d \mu_{1-\delta_j}\geq \int f\,\d\mu_0$. Therefore we can pass to the limit in \eqref{eq:ancheperunic} and conclude.
\end{proof}
We turn to uniqueness:
\begin{proposition}[``Uniqueness"]\label{prop:unique}
Let $(\X,\sfd,\mm)$ be an $\RCD(K,\infty)$ space, $K\in\R$, $f:\X\to\R$ be bounded and lower semicontinuous  and $\mu\in\pr(\X)$ be with $\mu\ll\mm$.. Then there is at most one plan $\ppi\in \pr(C([0,1],\X))$ such that  $(\e_0)_*\ppi=\mu$, $(\e_1)_*\ppi\ll\mm$ and
\begin{equation}
\label{eq:perunicog}
\int  f\,\d(\e_{1})_*\ppi+\frac12 \iint_0^{1}|\dot\gamma_t|^2\,\d t\,\d\ppi(\gamma)\leq \int Q_1f\,\d(\e_0)_*\ppi.
\end{equation}
Moreover, such plan (if it exists) satisfies the following:
\begin{itemize}
\item[i)] It is induced by a map, namely there is $F:\X\to C([0,1],\X)$ Borel such that $\e_0\circ F={\rm Id}_\X$ $\mm$-a.e.\ and $\ppi=F_*\mu$.
\item[ii)] It is concentrated on geodesics of length bounded above by $\sqrt{2\osc(f)}$.
\item[iii)]  For any $\gamma\in\supp(\ppi)$  we have
\[
Q_{1}f(\gamma_0)=f(\gamma_1)+\frac12\int_0^1|\dot\gamma_t|^2\,\d t =f(\gamma_1)+\frac{\sfd^2(\gamma_1,\gamma_0)}{2}.
\]
\item[iv)] $\ppi$ is an optimal geodesic plan from $(\e_0)_*\ppi$ to $(\e_1)_*\ppi$ for the cost $c(x,y):=\frac{\sfd^2(x,y)}{2}$.
\item[v)] $Q_1f$ is a Kantorovich potential for $(\e_0,\e_1)_*\ppi$.
\item[vi)] For any $t\in (0,1)$ we have
\begin{equation}
\label{eq:bellaid2}
\lip(Q_tf)=|\d Q_tf|\qquad(\e_{1-t})_*\ppi-a.e..
\end{equation}
\item[vii)] If $(\e_t)_*\ppi\leq C\mm$ for some $C>0$ and every $t\in[0,1]$, then
$\ppi$ represents the gradient of $-Q_1f$ (in the sense of  \cite[Definition 3.7]{Gigli12} with $q=2$). 
\end{itemize}
If the space $(\X,\sfd,\mm)$ is $\RCD(K,N)$ for some $N<\infty$, then the same conclusions hold without the need of assuming $(\e_1)_*\ppi\ll\mm$ and for $\mu$-a.e.\ $x$ there is only one minimizer for $f(\cdot)+\frac{\sfd^2(\cdot,x)}{2}$.
\end{proposition}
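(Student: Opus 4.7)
The strategy is to extract rigidity from inequality \eqref{eq:perunicog} via Young's inequality. For any $\gamma \in C([0,1],\X)$, the definition of $Q_1 f$ gives $Q_1 f(\gamma_0) \leq f(\gamma_1) + \frac{\sfd^2(\gamma_0,\gamma_1)}{2}$, while Cauchy--Schwarz yields $\frac{\sfd^2(\gamma_0,\gamma_1)}{2} \leq \frac{1}{2}\int_0^1 |\dot\gamma_t|^2\,\d t$. Integrating against $\ppi$ and combining with \eqref{eq:perunicog} forces both to be equalities for $\ppi$-a.e.\ $\gamma$. This yields (iii) directly, and (ii) follows since the bound $\sfd^2(\gamma_0,\gamma_1)/2 \leq \osc(f)$ is available from \eqref{eq:stimadqt2}.

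For (iv) and (v): the $\ppi$-a.e.\ identity $Q_1 f(\gamma_0) + (-f(\gamma_1)) = \frac{\sfd^2(\gamma_0,\gamma_1)}{2}$ combined with the pointwise inequality $Q_1 f(x) + (-f(y)) \leq \frac{\sfd^2(x,y)}{2}$ (immediate from the definition of $Q_1 f$) is precisely the equality case in Kantorovich duality for the cost $c(x,y) = \frac{\sfd^2(x,y)}{2}$: thus $(\e_0,\e_1)_*\ppi$ is an optimal plan with Kantorovich potential $Q_1 f$, and since $\ppi$ is concentrated on constant-speed geodesics (by step one), it is an optimal geodesic plan. For (i) and the uniqueness of $\ppi$: because $\mu = (\e_0)_*\ppi \ll \mm$ and $(\e_1)_*\ppi \ll \mm$, the theorem of Gigli \cite{Gigli12a} on existence and uniqueness of optimal maps in $\RCD(K,\infty)$ spaces applies, giving that $(\e_0,\e_1)_*\ppi$ is induced by a unique map $T$. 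In turn, the optimal geodesic plan between absolutely continuous measures in $\RCD(K,\infty)$ is uniquely determined and concentrated on a Borel selection of geodesics, which can be lifted to a map $F\colon \X \to C([0,1],\X)$ with $\ppi = F_*\mu$; applying this to any two candidate plans with the same second marginal yields equality. Since the target $(\e_1)_*\ppi$ is itself forced by the structure (minimizers are pushed forward by $T$ which is determined $\mu$-a.e.\ by $Q_1 f$), uniqueness of the full plan follows.

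For (vi) and (vii), the plan $\ppi$ satisfies $\frac{1}{2}\iint_0^1 |\dot\gamma_t|^2\,\d t\,\d\ppi = \int Q_1 f - f\circ\e_1 \,\d\ppi$ by the equality case above; rescaling to time $t \in [0,1]$ and using that sub-curves of geodesics realizing this identity with endpoints $\gamma_0, \gamma_{1-t}$ also satisfy $Q_{1-t} f(\gamma_0) = f(\gamma_{1-t}) + \frac{\sfd^2(\gamma_0, \gamma_{1-t})}{2(1-t)}$ together with \eqref{eq:HLHJ2}, one verifies the infinitesimal energy-dissipation identity defining gradient representation in the sense of \cite[Definition 3.7]{Gigli12}; this gives (vii), and \eqref{eq:bellaid2} in (vi) is the characterization of curves representing the gradient, comparing $|\d Q_t f|$ with $\lip(Q_t f)$ via \eqref{eq:lipfdf}. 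The main obstacle is this last verification, since one must manipulate time-parametrized Hopf--Lax identities along $\ppi$ and show they force the pointwise equality of weak upper gradient and local Lipschitz constant along the flow; the regularity assumption $(\e_t)_*\ppi \leq C\mm$ is precisely what allows these pointwise statements to be tested against the measure. Finally, in the $\RCD(K,N)$ case the result of Rajala--Sturm \cite{RajalaSturm12} yields that every optimal transport plan with absolutely continuous source is induced by a map, so the hypothesis $(\e_1)_*\ppi \ll \mm$ may be dropped; $\mu$-a.e.\ uniqueness of the minimizer of $f(\cdot) + \frac{\sfd^2(x,\cdot)}{2}$ follows since two distinct minimizers at a positive-measure set of $x$'s would produce two distinct optimal maps, contradicting this uniqueness.
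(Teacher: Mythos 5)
Your rigidity argument for items (ii)--(v) is sound and is essentially the paper's: forcing equality in $Q_1f(\gamma_0)-f(\gamma_1)\leq\tfrac12\sfd^2(\gamma_0,\gamma_1)\leq\tfrac12\int_0^1|\dot\gamma_t|^2\,\d t$ $\ppi$-a.e.\ and reading off the Kantorovich duality. The first genuine gap is in the uniqueness of $\ppi$. You claim the second marginal is ``forced by the structure'' because the optimal map $T$ is ``determined $\mu$-a.e.\ by $Q_1f$''; but that determination is precisely $\mu$-a.e.\ uniqueness of the minimizer of $f(\cdot)+\tfrac12\sfd^2(x,\cdot)$, which in the statement is only asserted (and only provable) in the $\RCD(K,N)$ case -- you cannot invoke it on an $\RCD(K,\infty)$ space, so your argument is circular there. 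The standard repair, and the one the paper uses, is convexity: if $\ppi\neq\ppi'$ both satisfy the hypotheses, then \eqref{eq:perunicog} and the marginal constraints are linear in the plan, so $\tfrac12(\ppi+\ppi')$ is also admissible, hence (by the same optimal-map theorem) induced by a map, which is impossible. A minor point in the same vein: dropping $(\e_1)_*\ppi\ll\mm$ in finite dimension rests on \cite[Theorem 1.1]{GigliRajalaSturm13}, not on \cite{RajalaSturm12}, whose hypotheses are what force the absolute continuity assumption in the infinite-dimensional statement in the first place.

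The second, more serious, gap is item (vi). Inequality \eqref{eq:lipfdf} gives $|\d Q_tf|\leq\lip(Q_tf)$, i.e.\ the \emph{trivial} direction, so citing it cannot yield \eqref{eq:bellaid2}: what must be produced is the reverse bound $\lip(Q_tf)\leq|\d Q_tf|$ at $(\e_{1-t})_*\ppi$-a.e.\ point, and your sketch contains no mechanism for it. The argument requires introducing the backward interpolated potential $Q_{1-t}(-Q_1f)$: both $Q_tf$ and $Q_{1-t}(-Q_1f)$ are Kantorovich potentials for the two restricted transport problems emanating from the intermediate time, their sum is $\geq 0$ everywhere with equality along $\supp(\ppi)$, the metric Brenier theorem identifies $|\d Q_tf|=|\partial^+Q_tf|=\sfd(\gamma_0,\gamma_1)=|\partial^+Q_{1-t}(-Q_1f)|$ at the relevant interpolated points, and the two-sided comparison coming from the equality case dominates the \emph{descending} slope $|\partial^-Q_tf|$ by $|\partial^+Q_{1-t}(-Q_1f)|$; since $\lip g=|\partial^+g|\vee|\partial^-g|$, equality follows. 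This slope comparison is the entire content of (vi), which is the property that later fuels \eqref{eq:slopedqtf} in Theorem \ref{prop:varregf}, so it cannot be waved through. Likewise (vii) needs an actual verification against \cite[Proposition 3.11]{Gigli12} via the metric Brenier theorem (after restricting $\ppi$ to achieve the bounded-density hypothesis), not only the energy-dissipation heuristic you describe.
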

\begin{proof} For arbitrary $\gamma\in C([0,1],\X)$ the definition of $Q_1f$ gives
\[
Q_1f(\gamma_0)- f(\gamma_{1})\stackrel{(*)}\leq \frac{\sfd^2(\gamma_0,\gamma_{1})}{2}.
\]
On the other hand, the assumption \eqref{eq:perunicog} tells
\begin{equation}
\label{eq:stimalungh}
\int Q_1f\circ\e_0- f\circ\e_1\,\d\ppi\geq \frac12\iint_0^{1}|\dot\gamma_t|^2\,\d t\,\d\ppi(\gamma)\stackrel{(**)}\geq \int \frac{\sfd^2(\gamma_0,\gamma_{1})}{2}\,\d\ppi(\gamma)
\end{equation}
thus forcing $(**)$ to be an equality and  $(*)$ to be an equality for $\ppi$-a.e.\ $\gamma$. Equality in $(**)$ holds iff $\ppi$-a.e.\ $\gamma$ is a (constant speed) geodesic, and since the collection of geodesics is a closed subset of $C([0,1],\X)$, property $(ii)$ follows (the estimate on the length also comes from \eqref{eq:stimalungh}). A similar argument by continuity based on the continuity of $Q_1f$ (recall \eqref{eq:contqt}), on the lower semicontinuity of $f$ and on $\ppi$-a.e.\ equality on $(*)$ tells that equality in $(*)$ holds for every $\gamma\in\supp(\ppi)$. It is now clear that item $(iii)$ holds.

Now observe that for the cost function $c(x,y):=\frac{\sfd^2(x,y)}{2}$ we have  $Q_1f=(- f)^c$ and since the identity $g^{cc}\geq g$ holds for arbitrary functions $g$ (see e.g.\ \cite[Proposition 2.2.9 (ii)]{G11}), from the fact that $(*)$ is an equality we deduce
\begin{equation}
\label{eq:perKP}
Q_1f(\gamma_0)+(- f)^{cc}(\gamma_{1})\geq Q_1f(\gamma_0)- f(\gamma_{1})=  \frac{\sfd^2(\gamma_0,\gamma_{1})}{2}\qquad\forall\gamma\in\supp(\ppi).
\end{equation}
This is sufficient to ensure that $(\e_0,\e_1)_*\ppi$ is $c$-optimal and since we already established that $\ppi$ is concentrated on geodesics, $(iv)$ follows. Item $(v)$ is equivalent to \eqref{eq:perKP}. For $(vi)$ we notice that considering the (normalized) restriction of $\ppi$ to $\{\gamma:\frac{\d(\e_0)_*\sppi}{\d\mm}(\gamma_0)+\frac{\d(\e_1)_*\sppi}{\d\mm}(\gamma_1)+\sfd(\gamma_0,\bar x)+\sfd(\gamma_1,\bar x)\leq n\}$ for $n\gg1$ and keeping in mind the estimate in \cite[Theorem 4.2]{AmbrosioGigliMondinoRajala12} we can assume that $(\e_t)_*\ppi\leq C\mm$ for every $t\in[0,1]$. Now  keeping in mind item $(ii)$, by \cite[Proposition 3.11]{Gigli12} we need to prove that
\[
\lim_{t\downarrow0}\frac{Q_1f(\gamma_0)-Q_1f(\gamma_t)}{\sfd(\gamma_0,\gamma_t)}=\sfd(\gamma_0,\gamma_1)  =|\d Q_1f|(\gamma_0)\qquad\text{ in }L^2(\ppi)
\]
and these follow from the metric Brenier theorem \cite[Theorems 10.3, 10.4]{AmbrosioGigliSavare11}. 
For $(vi-b)$ we notice that from $(v)$ and the general theory of interpolation of Kantorovich potentials (see \cite[Theorem 2.2.10]{G11} or \cite[Theorem 7.36]{Villani09} paying attention to the different sign convention) we have that $Q_tf,Q_{1-t}(-Q_1f)$ are Kantorovich potentials for $(\e_t,\e_0)_*\ppi,(\e_t,\e_1)_*\ppi$ respectively and - recalling that these are continuous function - that 
\begin{equation}
\label{eq:perlip}
Q_tf+Q_{1-t}(-Q_1f)\geq 0\quad\text{ on $\X$ with equality at  $\gamma_t$ for any $\gamma\in\supp(\ppi)$}
\end{equation}
Then again the metric Brenier theorem \cite[Theorems 10.3]{AmbrosioGigliSavare11} and a simple scaling show that 
\begin{equation}
\label{eq:qp}
|\d Q_tf|(\gamma_{1-t})=|\partial^+Q_tf|(\gamma_{1-t})=\sfd(\gamma_0,\gamma_1)=|\partial^+Q_{1-t}(-Q_1f)|(\gamma_t)\qquad\ppi-a.e.\ \gamma,
\end{equation}
while \eqref{eq:perlip} gives $Q_tf(\gamma_{1-t})-Q_tf(x)\leq  Q_{1-t}(-Q_1f)(x)-Q_{1-t}(-Q_1f)(\gamma_{1-t})$ for any $x\in\X$ and $\gamma\in \supp(\ppi)$. Taking positive parts this implies $|\partial^-  Q_tf  |(\gamma_t)\leq |\partial^+ Q_{1-t}(-Q_1f)|(\gamma_t)$ that together with \eqref{eq:qp} and the trivial identity $\lip\, g=|\partial^+g|\vee|\partial^-g|$ gives \eqref{eq:bellaid2}.

The fact that $\ppi$ is induced by a map now follows from \cite[Corollary 1.3]{RajalaSturm12}. The uniqueness of $\ppi$ is then a consequence of standard arguments in Optimal Transport: if $\ppi'\neq\ppi$ also satisfies the assumptions, the plan $\tfrac12(\ppi+\ppi')$ would also do so, without being induced by a map.

For the last claims we use uniqueness of optimal maps as stated in \cite[Theorem 1.1]{GigliRajalaSturm13} in place of  \cite[Corollary 1.3]{RajalaSturm12} and notice that if we pick, via a Borel selection argument, for $\mu$-a.e.\ $x$ a geodesic $F(x)$ with $F_0(x)=x$ and $F_1(x)$ minimizer for $f(\cdot)+\frac{\sfd^2(\cdot,x)}{2}$, then what said above     show that $\ppi:=(F_\cdot)_*\mu$ satisfies \eqref{eq:perunicog}. Hence it   is unique and a fortiori so is  $F_1(x)$ for $\mu$-a.e.\ $x\in\X$.
\end{proof}

Our proof of the variational principle Theorem \ref{prop:varregf} will be `fuelled' by the following result, proved in the recent \cite{GTT22}:
\begin{lemma}\label{prop:GTT}
Let $(\X,\sfd,\mm)$ be an $\RCD(K,\infty)$ space, $K\in\R$, and let $f\in\testi$. Then
\begin{subequations}\nonumber
\begin{align}
\bd Q_t f  &\leq \big(\|(\Delta f)^+\|_\infty +tK^-\Lip(f)^2\big)\mm\qquad\forall t\geq 0\\
\lip\, Q_tf(x)&=|\d Q_t f|(x),\qquad\qquad\qquad\qquad\qquad\mm\times\mathcal L^1-a.e.\ (x,t)\in\X\times[0,\infty).
\end{align}
\end{subequations}
\end{lemma}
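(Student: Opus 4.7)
The plan is to obtain both assertions by passing to the $\eps\downarrow 0$ limit in the \emph{viscous approximation} of the Hamilton-Jacobi equation. Via the Cole-Hopf transformation, set
$$u_\eps(t,x):=-2\eps\log\h_{2\eps t}\bigl(e^{-f/(2\eps)}\bigr)(x),$$
so that a direct computation yields the viscous HJ equation
$$\partial_t u_\eps+\tfrac12|\d u_\eps|^2=\eps\Delta u_\eps,\qquad u_\eps|_{t=0}=f,$$
and a Varadhan-type short-time asymptotic for the heat kernel (for which the large deviation upper bound \eqref{eq:LDP} is the crucial ingredient) gives $u_\eps(t,\cdot)\to Q_tf$ locally uniformly as $\eps\downarrow 0$. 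The merit of this approach is that the nonlinear HJ equation is traded for a linear heat equation to which the full $\RCD$ calculus of \cite{AmbrosioGigliSavare11-2}, \cite{Savare13} applies verbatim.

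For the distributional Laplacian bound, the strategy is to derive a pointwise estimate on $w_\eps:=\Delta u_\eps$ that is uniform in $\eps$. Formally differentiating the viscous HJ equation and invoking the $\RCD(K,\infty)$ Bochner inequality
$$\Delta\bigl(\tfrac12|\d u_\eps|^2\bigr)\geq\langle\d u_\eps,\d\Delta u_\eps\rangle+K|\d u_\eps|^2,$$
one arrives at the linear parabolic differential inequality
$$\partial_t w_\eps+\langle\d u_\eps,\d w_\eps\rangle-\eps\Delta w_\eps\leq K^-|\d u_\eps|^2\leq K^-\Lip(f)^2,$$
where the last inequality descends from Bakry-\'Emery \eqref{eq:BE} applied to the Cole-Hopf function $e^{-f/(2\eps)}$, yielding $\Lip(u_\eps(t,\cdot))\leq \Lip(f)$ uniformly in $\eps$ and $t$ (here I would exploit $f\in\testi$ so that $e^{\pm f/(2\eps)}$ is Lipschitz and bounded away from $0$). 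Combining this with the initial datum $w_\eps|_{t=0}=\Delta f\leq\|(\Delta f)^+\|_\infty$ and the parabolic comparison principle produces the pointwise bound
$$w_\eps(t,x)\leq\|(\Delta f)^+\|_\infty+tK^-\Lip(f)^2$$
uniformly in $\eps$. Passing this to the limit in the distributional sense via the stability \eqref{eq:stablub} (using that $u_\eps\to Q_tf$ with uniform $L^\infty$ bounds and that $w_\eps$ is uniformly bounded above, while its negative part is controlled in $L^1_{loc}$ by the heat equation) gives the first assertion.

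For the identity $\lip Q_tf=|\d Q_tf|$ $\mm\times\mathcal L^1$-a.e., one inequality is the general \eqref{eq:lipfdf}. The reverse is obtained by integrating the pointwise Hamilton-Jacobi identity \eqref{eq:HLHJ2} against a nonnegative test function $\varphi\in\tvar^+(\X)$ and comparing with the energy-dissipation identity for $t\mapsto Q_tf$ viewed as a curve in $W^{1,2}$. The latter is transferred from the viscous approximation: since $u_\eps$ satisfies $\partial_t u_\eps=-\tfrac12|\d u_\eps|^2+\eps\Delta u_\eps$ with uniform-in-$\eps$ control on $|\d u_\eps|$ (again by Bakry-\'Emery), integrating against $\varphi$, exploiting $|\eps\int\varphi\Delta u_\eps|=\eps|\int u_\eps\Delta\varphi|\to 0$, and sending $\eps\downarrow 0$ yields $\int\varphi\partial_t Q_tf\,\d\mm=-\tfrac12\int\varphi|\d Q_tf|^2\,\d\mm$ a.e.\ $t$. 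Comparison with the integrated version of \eqref{eq:HLHJ2} and \eqref{eq:lipfdf} forces $\lip Q_tf=|\d Q_tf|$ $\mm$-a.e.\ for a.e.\ $t$. The main obstacle throughout is the rigorous passage between the pointwise, $L^2$-dissipative, and distributional readings of the PDEs satisfied by $u_\eps$ and $Q_tf$; the Cole-Hopf linearization is what makes the uniform estimates on $u_\eps$ attainable using only the standard $\RCD$ toolbox.
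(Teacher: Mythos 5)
Your proposal reproduces precisely the viscous-approximation/Cole--Hopf strategy of \cite{GTT22}, which is the source this paper cites for the lemma (the paper gives no proof of its own and explicitly describes the result as obtained ``by passing to the limit in the viscous approximation of the Hamilton--Jacobi equation''). The only slip is a normalization: to produce the Hamiltonian $\tfrac12|\d u_\eps|^2$ rather than $|\d u_\eps|^2$ one should set $u_\eps(t,\cdot)=-2\eps\log\h_{\eps t}\bigl(e^{-f/(2\eps)}\bigr)$, i.e.\ run the heat flow for time $\eps t$, not $2\eps t$.
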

In the case $K<0$ we will want to improve the above Laplacian upper bound to avoid dependance on the  Lipschitz constant of $f$. To do so we shall use Lemma \ref{le:lapcomb} in conjunction with the following result, that as said in the introduction basically comes from  \cite{MS21}:
\begin{lemma}\label{le:lappunt}
Let  $(\X,\sfd,\mm)$ be a $\RCD(K,\infty)$ space, $K\in\R$, and $f:\X\to\R$ be Borel and bounded. Assume that for some $ x, y\in\X$ and $t>0$ we have $Q_tf( x)=f( y)+\frac{\sfd^2( x, y)}{2t}$. Then
\[
\tilde\Delta Q_tf( x)\leq \tilde\Delta f( y)-K\frac{\sfd^2( x, y)}{t}
\]
\end{lemma}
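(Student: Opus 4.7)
The idea is to differentiate at $s=0$ a Kuwada-type domination between $(\h_s Q_tf)(x)$ and $(\h_s f)(y)$, exploiting that the hypothesis says this domination is saturated at $s=0$.

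The first step is to invoke the bound
\[
\h_s Q_tf(x)\ \leq\ \h_s f(y)\ +\ e^{-2Ks}\,\frac{\sfd^2(x,y)}{2t}\qquad\forall\, s\geq 0,\ x,y\in\X,
\]
which, as noted in the introduction, is obtained by running (a small variant of) the argument in \cite[Lemma~3.4]{AmbrosioGigliSavare12}. Conceptually it reflects the duality between the Bakry--\'Emery $W_2$-contraction of the heat flow with factor $e^{-Ks}$ and the Hopf--Lax semigroup; it can be justified starting from the pointwise inequality $Q_tf(\cdot)\leq f(y)+\sfd^2(\cdot,y)/(2t)$, applying $\h_s$ (which preserves order by the weak maximum principle) to both sides, and controlling $\h_s(\sfd^2(\cdot,y))$ via the $L^2$ Bakry--\'Emery estimate applied to $\sfd(\cdot,y)$.

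The second step is immediate from the hypothesis: at $s=0$ we have $Q_tf(x)=f(y)+\sfd^2(x,y)/(2t)$, which is precisely the equality case of the above inequality at $s=0$. Subtracting this equality from the inequality at $s>0$ and dividing by $s$ yields
\[
\frac{\h_s Q_tf(x)-Q_tf(x)}{s}\ \leq\ \frac{\h_s f(y)-f(y)}{s}\ +\ \frac{e^{-2Ks}-1}{s}\cdot\frac{\sfd^2(x,y)}{2t}.
\]
Take $\lims_{s\downarrow 0}$ on both sides. By the definition \eqref{eq:deftd}, the left-hand side becomes $\tilde\Delta Q_tf(x)$. On the right, the second summand converges (as $s\downarrow 0$) to $-2K\cdot\sfd^2(x,y)/(2t)=-K\,\sfd^2(x,y)/t$; since this is a genuine limit, the $\lims$ of the sum splits as the $\lims$ of the first summand, which is $\tilde\Delta f(y)$ by definition, plus this limit. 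The result follows.

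The only delicate point is justifying the displayed Kuwada-type bound in the form stated, since the paper only cites the reference indirectly. This should not be an obstacle in the genuine sense---the argument goes through by a straightforward computation along the lines sketched above, using the order-preserving character of $\h_s$, the pointwise defining inequality for $Q_tf$, and the Bakry--\'Emery estimate \eqref{eq:BE}---but it is the one input that must be checked with some care, particularly since $f$ is only assumed Borel and bounded (so approximation of $f$ by test functions, then passing to the limit using \eqref{eq:reprht} and dominated convergence, may be needed to obtain the pointwise-at-specific-points statement rather than an $\mm$-a.e.\ one).
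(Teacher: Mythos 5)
Your proof is correct and takes essentially the same route as the paper: the paper's proof also starts from the bound $\h_sQ_tf(x)\leq \h_sf(y)+e^{-2Ks}\frac{\sfd^2(x,y)}{2t}$ of \cite[Lemma 3.4]{AmbrosioGigliSavare12} and its proof, subtracts the saturated equality at $s=0$, divides by $s$ and lets $s\downarrow0$. Your additional remarks on splitting the $\limsup$ and on checking the Kuwada-type bound pointwise for merely Borel bounded $f$ are sensible but do not alter the argument.
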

\begin{proof}From \cite[Lemma 3.4]{AmbrosioGigliSavare12} and its proof we know that for any $s>0$ it holds
\[
\h_sQ_tf( x)\leq \h_sf( y)+e^{-2Ks}\frac{\sfd^2( x, y)}{2t}.
\]
Subtracting the identity $Q_tf( x)=f( y)+\frac{\sfd^2( x, y)}{2t}$, dividing by $s$ and letting $s\downarrow0$ we conclude.
\end{proof}
We are now ready to state and prove our variational principle. Recall that the space $\acm(\X)$ was defined in \eqref{eq:defacm}.
\begin{theorem}[A variational principle on $\RCD(K,\infty)$ spaces]\label{prop:varregf}
Let $(\X,\sfd,\mm)$ be an $\RCD(K,\infty)$ space and let $f\in\acm(\X)$  be such that $\bd f\leq C\mm$ for some $C>0$.

Then for every $t> 0$ we have 
\begin{subequations}
\label{eq:macchina}\begin{align}
\label{eq:stimalapqtf}
\bd Q_tf&\leq  (C+2K^-\osc(f))\mm\\
\label{eq:slopedqtf}
|\d Q_tf|&=\lip (Q_tf)\qquad\mm-a.e..
\end{align}
\end{subequations}
Moreover, for every $T>0$ the following holds.  For given $x\in\X$ consider the  problem of minimizing
\begin{equation}
\label{eq:prob}
\begin{split}
\gamma\ \mapsto\   f(\gamma_T)+\frac12\int_0^T|\dot\gamma_t|^2\,\d t
\end{split}
\end{equation}
among continuous curves $\gamma$ on $[0,T]$ starting from $x$. Then:
\begin{itemize}
\item[-]\underline{\rm Existence} For $\mm$-a.e.\ $x\in\X$   a minimizer $F(x)$ exists. Also, the point $F_T(x)$ minimizes
\begin{equation}
\label{eq:minHL}
\X\ni y\qquad\mapsto\qquad f(y)+\frac{\sfd^2(x,y)}{2T}.
\end{equation}
\item[-]\underline{\rm Uniqueness} If $\X$ is $\RCD(K,N)$ for some $N<\infty$, then for $\mm$-a.e.\ $x\in\X$ the minimizer $F(x)$  is unique and so is the minimizer of \eqref{eq:minHL}.

In the general case,  $F:\X\to C([0,T],\X)$ is unique up to $\mm$-a.e.\ equality in the class of maps $\tilde F:\X\to C([0,T],\X)$ such that  $\tilde F_0=\Id$ $\mm$-a.e., $(\tilde F_T)_*\mm\ll\mm$  and so that for $\mm$-a.e.\ $x\in\X$ the curve $\tilde F_\cdot(x)$ is a minimizer for the above problem.

\item[-]{\underline {\rm Regularity}} It holds
\begin{equation}
\label{eq:stimabella}
(F_t)_*\mm\leq  e^{ t(C+2K^-\osc (f))}\mm,\qquad\forall t\in[0,T].
\end{equation}
\item[-]{\underline {\rm $F$ as Regular Lagrangian Flow}} $F$  is the only RLF for the vectors $v_t:=-\nabla Q_{T-t}f$.
\item[-]{\underline{\rm Minimal value}} For $\mm$-a.e.\ $x\in\X$ we have
\begin{equation}
\label{eq:flussoQt}
Q_{T}f(x)=f(F_T(x))+\frac1{2}\int_0^T|\dot{F_t(x)}|^2\,\d t=f(F_T(x))+\frac{\sfd^2(x,F_T(x))}{2T}.
\end{equation}
\item[-]{\underline{\rm Range in geodesics}} For $\mm$-a.e.\ $x\in\X$ the curve $[0,T]\ni t\mapsto F_t(x)$ is a constant speed geodesic of length bounded above by $\sqrt{2T\osc(f)}$.
\end{itemize}
\end{theorem}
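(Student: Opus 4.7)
The plan is to reduce to $T=1$ (by time rescaling and replacing $f$ with $Tf$), to establish the two bounds \eqref{eq:stimalapqtf}--\eqref{eq:slopedqtf}, and then to apply Propositions \ref{prop:exist}--\ref{prop:unique} to obtain existence, uniqueness, and the remaining properties of the map $F$.

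To obtain \eqref{eq:stimalapqtf} I would prove a \emph{pointwise everywhere} upper bound on $\tilde\Delta Q_tf$ and then upgrade it to a distributional bound via reverse Fatou. The pointwise bound extends Lemma \ref{le:lappunt} to nearly-minimizing sequences: for $x\in\X$, $s>0$ and $\eps>0$, pick $y=y(x,s,\eps)$ with $f(y)+\sfd^2(x,y)/(2t)\leq Q_tf(x)+\eps s$, insert into the universal inequality $\h_sQ_tf(x)\leq \h_sf(y)+e^{-2Ks}\sfd^2(x,y)/(2t)$ and divide by $s$. Lemma \ref{le:lapcomb}(i) gives $\h_sf\leq f+Cs$ $\mm$-a.e., which extends to \emph{everywhere} on $\X$ by the lower semicontinuity of $f\in\acm(\X)$, the continuity of $\h_sf$ and $\supp\mm=\X$; combined with $\sfd^2(x,y)/(2t)\leq \osc(f)+\eps s$ (essentially \eqref{eq:stimadqt2}) and the elementary limit $s^{-1}(e^{-2Ks}-1)\to -2K$, letting $s\downarrow 0$ and then $\eps\downarrow 0$ yields $\tilde\Delta Q_tf(x)\leq C+2K^-\osc(f)$ for every $x\in\X$. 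The same computation exhibits a uniform-in-$x$ upper bound on $s^{-1}(\h_sQ_tf-Q_tf)$ for $s$ in a neighbourhood of $0$, so reverse Fatou applied to
\[
\int Q_tf\,\Delta\varphi\,\d\mm\ =\ \lim_{s\downarrow 0}\int s^{-1}\big(\h_sQ_tf-Q_tf\big)\,\varphi\,\d\mm
\]
for $\varphi\in\tvar^+(\X)$ promotes the pointwise bound to the distributional inequality \eqref{eq:stimalapqtf}.

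To obtain \eqref{eq:slopedqtf} and the rest of the theorem I would approximate $f$ by $f_n:=\tilde\h_n f\in\testi$, which satisfy $\bd f_n\leq C\mm$ (by \eqref{eq:stabub2}) and $\osc(f_n)\leq \osc(f)$; Lemma \ref{prop:GTT} then gives $\lip Q_tf_n=|\d Q_tf_n|$ $\mm$-a.e., and the argument of the previous paragraph (applied to each $f_n$) yields the \emph{uniform-in-$n$} bound $\bd Q_tf_n\leq (C+2K^-\osc f)\mm$. Proposition \ref{prop:exist} applied to each $f_n$ therefore produces, for every bounded compactly-supported probability density $\rho$, plans $\ppi_n^\rho$ with compression uniformly controlled by $e^{s(C+2K^-\osc f)}\|\rho\|_\infty\mm$; a tightness/diagonalization argument as in the proof of Proposition \ref{prop:exist} extracts a subsequential limit $\ppi^\rho$ satisfying all the hypotheses of Proposition \ref{prop:unique} for $f$ itself. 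The latter proposition then delivers simultaneously: (a) the representing map $F^\rho$ for $\ppi^\rho$, its concentration on constant-speed geodesics of length $\leq\sqrt{2\osc f}$ (item (ii)), the minimal-value formula \eqref{eq:flussoQt} (item (iii)), and the optimal-transport / Kantorovich-potential interpretation (items (iv)--(v)); (b) the identity $\lip Q_tf=|\d Q_tf|$ on $(\e_{1-t})_*\ppi^\rho$ (item (vi)), which, by varying $\rho$ over normalized indicators of an exhaustion of $\X$, yields \eqref{eq:slopedqtf} $\mm$-a.e.\ and allows gluing the $F^\rho$ into a single $\mm$-a.e.\ defined map $F$; (c) the gradient-flow statement (vii), which combined with Proposition \ref{prop:RLF}(c) identifies $F$ as the RLF of $v_t=-\nabla Q_{1-t}f$, whose uniqueness is then granted by Theorem \ref{thm:AT} (applied to the smoothings via Lemma \ref{le:perRLF}). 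The regularity \eqref{eq:stimabella} is the compression bound on $\ppi^\rho$; the $\RCD(K,N)$ uniqueness uses the final statement of Proposition \ref{prop:unique}.

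The main obstacle I expect is the slope identity \eqref{eq:slopedqtf}: naively passing to the limit $f_n\to f$ in Lemma \ref{prop:GTT} fails because $Q_t$ does not interact well with the $L^1_{\rm loc}$-convergence $\tilde\h_nf\to f$, and a minimizing sequence for $Q_tf_n(x)$ can cluster at points where $f$ is discontinuous. The route described above sidesteps this by passing through Proposition \ref{prop:unique}(vi), at the cost of a careful bookkeeping to avoid circularity — Proposition \ref{prop:exist} itself requires \eqref{eq:slopedqtf} — so one must first use Proposition \ref{prop:exist} at the $f_n$-level (where \eqref{eq:slopedqtf} is free from Lemma \ref{prop:GTT}), with the uniform constants provided by the variant of \eqref{eq:stimalapqtf} applied to $f_n$, and only then pass to the limit in $n$.
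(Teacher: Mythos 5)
Your overall architecture matches the paper's: mollify $f$, use Lemma \ref{prop:GTT} at the level of test functions to feed Proposition \ref{prop:exist}, pass to the limit in the resulting plans, and extract existence, uniqueness and all structural properties from Proposition \ref{prop:unique}. Your treatment of \eqref{eq:stimalapqtf} is, however, genuinely different and essentially correct: by working with $\eps s$-approximate minimizers you get the pointwise bound $\tilde\Delta Q_tf\le C+2K^-\osc(f)$, together with a uniform upper bound on $s^{-1}(\h_sQ_tf-Q_tf)$, \emph{before} knowing that exact minimizers exist, whereas the paper must first run the whole RLF machinery with a ``bad'' constant to produce exact minimizers and only then upgrades the Laplacian bound via Lemmas \ref{le:lappunt} and \ref{le:lapcomb}. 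The one step you must justify with more care is the \emph{everywhere} (not merely $\mm$-a.e.) inequality $\h_sf\le f+Cs$, which you need at the $s$-dependent points $y(x,s,\eps)$: lower semicontinuity of $f$ gives the wrong inequality when you approximate $x$ by points of the continuity set $A$; what is really used is that the $\limi$ in \eqref{eq:acmc} is attained along $A$, so that the $\mm$-a.e.\ bound of Lemma \ref{le:lapcomb}(i) propagates to every point by continuity of $\h_sf$. The asymptotic bound $\tilde\Delta f\le C$ of Lemma \ref{le:lapcomb}(iii) would not suffice here, precisely because your $y$ varies with $s$.

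There remain some genuine gaps. First, $\tilde\h_nf$ is \emph{not} in $\testi$ when $\mm(\X)=\infty$ (it is only in $\Lip_\b(\X)\cap D(\Delta_{\loc})$ with Lipschitz Laplacian), so Lemma \ref{prop:GTT} does not apply to it and \eqref{eq:slopedqtf} is not ``free'' at the $\tilde\h_nf$-level; a second approximation layer by cut-offs $\eta_m\tilde\h_nf\in\testi$ is needed, as in the paper's intermediate regularity step. Second, Proposition \ref{prop:unique}(vi) only gives $\lip(Q_sf)=|\d Q_sf|$ $(\e_{1-s})_*\ppi$-a.e.\ for $s\in(0,1)$; varying $\rho$ over an exhaustion does not yield \eqref{eq:slopedqtf}, because the measures $(\e_{1-s})_*\ppi$ need not charge all of the ball supporting $\rho$. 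One needs the quantitative bound $\mm(\{\rho_s>0\})\ge\|\rho_s\|_{\infty}^{-1}$ coming from the compression estimate, a rescaling $\sfd\mapsto\sfd/\sqrt{1-s}$ to convert the statement about $Q_{1-s}f$ into one about $Q_1$ of the rescaled problem, and a limit $s\downarrow0$ followed by $R_1\uparrow R_2$. Third, Theorem \ref{thm:AT} cannot give uniqueness of the RLF of $v_t=-\nabla Q_{T-t}f$: the limit vector fields do not satisfy its Sobolev-type hypotheses, only the mollified ones do. Uniqueness must instead be proved by showing that \emph{any} RLF of $(v_t)$ produces a plan satisfying \eqref{eq:perunicog} --- via \eqref{eq:HLHJ2}, the already established \eqref{eq:slopedqtf} and \eqref{eq:spRLF} --- and then invoking the uniqueness part of Proposition \ref{prop:unique}. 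Finally, when passing to the limit to verify \eqref{eq:perunicog} for $f$ itself, tightness alone is not enough: you need both $\Gamma$-inequalities \eqref{eq:glim} for $\tilde\h_nf\to f$ (the $\Gamma$-$\lims$ again uses the $\acm$ structure) to compare $\int Q_1f_n\,\d(\e_0)_*\ppi_n$ with $\int Q_1f\,\d(\e_0)_*\ppi$ and $\int f_n\,\d(\e_1)_*\ppi_n$ with $\int f\,\d(\e_1)_*\ppi$.
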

\begin{proof} We shall prove \eqref{eq:macchina} for $t=1$ only and the statements about minimizers for $T=1$ only. The general cases follow by the same arguments, or by replacing the distance $\sfd$ with $\sqrt t\sfd$, noticing that  both Laplacian and Ricci  bounds are affected  by a factor $\frac1t$ and taking also into account the natural rescaling properties for RLFs.\\
\st{Step 1: the core argument} We claim that all the stated conclusions follow if $f\in\acm(\X)$ is   such that $\bd f\leq C\mm$ and
\begin{subequations}
\label{eq:core}
\begin{align}
\label{eq:corelap}
\bd Q_tf&\leq C'\mm,\qquad\forall t\in[0,1]\qquad\text{ for some }C'>0\\
\label{eq:coresl}
|\d Q_tf|(x)&=\lip(Q_tf)(x),\qquad\mm\times\mathcal L^1-a.e.\ (x,t)\in\X\times[0,1],
\end{align}
\end{subequations}
(in particular, even if a priori we only have some `bad' upper bound on the Laplacian, we can improve it to the `correct' estimate \eqref{eq:stimalapqtf}).

Let us prove \eqref{eq:slopedqtf}. Put $A_t:=\{|\d Q_tf|=\lip(Q_tf)\}$ (this is a Borel set defined up to $\mm$-negligible sets), let $\bar x\in\X$ and  $0<R_1<R_2$. Put $\rho:=\mm(B_{R_1})^{-1}\nchi_{B_{R_1}}$ and apply  Proposition \ref{prop:exist} with $C'$ in place of $C$ to find a plan $\ppi $ satisfying  \eqref{eq:regestpi} and \eqref{eq:daexist}.  
 We can thus apply Proposition \ref{prop:unique} and deduce, by item $(ii)$ of such statement, that $\supp((\e_t)_*\ppi)\subset B_{R_2}(x)$ for $t\ll1$. Thus letting $\rho_t$ be the density of $(\e_t)_*\ppi$, we have 
\[
\mm(A_{1-t}\cap B_{R_2})\stackrel{\eqref{eq:bellaid2}}\geq \mm(\{\rho_t>0\})\geq \|\rho_t\|_\infty^{-1}\stackrel{\eqref{eq:regestpi}}\geq \mm(B_{R_1})e^{-tC'}\qquad for\ t\ll1. 
\]
We apply this estimate to the scaled space $(\X,\frac{\sfd}{\sqrt{1-t}},\mm)$ for $t\ll1$ noticing that the  Laplacian and Ricci bounds are only slightly affected and that $Q^\sfd_{1-t}f=Q^{\frac{\sfd}{\sqrt{1-t}}}_{1}f$  (equivalently: in the above argument we apply Propositions \ref{prop:exist}, \ref{prop:unique} to slightly different time intervals) . We conclude that
\[
\mm(A_1\cap B_{\frac{R_2}{\sqrt{1-t}}})\geq \mm(B_{\frac{R_1}{\sqrt{1-t}}})e^{-tC''}\qquad for\ t\ll1,
\]
 thus letting first $t\downarrow0$ and then $R_1\uparrow R_2$ we deduce that $\mm(A_1\cap B_{R_2})=\mm(B_{R_2})$. By the arbitrariness of $R_2$, this proves \eqref{eq:slopedqtf} for $t=1$. The general case follows by scaling.

We turn to the rest of the proof and observe that for $\ppi=\ppi_{R_1}$ built as above the plan $\ppi':=\sum_{n}2^{-n}\ppi_{R_{1,n}}$ for some $R_{1,n}\uparrow\infty$ satisfies  $(\e_0)_*\ppi,(\e_1)_*\ppi\ll\mm$ and \eqref{eq:perunicog}, thus   Proposition \ref{prop:unique} ensures that   is induced by a map $F$. The fact that $F$ takes values on geodesics of length $\leq \sqrt{2\osc(f)}$ and satisfies \eqref{eq:flussoQt} (for $T=1$) follows from the analogue statements in Proposition \ref{prop:unique}. Now observe that   from Lemma \ref{le:lapcomb} and the assumptions $f\in\acm(\X)$, $\bd f\leq C\mm$ we have that $\tilde\Delta f(y)\leq  C$ for every $y\in\X$. Hence by \eqref{eq:flussoQt} we are in position to apply  Lemma \ref{le:lappunt} above in conjunction with the trivial bound \eqref{eq:stimadqt2} to deduce that
\[
\tilde\Delta Q_1f(x)\leq C+2K^-\osc(f)\qquad\mm-a.e.\ x\in\X.
\] 
This  bound and the distributional inequality \eqref{eq:corelap} give, via  Lemma \ref{le:lapcomb},  estimate \eqref{eq:stimalapqtf}.

Now that we have \eqref{eq:stimalapqtf} we can apply once again Proposition \ref{prop:exist}, this time  with $C+2K^-\osc(f)$ in place of $C$ to conclude, by \eqref{eq:regestpi} and the construction, that \eqref{eq:stimabella} holds.

The validity of \eqref{eq:flussoQt}, the very definition of $Q_1f$ and the fact that $\int_0^1|\dot\gamma_t|^2\,\d t\geq\sfd^2(\gamma_0,\gamma_1)$ with equality iff $\gamma$ is a constant speed geodesic, ensure that $F_\cdot(x)$ is a minimizer for \eqref{eq:prob}. Thus existence is proved. Uniqueness follows from Proposition \ref{prop:unique}: if $\tilde F:\X\to C([0,1],\X)$ is also a selection of minimizers with $(\tilde F_1)_*\mm\ll\mm$, then for $\mu\in\pr(\X)$ with $\mm\ll\mu\ll\mm$ the plan $\ppi:=\frac12((F_\cdot)_*\mu+(\tilde F_\cdot)_*\mu)$ satisfies \eqref{eq:perunicog} with $(\e_0)_*\ppi=\mu$ and $(\e_1)_*\ppi\ll\mm$. By Proposition \ref{prop:unique} this is enough to ensure that $\ppi$ is induced by a map, i.e.\ that $F=\tilde F$ $\mm$-a.e.. The last claim in Proposition  \ref{prop:unique} also gives $\mm$-a.e.\ uniqueness of minimizers of \eqref{eq:minHL} if $\X$ is $\RCD(K,N)$.

It remains to prove that $(F_t)$ is the only Regular Lagrangian Flow of $v_t:=-\nabla Q_{1-t}f$.  To see that it is a RLF, since $F$ takes values in continuous curves, it is sufficient to show that for any $\delta\in(0,1)$ it is the RLF of $(v_t)$ on $[0,1-\delta]$.  In this direction notice that \eqref{eq:lipQt} tells
\begin{equation}
\label{eq:estvtagain}
|v_t|\leq C(\delta)\qquad\mm-a.e.\qquad \forall t\in[0,1-\delta],\qquad\forall \delta\in(0,1).
\end{equation}
Then    let $\rho$ be a bounded probability density with bounded support, put $\ppi:=(F_\cdot)_*(\rho\mm)$ as above and use  \eqref{eq:estvtagain} to apply Proposition \ref{prop:RLF} on $[0,1-\delta]$ and deduce that it is sufficient to prove that for any   $\varphi\in W^{1,2}(\X)$ the map $[0,1-\delta]\ni t\mapsto \int\varphi \circ\e_t\,\d\ppi$ is absolutely continuous with derivative equal to $-\int\d\varphi(\nabla Q_{1-t}f)\circ\e_t\,\d\ppi$ for a.e.\ $t$. Absolute continuity follows from the Sobolev regularity of $\varphi$ and the fact that $\ppi$ is a test plan (see e.g.\ \cite[Theorem 2.1.21]{GP19}). Then recalling that $\ppi$ is an optimal geodesic test plan and by  a simple scaling argument we see that it is sufficient to prove the formula for the derivative for $t=0$, i.e.\ to prove that
\[
\lim_{t\downarrow0}\int\frac{\varphi(\gamma_t)-\varphi(\gamma_0)}t\,\d\ppi(\gamma)=-\int\d\varphi(\nabla Q_{1}f)\circ\e_0\,\d\ppi.
\]
This follows recalling that $\ppi$ represents the gradient of $-Q_1f$ in the sense of \cite[Definition 3.7]{Gigli12} (recall item $(vii)$ in Proposition \ref{prop:unique}) and the lemma about `horizontal and vertical derivatives', see \cite[Theorem 3.10]{Gigli12} (and \cite[Lemma 4.5]{AmbrosioGigliSavare11-2}).

We pass to uniqueness, thus let $\rho$ as before, $\tilde \Fl$ be a RLF of the $v_t$'s, put $\tilde\ppi:=(\tilde \Fl_\cdot)_*\mu$, let $f_t:=Q_{t}f$ and notice that - by \eqref{eq:HLHJ2} and the assumption \eqref{eq:slopedqtf} -  the identity $\partial_tf_t=-\frac12|\d f_t|^2$ holds $\mm$-a.e.\ for a.e.\ $t\in[0,1]$. Then from \eqref{eq:estvtagain} and \eqref{eq:spRLF} and arguing as for  \eqref{eq:integr} we obtain
\[
\begin{split}
\int Q_1f\,\d\mu-\int Q_\delta f\,\d(\e_{1-\delta})_*\tilde\ppi&=\iint_0^{1-\delta}\big(|\d f_t|^2-\tfrac12|\d f_t|^2)\circ\tilde \Fl_t\,\d t\,\d\mu
\stackrel{\eqref{eq:spRLF}}=\frac12\iint_0^{1-\delta}  |\dot\gamma_t|^2\,\d t\,\d\tilde\ppi(\gamma).
\end{split}
\]
Now notice that the continuity of  $t\mapsto\tilde \Fl_t(x)$ for $\mm$-a.e.\ $x\in \X$ implies the weak convergence of  $(\e_{1-\delta})_*\tilde\ppi$ to $(\e_1)_*\tilde\ppi$, hence the lower semicontinuity stated in \eqref{eq:contqt} and Fatou's lemma for weakly converging measures (see e.g.\ \cite[Lemma 2.5]{GDP17}) imply $\limi_{\delta\downarrow0}\int Q_\delta f\,\d(\tilde \Fl_{1-\delta})_*\mu\geq \int f\,\d(\e_1)_*\tilde\ppi$.

We can therefore pass to the limit in the above and conclude that $\tilde\ppi$ satisfies \eqref{eq:perunicog}, so that the uniqueness claim follows again by Proposition \ref{prop:unique}.

\st{Step 2: the approximation argument} Let $f\in\acm(\X)$ be   such that $\bd f\leq C\mm$ and assume that $(f_n)\subset\acm(\X)$  is a sequence that is $\Gamma$-converging to  $f$  (see \cite{DalMaso93}, \cite{Braides02}), i.e.\ so that
\begin{subequations}
\label{eq:glim}
\begin{align}
\label{eq:glimi}
&\text{for every $x_n\to x$ we have  $\limi_nf_n(x_n)\geq f(x)$,}\\
\label{eq:glims}
&\text{for every $x\in\X$ there is $x_n\to x$ such that $\lims_nf_n(x_n)\leq f(x)$.}
\end{align}
\end{subequations}
Assume also that for some $C''>0$ the uniform bounds
\begin{subequations}
\label{eq:boundbrutti}
\begin{align}
\label{eq:lapbrutto}
\bd f_n&\leq C''\mm\\
\label{eq:oscbound}
\osc(f_n)&\leq C''
\end{align}
\end{subequations}
hold for every $n\in\N$ and that the conclusions of the theorem are true for the $f_n$'s (with the estimate \eqref{eq:stimabella} depending on $C''$). We claim that in this case the conclusions of the statement hold for $f$ (with the correct estimate \eqref{eq:stimabella} depending on $C,\osc(f)$ only). According to the previous step to this aim it is sufficient to show that the properties \eqref{eq:core} hold. In proving this we shall frequently use the fact  that \eqref{eq:glim} and \eqref{eq:oscbound} give that the $f_n$'s - and thus also the $Q_tf_n$'s - are equibounded.

We start claiming that 
\begin{equation}
\label{eq:daglims}
\lims_{n}Q_tf_n(x)\leq Q_tf(x)\qquad\forall t>0.
\end{equation}
Indeed, for $x,y\in\X$ we use \eqref{eq:glims} to find $y_n\to y$ with $\lims_nf_n(y_n)\leq f(y)$ and observe that
\[
\lims_{n}Q_tf_n(x)\leq\lims_n\Big( f_n(y_n)+\frac{\sfd^2(x,y_n)}{2t}\Big)\leq f(y)+\frac{\sfd^2(x,y)}{2t},
\]
so that  the arbitrariness of $y\in\X$ gives \eqref{eq:daglims}. 

To get (a sort of) the $\Gamma-\limi$ inequality analogue to \eqref{eq:daglims} we need to overcome the  lack  of compactness by using the uniform estimates \eqref{eq:boundbrutti}.

We assumed that for the $f_n$'s the conclusions are true, thus let $F^n:\X\to C([0,1],\X)$ be the corresponding maps as in the statement, fix a bounded probability density $\rho$ with bounded support, define $\ppi_n:=(F^n)_*(\rho\mm)\in\pr(C([0,1],\X))$ and notice that by \eqref{eq:oscbound} and the last claim in the statement   the $\ppi_n$'s are concentrated on a bounded set of geodesics. Also, by \eqref{eq:boundbrutti} and \eqref{eq:stimabella}, they satisfy 
\begin{equation}
\label{eq:unipin}
(\e_t)_*\ppi_n\leq e^{t(C''+2K^-C'')}\mm,\qquad\forall n\in\N,\ t\in[0,1].
\end{equation}
Arguing as in the proof of Proposition \ref{prop:exist}, this is enough to conclude that the family  $(\ppi_n)$ is tight, so that passing to a non-relabeled subsequence we can assume that it converges to a limit plan $\ppi$. Now observe that integrating \eqref{eq:flussoQt} we get
\[
\int Q_1f_n\,\d(\e_0)_*\ppi_n=\int f_n\,\d(\e_1)_*\ppi_n+\frac12\iint_0^1|\dot\gamma_t|^2\,\d t\,\d\ppi_n(\gamma)\qquad\forall n\in\N.
\]
The bound \eqref{eq:daglims},  identity $(\e_0)_*\ppi_n=(\e_0)_*\ppi=\rho\mm$   and (reverse) Fatou's lemma give $$\lims_n\int Q_1f_n\,\d(\e_0)_*\ppi_n\leq \int Q_1f\,\d(\e_0)_*\ppi\,,$$ while \eqref{eq:glimi} and  Fatou's lemma for weakly converging sequences of measures (see e.g.\ \cite[Lemma 2.5]{GDP17}) give $\limi_n\int f_n\,\d(\e_1)_*\ppi_n\geq \int f\,\d(\e_1)_*\ppi$. Thus by  the lower semicontinuity of the kinetic energy we can pass to the limit in the above and obtain
\begin{equation}
\label{eq:tantiug2}
\begin{split}
\int Q_1f\,\d(\e_0)_*\ppi&\geq \limi_{n\to\infty}\int Q_1f_n\,\d(\e_0)_*\ppi_n\geq \int f\,\d(\e_1)_*\ppi+\frac12\iint_0^1|\dot\gamma_t|^2\,\d t\,\d\ppi(\gamma).
\end{split}
\end{equation}
Since  \eqref{eq:unipin}  implies   $(\e_1)_*\ppi\ll\mm$,  we can apply Proposition \ref{prop:unique} and deduce that
\[
|\d Q_tf|=\lip(Q_tf)\qquad (\e_{1-t})_*\ppi-a.e..
\]
Then the arbitrariness of $\rho$ and the very same arguments used at the beginning of the previous step (based on the fact that the estimate \eqref{eq:unipin} passes to the limit in place of \eqref{eq:regestpi}) show that \eqref{eq:slopedqtf}, and in particular {\eqref{eq:coresl}, holds.} 

Also, since the inequality $f(\gamma_1)+\frac12\int_0^1|\dot\gamma_t|^2\,\d t\geq f(\gamma_1)+\frac{\sfd^2(\gamma_0,\gamma_1)}2\geq Q_1f(\gamma_0)$ is true for any $\gamma\in C([0,1],\X)$, the bound \eqref{eq:tantiug2}  gives  $\limi_n\int Q_1f_n\,\d(\rho\mm)\geq \int Q_1f\,\d(\rho\mm)$. Now   put $g_n:=\sup_{k\geq n}Q_1f_k$, $g:=\inf_n g_n=\lims_nQ_1f_n$ and observe that,  by dominated convergence,  $g_n\to g$ in $L^1(\X,\rho\mm)$. On the other hand
\[
\lims_n\|g_n-Q_1f_n\|_{L^1(\rho\mm)}=\lims_n\int g_n-Q_1f_n\,\d(\rho\mm)\leq \int \underbrace{ g-Q_1f}_{\leq 0\text{ by \eqref{eq:daglims}}}\,\d(\rho\mm)\leq 0.
\]
This forces at once $Q_1f_n\to g$ in $L^1(\rho\mm)$ and $g=Q_1f$ $\rho\mm$-a.e., hence  the arbitrariness of $\rho$ tells that $(Q_1f_n)$ converges to $Q_1f$ in $L^1_{loc}(\X)$. Now from \eqref{eq:boundbrutti} and    the validity  of \eqref{eq:stimalapqtf} for $f_n$ we get  $\bd Q_1f_n\leq (C''+2K^-C'')\mm$ and according to the stability property \eqref{eq:stablub} we can conclude that $\bd Q_1f\leq (C''+2K^-C'')\mm$. Since by a scaling argument the same inequality is true for $Q_tf$, \eqref{eq:corelap} holds. We thus established \eqref{eq:core}, as desired.

\st{Step 3: wrapping everything up}

\noindent\underline{The case of test functions}  Let $f\in\testi$. Then   clearly $f\in\acm(\X)$. Also, the properties  \eqref{eq:core} are granted by  Lemma \ref{prop:GTT}, hence in this case the conclusion follows from the first step.

\noindent\underline{Intermediate regularity} Assume that $f$ is in $\Lip_\b(\X)\cap D(\Delta_\loc)$ with $\Delta f\in \Lip_b(\X)$. Pick $\bar x\in\X$, let $\phi_n:=(1-\sfd(\cdot,B_n(\bar x)))^+$, define $\eta_n:=\tilde\h_1\phi_n\in\tvar(\X)$ and then $f_n:=\eta_nf$. It is clear from \eqref{eq:calclap} that $f_n\in\testi$ for every $n\in\N$, so by the previous step the conclusions of the theorem are true for it. We claim that the bounds \eqref{eq:boundbrutti} and the $\Gamma-$convergence \eqref{eq:glim} are in place.

The bound \eqref{eq:oscbound} is trivial and since \eqref{eq:apprf}  gives
\[
\sup_n\osc(\eta_n)+\Lip(\eta_n)+\|\Delta\eta_n\|_\infty<\infty,
\]
the bound  \eqref{eq:lapbrutto}  follows from the Leibniz rule for the Laplacian \eqref{eq:calclap}. We pass to \eqref{eq:glim} and   notice that the monotonicity of the heat flow grants that $(\eta_n(x))$ is a bounded non-decreasing sequence for every $x\in\X$, hence it converges to a limit $\eta(x)$. In particular  for $\varphi\in\tvar(\X)$ by dominated convergence we have $\int\eta_n\varphi\,\d\mm\to\int\eta\varphi\,\d\mm$. On the other hand we have
\[
\int \varphi\eta_n\,\d\mm=\int \varphi\tilde\h_1\phi_n\,\d\mm=\int \tilde\h_1\varphi\,\phi_n\,\d\mm\qquad\to\qquad\int \tilde\h_1\varphi\,\d\mm=\int \varphi\,\d\mm
\]
and by the density property \eqref{eq:denstvar} we conclude that $\eta=1$ $\mm$-a.e., and thus everywhere because it is a Lipschitz function (being limit of equiLipschitz functions). It follows that for $x_n\to x$ we have
\[
|\eta_n(x_n)-\eta(x)|\leq \sfd(x_n,x)\sup_n\Lip(\eta_n)+|\eta_n(x)-\eta(x)|\quad\to\quad 0\qquad\text{ as $n\to\infty$}
\]
and thus, since $f\in\Lip_\b(\X)$, that $ f_n(x_n)\to f(x)$. This property is clearly stronger than \eqref{eq:glim}, which therefore is proved. Hence the desired conclusion for $f$ follows from Step 2.

\noindent\underline{The general case}  Let $f:\X\to\R$ be as in the assumptions, put $f_n:=\tilde\h_n f$ and notice that by \eqref{eq:extappr} and \eqref{eq:lintylip} we have $f_n\in \Lip_\b(\X)\cap D(\Delta_\loc)$ with $\Delta f_n\in\Lip_b(\X)$. Then by the previous set the conclusions of the theorem are true for $f_n$, whereas by  \eqref{eq:stabub2}  and the weak maximum principle we see  that the uniform bounds \eqref{eq:boundbrutti} are in place.

According to Step 2 to conclude it is therefore sufficient to show that the $\Gamma$-convergence    \eqref{eq:glim} is in place. To prove \eqref{eq:glimi} we  let $g\in\Lip_{\b}(\X)$, $g_n:=\tilde\h_n g$ and notice that \eqref{eq:BElip} implies $L:=\sup_n\Lip(g_n)<\infty$. Thus if  $g\in \Lip_\b(\X)$, for any   $x_n\to x$  the monotonicity of the heat flow gives
\begin{equation}
\label{eq:perlimi}
\limi_{n\to\infty}f_n(x_n)\geq \limi_{n\to\infty} g_n(x_n)\geq \limi_{n\to\infty} (g_n(x)-L\sfd(x,x_n))= \limi_{n\to\infty}g_n(x)=g(x),
\end{equation}
where in the last equality we used the continuity of $g$, the representation $\h_tg(x)=\int g\,\d\h_t\delta_x$ and the fact that $\h_t\delta_x\weakto\delta_x$ as $t\downarrow0$. From the lower semicontinuity of $f$ it is easy to establish that $f=\sup\{g\, :\, g\in \Lip_b(\X),  g\leq f\}$ (see e.g.\ \cite[Section 5.1]{AmbrosioGigliSavare08}),  thus \eqref{eq:glimi} follows from \eqref{eq:perlimi}.

To prove \eqref{eq:glims} we recall that for some Borel set $E\subset\X$ with $\mm$-negligible complement we have that $f\restr E$ is continuous. Also, using \eqref{eq:acmc} and a diagunalization argument it is clear that it suffices to prove \eqref{eq:glims} for $x\in E$. Fix such $x$, let $\eps>0$, find $r>0$ such that $f\leq f(x)+\eps$ on $B_r(x)\cap E$ and notice that since $\h_t\delta_x\ll\mm$ for every $t>0$ and $\h_t\delta_x(B_r^c)\to 0$ as $t\downarrow0$ we have
\[
\lims_{t\downarrow0}\h_tf(x)=\lims_{t\downarrow0}\int f\,\d\h_t\delta_x=\lims_{t\downarrow0}\int_{B_r(x)\cap E} f\,\d\h_t\delta_x\leq f(x)+\eps.
\]
Since $\eps>0$ was arbitrary, the claim \eqref{eq:glims}  follows by picking $x_n\equiv x$.
\end{proof}
\begin{remark}\label{re:infdim}{\rm
Let  $\X,f$ be as in the previous statement, $T>0$ and $U,U'\subset \X$ be open sets such that $F_T(x)\in U'$ for $\mm$-a.e.\ $x\in U$. Assume also that for some $c\in\R$, in addition to the assumption $\bd f\leq C\mm$ we also have $\bd f\restr U\leq c\mm$. Then from Theorem \ref{prop:varregf} and  Lemmas \ref{le:lappunt}, \ref{le:lapcomb} we get
\begin{equation}
\label{eq:vargen}
\bd Q_Tf\leq ( c\nchi_U+C\nchi_{\X\setminus U}-K{\sf D}(U,U'))\mm.
\end{equation}
Comparing this bound with \cite[Theorem 1.5]{MS22} we see that a similar result, stated for $p=2$ in place of $p=1$, holds without assuming  non-collapsing or finite dimensionality.

We notice also that variants of \eqref{eq:vargen} and Theorem \ref{prop:varregf} for other values of $p$ are quite easy to obtain following the above strategy, at least on $\RCD(K,N)$ spaces where existence of optimal maps for the $L^p$-cost is known (\cite{CM15}, \cite{Deng21}, see also \cite{ACMCS21}). The key ingredients are the `$p$-versions' of Lemma \ref{le:lappunt} - whose validity has already been noticed in \cite{MS21} - and of Lemma \ref{prop:GTT}  - that can be obtained following the arguments in \cite{GTT22} using the $L^p$-version of Bochner inequality in \cite{Savare13}.
}\fr\end{remark}

\section{Harmonic maps from $\RCD(K,N)$ to  $\CAT(0)$ spaces}\label{ch:main}

\subsection{Reminders: basic definitions and results} \label{se:sobmet}
From now on we shall work under the following assumptions:
\begin{itemize}
\item[-] $(\X,\sfd,\mm)$ is  an $\RCD(K,N)$ space of essential dimension $d\in\N\cap[1,N]$ (\cite{AmbrosioGigliSavare11-2}, \cite{Gigli12}, \cite{BS18}).
\item[-] $U\subset\X$ is open, bounded and such that $\mm(\X\setminus U)>0$.
\item[-] $(\Y,\sfd_\Y)$  is a  $\CAT(0)$ space (see e.g.\ \cite{Bacak14} for an introduction to the topic).
\end{itemize}
Recall that the space $L^2(U;\Y)$ is the collection of Borel maps $u$ (defined up to $\mm$-a.e.\ equality)  from $U$ to $\Y$ that are essentially separably valued (i.e.\ $\mm(U\setminus u^{-1}(\Y'))=0$ for some separable subset $\Y'\subset\Y$) and with $\int_U\sfd_\Y^2(u(x),\o)\,\d\mm(x)<\infty$ for some $\o\in\Y$, and thus any by $\mm(U)<\infty$.

We start recalling two notions of metric-valued Sobolev functions, referring to \cite{HKST15} for more details on the first and to \cite{KS93}, \cite{GT20}  for the second. 
\begin{definition}[Sobolev space via post-composition] $W^{1,2}(U;\Y)\subset L^2(U;\Y)$ is the collection of those maps $u:U\to\Y$ such that $\varphi\circ u\in W^{1,2}(U)$ for every $\varphi:\Y\to\R$ Lipschitz and so that the function $|\d u|:U\to[0,\infty]$ defined up to $\mm$-a.e.\ equality as
\begin{equation}
\label{eq:defdun}
|\d u|:=\esssup_{\varphi:\Y\to\R\atop\Lip(\varphi)\leq 1}|\d(\varphi\circ u)|
\end{equation}
is in $L^2(U)$.
\end{definition}
\begin{definition}[Sobolev space \`a la Korevaar-Schoen]
For every  $r>0$ define the local energy $\kse[u,U]:U\to\R^+$ of $u$ at scale $r$   as
\begin{equation}
\label{eq:kse}
\kse_{2,r}[u,U] (x) := \begin{cases} \ \Big\vert \fint_{B_r(x)} \frac{\sfd_{\Y}^2(u(x),u(y))}{r^2} \, \d \mm(y) \Big\vert^{1/2} &\text{ if } B_r(x) \subset U, \\ \ 0 &\text{ otherwise.} \end{cases}
\end{equation}
Then $\KS(U;\Y)\subset L^2(U,\Y)$ is the collection of those $u$'s such that
\begin{equation}
\label{eq:defKS}
\E(u):=\limi_{r\downarrow0}\int_U\kse_{2,r}[u,U] ^2\,\d\mm<\infty.
\end{equation}
\end{definition}
We now collect those properties of metric-valued Sobolev functions that are relevant for our discussion, referring to  \cite[Theorem 3.13, Proposition 3.9, Theorem 6.4]{GT20} for the proof:
\begin{proposition}\label{prop:baseKS}With the same assumptions and notations as above we have:
\begin{itemize}
\item[i)] For every $u\in \KS(U;\Y)$  there is a function $\e_2[u]\in L^2(U)$, called \emph{energy density}, such that 
\begin{equation}
\label{eq:conven}
\kse_{2,r}[u,U]\quad\to\quad \e_2[u]\qquad\text{ in $L^2(U)$ and $\mm$-a.e.\ as $r\downarrow0$.}
\end{equation}
In particular, the $\limi$ in \eqref{eq:defKS} is a limit. 
\item[ii)] We have $W^{1,2}(U;\Y)= \KS(U;\Y)$ with
\begin{equation}
\label{eq:dfu}
\e_2[u]\leq |\d u |\leq c(d)\e_2[u]\qquad\mm-a.e.\ on\ U,
\end{equation}
for some  constant $c(d)$ depending only on the essential dimension $d$, and thus only on $N$.
\item[iii)] For any $u\in \KS(U;\Y)$ there is a unique minimizer $\bar u\in \KS(U;\Y)$ of the Korevaar-Schoen energy $\E$ in the class of functions $v\in \KS(U;\Y)$  such that $x\mapsto \sfd_\Y(u(x),v(x))$ is in $W^{1,2}_0(U)$. Such minimizer is called \emph{harmonic map}.
\end{itemize}
\end{proposition}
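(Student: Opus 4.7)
The plan is to establish the three items in turn, following the strategy developed in \cite{GT20}. For item (i), the existence of the energy density, I would first introduce the Borel measures $\kse_{2,r}^2[u,U]\,\mm$ on $U$ and show, via the doubling and Poincar\'e inequalities available on $\RCD(K,N)$ spaces, the uniform bound $\lims_{r\downarrow 0}\int_U\kse_{2,r}^2[u,U]\,\d\mm<\infty$ together with absolute continuity (with respect to $\mm$) of every weak limit of these measures. To upgrade from weak convergence of measures to $L^2$-convergence of densities, the structure theory of $\RCD(K,N)$ spaces enters crucially: at $\mm$-a.e.\ point $x$ the tangent space is Euclidean of dimension $d$. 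A blow-up argument then produces the pointwise limit of $\kse_{2,r}[u,U](x)$, identifies it with an intrinsic dimensional quantity, and rules out energy concentration, forcing the $L^2$-convergence asserted in \eqref{eq:conven}.

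For item (ii), the inequality $\e_2[u]\leq|\d u|$ is obtained by fixing a countable dense set $\{y_n\}$ in a separable set containing the essential image of $u$, using the identity $\sfd_\Y(u(x),u(y))=\sup_n|\sfd_\Y(u(x),y_n)-\sfd_\Y(u(y),y_n)|$, and applying \eqref{eq:defdun} to the $1$-Lipschitz functions $\sfd_\Y(\cdot,y_n)$ together with a standard integral representation of $\kse_{2,r}$ for scalar Sobolev functions. The reverse inequality $|\d u|\leq c(d)\e_2[u]$ is the most delicate: it compares an operator-type norm (the essential supremum over $1$-Lipschitz test functions that defines $|\d u|$) with a Hilbert-Schmidt-type averaged quantity (the spherical average in \eqref{eq:kse}). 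In a $d$-dimensional Euclidean setting one has $|A|_{\mathrm{op}}\leq \sqrt d\,|A|_{\HS}$, and one exports this bound to the $\RCD$ setting via blow-up at $\mm$-a.e.\ point, exploiting again that the tangent space is $\R^d$.

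For item (iii) I would argue by the direct method. The $\CAT(0)$ structure of $\Y$ renders $\E$ a convex functional on $\KS(U;\Y)$: given $u_0,u_1\in\KS(U;\Y)$, the pointwise midpoint map $u_{1/2}$ is well-defined by unique geodesity, and the parallelogram-type $\CAT(0)$ inequality applied to the pairs $(u_0(x),u_1(x))$ and $(u_0(y),u_1(y))$ yields at each scale $r$
\[
\kse_{2,r}^2[u_{1/2},U]\leq \tfrac12\kse_{2,r}^2[u_0,U]+\tfrac12\kse_{2,r}^2[u_1,U]-\tfrac14\kse_{2,r}^2[\sfd_\Y(u_0,u_1),U],
\]
which passes to the limit $r\downarrow 0$ thanks to item (i). A minimizing sequence with fixed boundary data (meaning $x\mapsto\sfd_\Y(u_n(x),u(x))\in W^{1,2}_0(U)$) is then seen to be $L^2$-Cauchy via the Poincar\'e inequality applied to $\sfd_\Y(u_n,u_m)\in W^{1,2}_0(U)$, where the assumption $\mm(\X\setminus U)>0$ is essential; this produces the minimizer, and strict convexity forces uniqueness.

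The main obstacle is the reverse inequality in item (ii): obtaining the dimensional constant $c(d)$ requires a rigorous blow-up analysis of $u$ at $\mm$-a.e.\ point and a notion of metric differential compatible with the tangent module of the $\RCD$ space, which is the technical core of \cite{GT20}; the constant $c(d)$ cannot be read off without passing through this tangent structure.
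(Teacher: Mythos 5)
First, note that the paper does not prove this proposition: it is quoted verbatim from \cite[Theorem 3.13, Proposition 3.9, Theorem 6.4]{GT20}, and your proposal is best read as a sketch of the proofs given there. At that level your outline is on target for items (i) and (iii): the proof of \eqref{eq:conven} in \cite{GT20} does proceed by blow-up at $\mm$-a.e.\ point to the Euclidean tangent $\R^d$, producing the approximate metric differential ${\sf md}_u(x)$ (a seminorm on $\R^d$) and the formula $\e_2^2[u](x)=\fint_{B_1}{\sf md}_u(x)(v)^2\,\d\mathcal L^d(v)$, with doubling/Poincar\'e maximal estimates controlling the off-chart contributions; and item (iii) is exactly the Korevaar--Schoen convexity argument you describe (quadrilateral comparison in $\CAT(0)$ giving quantitative convexity of $\E$, Poincar\'e on $W^{1,2}_0(U)$ — where $\mm(\X\setminus U)>0$ enters — making minimizing sequences $L^2$-Cauchy). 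For (iii) you should still add the lower semicontinuity of $\E$ under $L^2(U;\Y)$-convergence to close the direct method, and observe that the midpoint competitor has admissible boundary data via convexity of $\sfd_\Y(\cdot,p)$ and the lattice property of $W^{1,2}_0(U)$.

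There is, however, a concrete misstep in item (ii): the mechanism you assign to the inequality $\e_2[u]\leq|\d u|$ does not produce it. Writing $\sfd_\Y(u(x),u(y))=\sup_n|\varphi_n(u(x))-\varphi_n(u(y))|$ with $\varphi_n=\sfd_\Y(\cdot,y_n)$ puts the supremum \emph{inside} the integral defining $\kse_{2,r}[u,U]$, so this only yields $\kse_{2,r}[u,U]\geq\kse_{2,r}[\varphi_n\circ u,U]$ for each $n$, hence (via the scalar identity $\e_2[f]=(d+2)^{-1/2}|\d f|$, itself a nontrivial blow-up statement) the bound $\e_2[u]\geq (d+2)^{-1/2}\sup_n|\d(\varphi_n\circ u)|=(d+2)^{-1/2}|\d u|$ — i.e.\ the \emph{second} inequality in \eqref{eq:dfu} with $c(d)=\sqrt{d+2}$, not the first. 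The inequality $\e_2[u]\leq|\d u|$ cannot be obtained by post-composition alone: it comes from the same tangent-level picture as (i), namely
\[
\e_2^2[u](x)=\fint_{B_1^{\R^d}(0)}{\sf md}_u(x)(v)^2\,\d\mathcal L^d(v)\ \leq\ \sup_{|v|\leq1}{\sf md}_u(x)(v)^2\ =\ |\d u|^2(x),
\]
where the last identification of $|\d u|$ with the sup of the metric differential is the Rademacher-type statement \cite[Theorem 4.12]{GT20} (compare Proposition \ref{prop:rad} of the present paper). In other words, both inequalities in \eqref{eq:dfu} are the comparison between the $L^2$-average of the seminorm ${\sf md}_u(x)$ over the unit ball and its supremum (Jensen in one direction, a dimensional norm-equivalence for seminorms on $\R^d$ in the other); your instinct that the tangent structure is unavoidable is correct, but it is unavoidable for \emph{both} directions, and the post-composition argument you describe proves the opposite bound to the one you attach it to.
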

In what follows we will need to know that the energy density $\e_2[u]$ is not only the limit of $\kse_{2,r}[u,U]$, but also of an appropriate average of $\sfd_\Y(u(x),u(\cdot))$ via heat kernel:
\begin{proposition} With the same assumptions and notations as in the beginning of the section, the following holds. Let $u\in \KS(U;\Y)$. Then there is $E\subset U$ Borel with $\mm(U\setminus E)=0$ such that 
\begin{equation}
\label{eq:convenvar}
\lim_{t\downarrow0}\frac1t\int \sfd_\Y(u(x),u(y))\,\d\h_t\delta_x(y)= 2(d+2) \e_2[u]^2(x)\qquad\forall x\in E.
\end{equation}
\end{proposition}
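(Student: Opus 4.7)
Setting $f_x(\cdot):=\sfd_\Y(u(x),u(\cdot))$, and noting that $\sfd_\Y(u(x),\cdot)$ is $1$-Lipschitz on $\Y$, the characterization \eqref{eq:defdun} places $f_x\in W^{1,2}(U)$ with $|\d f_x|\le|\d u|\in L^2$. A direct inspection of \eqref{eq:kse} gives the pointwise identity $\kse_{2,r}^2[u,U](x)=\kse_{2,r}^2[f_x,U](x)$ (since $f_x(x)=0$ makes $(f_x(y)-f_x(x))^2=\sfd_\Y^2(u(x),u(y))$), and hence by \eqref{eq:conven} also $\e_2[f_x]^2(x)=\e_2[u]^2(x)$ for $\mm$-a.e.\ $x$. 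Using $f_x(x)=0$, the assertion reduces to proving
\[
\lim_{t\downarrow 0}\tfrac{1}{t}\,\h_t(f_x^2)(x)=2(d+2)\,\e_2[u]^2(x)\qquad\text{for $\mm$-a.e.\ }x\in U.
\]

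The plan is to split this into two scalar-valued statements, each valid $\mm$-a.e.\ on $\RCD(K,N)$ spaces of essential dimension $d$ and for any $f\in W^{1,2}$: first, call it \textup{(I)}, the short-time heat-kernel (carr\'e-du-champ) formula $\tfrac{1}{t}\int(f(y)-f(x))^2\,\d\h_t\delta_x(y)\to 2|\d f|^2(x)$; second, call it \textup{(II)}, the sharp Korevaar-Schoen identification $\e_2[f]^2(x)=|\d f|^2(x)/(d+2)$. Combining \textup{(I)} and \textup{(II)} applied to $f=f_x$ together with $\e_2[f_x]^2(x)=\e_2[u]^2(x)$ yields the claim. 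Assertion \textup{(I)} is proved by expanding $(f(y)-f(x))^2=f(y)^2-2f(x)f(y)+f(x)^2$, applying the Leibniz rule $\Delta(f^2)=2f\Delta f+2|\d f|^2$ from \eqref{eq:calclap}, and invoking Lemma~\ref{le:lapcomb2} together with the Lebesgue-point property \eqref{eq:lebheat}; \textup{(II)} sharpens the qualitative inequality \eqref{eq:dfu} and follows from a blow-up at regular points where the tangent cone is $\R^d$, the constant $(d+2)^{-1}$ coming from the Euclidean identity $\fint_{B_1(0)}|\langle v,z\rangle|^2\,\d z=|v|^2/(d+2)$ applied to the Kirchheim-type metric differential of $f$.

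The principal difficulty is the \emph{diagonal evaluation} in \textup{(I)}: the $\mm$-negligible exceptional set there depends on the choice of $f$, whereas we are applying it with $f=f_x$ depending on the evaluation point $x$. My plan to handle this is a Fubini / measurable-selection argument: establish a joint $(x,z)$-convergence in \textup{(I)} over $U\times U$ by using the symmetry of the heat kernel \eqref{eq:heattrans}, the Gaussian estimate \eqref{eq:gauss} and the volume growth \eqref{eq:expgr} to control the errors uniformly in $(x,z)$, then pass to the diagonal $\{z=x\}$ by selecting a full-measure subset along a subsequence $t_n\downarrow 0$. This diagonal extraction is the technically demanding step; once it is in place, the remainder reduces to \textup{(I)}, \textup{(II)} and the $W^{1,2}$-regularity of $f_x$. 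An alternative route, bypassing the diagonal issue, is a radial layer-cake decomposition rewriting $\tfrac{1}{t}\int\sfd_\Y^2(u(x),u(y))\,\d\h_t\delta_x(y)$ as an integral over $r>0$ of the $\KS$ averages $\kse_{2,r}^2[u,U](x)$ weighted by an explicit kernel that, upon rescaling $r=s\sqrt t$ and using short-time Gaussian asymptotics of the heat kernel at regular points together with volume convergence $\mm(B_r(x))\sim\omega_d\theta(x)r^d$, converges to $\tfrac{\omega_d}{2(4\pi)^{d/2}}\int_0^\infty s^{d+3}e^{-s^2/4}\,\d s\cdot\e_2[u]^2(x)=2(d+2)\,\e_2[u]^2(x)$ by a standard Gamma-function computation.
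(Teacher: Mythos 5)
Your reduction to the scalar functions $f_x:=\sfd_\Y(u(x),u(\cdot))$ is natural, and your closing $\Gamma$-function computation does produce the correct constant $2(d+2)$, but both of your routes have a genuine gap at their decisive step. In the first route, the diagonal evaluation you flag is not merely ``technically demanding'': the Fubini/selection scheme cannot reach the diagonal. Assertion (I) applied to $f_z$ holds outside a null set $N_z\subset U$ depending on $z$; Fubini then gives a full-measure subset of $U\times U$ where the off-diagonal statement holds, but the diagonal $\{z=x\}$ is itself $(\mm\times\mm)$-null, so this yields no information at any diagonal point, and extracting a subsequence $t_n\downarrow0$ does not help because the obstruction sits in the base point, not in $t$. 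Replacing $u(x)$ by a nearby fixed $p\in\Y$ is no better: the error $\sfd_\Y(u(x),p)$ enters squared and divided by $t$, so one needs it to be $o(\sqrt t)$ at scale $\sqrt t$ --- i.e.\ precisely a first-order expansion of $y\mapsto\sfd_\Y(u(x),u(y))$ around $x$. This is exactly the approximate metric differential ${\sf md}_u(x)$ of \cite[Definition 3.3]{GT20} on which the paper's proof rests, and it cannot be bypassed. Moreover, your proof of (I) for a generic $f\in W^{1,2}$ via the Leibniz rule \eqref{eq:calclap} and Lemma \ref{le:lapcomb2} is not available: those tools require $f,f^2\in D(\Delta_{loc})$ with two-sided integrable Laplacian bounds, which a generic Sobolev function does not have; even for the subharmonic $f_x$ (and the $2$-convex composition giving $\bd f_x^2\geq 2(d+2)\e_2[u]^2\mm$) the bounds are one-sided, so Lemma \ref{le:lapcomb2} would only give an inequality for $\tilde\Delta(f_x^2)(x)$, not the limit.

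Your alternative ``layer-cake'' route is closer in spirit to what the paper actually does, but as written it also fails at the first step: on a general $\RCD(K,N)$ space the heat kernel $\rho_t[x]$ is not a radial function of $\sfd(x,\cdot)$, so there is no explicit kernel turning $\tfrac1t\int\sfd_\Y^2(u(x),u(y))\,\d\h_t\delta_x(y)$ into a weighted integral of $\kse_{2,r}^2[u,U](x)$ over $r$, and the two-sided Gaussian bounds of type \eqref{eq:gauss} have mismatched exponents and thus cannot identify the exact limit. The paper instead works at points where $u$ is approximately metrically differentiable and uses the \emph{weak, measure-level} convergence of the rescaled kernels $\h_t\delta_x$ to the Euclidean Gaussian $\h_1^{\R^d}\delta_0$ (stability of the heat flow under pmGH convergence, \cite[Theorem 7.7]{GMS15}), then evaluates $\int{\sf md}_u(x)(v)^2\,\d\h_1^{\R^d}\delta_0(v)$ --- which is the same $\Gamma$-function identity you compute. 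So the correct repair of your argument is to supply the metric-differentiability input (which simultaneously dissolves the diagonal problem of your first route) and to replace the pointwise radial profile of the heat kernel by its weak convergence after blow-up; this is precisely the modification of the proof of \cite[Theorem 3.13]{GT20} that the paper indicates.
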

The \emph{proof} of this proposition follows very closely that of  \cite[Theorem 3.13]{GT20} from which the result \eqref{eq:conven} recalled above is extracted. Since the arguments for proving  \cite[Theorem 3.13]{GT20} are rather lengthy (and based on few additional concepts such as that of mGH-convergence of spaces, approximate metric differentiability, Hajlasz-Sobolev functions...) and since the modifications needed to obtain \eqref{eq:convenvar} are straightforward, we limit ourselves to pointing out what follows. The basic idea behind the proof of \eqref{eq:conven} is that for $\mm$-a.e.\ $x\in U$ there is a seminorm ${\sf md}_u(x)$ on $\R^d$ that acts as a sort of `metric differential' by describing the infinitesimal variation of $u$ around $x$ (see  \cite[Definition 3.3]{GT20}). The energy density $\e_2[u](x)$ is then defined by the formula  
\begin{equation}
\label{eq:defe2}
\begin{split}
\e_2^2[u](x):=&\fint_{B_1^{\R^d}(0)} {\sf md}_u(x)(v)^2\,\d\mathcal L^d(v)\\
=&\frac1{|B^{\R^d}_1|}\int_0^1r^{d+1}\underbrace{\int_{S^{d-1}}{\sf md}_u(x)(v)^2\,\d\mathcal H^{d-1}(v)}_{=:|||{\sf md}_u(x)|||}\,\d r=\frac{|||{\sf md}_u(x)|||}{(d+2)|B^{\R^d}_1|}
\end{split}
\end{equation}
and  \eqref{eq:conven} follows by the `first order expansion of $u$ given by ${\sf md}_u$' by noticing that the measures $\mm(B_r(x))^{-1}\mm\restr{B_r(x)}$  appearing in the definition of $\kse_{2,r}[u,U]$ converge, after scaling, to  $\frac1{|B^{\R^d}_1|}\mathcal L^d\restr{B^{\R^d}_1(0)}$\footnote{Such convergence of measures is in place, by definition, at all points with Euclidean tangent by the very definition of mGH-convergence of pointed spaces. To get the desired \eqref{eq:conven} one uses also that $\RCD$ spaces are doubling and support a Poincar\'e inequality to call into play maximal estimates that are useful to control the term $\tfrac{\sfd_\Y(u(x),u(y))}{r}$ appearing in the definition of $\kse_{2,r}[u,U]$.  Such control is necessary due to the nature of the concept of approximate metric differentiability, that only speaks about $y$ belonging to suitabl charts for which the point $x$ is assumed to be a Lebesgue point, while in the definition of $\kse_{2,r}[u,U]$ all the points $y$ in $B_r(x)$ apear in the integral. See the proof of  \cite[Theorem 3.13]{GT20}  for more details}. Similarly, \eqref{eq:convenvar} is proved recalling that the heat kernels $\h_t\delta_x$ converge, after scaling, to the Gaussian measure $\h^{\R^d}_1\delta_0={(4\pi)^{-\frac d2}}e^{-\frac{|\cdot|^2}4}\mathcal L^d$ (this follows from the stability of the heat flow under lower Ricci curvature bounds first noticed in \cite{Gigli10} - here we use \cite[Theorem 7.7]{GMS15}). Using such convergence and arguing as for \eqref{eq:conven} we see that the limit in \eqref{eq:convenvar} is equal to
\[
\int  {\sf md}_u(x)(v)^2\,\d \h^{\R^d}_1\delta_0(v)=\frac{|||{\sf md}_u(x)|||}{(4\pi)^{\frac d2}}\int_0^{+\infty} r^{d+1}e^{-\frac{r^2}4}\,\d r,
\]
thus from \eqref{eq:defe2} we conclude that \eqref{eq:convenvar} is ultimately reduced to the verification of the identity
\[
\frac{1}{(4\pi)^{\frac d2}}\int_0^{+\infty} r^{d+1}e^{-\frac{r^2}4}\,\d r=\frac2{|B^{\R^d}_1|}
\]
that in turn is well known (and easy to establish by direct computation, e.g.\ by induction). In all this, the fact that the set $E$ can be chosen to be Borel follows from the construction or, alternatively, noticing that the right hand side of \eqref{eq:convenvar} is a Borel function, that the set of points for which the limit in the left hand side exists is Borel and that so is the associated limit function.

\bigskip

We shall also need the following result, see  \cite[Theorem 4.18]{gn20}:
\begin{theorem}\label{thm:lapcomp}  With the same assumptions and notations as in the beginning of the section, the following holds.
Let $u\in \KS(U;\Y)$ be harmonic,  $f:\Y\to\R$ be  $\lambda$-convex and $\Y'\subset\Y$ be a $\CAT(0)$ subspace with $\mm(U\setminus u^{-1}(\Y'))=0$ and so that $f\restr {\Y'}$ is Lipschitz. Then  
\begin{equation}
\label{eq:fconv}
\bd(f\circ u)\restr U\geq \lambda (d+2)\e_2[u]^2\mm.
\end{equation}
\end{theorem}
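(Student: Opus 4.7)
The approach is a variational comparison: exploit the minimality of $\E$ at the harmonic map $u$ against a perturbation of $u$ built from the gradient flow of $f$ on $\Y$. By the locality property \eqref{eq:localappr} and a composition argument (post-composing $f$ with a suitable nearest-point projection onto a convex subset of $\Y'$ essentially containing $u(U)$, then Lipschitz-extending), one reduces to the case where $f$ is globally $L$-Lipschitz and $\lambda$-convex on $\Y$. In particular $f\circ u\in W^{1,2}(U)$, and via \eqref{eq:considerazioni} the conclusion \eqref{eq:fconv} is equivalent to showing
\begin{equation*}
-\int\nabla(f\circ u)\cdot\nabla\varphi\,\d\mm \geq \lambda(d+2)\int\varphi\,\e_2[u]^2\,\d\mm \qquad \forall\varphi\in\Lbs(\X),\ \varphi\geq 0,\ \supp\varphi\subset U.
\end{equation*}

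\textbf{Competitor.} For such $\varphi$ and small $s>0$, set $v_s(x):=T^f_{s\varphi(x)}(u(x))$, where $(T^f_t)_{t\geq0}$ denotes the Mayer--Ba\v{c}\'ak gradient flow of $f$ on the complete $\CAT(0)$ space $\Y$. It is $e^{-\lambda t}$-Lipschitz, satisfies $\sfd_\Y(T^f_ty,y)\leq Lt$, and fulfils the evolution variational inequality
\begin{equation*}
\tfrac12\sfd_\Y^2(T^f_ty,z)-\tfrac12\sfd_\Y^2(y,z)\leq t\bigl(f(z)-f(T^f_ty)\bigr)-\tfrac{\lambda}{2}\int_0^t\sfd_\Y^2(T^f_\tau y,z)\,\d\tau,\qquad\forall y,z\in\Y,\ t\geq 0.
\end{equation*}
Then $v_s=u$ on $\X\setminus\supp\varphi$, the map $\sfd_\Y(u,v_s)$ lies in $W^{1,2}_0(U)$, and $v_s\in\KS(U;\Y)$ is an admissible competitor in the harmonic problem. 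Harmonicity of $u$ therefore yields $\E(u)\leq\E(v_s)$.

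\textbf{Expansion.} Applying the EVI with $(y,t,z)=(u(x),s\varphi(x),v_s(y))$ and, symmetrically, with $(u(y),s\varphi(y),u(x))$, then summing, canceling the common $\sfd_\Y^2(u(x),v_s(y))=\sfd_\Y^2(v_s(y),u(x))$, and Taylor-expanding $f\circ v_s$ around $f\circ u$ using $\sfd_\Y(v_s,u)\leq Ls\varphi$, I obtain the pointwise bound
\begin{equation*}
\sfd_\Y^2(v_s(x),v_s(y))\leq\sfd_\Y^2(u(x),u(y))+2s\bigl(\varphi(x)-\varphi(y)\bigr)\bigl(f(u(y))-f(u(x))\bigr)-s\lambda\bigl(\varphi(x)+\varphi(y)\bigr)\sfd_\Y^2(u(x),u(y))+O(s^2),
\end{equation*}
with the remainder uniform on $\supp\varphi\times\supp\varphi$ (using Lipschitzianity of $f$, $\varphi$ and $u\restr{\supp\varphi}$-local boundedness in $\Y'$). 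Inserting this into $\kse_{2,r}[v_s,U]^2(x)$ and taking the limit $r\downarrow 0$ via \eqref{eq:conven}, the last term contributes $-2s\lambda\varphi(x)\e_2[u]^2(x)$ at $\mm$-a.e.\ $x$; for the cross term, the Lipschitz regularity of $\varphi,f\circ u$ combined with the Euclidean tangent structure of the $\RCD(K,N)$ space (at $\mm$-a.e.\ point the measured tangent is $(\R^d,|\cdot|,c_d\mathcal L^d)$) gives
\begin{equation*}
\lim_{r\downarrow 0}\fint_{B_r(x)}\frac{(\varphi(x)-\varphi(y))(f(u(y))-f(u(x)))}{r^2}\,\d\mm(y)=-\frac{\nabla\varphi(x)\cdot\nabla(f\circ u)(x)}{d+2}\qquad\text{for }\mm\text{-a.e. }x,
\end{equation*}
the constant $1/(d+2)$ coming from $\fint_{B^{\R^d}_1(0)}v_iv_j\,\d\mathcal L^d(v)=\delta_{ij}/(d+2)$, precisely as in \eqref{eq:defe2}. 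Integrating over $U$ and combining,
\begin{equation*}
\E(v_s)-\E(u)\leq-\tfrac{2s}{d+2}\int\nabla(f\circ u)\cdot\nabla\varphi\,\d\mm-2s\lambda\int\varphi\,\e_2[u]^2\,\d\mm+o(s).
\end{equation*}

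\textbf{Conclusion and main obstacle.} Since the left-hand side is $\geq 0$ by harmonicity, dividing by $2s$, letting $s\downarrow 0$, and multiplying through by $(d+2)$ yields the desired inequality. The most delicate technical step is the limit $r\downarrow 0$ of the cross term just displayed: it requires compatibility between the $(d+2)$-averaging built into the Korevaar--Schoen density \eqref{eq:defe2} and the metric-measure Euclidean tangent structure of $\X$ at $\mm$-a.e.\ point. This is exactly what produces the constant $\lambda(d+2)$ in \eqref{eq:fconv} and, by the same token, why the finiteness of the essential dimension $d$ is essential for the statement. A secondary, more technical issue is controlling the $O(s^2)$ remainder uniformly enough to pass to the limit $s\downarrow 0$ after integration against the scaled average $\mm(B_r(x))^{-1}\d\mm$.
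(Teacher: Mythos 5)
Your strategy is, in substance, the actual proof of this result: the paper does not reprove Theorem \ref{thm:lapcomp} but imports it from \cite[Theorem 4.18]{gn20}, and the argument there is precisely a first-variation computation along the perturbation $v_s(x)=T^f_{s\varphi(x)}(u(x))$ built from the gradient flow of $f$ on the target, leading to the inequality $\lims_{s\downarrow0}s^{-1}\big(\E(v_s)-\E(u)\big)\leq-\tfrac1{d+2}\int\lambda\varphi\,|\d u|^2_\HS+\la\d(f\circ u),\d\varphi\ra\,\d\mm$ recorded in the remark following the theorem (recall $|\d u|_\HS^2=(d+2)\e_2[u]^2$). Your identification of the origin of the constant $d+2$ and your treatment of the cross term via the Euclidean tangent structure also match the machinery of \cite{GT20}, \cite{gn20}.

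There is, however, a genuine gap in the expansion step, and it is not the ``secondary'' issue you relegate to the last lines: it is the heart of the proof. The order of limits is forced — for fixed $s$ one must first let $r\downarrow0$ in $\int\kse_{2,r}[v_s,U]^2\,\d\mm$, and only afterwards let $s\downarrow0$. A remainder that is merely ``$O(s^2)$ uniformly on $\supp\varphi\times\supp\varphi$'' is therefore fatal: it contributes $\fint_{B_r(x)}O(s^2)\,r^{-2}\,\d\mm(y)=O(s^2/r^2)$, which diverges as $r\downarrow0$. What is needed is a remainder of the form $Cs^2\big[(\varphi(x)-\varphi(y))^2+\sfd_\Y^2(u(x),u(y))\big]$, since only such terms survive the division by $r^2$. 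Your derivation — summing two EVIs and Taylor-expanding $f\circ v_s$ around $f\circ u$ using $\sfd_\Y(v_s,u)\leq Ls\varphi$ — does not produce this: replacing $\int_0^{s\varphi(x)}f(T^f_\tau u(x))\,\d\tau$ by $s\varphi(x)f(u(x))$ costs a nonnegative term of size $\sim L^2s^2\varphi(x)^2$ with no gain as $y\to x$; the same happens when $f(v_s(y))$ is replaced by $f(u(y))$, and when $\sfd_\Y^2(T^f_\tau u(x),v_s(y))$ is replaced by $\sfd_\Y^2(u(x),u(y))$ inside the $\lambda$-term (this costs $\sim s^2\,\sfd_\Y(u(x),u(y))$, which divided by $r^2$ still diverges). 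Closing the gap requires a sharper quadrilateral-type comparison for $\sfd_\Y^2(T^f_{t_1}y_1,T^f_{t_2}y_2)$ in which the quadratic error is controlled by $(t_1-t_2)^2$ and by $(t_1+t_2)\,\sfd_\Y^2(y_1,y_2)$ rather than by $t_1^2+t_2^2$ (in the model case of a linear $f$ on a Hilbert space the error is exactly $(t_1-t_2)^2|\nabla f|^2$); this refined estimate is what the argument of \cite{gn20} supplies and what your sketch is missing. A smaller remark: the reduction to globally Lipschitz $f$ via a nearest-point projection is delicate, since such a composition need not remain $\lambda$-convex; the cleaner route, indicated in the remark after the theorem, is simply to work inside $\Y'$, where $u$ is still harmonic and the gradient flow of $f\restr{\Y'}$ is available.
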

\begin{remark}{\rm
There are few differences between the statement as given above and that of \cite[Theorem 4.18]{gn20}, let us comment on them. 

One concerns the fact that in  \cite[Theorem 4.18]{gn20} we assumed $f$ to be globally Lipschitz, rather than only on an essential image of $u$ as above. The modification we did is inessential in  the proof (just replace $\Y$ with $\Y'$ and notice that $u$ remains harmonic) and useful later on, when, after proving that $u$ is locally bounded, we will choose as $f$ the squared distance from a point.

Another is about the use of $\e_2[u]$ in place of $|\d u|_\HS$ that appeared in \cite[Theorem 4.18]{gn20}. There is a strict relation between these two: in \cite[Proposition 6.7]{GT20} we proved that $(d+2)\e_2[u]^2=|\d u|_{\HS}^2$ so  that \eqref{eq:fconv} is equivalent to
\begin{equation}
\label{eq:fconv2}
\bd(f\circ u)\restr U\geq \lambda |\d u|_{\HS}^2\mm.
\end{equation}
We preferred to state the bound as in \eqref{eq:fconv} because in order to give a meaning to the pointwise Hilbert-Schmidt norm   $|\d u|_\HS$ one needs to rely on the  \emph{universally infinitesimally Hilbertian} of $\CAT(\kappa)$ spaces proved in  \cite{DMGSP18}.  Since, perhaps surprisingly, such result  plays no role in this paper (nor in \cite{gn20}), we decided to stick to the notation $\e_2[u]$ to emphasize this fact.

The last  difference is made more evident by the writing in \eqref{eq:fconv2}, as in  \cite[Theorem 4.18]{gn20} the right hand side is multiplied by an additional factor $\frac1{d+2}$. This is due to a computational mistake done in \cite{gn20} that made it all the way up to publication: namely, in deriving the bound \cite[(4.12)]{gn20} from \cite[(4.11)]{gn20} we forgot to multiply one of the terms in the right hand side by $\frac1{d+2}$. This results in an incorrect formula, the correct version being
\[
\lims_{t\downarrow0}\frac{\E(u_t)-\E(u)}t\leq -\frac1{d+2}\int_\Omega \lambda g|\d u|_{\HS}^2+\la\d(f\circ u),\d g \ra\,\d\mm. 
\]
Using this bound in place of the original \cite[(4.12)]{gn20} in the proof of  \cite[Theorem 4.18]{gn20} we obtain the correct result reported above.
}\fr\end{remark}
We conclude recalling the following key regularity statement about subharmonic functions. 
\begin{theorem}\label{thm:ellrt}
Let $(\X,\sfd,\mm)$ be an $\RCD(K,N)$ space, $K\in\R$, $N<\infty$, $U\subset\X$ open, $B_R(\bar x)\subset U$, $\alpha,\beta \in\R$ with $\beta\geq 0$ and $f\in W^{1,2}_{loc}(U)$ be with 
\begin{equation}
\label{eq:subsol}
\bd f\geq -R^{-2}(\alpha  f+\beta)\mm\qquad\ on\ U.
\end{equation}
 Then for every   $\lambda \in(0,1)$ and $r\in(0, R]$  we have
\begin{equation}
\label{eq:ellst}
\|f^+\|_{L^\infty(B_{ \lambda r}(\bar x))}\leq C_1(\alpha^+,\tfrac1{1-\lambda},K^-R^2,N)\fint_{B_{ r}(\bar x)}f^+\,\d\mm+\beta\frac{r^2}{R^2}\,C_2(\tfrac1{1-\lambda},K^-R^2,N) .
\end{equation}
Moreover, $f$ admits an upper semicontinuous representative. More precisely, it  coincides $\mm$-a.e.\ with its   essential upper semicontinuous envelope $f^*$ defined as $f^*(x):=\inf_{r>0}\esssup_{B_r(x)\subset U} f$.
\end{theorem}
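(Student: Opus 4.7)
The statement is the standard $L^\infty$-estimate for subsolutions of inhomogeneous elliptic equations, to be obtained by De Giorgi--Nash--Moser iteration adapted to the synthetic setting via the distributional formulation of Definition \ref{def:lapbU} and the integration-by-parts characterization \eqref{eq:considerazioni}. After rescaling $\sfd\to\sfd/R$ the space becomes $\RCD(KR^2,N)$, so I may assume $R=1$; all constants will then depend only on $K^-R^2$ and $N$, as required.

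The Caccioppoli step: via \eqref{eq:considerazioni} the hypothesis reads
\[
\int\nabla f\cdot\nabla\psi\,\d\mm\leq\int(\alpha f+\beta)\psi\,\d\mm\qquad\forall\psi\in\Lip_\bs(\X),\ \psi\geq 0,\ \supp(\psi)\subset U.
\]
Setting $v:=f^+$ and choosing $k>0$ large enough to absorb the forcing term (e.g.\ $k:=\beta/\alpha^+$ when $\alpha^+>0$, and a small fixed positive constant otherwise), I test against $\psi:=\eta^2(v+k)^{2q-1}$ for $q\geq 1$ and $\eta\in\Lip_\bs(\X)$ a non-negative cutoff supported in $U$. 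Chain rule and Young's inequality then yield the familiar energy estimate
\[
\int|\nabla(\eta(v+k)^q)|^2\,\d\mm\leq Cq^2\int(v+k)^{2q}\bigl(|\nabla\eta|^2+\alpha^+\eta^2\bigr)\,\d\mm.
\]

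Combining this with the local $(2,2\chi)$-Sobolev inequality for compactly supported functions available on $\RCD(K,N)$ spaces --- with $\chi=\chi(N)>1$ and constants depending only on $K^-R^2, N$ --- produces a reverse Hölder inequality between concentric balls. Iterating with $q_j:=\chi^j$ and $r_j:=\lambda r+(1-\lambda)r\,2^{-j}$ (the series $\sum_j\chi^{-j}\log(q_j/(r_j-r_{j+1})^2)$ converges) gives
\[
\|f^++k\|_{L^\infty(B_{\lambda r})}\leq C\Bigl(\fint_{B_r}(f^++k)^2\,\d\mm\Bigr)^{1/2}.
\]
A Bombieri--De Giorgi interpolation (or, equivalently, a parallel iteration at exponents in $(0,2)$) then lowers the right-hand side from $L^2$ to $L^1$, yielding \eqref{eq:ellst} after substituting back $v+k=f^++O(\beta)$.

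For the upper semicontinuous representative: the Moser estimate makes $f$ locally essentially bounded above on $U$, so $f^*(x):=\inf_{r>0}\esssup_{B_r(x)}f$ is finite and non-increasing in $r$. Upper semicontinuity of $f^*$ is immediate: if $x_n\to x$ and $r>0$ then eventually $B_{r/2}(x_n)\subset B_r(x)$, hence $f^*(x_n)\leq\esssup_{B_r(x)}f$, and taking $\limsup_n$ followed by $r\downarrow 0$ yields $\limsup_n f^*(x_n)\leq f^*(x)$. The inequality $f^*\geq f$ $\mm$-a.e.\ is trivial via Lebesgue differentiation. The reverse $f^*\leq f$ $\mm$-a.e.\ is the \emph{main expected obstacle}: the Moser $L^\infty$-bound alone gives only $f^*(x)\leq Cf^+(x)$ with $C>1$, so a genuine mean value inequality for subsolutions (with unit leading constant) is required. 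Such a mean value property can be derived either from the heat kernel --- integrating $\bd f\geq-g\mm$ against $\rho_t[x]$ and controlling the remainder by the Gaussian bound \eqref{eq:gauss} --- or by a refined Moser scheme at small exponents. Once in hand, it gives $\lim_{r\downarrow 0}\esssup_{B_r(x)}f=f(x)$ at every Lebesgue point of $f^+$, closing the proof.
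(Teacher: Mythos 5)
Your Moser iteration for the bound \eqref{eq:ellst} follows the same (folklore) route the paper indicates: a Caccioppoli estimate obtained by testing the integration-by-parts form of $\bd f\geq -R^{-2}(\alpha f+\beta)\mm$, the local Sobolev inequality with constants controlled by doubling and Poincar\'e, a rescaling to get the dependence on $K^-R^2$ and $N$ only, and a lowering of the right-hand side from $L^2$ to $L^1$ (the paper does this by the elementary interpolation $\|f\|_{L^p}\leq\|f\|_{L^\infty}^{(p-1)/p}\|f\|_{L^1}^{1/p}$ plus absorption, rather than a parallel small-exponent iteration; both are fine). Two minor points: the test function $\eta^2(v+k)^{2q-1}$ must be truncated to remain in the admissible class, and to recover the precise additive term $\beta\tfrac{r^2}{R^2}C_2$ the shift $k$ should be taken proportional to $\beta r^2/R^2$ --- a ``small fixed positive constant'' when $\alpha^+=0$ would destroy the linear dependence on $\beta$.

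The genuine gap is in the last step. You correctly isolate $f^*\leq f$ $\mm$-a.e.\ as the nontrivial direction, but neither of the routes you propose closes it. A mean value inequality with unit leading constant, $\esssup_{B_{\lambda r}(x)}f^+\leq\fint_{B_r(x)}f^+ +o(1)$, is false already for the harmonic function $f(x)=x_1$ on $\R^n$ centred at the origin; and pairing $\bd f\geq -g\mm$ with the heat kernel yields $f(x)\leq \h_tf(x)+\int_0^t\h_sg(x)\,\d s$, a \emph{sub}-mean-value bound that controls $f(x)$ from above by an average and hence only reproves the trivial direction $f\leq f^*$. The standard closure (and the one the paper intends with its remark on $f+k$) requires no new inequality: hypothesis \eqref{eq:subsol} is stable under replacing $f$ by $f-q$ for a constant $q$, up to changing $\beta$ into $\beta+\alpha q$ --- which can be kept nonnegative since, by the first part, $f$ is locally bounded and one may locally majorize $\alpha f+\beta$ by a constant. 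Applying the already proven \eqref{eq:ellst} to $f-q$ gives
\[
\esssup_{B_{\lambda r}(x)}(f-q)^+\leq C_1\fint_{B_r(x)}(f-q)^+\,\d\mm+\beta'\tfrac{r^2}{R^2}C_2,
\]
and at a Lebesgue point $x$ of $f$, with $q$ rational and close to $f(x)$ from above, the right-hand side tends to $0$ as $r\downarrow0$ because $\fint_{B_r(x)}(f-f(x))^+\leq\fint_{B_r(x)}|f-f(x)|\to0$; the size of $C_1$ is then irrelevant. Running over a countable dense set of $q$'s makes the exceptional set uniform and yields $f^*\leq f$ $\mm$-a.e.
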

This result is folklore in the field and its \emph{proof} can be achieved via Moser's iteration. In this direction let us point out what follows. 
\begin{itemize}[leftmargin=*]
\item[-] The constants appearing in Moser's technique depend only on the local doubling constant and on the constant in the Poincar\'e inequality. This has been realized in the smooth category in \cite{SC92}, where it has been proved that doubling \& Poincar\'e imply a Sobolev inequality (needed in Moser's iteration) with an argument that carries over to the present setting without modifications, much like it does so in the setting of Dirichlet forms, as noticed in \cite{Sturm96III}. Notice that the settings of Dirichlet forms and $\RCD$ spaces are fully compatible by \cite[Theorem 6.10]{AmbrosioGigliSavare11-2} (whose proof requires the chain and Leibniz rules established in \cite{AmbrosioGigliSavare11-2} and   \cite{Gigli12}).
\item[-] Chain rule and integration by parts formula in general metric measure spaces have been provided in \cite{Gigli12} and can be used to follow the proof, e.g., of \cite[Theorem 8.17]{GT01} without any relevant modification, even in non-Hilbertian spaces. These tools are not-necessary if $\alpha=\beta=0$ as in this case the inequality $\bd f\geq 0$ can be read in purely variational terms and, notably, this is sufficient to develop a satisfactory theory, see e.g.\ \cite{Bjorn-Bjorn11} and references therein. In the `dual' setting of Dirichlet forms, in place of the calculus as in \cite{Gigli12} one can use that based on the `Carr\'e du champ' operator, see e.g.\ \cite{MaRockner92}, \cite{FOM11} and references therein. 
\item[-] The dependance of $C_1,C_2$ on the given parameters can be seen e.g.\ inspecting the proof of \cite[Theorem 8.17]{GT01}. The dependance on $K^-R^2$ can also be proved via a scaling argument: replacing $\sfd$ with $\frac1r\sfd$ the original $\RCD(K,N)$ space becomes $\RCD(Kr^2,N)$, hence $\RCD(-K^-R^2,N)$, inequality \eqref{eq:subsol} becomes $\bd f\geq -\frac{r^2}{R^{2}}(\alpha  f+\beta)\mm$ and everything happens in a ball of radius 1.
\item[-] From  \eqref{eq:ellst} it is clear that $f(x)=f^*(x)$ for  $x$ Lebesgue point of $f$ such that $f(x)>0$. For the general case we  notice that  $f+k$, $k\gg1$, satisfies a similar differential inequality.
\item[-] In \cite[Theorem 8.17]{GT01}  there is an $L^p$ norm, $p>1$, at the right hand side. Still, once such bound is proved, as customary one can use the $L^\infty$ control it provides and the trivial inequality $\|f\|_{L^p}\leq \|f\|^{\frac{p-1}p}_{L^\infty}\|f\|_1^{\frac 1p}$ to get \eqref{eq:ellst}.
\end{itemize}

\subsection{First regularity results}

In this section we shall work under the following assumptions:
\begin{itemize}
\item[-] $(\X,\sfd,\mm)$ is an $\RCD(K,N)$ space of essential dimension $d\in\N$.
\item[-] $U\subset\X$ is open, bounded and such that $\mm(\X\setminus U)>0$.
\item[-] $(\Y,\sfd_\Y)$  is a $\CAT(0)$ space.
\item[-] $u\in\ks(U;\Y)\subset L^2(U,\Y)$ is harmonic. For convenience, we shall also fix  Borel representatives of $u$ and $\e_2[u]$.
\end{itemize} 
We recall that the $\CAT(0)$ condition implies (see e.g.\ \cite{Bacak14}):
\begin{equation}
\label{eq:convdacat3}
\text{for every $p\in\Y$ the map $\sfd_\Y(\cdot,p)$ is convex and the map $\sfd_\Y^2(\cdot,p)$ is 2-convex.}
\end{equation}
This and Theorem \ref{thm:ellrt} give the following important result about the `continuity set' of $u$:
\begin{proposition}\label{prop:boundlinfty}
With the above notation and assumptions, the following holds.

There exists $\cont(u)\subset U$ Borel with 
\begin{equation}
\label{eq:mcont}
\mm(U\setminus \cont(u))=0
\end{equation}
such that the restriction of $u$ to $\cont(u)$ is continuous and locally bounded. More precisely, for any $\o\in\Y$, ball $B_R(x)\subset U$,  $\lambda\in(0,1)$ and $r\in(0, R]$ we have
\begin{equation}
\label{eq:boundest}
\sup_{x\in\cont(u)\cap B_{\lambda r}(x)}\sfd_\Y(u(x),\o)\leq  C(\tfrac1{1-\lambda},K^-R^2,N)\sqrt{\fint_{ B_{ r}(x)}\sfd^2_\Y(u(x),\o)\,\d\mm(x)}.
\end{equation}
\end{proposition}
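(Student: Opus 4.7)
The plan is to reduce the metric-valued problem to a scalar subharmonic one via the distance-to-a-point functions. For every $\o\in\Y$ I would consider $g_\o:U\to\R$ defined by $g_\o(x):=\sfd_\Y(u(x),\o)$. Since $\sfd_\Y(\cdot,\o)$ is $1$-Lipschitz and convex on $\Y$ by \eqref{eq:convdacat3}, Theorem \ref{thm:lapcomp} applied with $\Y'=\Y$ and $\lambda=0$ gives $\bd g_\o\restr U\geq 0$, while $u\in W^{1,2}(U;\Y)$ together with the Lipschitz character of $\sfd_\Y(\cdot,\o)$ yields $g_\o\in W^{1,2}_{loc}(U)$. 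Theorem \ref{thm:ellrt} with $\alpha=\beta=0$ then produces both the Moser-type $L^\infty$ bound
\[
\|g_\o\|_{L^\infty(B_{\lambda r}(\bar x))}\leq C\big(\tfrac{1}{1-\lambda},K^-R^2,N\big)\fint_{B_r(\bar x)}g_\o\,\d\mm
\]
and the existence of a Borel set $A_\o\subset U$ with $\mm(U\setminus A_\o)=0$ on which $g_\o$ agrees with its upper semicontinuous envelope $g_\o^*$. Cauchy--Schwarz applied to the right hand side above turns the $L^1$-average into the $L^2$-average appearing in \eqref{eq:boundest}.

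To build the continuity set I would use the fact that $u$ is essentially separably valued: pick a separable $\Y'\subset\Y$ with $\mm(U\setminus u^{-1}(\Y'))=0$ and a countable dense subset $\{\o_j\}_{j\in\N}\subset\Y'$, and define
\[
\cont(u):=u^{-1}(\Y')\cap\bigcap_{j\in\N}A_{\o_j}.
\]
This is Borel with $\mm$-negligible complement, and on it every $g_{\o_j}$ coincides with the u.s.c.\ function $g_{\o_j}^*$.

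The crux of the argument is upgrading this u.s.c.\ information on a dense family of distance functions to actual continuity of $u$ itself on $\cont(u)$. For $x\in\cont(u)$ and $\cont(u)\ni x_n\to x$, I would fix $\eps>0$, pick $\o_j$ with $\sfd_\Y(u(x),\o_j)<\eps$ (possible since $u(x)\in\Y'$ and $\{\o_j\}$ is dense), and use u.s.c.\ of $g_{\o_j}^*$ at $x$ to get $\sfd_\Y(u(x_n),\o_j)<2\eps$ eventually; the triangle inequality then yields $\sfd_\Y(u(x_n),u(x))<3\eps$. For the quantitative bound \eqref{eq:boundest} at any $x\in\cont(u)\cap B_{\lambda r}(\bar x)$, I would choose $\delta>0$ small so that $B_\delta(x)\subset B_{\lambda r}(\bar x)$ and use
\[
g_\o(x)=g_\o^*(x)\leq\esssup_{B_\delta(x)}g_\o\leq\|g_\o\|_{L^\infty(B_{\lambda r}(\bar x))},
\]
from which local boundedness of $u|_{\cont(u)}$ is immediate.

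I do not foresee a genuine obstacle: the argument is mainly a clean assembly of (i) Theorem \ref{thm:lapcomp}, saying that the post-composition of a harmonic map with a convex function is subharmonic, (ii) Theorem \ref{thm:ellrt}, providing the Moser $L^\infty$ estimate and an upper semicontinuous representative, and (iii) the elementary observation that the u.s.c.\ character of $\sfd_\Y(u(\cdot),\o_j)$ for every element $\o_j$ of a countable dense subset of the essential image of $u$ already detects continuity of the metric-valued map $u$.
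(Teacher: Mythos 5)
Your argument is correct and is essentially the paper's own proof: subharmonicity of $\sfd_\Y(u(\cdot),\o)$ via Theorem \ref{thm:lapcomp} and the convexity \eqref{eq:convdacat3}, the Moser estimate and upper semicontinuous representative from Theorem \ref{thm:ellrt}, a countable dense subset of the separable essential image to build $\cont(u)$, and the triangle inequality to upgrade u.s.c.\ of the countably many distance functions to continuity of $u$ on $\cont(u)$. The only cosmetic difference is that you apply Cauchy--Schwarz to pass from the $L^1$- to the $L^2$-average, which is exactly how \eqref{eq:boundest} is meant to be read off from \eqref{eq:ellst}.
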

\begin{proof} 
By \eqref{eq:convdacat3} and Theorem \ref{thm:lapcomp} we know that for any $p\in\Y$ we have $\bd\sfd_{\Y}( u(\cdot),p)\geq 0$ on  $U$, thus  by  the second part of Theorem \ref{thm:ellrt} we know that there is a Borel $\mm$-negligible set $N_p\subset U$ such that $\sfd_{\Y}( u(\cdot),p)$ is upper semicontinuous. Now recall that since $u$ is essentially separable valued there is $\Y'\subset\Y$  separable such that $\mm(U\setminus u^{-1}(\Y'))=0$, then let $(p_n)\subset\Y'$ be countable and dense and put 
\[
\cont(u):=u^{-1}(\Y')\setminus(\cup_n N_{p_n}).
\] 
Then $\cont(u)$ is Borel with $\mm(U\setminus \cont(u))=0$ and 
\[
\lims_{y\to x\atop y\in \cont(u)}\sfd_\Y(u(x),u(y))\leq \sfd_\Y(u(x),p_n)+\lims_{y\to x\atop y\in \cont(u)}\sfd_\Y(p_n,u(y))\leq 2\sfd_\Y(u(x),p_n)
\]
for every $x\in \cont(u)$ and $n\in\N$. Taking the $\inf$ in $n$ we obtain the desired continuity, while    estimate  \eqref{eq:boundest}  follows directly from  \eqref{eq:ellst} with $C=C_1(0,\tfrac1{1-\lambda},K^-R^2,N)$.
\end{proof}
We now define the upper semicontinuous function $\sfd_u:U\times U\to\R$ as 
\begin{equation}
\label{eq:defdu}
\sfd_u(x,y):=\lims_{(x',y')\to(x,y)\atop (x',y')\in \cont(u)\times \cont(u)}\sfd_\Y(u(x'),u(y'))
\end{equation}
and notice that trivially from the definition it satisfies
\begin{subequations}
\begin{align}
\label{eq:duu}
\sfd_u(x,y)=\sfd_\Y(u(x),u(y))\qquad\forall x,y\in \cont(u),\\
\label{eq:dut}
\sfd_u(x,y)\leq\sfd_u(x,z)+\sfd_u(z,y)\qquad\forall x,y,z\in U.
\end{align}
\end{subequations}
Another direct consequence of the definition and \eqref{eq:duu}  is that
\begin{equation}
\label{eq:liptilt}
\lip(u)(x)=\lims_{y\to x}\frac{\sfd_\Y(u(y),u(x))}{\sfd(y,x)}=\tilt(-\sfd_u)(x)\qquad\forall x\in\cont(u),
\end{equation}
where the definition of $\tilt$ was given in \eqref{eq:tilt}. We shall also need the following properties:
\begin{proposition}\label{prop:lapdu} With the above notation and assumptions, the following holds:
\begin{itemize}
\item[i)] For every $x\in U$ we have  $\sfd_u(x,\cdot)\in W^{1,2}(U)$ and $\bd\sfd_u(x,\cdot)\restr U\geq 0$. 
\item[ii)] We have
\begin{equation}
\label{eq:tilte2}
\lip (u)\leq C(N)\,\e_2[u]\qquad\mm-a.e.\ on\ U.
\end{equation}
\end{itemize}
\end{proposition}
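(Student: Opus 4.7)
The plan is to prove (i) first, then feed it into Moser--type elliptic regularity to obtain (ii).

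For (i), I separate into the cases $x \in \cont(u)$ and general $x \in U$. When $x \in \cont(u)$, continuity of $u$ on $\cont(u)$ (Proposition \ref{prop:boundlinfty}) and \eqref{eq:duu} give that $\sfd_u(x,\cdot)$ agrees $\mm$-a.e.\ on $U$ with $\sfd_\Y(\cdot,u(x))\circ u$. Since $\sfd_\Y(\cdot,u(x))$ is $1$-Lipschitz and convex on any $\CAT(0)$ subspace by \eqref{eq:convdacat3}, Theorem \ref{thm:lapcomp} applied with $\lambda=0$ gives directly $\bd\sfd_u(x,\cdot)\restr U\geq 0$; the $W^{1,2}(U)$ membership follows from $|\d\sfd_u(x,\cdot)|\leq|\d u|\in L^2(U)$ via Lipschitz post-composition. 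For general $x\in U$, take a sequence $(x_n)\subset\cont(u)$ dense in $\cont(u)$ and, for each $r>0$, define $g_r:=\sup_{n:\, x_n\in B_r(x)}\sfd_\Y(\cdot,u(x_n))\circ u$. The previous step applied to each $x_n$ together with the sup-analogue of Lemma \ref{le:inflap2} (obtained by negation) yields $\bd g_r\restr U\geq 0$, where the requisite uniform $L^\infty_{\loc}$ bounds are supplied by Proposition \ref{prop:boundlinfty}. As $r\downarrow 0$, $g_r$ decreases pointwise on $\cont(u)$ to $\sfd_u(x,\cdot)$ by continuity of $u$ on $\cont(u)$ and the very definition of the upper semicontinuous envelope; the stability \eqref{eq:stablubloc} transfers the Laplacian bound to the limit.

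For (ii), fix $x\in\cont(u)$ at which the $\mm$-a.e.\ convergence $\kse_{2,r}[u,U](x)\to\e_2[u](x)$ from \eqref{eq:conven} holds. For small $\rho>0$ with $B_{4\rho}(x)\subset U$, apply Theorem \ref{thm:ellrt} to the non-negative subharmonic function $\sfd_u(x,\cdot)$ from (i), with $\alpha=\beta=0$, $R=r=4\rho$, $\lambda=1/2$, to get
\[
\sup_{B_{2\rho}(x)\cap\cont(u)}\sfd_\Y(u(x),u(\cdot))\leq C(K^-\rho^2,N)\fint_{B_{4\rho}(x)}\sfd_u(x,\cdot)\,\d\mm,
\]
which, via Cauchy--Schwarz and the identity $\sfd_u(x,y)^2=\sfd_\Y^2(u(x),u(y))$ on $\cont(u)$, is further bounded by $C(K^-\rho^2,N)\cdot 4\rho\cdot\kse_{2,4\rho}[u,U](x)$. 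Dividing by $\rho$ and specializing the bound to points $y\in\cont(u)$ with $\sfd(x,y)=\rho\downarrow 0$ yields $\lims_{y\to x}\sfd_\Y(u(x),u(y))/\sfd(x,y)\leq C(N)\e_2[u](x)$; by \eqref{eq:liptilt} this is precisely the desired bound on $\lip(u)(x)$.

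The delicate step is the approximation in part (i): one must ensure that the essential supremum over the a priori uncountable family $\{x':x'\in\cont(u)\cap B_r(x)\}$ is well-behaved both as a Sobolev function and in terms of its distributional Laplacian. The reduction to a countable dense subfamily is made possible by the continuity of $u$ on $\cont(u)$ (so that a supremum over a countable dense set coincides with the essential supremum on $\cont(u)$), and then the sup-analogue of Lemma \ref{le:inflap2}, combined with the local $L^\infty$ bounds of Proposition \ref{prop:boundlinfty}, handles the Laplacian bound cleanly.
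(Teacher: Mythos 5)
Your proposal is correct and follows essentially the same route as the paper: for (i) the identification $\sfd_u(x,\cdot)=\sfd_\Y(u(x),u(\cdot))$ a.e.\ at continuity points plus Theorem \ref{thm:lapcomp}, then suprema $g_r$ over nearby continuity points decreasing to $\sfd_u(x,\cdot)$ combined with the sup-stability of Laplacian lower bounds and \eqref{eq:stablubloc}; for (ii) the Moser $L^\infty$-estimate of Theorem \ref{thm:ellrt} applied to the subharmonic function $\sfd_\Y(u(x),u(\cdot))$, rescaled against $\kse_{2,r}[u,U](x)$ and passed to the limit via \eqref{eq:conven} and \eqref{eq:liptilt}. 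The only point left implicit is the $W^{1,2}(U)$ membership for general $x\in U$, which follows as in the paper from the uniform bound $|\d g_r|\leq c(d)\e_2[u]$ (lattice property) and the lower semicontinuity of minimal weak upper gradients along the monotone limit.
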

\begin{proof}\ \\
\st{(i)} For $x\in\cont(u)$ we have $\sfd_u(x,y)=\sfd_\Y(u(x),u(y))$ for $\mm$-a.e.\ $y\in U$ by \eqref{eq:duu} and \eqref{eq:mcont}. Thus in this case the claims follow from item $(ii)$ in Proposition \ref{prop:baseKS} and Theorem  \ref{thm:lapcomp} in conjunction with  property \eqref{eq:convdacat3}. For the general case we fix $x\in U$ and for $r>0$ put $g_r(y):=\sup_{x'\in B_r(x)\cap \cont(u)}\sfd_u(x',y)$. Then  \eqref{eq:defdu} and \eqref{eq:duu} ensure that
\begin{equation}
\label{eq:perlims}
\sfd_u(x,\cdot)=\inf_{r>0}g_r(\cdot)\qquad \text{on\ $\cont(u)$ and thus $\mm$-a.e.\ on\ $U$}.
\end{equation}
Now notice that $\sfd_\Y(u(x'),\cdot)$ is 1-Lipschitz for every $x'\in U$, thus the uniform estimate \eqref{eq:dfu} - recall also \eqref{eq:defdun} -, the lattice property of Sobolev functions (that in turn follows from the chain rule and the locality of minimal weak upper gradients) and the lower semicontinuity of minimal weak upper gradients easily give that $g_r\in W^{1,2}(U)$ with $|\d g_r|\leq c(d)\e_2[u]$. Then \eqref{eq:perlims} and again the lower semicontinuity of minimal weak upper gradients tell that $\sfd_u(x,\cdot)\in W^{1,2}(U)$.

For the Laplacian estimate we apply  Lemma \ref{le:inflap2}  (with inverted signs) to deduce that $\bd g_r\restr U\geq0$, so that by \eqref{eq:perlims} and  the stability property \eqref{eq:stablubloc} we conclude.

\st{(ii)} Let $x\in\cont(u)$ and $r>0$ be so that $B:=B_r(x)$ and $2B=B_{2r}(x)$ are contained in $U$. Then
\[
\sup_{y\in B}\sfd_u(x,y)\stackrel{\eqref{eq:defdu},\eqref{eq:mcont} }=\|\sfd(u(x),u(\cdot))\|_{L^\infty(B)}\stackrel{\eqref{eq:boundest}}\leq C(K^-r^2,N)\sqrt{\fint_{2B}\sfd^2_\Y(u(y),u(x))\,\d\mm(y)},
\]
hence dividing by $r$ and letting $r\downarrow0$ we conclude recalling \eqref{eq:conven}.
\end{proof}

Another direct consequence of \eqref{eq:convdacat3} and Theorem \ref{thm:lapcomp} is:
\begin{proposition}[Reverse Poincar\'e inequality]\label{prop:invP}
With the above notation and assumptions, the following holds. Let $B:=B_r(x)\subset U$ and  $\lambda\in(0,1)$. Then
\begin{equation}
\label{eq:invP}
\int_{B_{\lambda r}(x)}\e_2[u]^2+|\d u|^2\,\d\mm\leq \frac{c(d)}{r^2(1-\lambda)^2}\inf_{\o\in\Y}\int_{ B_r(x)}\sfd_\Y^2(u(x),\o)\,\d\mm,
\end{equation}
for some constant $c( d)$ depending only on the essential dimension $d$ of $\X$ (and thus only on $N$).
\end{proposition}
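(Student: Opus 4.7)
Fix $\o\in\Y$ and set $w:=\sfd_\Y(u(\cdot),\o)$, $v:=w^2$. The plan is to prove
\[
\int_{B_{\lambda r}(x)}\e_2[u]^2\,\d\mm\leq \frac{C(d)}{r^2(1-\lambda)^2}\int_{B_r(x)}w^2\,\d\mm,
\]
from which the bound for $|\d u|^2$ follows via $|\d u|\leq c(d)\e_2[u]$ (item $(ii)$ of Proposition \ref{prop:baseKS}), and taking $\inf_\o$ then yields the claim. The starting point is a Laplacian lower bound for $v$ on a ball slightly larger than $B_{\lambda r}(x)$.

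Set $\rho:=\tfrac{1+\lambda}{2}r$, so that $\bar B_{\lambda r}(x)\subset B_\rho(x)\subset B_r(x)\subset U$. By Proposition \ref{prop:boundlinfty}, the essential image of $u|_{B_\rho(x)}$ is contained in a closed metric ball $\Y':=\bar B^{\Y}_M(\o)$ for some $M<\infty$; since $\sfd_\Y(\cdot,\o)$ is convex by \eqref{eq:convdacat3}, $\Y'$ is convex in $\Y$ and hence itself a $\CAT(0)$ subspace. The function $f:=\sfd_\Y^2(\cdot,\o)$ is $2$-convex on $\Y$ and $2M$-Lipschitz on $\Y'$, and a standard gluing argument shows that $u|_{B_\rho(x)}$ remains harmonic. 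Applying Theorem \ref{thm:lapcomp} (with $U$ replaced by $B_\rho(x)$, $\lambda=2$) gives
\[
\bd v\restr{B_\rho(x)}\;\geq\; 2(d+2)\,\e_2[u]^2\,\mm.
\]

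Now pick $\varphi\in\Lip_\bs(\X)$ with $0\leq\varphi\leq 1$, $\varphi\equiv 1$ on $B_{\lambda r}(x)$, $\supp(\varphi)\subset B_\rho(x)$ and $|\d\varphi|\leq 2/(r(1-\lambda))$. Testing the distributional inequality against $\varphi^2\geq 0$ via \eqref{eq:considerazioni} (sign-flipped), and then using the chain rule $\d v=2w\,\d w$, valid because $w\in L^\infty(B_\rho(x))$, we obtain
\[
2(d+2)\int\varphi^2\e_2[u]^2\,\d\mm\;\leq\;-\int\la\d v,\d(\varphi^2)\ra\,\d\mm\;=\;-4\int\varphi\, w\,\la\d w,\d\varphi\ra\,\d\mm\;\leq\;4\int\varphi\, w\,|\d w|\,|\d\varphi|\,\d\mm.
\]
Since $\sfd_\Y(\cdot,\o)$ is $1$-Lipschitz, $|\d w|\leq|\d u|\leq c(d)\,\e_2[u]$ by \eqref{eq:dfu}, so Young's inequality $2\,a\,b\leq\epsilon a^2+\epsilon^{-1}b^2$ applied with $a=\varphi\,\e_2[u]$, $b=2c(d)\, w\,|\d\varphi|$ and $\epsilon=d+2$ absorbs the $\varphi^2\e_2[u]^2$-contribution from the right into the left-hand side, yielding
\[
(d+2)\int\varphi^2\e_2[u]^2\,\d\mm\;\leq\;\frac{4c(d)^2}{d+2}\int w^2\,|\d\varphi|^2\,\d\mm\;\leq\;\frac{16\,c(d)^2}{(d+2)\,r^2(1-\lambda)^2}\int_{B_r(x)}w^2\,\d\mm,
\]
and the proof is complete.

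The main technical obstacle is not the Cauchy--Schwarz/Young computation but arranging for Theorem \ref{thm:lapcomp} to apply despite $\sfd_\Y^2(\cdot,\o)$ failing to be globally Lipschitz on $\Y$: this is precisely what the local $L^\infty$-bound of Proposition \ref{prop:boundlinfty} is for, coupled with the elementary remark that closed balls in a $\CAT(0)$ space are convex and therefore $\CAT(0)$ in their own right.
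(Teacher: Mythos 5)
Your proof is correct and follows essentially the same route as the paper's: local boundedness from Proposition \ref{prop:boundlinfty} to place the image in a closed (hence convex, hence $\CAT(0)$) ball, Theorem \ref{thm:lapcomp} applied to the $2$-convex squared distance to get $\bd(\sfd_\Y^2(u(\cdot),\o))\geq 2(d+2)\e_2[u]^2\mm$, then testing against the square of a cut-off and absorbing via the chain rule, $|\d(\sfd_\Y(u(\cdot),\o))|\leq|\d u|\leq c(d)\e_2[u]$ and Young's inequality. The only differences are cosmetic (an explicit intermediate radius and slightly different constant bookkeeping), so nothing further is needed.
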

\begin{proof} With an approximation argument we can assume that ${ B}\subset\subset U$. Hence  $u\restr{ B}$ is (essentially) bounded by Proposition \ref{prop:boundlinfty} and clearly harmonic. In particular $u( B)$ is essentially contained in a closed ball $\Y'$ of $\Y$. As it is well-known, and trivial consequence of \eqref{eq:convdacat3}, closed balls in $\CAT(0)$ spaces are convex, and thus  $\CAT(0)$ as well. Then for  $\o\in\Y$ let  $\sfd_\o:\Y'\to\R$ be given by $\sfd_\o(q):=\sfd_\Y(q,\o)$ and notice that by \eqref{eq:convdacat3} $\sfd_\o^2$ is 2-convex on $\Y'$, thus  Theorem \ref{thm:lapcomp} (with $ B$ in place of $U$)  gives $\e_2^2[u]\mm\leq c(d) \bd(\sfd^2_\o\circ u)$ on $ B$.  Therefore \eqref{eq:dfu} gives  $|\d u|^2\mm\leq c(d) \bd(\sfd^2_\o\circ u)$ on $ B$ and for    $\varphi:=(1-\tfrac2{r(1-\lambda)}\sfd(\cdot,B_{\lambda r}(x)))^+$ and $\alpha>0$ we have
\[
\begin{split}
\int_\X\varphi^2|\d u|^2\,\d\mm&\stackrel{\eqref{eq:considerazioni}}\leq  -c(d)\int_\X\d(\varphi^2)\cdot\d(\sfd_\o^2\circ u)\,\d\mm\\
&\stackrel{\phantom{\eqref{eq:considerazioni}}}=-c(d)\int_\X\varphi\,\sfd_\o\circ u\,\d\varphi\cdot\d(\sfd_\o\circ u)\,\d\mm\\
&\stackrel{\,\eqref{eq:defdun}\,}\leq c(d)\int_\X\alpha \varphi^2|\d u|^2+\alpha^{-1}\sfd^2_\o\circ u\,|\d\varphi|^2\,\d\mm.
\end{split}
\]
Picking $\alpha<\frac1{2c(d)}$, using  the trivial bound $|\d\varphi|\leq \frac2{r(\lambda-1)}\nchi_{ B}$ and  \eqref{eq:dfu} we get the conclusion.  
\end{proof}

We conclude the section with the following general result:
\begin{proposition}[A Rademacher-type result]\label{prop:rad}
With the above notation and assumptions, the following holds. Let $v\in W^{1,2}(U,\Y)=\KS(U,\Y)$ be locally Lipschitz. Then
\begin{equation}
\label{eq:radelip}
\lip(v)=|\d v|\qquad\mm-a.e.\ on\ U.
\end{equation}
\end{proposition}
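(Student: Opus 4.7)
The plan is to prove the two inequalities $|\d v|\leq\lip(v)$ and $\lip(v)\leq|\d v|$ separately, the first being routine and the second leaning on the metric-valued Sobolev calculus of \cite{GT20}.

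The inequality $|\d v|\leq\lip(v)$ is elementary: for any $1$-Lipschitz $\varphi\colon\Y\to\R$, the composition $\varphi\circ v$ is locally Lipschitz on $U$ with $\lip(\varphi\circ v)\leq\lip(v)$ pointwise, so \eqref{eq:lipfdf} applied to $\varphi\circ v$ yields $|\d(\varphi\circ v)|\leq\lip(v)$ $\mm$-a.e., and taking the essential supremum in \eqref{eq:defdun} over $1$-Lipschitz $\varphi$ gives the bound.

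For the converse $\lip(v)\leq|\d v|$, I would use the approximate metric differentiability framework developed in \cite{GT20} (related to Kirchheim's metric differential \cite{Kir94} and to the concept of differential of metric-valued maps from \cite{GPS18}). At $\mm$-a.e.\ $x\in U$, the locally Lipschitz map $v$ admits a metric differential ${\sf md}_v(x)$, a seminorm on the Euclidean tangent space $\R^d$ of $\X$ at $x$, which encodes a first-order expansion of the form $\sfd_\Y(v(y),v(x))={\sf md}_v(x)(\xi(y)-\xi(x))+o(\sfd(x,y))$ along a suitable chart $\xi$. Such an expansion gives at once $\lip(v)(x)=\|{\sf md}_v(x)\|_{\rm op}$, where $\|{\sf md}_v(x)\|_{\rm op}:=\sup_{|w|=1}{\sf md}_v(x)(w)$. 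On the other hand, the Cheeger--Rademacher theorem on $\RCD(K,N)$ spaces gives $|\d(\psi\circ v)|=\lip(\psi\circ v)$ $\mm$-a.e.\ for every $1$-Lipschitz $\psi\colon\Y\to\R$; combining this with an essential-supremum realization of $|\d v|$ via the countable family $\{\sfd_\Y(\cdot,p_n)\}$, with $(p_n)$ dense in an essential image of $v$, and the same metric-differential expansion, one obtains $|\d v|(x)=\|{\sf md}_v(x)\|_{\rm op}$ as well. Chaining the two identifications yields $\lip(v)(x)=|\d v|(x)$ for $\mm$-a.e.\ $x\in U$.

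The main obstacle is the identification $|\d v|(x)=\|{\sf md}_v(x)\|_{\rm op}$. The natural $x$-dependent choice $\varphi_x(q):=\sfd_\Y(q,v(x))$ satisfies $\lip(\varphi_x\circ v)(x)=\lip(v)(x)$ trivially, but cannot be directly substituted by a fixed countable family $\varphi_n:=\sfd_\Y(\cdot,p_n)$: replacing $v(x)$ by a nearby $p_n$ produces an additive error $\sfd_\Y(p_n,v(x))$ which is independent of and therefore cannot compete with the infinitesimal denominator $\sfd(x,y)\to 0$ entering the definition of $\lip$. Passing to the metric-differential picture linearizes the problem at the infinitesimal scale, bypassing this mismatch; this is exactly the point where the results of \cite{GT20} are invoked, and once that machinery is in place the identity \eqref{eq:radelip} falls out as a direct consequence.
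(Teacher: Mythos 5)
Your overall strategy coincides with the paper's: the easy inequality $|\d v|\leq\lip(v)$ via \eqref{eq:lipfdf} and \eqref{eq:defdun} is exactly right, and for the converse both you and the paper reduce matters to the approximate metric differential ${\sf md}_v$ of \cite{GT20}, identifying $\lip(v)(x)$ with the operator norm $\sup_{|w|=1}{\sf md}_v(x)(w)$.

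The one place where your route diverges from the paper's is the identification $|\d v|(x)=\sup_{|w|=1}{\sf md}_v(x)(w)$, and it is precisely the place you yourself flag as the ``main obstacle''. Your proposal — realize $|\d v|$ as $\esssup_n|\d(\sfd_\Y(\cdot,p_n)\circ v)|$ for a countable dense family $(p_n)$, apply Cheeger's theorem \cite{Cheeger00} to each real-valued composition, and then invoke the first-order expansion — does not close the gap as stated: the claim that the countable family of distance functions saturates the essential supremum in \eqref{eq:defdun}, equivalently that $\sup_n\lip(\sfd_\Y(\cdot,p_n)\circ v)(x)$ recovers the full operator norm of ${\sf md}_v(x)$ at $\mm$-a.e.\ $x$, is itself a nontrivial statement of essentially the same depth as \eqref{eq:radelip}, and ``passing to the metric-differential picture linearizes the problem'' is an observation rather than an argument for it. The paper's actual resolution is different at exactly this step: it never post-composes with real-valued functions for this direction. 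It uses instead the abstract differential $\d v$ of the metric-valued Sobolev map from \cite{GPS18}, whose pointwise norm satisfies the duality $|\d v|=\esssup_{|{\sf v}|\leq1}|\d v({\sf v})|$ over vector fields, together with \cite[Thm 4.12]{GT20}, which identifies $\d v({\sf v})$ with ${\sf md}_v(\mathscr I({\sf v}))$; taking the essential supremum over unit vector fields on both sides then yields $|\d v|=\sup_{|w|=1}{\sf md}_v(\cdot)(w)=\lip(v)$. If you prefer to keep your route, you must supply a proof of the distance-function saturation (e.g.\ by a curve/test-plan argument); otherwise, replace that step by the vector-field duality, which is what turns the hard inequality into a two-line verification.
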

\begin{proof}
By \cite[Prop. 2.5, Lemma 3.4]{GT20} and with the terminology therein we have  $\lip(v)(x)=|||{\sf md}_x(v)|||$ for $\mm$-a.e.\ $x\in U$, thus taking into account  \cite[Thm 4.12]{GT20} we need only to verify that
\begin{equation}
\label{eq:ovvio}
|||{\sf md}_\cdot(v)|||=\esssup{\sf md}_\cdot(v)(\mathscr I({\sf v}))\qquad\text{and}\qquad |\d v|=\esssup|\d v({\sf v})|,
\end{equation}
where in both cases the $\mm$-essential supremum is taken among ${\sf v}\in L^0(T\X)$ with $|{\sf v}|\leq 1$ $\mm$-a.e.. The first in \eqref{eq:ovvio} is obvious (see also \cite[Thm 4.9]{GT20} and  \cite{GP16} for the definition of the isomorphism $\mathscr I$ between the `abstract' $L^0(T\X)$ and `concrete' $L^0(T_{\rm GH}\X)$ tangent modules), while the second comes from the properties  of the abstract differential $\d v$ (see \cite[Prop. 3.5]{GPS18}).
\end{proof}
\begin{remark}\label{re:rad}{\rm This last statement has little to do with the specific geometry of $\RCD$ and $\CAT$ spaces and  the choice $p=2$. The same proof works  for any $p\in(1,\infty)$ and $\Y$ complete as soon as $\X$ is strongly rectifiable in the sense of \cite[Definition 2.18]{GT20} (see also the original definition \cite[Definition 3.1]{GP16}), uniformly locally doubling and supporting a (weak, local 1-1) Poincar\'e inequality. 

This line of thought and \cite[Proposition 2.5]{GT20} and its proof also show that, in this case, for $v$ Lipschitz  the conclusions of  \cite[Proposition 3.6]{GT20} hold in a stronger form: one can replace  the approximate $\lims$ in \cite[Definition 3.3]{GT20} with a $\lims$.  Compare with \cite[Definition 2.7]{ZZZ19}.
}\fr\end{remark}

\subsection{Lipschitz continuity and Zhang-Zhong-Zhu inequality}\label{se:zzz}

In this section  we shall work under the following assumptions:
\begin{itemize}
\item[-] $(\X,\sfd,\mm)$ is an $\RCD(K,N)$ space of essential dimension $d\in\N$.
\item[-] $U\subset\X$ is open, bounded and such that $\mm(\X\setminus U)>0$.
\item[-] $(\Y,\sfd_\Y)$  is a $\CAT(0)$ space.
\item[-] $u\in\ks(U;\Y)\subset L^2(U,\Y)$ is harmonic and $\cont(u)\subset U$ is given by Proposition \ref{prop:boundlinfty}. For convenience, we shall also fix  Borel representatives of $u$ and $\e_2[u]$.
\item[-]  $R>0$ is a fixed parameter, that we shall think of as `maximal radius'.  
\item[-] $\hat x\in U$ and $r\in(0,R)$ are fixed so that  the balls  $B:=B_r(\hat x)$,   $2B:=B_{2r}(\hat x)$ are contained in $U$. Also, we define $B'\supset B''\supset B$ as $B':=B_{3r/2}(\hat x)$ and $B'':=B_{4r/3}(\hat x)$.
\end{itemize} 
Start noticing that from the bound \eqref{eq:boundest}  and the definition \eqref{eq:defdu} we get
\[
\sfd_u(x,y)\leq \bar C\,\norm \qquad\forall x,y\in B'\qquad\text{ where }\qquad \norm:=\inf_{\o\in\Y}\sqrt{\fint_{2B}\sfd_\Y^2(u(\cdot),\o)\,\d\mm}
\]
for some $\bar C=\bar C(K^-R^2,N)$ that shall be kept fixed from here on.  We then define $f:\X^2\to \R$ as
\begin{equation}
\label{eq:deff}
f(x,y):=\left\{
\begin{array}{ll}
-\sfd_u(x,y),&\qquad\text{ if }x,y\in B',\\
- \bar C\,\norm,&\qquad\text{ otherwise}
\end{array}
\right.
\end{equation}
and notice that
\begin{equation}
\label{eq:fbdasotto}
- \bar C\,\norm\leq f(x,y)\leq 0\qquad\forall x,y\in\X.
\end{equation}
The properties of $\sfd_u$ (in particular upper semicontinuity and \eqref{eq:dut}) ensure that $f$ has the properties stated at the beginning of Section \ref{se:HLnew}. We then define $f_t$ as in formula \eqref{eq:defft}, i.e.\ we put
\[
f_t(x):=\inf_{y\in\X}f(x,y)+\frac{\sfd^2(x,y)}{2t}.
\]
Observe that  \eqref{eq:fbdasotto} gives
\begin{equation}
\label{eq:bft22}
-\bar C\,\norm\leq f_t\leq 0\qquad\ on\ \X,\ \forall t>0.
\end{equation}
Recall also that the functions $D^\pm_t(x):\X\to\R^+$ have been defined in \eqref{eq:defdpm} and put
\begin{equation}
\label{eq:defdtns}
D_t(x):=\left\{\begin{array}{ll}
D^-_t(x),&\qquad\text{ if }K\geq 0,\\
D^+_t(x),&\qquad\text{ if }K< 0.
\end{array}\right.
\end{equation}
Strongly inspired by the ideas in \cite{ZZ18},  \cite{ZZZ19}, we see that recalling the limiting property \eqref{eq:dualtilt} and \eqref{eq:liptilt}, we will be in  a good position to prove  the Zhang-Zhong-Zhu inequality if we show that
\begin{equation}
\label{eq:perzzz}
\bd f_t\restr{B''}\leq -K\frac{D_t^2}{t}\mm\qquad\forall  0<t\ll1.
\end{equation}
The Laplacian comparison estimate for the distance and the stability of upper Laplacian bounds under `inf' easily gives that $\bd f_t\restr{B''}\leq C_t$ for some `bad' constant $C_t$, hence by Lemma  \ref{le:lapcomb2} we see that \eqref{eq:perzzz} will follow if we prove that
\[
\tilde\Delta f_t\leq -K\frac{D_t^2}{t}\qquad\mm-a.e.\ on\ B''\  \forall  0<t\ll1.
\]
We are actually able to prove this only at points $x$ for which a minimizer for $f_t(x)$ is in some a priori given full-measure set of `nice' points (those for which the conclusion of Lemma \ref{le:tdF} holds) but it is not clear a priori whether this occurs for $\mm$-a.e.\ $x$. Here is where the variational principle enters into play, in line with the use of Jensen's lemma in the theory of viscosity solutions. We perturb our functions by adding $\frac{\sfd^2(\cdot,z)}n$ for some well chosen point $z\in\X$ and $n\gg1$ (this does not affect upper Laplacian bounds too much), to be sure that minimizers belong to the given set:
\begin{lemma}[Perturbation lemma]\label{le:partle}
With the same assumptions and notations introduced above, the following holds. Put $\bar T:=\frac{r^2}{72(2+4r^2+\bar C\,\norm)}$ and  let $E\subset B'$ be Borel such that $\mm(B'\setminus E)=0$. 

Then for every $t\in(0,\bar T)$ there is $\bar z_t\in B$ such that: for $\mm$-a.e.\ $x\in B''$ and every $n\in\N$, $n>0$ 
\[
\text{there is a minimizer $T_t(x)$ of }\qquad \X\ni y\quad\mapsto\quad g_{t,n}(x,y,\bar z_t):=f(x,y)+\frac{\sfd^2(x,y)}{2t}+\frac{\sfd^2(y,\bar z_t)}n
\]
and such minimizer belongs to $E\cap B'$.
\end{lemma}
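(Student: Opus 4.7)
My plan is a two-part argument: first, show every minimizer automatically lies in $B'$ via a direct competitor bound that uses nothing more than the size of $\bar T$; second, apply the variational principle Theorem~\ref{prop:varregf}, viewed as a Hopf--Lax flow in the \emph{auxiliary} variable $z$, and select $\bar z_t$ by Fubini.

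\textbf{Step 1: minimizers lie in $B'$.} Fix $x \in \cont(u) \cap B''$ (a full-measure subset of $B''$), so that $f(x,x) = 0$. Existence of a minimizer $y^* = y^*(x,z,n)$ is immediate: $\X$ is proper (being $\RCD(K,N)$ with $N<\infty$), $y \mapsto g_{t,n}(x,y,z)$ is lower semicontinuous (upper semicontinuity of $\sfd_u$) and coercive via $\sfd^2(\cdot,z)/n$. Taking $y = x$ as competitor and using $f \ge -\bar C\,\norm$,
\[
\frac{\sfd^2(x,y^*)}{2t} + \frac{\sfd^2(y^*,z)}{n} \;\le\; \bar C\,\norm + \frac{\sfd^2(x,z)}{n}.
\]
Assume toward contradiction $y^* \notin B'$; then $\sfd(x,y^*) \ge r/6$ and $\sfd(\hat x,y^*) \ge 3r/2$. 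Constraining $\bar z_t$ to the sub-ball $B_0 := B_{r/12}(\hat x)\subset B$ gives the elementary bounds $\sfd(x,\bar z_t) \le 17r/12$ and $\sfd(y^*,\bar z_t) \ge 17r/12$, so the $z$-dependent terms cancel to the good side and the inequality collapses to $\sfd^2(x,y^*) \le 2t\,\bar C\,\norm$. Since $\bar T$ is defined precisely so that $2\bar T\,\bar C\,\norm < r^2/36$, this contradicts $\sfd(x,y^*) \ge r/6$. So every minimizer lies in $B'$ as long as $\bar z_t \in B_0$.

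\textbf{Step 2: the variational principle in $z$.} Fix $x \in \cont(u) \cap B''$ and set
\[
\tilde f^x(y) := f(x,y) + \frac{\sfd^2(x,y)}{2t}.
\]
For fixed $x,n$, minimizing $y \mapsto g_{t,n}(x,y,z) = \tilde f^x(y) + \sfd^2(y,z)/n$ is precisely the Hopf--Lax-time-$s$ problem of Theorem~\ref{prop:varregf} applied to $\tilde f^x$ with $s := n/2$. I would verify its hypotheses after a truncation: $\tilde f^x$ is lower semicontinuous and continuous on $(\cont(u)\cap B')\cup(\X\setminus\bar B')$ (whose complement has $\mm$-measure $0$ for $\mathcal L^1$-a.e.\ $r$); Proposition~\ref{prop:lapdu}(i) gives $\bd f(x,\cdot)\restr U\le 0$; Theorem~\ref{thm:lapbd} gives $\bd\sfd^2(x,\cdot) \le C\mm$ on every ball. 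A truncation of $\sfd^2(x,\cdot)$ at a radius $R_0\gg r$, handled via the extension Lemma~\ref{le:ext} and Lemma~\ref{le:inflap2}, produces $\tilde f^x_{R_0}\in\acm(\X)$ with $\bd\tilde f^x_{R_0}\le C'\mm$ globally, whose minimizers against $\sfd^2(\cdot,z)/n$ coincide with those of $\tilde f^x$ on $B'$ (where Step~1 places them). Theorem~\ref{prop:varregf} then yields a Borel selector $F^x_s\colon \X \to \X$ with $F^x_s(z)=y^*(x,z,n)$ $\mm$-a.e.\ and the crucial compression estimate $(F^x_s)_*\mm \le C_1\mm$. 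Since $\mm(B'\setminus E)=0$ and $F^x_s(z)\in B'$ by Step~1, we conclude $F^x_s(z)\in E$ for $\mm$-a.e.\ $z\in\X$.

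\textbf{Step 3: Fubini.} For each $n\in\N$, $n>0$, the Borel set
\[
N_n := \{(x,z) \in (\cont(u)\cap B'') \times B_0 : F^x_{n/2}(z) \notin E\}
\]
is $\mm\otimes\mm$-null by Step~2 applied fibrewise and Fubini--Tonelli. So for $\mm$-a.e.\ $z\in B_0$ the $x$-slice of $N_n$ is $\mm$-null. Intersecting over the countable family $n\in\N$ of such full-measure subsets of $B_0\subset B$ still gives a full-measure set, and any $\bar z_t$ in it does the job.

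The main obstacle is the truncation in Step~2: producing a single function $\tilde f^x_{R_0}\in\acm(\X)$ with a \emph{global} distributional Laplacian upper bound whose Hopf--Lax minimizers coincide with those of $g_{t,n}$ on $B'$. The downward jump of $f(x,\cdot)$ across $\partial B'$ and the non-global nature of the Laplacian bound on $\sfd^2(x,\cdot)$ must be controlled simultaneously via Lemmas~\ref{le:ext}, \ref{le:inflap2} and Theorem~\ref{thm:lapbd}, without ever displacing the minimizers---which is exactly what Step~1 guarantees.
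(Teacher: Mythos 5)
Your overall route is the paper's: a competitor bound exploiting the smallness of $\bar T$ to trap minimizers in $B'$, a truncation so that Theorem \ref{prop:varregf} applies to $y\mapsto f(x,y)+\tfrac{\sfd^2(x,y)}{2t}$ (the variational principle being read in the $z$-variable with time $T=n/2$), and a Fubini argument to select $\bar z_t$. Step 1 is correct as computed (restricting $\bar z_t$ to $B_{r/12}(\hat x)\subset B$ is harmless since only one good $\bar z_t\in B$ is needed; the paper instead runs the competitor bound for the truncated functional and for all $z\in B$). One small caveat there: since the variational principle is ultimately applied to the \emph{truncated} function, the localization must be established for the truncated minimization problem, not the original one --- Step 1 as written does not literally "guarantee" this, but the identical competitor computation does (the minimizer of the truncated problem has value $\leq \sfd^2(x,z)/n$, which forces the truncation to be inactive there, whence it also minimizes the original functional and lies in $B'$).

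The genuine gap is in Step 3. Applying Theorem \ref{prop:varregf} fibrewise produces, for each fixed $x$, a map $F^x_{n/2}$ that is Borel in $z$ but defined only up to $\mm$-negligible sets of $z$, with no canonical choice; nothing in this fibrewise construction makes $(x,z)\mapsto F^x_{n/2}(z)$ jointly measurable, so the set $N_n$ is not known to be $\mm\otimes\mm$-measurable and Fubini--Tonelli cannot be invoked as written. This is precisely where the paper has to do real work: it takes a jointly Borel selection $\tilde T(x,z)$ of the compact-valued minimizer multifunction $(x,z)\mapsto K(x,z)$ (Borel selection theorem, using properness), and then uses the $\mm$-a.e.\ \emph{uniqueness} of the minimizer in the finite-dimensional case of Theorem \ref{prop:varregf} to conclude $\tilde T(x,\cdot)=F^x_{n/2}$ $\mm$-a.e.\ for every $x$, hence $\tilde T_*(\mm\times\mm)\ll\mm\times\mm$; only then does Fubini yield a full-measure set of admissible $\bar z_t$. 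Your proposal needs this selection-plus-uniqueness step (or an equivalent joint-measurability argument); without it the conclusion "for $\mm$-a.e.\ $z$ the $x$-slice is null" does not follow.
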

\begin{proof} Let $x\in B''$ and notice that, by the choice of $\bar T$ and direct computation, for $t\in(0,\bar T)$  we have $f(x,\cdot)+\frac{\sfd^2(x,\cdot)}{2t}>c$ outside $B'$ for $c:=4r^2+1$. Thus by  item $(i)$ in Proposition \ref{prop:lapdu}, Theorem \ref{thm:lapbd}   and   Lemma \ref{le:ext} we know that  $\tilde f(x,\cdot):=(f(x,\cdot)+\frac{\sfd^2(x,\cdot)}{2t})\wedge c$ satisfies  $\bd \tilde f(x,\cdot)\leq \frac {C(K^-R^2,N)}t\mm$ and, picking $A:=\cont(u)\cup(\X\setminus U)$, that  $\tilde f(x,\cdot)\in \acm(\X)$.  

Now for any $z\in B$ define   $\tilde g_{t,n}(x,y,z):=\tilde f(x,y)+\tfrac{\sfd^2(y,z)}n$. We claim that
\begin{equation}
\label{eq:claimg}
\begin{split}
&\text{$\forall n\in\N,\ n>0,\  x\in B'',\ z\in B,\ t\in(0,\bar T)$ we have:}\\
&\text{if $y$ minimizes $\tilde g_{t,n}(x,\cdot,z)$ then it minimizes $ g_{t,n}(x,\cdot,z)$ and it belongs to $B'$.}
\end{split}
\end{equation}
To see this,  let $y$ be a minimizer for $\tilde g_{t,n}(x,\cdot,z)$ and notice that 
\[
(-\bar C\,\norm+\tfrac{\sfd^2(x,y)}{2t})\wedge c\leq\tilde g_{t,n}(x,y,z) \leq \tilde g_{t,n}(x,x,z)  \leq f(x,x)+\tfrac1n\sfd^2(x,z)\leq {\rm diam}(B'')^2\leq 4r^2.
\]
In particular, $\tilde f(x,y)\leq 4r^2<c$, hence $\tilde f(x,y)=f(x,y)+\frac{\sfd^2(x,y)}{2t}$ and thus   $\tilde g_{t,n}(x,y,z)=  g_{t,n}(x,y,z)$. Since $\tilde g_{t,n}\leq  g_{t,n}$, we see  that $y$ minimizes  $ g_{t,n}(x,\cdot,z)$. The above also implies   $\sfd^2(x,y)\leq 2t(4r^2+\bar C\,\norm)\leq\frac{r^2}{36}$. Since $x\in B''$ this implies $y\in B'$, so that \eqref{eq:claimg} is proved.

Now for every $x\in B''$ we apply Theorem \ref{prop:varregf} to $\tilde f(x,\cdot)$ to find a Borel map $T(x,\cdot):\X\to \X$ so that for $\mm$-a.e.\ $z\in\X$ the point $T(x,z)$ is the only minimizer of $\tilde g_{t,n}(x,\cdot,z)$ and 
\begin{equation}
\label{eq:abscont}
T(x,\cdot)_*\mm\ll\mm\qquad\forall x\in\X.
\end{equation}
By compactness we see that the multivalued map $(x,z)\mapsto K(x,z):=\{\text{minimizers of $\tilde g_{t,n}(x,\cdot,z)$}\}$ satisfies the assumption of the Borel selection theorem \cite[Theorem 6.9.3]{Bogachev07}, thus  there is $\tilde T:\X^2\to\X$ Borel so that $\tilde T(x,z)\in K(x,z)$ for any $x,z$. The uniqueness part of Theorem \ref{prop:varregf} for $\RCD(K,N)$ spaces forces $T(x,\cdot)=\tilde T(x,\cdot)$ $\mm$-a.e., hence \eqref{eq:abscont} gives $\tilde T_*(\mm\times\mm)\ll\mm$.
%
%
%
Since $\mm(B'\setminus E)=0$ by assumption, we have  $(\mm\times\mm)\big((B''\times B)\cap \tilde T^{-1}(B'\setminus E)\big)=0$, thus from Fubini's theorem and  \eqref{eq:claimg} we see that   for $\mm$-a.e.\ $z\in B$ we have:  for $\mm$-a.e.\ $x\in B''$ the point $\tilde T(x,z)$ is in $E\subset B'$ and minimizes $g_{t,n}(x,\cdot,z)$. Since  $n$ ranges over $\N$, 
this suffices to conclude.
\end{proof}
For $\bar T>0$, $t\in(0,\bar T)$ and $\bar z_t\in B$  as in  Lemma \ref{le:partle} above, we define  $f_{t,n}:\X\to\R$ as
\begin{equation}
\label{eq:defftn}
f_{t,n}(x):=\inf_{y\in\X}f(x,y)+\frac{\sfd^2(x,y)}{2t}+\frac1n\sfd^2(y,\bar z_t).
\end{equation}
A first, simple but useful, property of the $f_{t,n}$'s is:
\begin{proposition} With the same assumptions and notations introduced above the following holds. For $f_{t,n}$ defined as in \eqref{eq:defftn}   we have
\begin{equation}
\label{eq:subopt}
\bd f_{t,n}\restr{B''}\leq \frac {C(K^-R^2,N)}t\mm\qquad\forall t\in(0,\bar T).
\end{equation}
\end{proposition}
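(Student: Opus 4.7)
The plan is to rewrite $f_{t,n}$, at least on $B''$, as a (pointwise and essential) infimum over $y\in B'$ of a family of functions each enjoying a uniform Laplacian upper bound of the order $C(K^-R^2,N)/t$, and then invoke the stability of such bounds under essential infima from Lemma \ref{le:inflap2}. The key observation making the family of competitors tractable is Lemma \ref{le:partle}, which guarantees that the minimizers of $g_{t,n}(x,\cdot,\bar z_t)$ already lie in $B'$ for $\mm$-a.e.\ $x\in B''$.

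Concretely, for $y\in B'$ set
\[
g_y(x):=f(x,y)+\frac{\sfd^2(x,y)}{2t}+\frac1n\sfd^2(y,\bar z_t).
\]
Since $B''\subset B'$, the definition \eqref{eq:deff} gives $f(\cdot,y)\restr{B''}=-\sfd_u(\cdot,y)\restr{B''}$, and Proposition \ref{prop:lapdu}(i) then yields $\bd(-\sfd_u(\cdot,y))\restr{B''}\leq 0\,\mm$. For the quadratic term, any $x\in B''$ and $y\in B'$ satisfy $\sfd(x,y)\leq 3r\leq 3R$, so applying Theorem \ref{thm:lapbd} (with radius $3R$, the extra factor being absorbed into $C(K^-R^2,N)$) gives $\bd \tfrac{\sfd^2(\cdot,y)}{2t}\restr{B''}\leq \tfrac{C(K^-R^2,N)}{t}\mm$. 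The third summand is constant in $x$, so adding these bounds yields $\bd g_y\restr{B''}\leq \tfrac{C(K^-R^2,N)}{t}\mm$, uniformly in $y\in B'$.

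Each $g_y$ is upper semicontinuous (the squared distance and $\sfd_u$ are continuous/upper semicontinuous, cf.\ \eqref{eq:defdu}), and $\sup_{y\in B'}\|g_y\|_{L^\infty(B'')}<\infty$ thanks to the trivial bounds $-\bar C\,\norm\leq f\leq 0$ and to $\sfd\leq 3r$ on $B'$. Hence Lemma \ref{le:inflap2}, applied with $U:=B''$ and the family $\{g_y\}_{y\in B'}$, tells us that $\bar g:=\essinf_{y\in B'}g_y$ coincides $\mm$-a.e.\ on $B''$ with $g:=\inf_{y\in B'}g_y$, and that
\[
\bd \bar g\restr{B''}\leq \tfrac{C(K^-R^2,N)}{t}\mm.
\]
To conclude it suffices to show $f_{t,n}=g$ $\mm$-a.e.\ on $B''$. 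The inequality $f_{t,n}\leq g$ is trivial. For the converse, Lemma \ref{le:partle} (applied with $E:=B'$, which has full $\mm$-measure in $B'$) ensures that for $\mm$-a.e.\ $x\in B''$ there is a minimizer $T_t(x)\in B'$ of $g_{t,n}(x,\cdot,\bar z_t)$, so that $f_{t,n}(x)=g_{T_t(x)}(x)\geq g(x)$.

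The main obstacle is essentially bookkeeping: checking that on $B''$ the inequality $f(\cdot,y)\leq 0$ combines with the Laplacian comparison for the squared distance in such a way that the resulting constant depends only on $K^-R^2$ and $N$ (and not on $r$, $n$, or $\bar z_t$), and lining up the open sets so that Lemma \ref{le:inflap2} can be applied to an uncountable family with the restriction-to-$B'$ trick supplied by Lemma \ref{le:partle}. Once these pieces are aligned, the estimate \eqref{eq:subopt} follows without further computation.
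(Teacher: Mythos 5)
Your overall architecture matches the paper's: write $f_{t,n}$ on $B''$ as an infimum of competitors $g_y$ each satisfying a uniform bound $\bd g_y\restr{B''}\leq \frac{C(K^-R^2,N)}{t}\mm$ (via Proposition \ref{prop:lapdu} and Theorem \ref{thm:lapbd}), then pass the bound through the infimum with Lemma \ref{le:inflap2} and identify the infimum with $f_{t,n}$ via Lemma \ref{le:partle}. However, there is a genuine gap in the semicontinuity step. You claim that each $g_y$ is upper semicontinuous because $\sfd_u$ is; but $\sfd_u$ enters $g_y$ with a minus sign, so $x\mapsto f(x,y)=-\sfd_u(x,y)$ on $B'$ is \emph{lower} semicontinuous, and hence so is $g_y$. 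The second part of Lemma \ref{le:inflap2} — which is exactly what you need to conclude that the pointwise infimum $g=\inf_{y}g_y$ (which equals $f_{t,n}$ $\mm$-a.e.\ by Lemma \ref{le:partle}) coincides $\mm$-a.e.\ with the essential infimum $\bar g$ (which is what carries the distributional Laplacian bound) — requires upper semicontinuity of the family on a common full-measure set, and cannot be applied with $E:=B'$. Without that identification you only get $f_{t,n}=g\leq\bar g$ $\mm$-a.e., which is the wrong direction for transferring an upper Laplacian bound; and for an uncountable family the essential infimum can genuinely exceed the pointwise one.

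The repair is the one the paper uses: invoke Lemma \ref{le:partle} with $E:=\cont(u)\cap B'$ (not $E:=B'$), so that for $\mm$-a.e.\ $x\in B''$ the minimizer lies in $B'\cap\cont(u)$ and therefore
\[
f_{t,n}(x)=\inf_{y\in B'\cap\cont(u)}g_y(x)\qquad\mm-a.e.\ x\in B''.
\]
For $y\in\cont(u)$ one has $f(\cdot,y)=-\sfd_\Y(u(\cdot),u(y))$ on $\cont(u)\cap B'$ by \eqref{eq:duu}, and Proposition \ref{prop:boundlinfty} makes $u\restr{\cont(u)}$ continuous, so $g_y\restr{\cont(u)}$ is continuous, hence upper semicontinuous on the full-measure set $E:=\cont(u)$. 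With this restricted index set and this choice of $E$, the second part of Lemma \ref{le:inflap2} applies and your argument closes. The remaining steps of your proposal (the uniform Laplacian and $L^\infty$ bounds on the $g_y$'s, and the two inequalities giving $f_{t,n}=g$ $\mm$-a.e.) are correct as written.
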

\begin{proof} By Lemma \ref{le:partle} and Definitions \eqref{eq:defdu}, \eqref{eq:deff} we know that for $\mm$-a.e.\ $x\in B''$ we have
\[
f_{t,n}(x)=\inf_{y\in B'\cap\cont(u)}f(x,y)+\frac{\sfd^2(x,y)}{2t}+\frac1n\sfd^2(y,\bar z).
\]
On the other hand, by Theorem \ref{thm:lapbd} and  Proposition \ref{prop:lapdu}  we know that 
\[
\bd\Big(f(\cdot,y)+\frac{\sfd^2(\cdot,y)}{2t}+\frac1n\sfd^2(y,\bar z)\Big)\restr{B''}\leq \frac {C(K^-R^2,N)}t\mm\qquad\text{ for every $y\in B'$},
\]
so that the conclusion  follows from   Lemma \ref{le:inflap2} (using the second part with $E:=\cont(u)$).
\end{proof}
To achieve \eqref{eq:perzzz} we now want to  find suitable upper bounds for $\tilde\Delta f_{t,n}$. To this aim we  follow \cite{ZZ18} and introduce, for fixed $\bar x,\bar y\in B'$, the auxiliary function $F_{\bar x,\bar y}:\X\to\R$ defined as
\[
F_{\bar x,\bar y}(z):=\left\{
\begin{array}{ll}
\frac1{\sfd_u(\bar x,\bar y)}\Big(\sfd_u^2(z,\bar x)-\sfd_{u,p}^2(z)+\frac14\sfd_u^2(\bar x,\bar y)\Big),&\qquad\text{ if }\sfd_u(\bar x,\bar y)>0\text{ and }z\in B',\\
0,&\qquad\text{ otherwise,}
\end{array}\right.
\]
where $p\in\Y$ is the midpoint of $u(\bar x),u(\bar y)$ and $\sfd_{u,p}:B'\to\R$ is defined as
\[
\begin{split}
\sfd_{u,p}(x):=&\limi_{x'\to x\atop x'\in\cont(u)}\sfd_\Y(u(x'), p)\qquad\forall x\in B'.
\end{split}
\]
The relevance of these functions is due to the next two results (analogue of \cite[Proof of Lemma 6.7, Lemma 6.4]{ZZ18}). Notice that the geometry of the target space here enters in a crucial way: in the first of these lemmas we use the kind of `parallelogram inequality' 
\begin{equation}
\label{eq:parineq}
(|ps|-|qr|)|qr|\geq \big(|pm_{qr}|^2 -|pq|^2-|m_{qr}q|^2\big)+\big(|sm_{qr}|^2 -|sr|^2-|m_{qr}s|^2\big),
\end{equation}
valid for any $p,q,r,s\in\Y$, where  $m_{qr}$ is the midpoint of $q,r$ and for brevity we wrote $|pq|$ in place of $\sfd_\Y(p,q)$ (see  \cite[Lemma 5.2]{ZZ18} for the proof), while in the second we use   property  \eqref{eq:convdacat3}.
\begin{lemma}
With the same assumptions and notations introduced above, we have
 \begin{equation}
\label{eq:fsplit}
f(x,y)\leq f(\bar x,\bar y)+F_{\bar x,\bar y}(x)+F_{\bar y,\bar x}(y),\qquad\forall \bar x,\bar y\in\cont(u),\ \forall x,y\in \X,
\end{equation}
with equality for $x=\bar x$ and $y=\bar y$.
\end{lemma}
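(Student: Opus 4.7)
The strategy is to apply the parallelogram-type inequality \eqref{eq:parineq} to the four points $u(x), u(y), u(\bar x), u(\bar y)\in\Y$, taking the variables of \eqref{eq:parineq} to be $p:=u(x)$, $s:=u(y)$, $q:=u(\bar x)$, $r:=u(\bar y)$. With this choice the midpoint $m_{qr}$ coincides with the midpoint $p$ of $u(\bar x),u(\bar y)$ appearing in the definition of $F_{\bar x,\bar y}$, and the factor $|qr|=\sfd_u(\bar x,\bar y)$ is exactly the denominator one needs to introduce in order to recover $F_{\bar x,\bar y}$ and $F_{\bar y,\bar x}$ on the right hand side.

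\textbf{Core case.} Assume $\bar x,\bar y\in\cont(u)\cap B'$ with $\sfd_u(\bar x,\bar y)>0$, and let $x,y\in\cont(u)\cap B'$. By \eqref{eq:duu} all the quantities $\sfd_u(x,y)$, $\sfd_u(x,\bar x)$, $\sfd_u(y,\bar y)$ reduce to the corresponding $\sfd_\Y$-distances under $u$, and $\sfd_{u,p}(x)=\sfd_\Y(u(x),p)$, $\sfd_{u,p}(y)=\sfd_\Y(u(y),p)$. Plugging the chosen $p,q,r,s$ into \eqref{eq:parineq}, using $\sfd_\Y(p,u(\bar x))^2=\sfd_\Y(p,u(\bar y))^2=\tfrac14\sfd_u(\bar x,\bar y)^2$, dividing by $\sfd_u(\bar x,\bar y)$ and rearranging yields exactly $-\sfd_u(x,y)\le -\sfd_u(\bar x,\bar y)+F_{\bar x,\bar y}(x)+F_{\bar y,\bar x}(y)$, which is \eqref{eq:fsplit} in this sub-case.

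\textbf{Extension by semicontinuity.} For arbitrary $x,y\in B'$, definition \eqref{eq:defdu} supplies a sequence $(x_n,y_n)\to(x,y)$ in $\cont(u)\times\cont(u)$ with $\sfd_\Y(u(x_n),u(y_n))\to\sfd_u(x,y)$. For such a sequence $\sfd_u(x_n,\bar x)=\sfd_\Y(u(x_n),u(\bar x))$ (by \eqref{eq:duu}), and the definition of $\sfd_u(\cdot,\bar x)$ as a $\limsup$ gives $\limsup_n\sfd_u(x_n,\bar x)\le\sfd_u(x,\bar x)$; dually, the $\liminf$ definition of $\sfd_{u,p}$ gives $\liminf_n\sfd_\Y(u(x_n),p)\ge\sfd_{u,p}(x)$, and analogously for $y_n$. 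Hence $\limsup_n F_{\bar x,\bar y}(x_n)\le F_{\bar x,\bar y}(x)$ and $\limsup_n F_{\bar y,\bar x}(y_n)\le F_{\bar y,\bar x}(y)$, so passing to the limit in the core inequality yields \eqref{eq:fsplit} on $B'\times B'$.

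\textbf{Degenerate and out-of-ball cases; equality.} If $\sfd_u(\bar x,\bar y)=0$, both $F_{\bar x,\bar y}$ and $F_{\bar y,\bar x}$ vanish identically, and \eqref{eq:fsplit} reduces to $f(x,y)\le 0$, which is \eqref{eq:fbdasotto}. When $x,y\notin B'$ the inequality becomes $-\bar C\,\norm\le -\sfd_u(\bar x,\bar y)$, true by the choice of $\bar C$ via \eqref{eq:boundest}. The mixed case $x\notin B'$, $y\in B'$ is obtained by inserting $(\bar x,y)$ into the already-proved inequality on $B'\times B'$: using $F_{\bar x,\bar y}(\bar x)=0$ this gives $F_{\bar y,\bar x}(y)\ge\sfd_u(\bar x,\bar y)-\sfd_u(\bar x,y)$, whence $-\sfd_u(\bar x,\bar y)+F_{\bar y,\bar x}(y)\ge-\sfd_u(\bar x,y)\ge-\bar C\,\norm=f(x,y)$, and symmetrically for the remaining configuration. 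Equality at $x=\bar x$, $y=\bar y$ is a direct check: continuity of $u$ along $\cont(u)$ and the fact that $p$ is the midpoint of $u(\bar x),u(\bar y)$ yield $\sfd_u(\bar x,\bar x)=0$ and $\sfd_{u,p}(\bar x)=\tfrac12\sfd_u(\bar x,\bar y)$, so $F_{\bar x,\bar y}(\bar x)=0$, and symmetrically $F_{\bar y,\bar x}(\bar y)=0$. The only delicate point in the argument is the limiting step: the sequence $(x_n,y_n)$ must be chosen just to realize the $\limsup$ defining $\sfd_u(x,y)$, and then the \emph{one-sided} semicontinuity of $\sfd_u(\cdot,\bar x)$ (from above) and of $\sfd_{u,p}(\cdot)$ (from below) takes care, separately, of the two contributions entering $F_{\bar x,\bar y}(x_n)$ without requiring different approximations for each term.
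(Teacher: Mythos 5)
Your proof is correct and takes essentially the same route as the paper's: the parallelogram-type inequality \eqref{eq:parineq} applied to the quadruple $u(x),u(\bar x),u(\bar y),u(y)$ gives the core case on $\cont(u)$, the one-sided semicontinuities built into the definitions of $\sfd_u$ (a $\limsup$) and $\sfd_{u,p}$ (a $\liminf$) extend it to all of $B'\times B'$, and the mixed case is reduced to the already-proved inequality by specializing one argument to $\bar x$ (the paper specializes to $\bar y$, which is the symmetric version of the same trick). Your write-up of the semicontinuity step and of the vanishing $F_{\bar x,\bar y}(\bar x)=F_{\bar y,\bar x}(\bar y)=0$ needed for the equality claim is in fact more detailed than the paper's.
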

\begin{proof}
If $\sfd_u(\bar x,\bar y)=0$ or both $x$ and $y$ are not in $B'$ the claim is obvious. Then we observe that rearranging the terms in \eqref{eq:parineq} we easily get that   \eqref{eq:fsplit} holds for any $x,y,\bar x,\bar y\in\cont(u)$ with $\sfd_u(\bar x,\bar y)>0$ and $x,y\in B'$. The conclusion for $x,y\in B'$ arbitrary follows from the definition of $\sfd_u(x,y)$ and $\sfd_{u,p}(x)$. It remains to deal with the case  $\sfd_u(\bar x,\bar y)>0$  and $x\in B'$, $y\notin B'$: if this happens \eqref{eq:fsplit} reads as
\begin{equation}
\label{eq:perc}
-\bar C\,\norm\leq f(\bar x,\bar y)+F_{\bar x,\bar y}(x).
\end{equation}
To prove this notice that we know the validity of \eqref{eq:fsplit} for $x,\bar x,\bar y$ as in \eqref{eq:perc} and the choice $y:=\bar y$. With this choice we have $F_{\bar y,\bar x}(y)=0$ and thus \eqref{eq:perc} follows from \eqref{eq:fbdasotto}.
\end{proof}
\begin{lemma}\label{le:tdF}
With the same assumptions and notations introduced above, the following holds. 

There is $E\subset B'$ Borel with $\mm(B'\setminus E)=0$ such that
\[
\tilde\Delta F_{x,y}(x)\leq 0,\qquad\forall x\in E,\ y\in B'.
\]
\end{lemma}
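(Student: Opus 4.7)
My strategy is to reduce the claim to an exact cancellation of pointwise Laplacian values at nice $x$. Writing
\[
F_{x,y}(z)\cdot\sfd_u(x,y)=\bigl[\sfd_u^2(z,x)-\sfd_{u,p}^2(z)\bigr]+\tfrac14\sfd_u^2(x,y),
\]
the last term is $z$-independent, so it suffices to show $\tilde\Delta\bigl[\sfd_u^2(\cdot,x)-\sfd_{u,p}^2\bigr](x)\le 0$. The guiding model is the Euclidean case, in which the bracketed difference agrees up to a constant with the inner product of $u(z)-u(x)$ with the fixed unit direction from $u(x)$ to $u(y)$, and so has vanishing Laplacian by harmonicity of $u$; in the synthetic setting the two matching one-sided bounds available for each piece will combine to give $\leq 0$.

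I will take $E\subset B'$ to be the Borel subset of $\cont(u)$ consisting of those $x$ at which both the variance limit \eqref{eq:convenvar} holds and $\e_2[u]^2$ has a Lebesgue point; by \eqref{eq:mcont} and \eqref{eq:convenvar} this has full $\mm$-measure in $B'$. Fix such an $x$ and any $y\in B'$. If $\sfd_u(x,y)=0$ then $F_{x,y}\equiv 0$ by definition and nothing is to prove; otherwise let $p\in\Y$ be the midpoint of $u(x)$ and $u(y)$.

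The first summand is treated by \eqref{eq:convenvar}. Since $\h_t\delta_x\ll\mm$, $\mm(U\setminus\cont(u))=0$, and $\sfd_u(z,x)=\sfd_\Y(u(z),u(x))$ on $\cont(u)$, one has $\h_t\bigl(\sfd_u^2(\cdot,x)\bigr)(x)=\int\sfd_\Y^2(u(z),u(x))\,\d\h_t\delta_x(z)$ for all small $t$ (extending $\sfd_u^2(\cdot,x)$ by zero outside $U$), and from $\sfd_u^2(x,x)=0$ and the variance formula one reads the exact value
\[
\tilde\Delta\sfd_u^2(\cdot,x)(x)=2(d+2)\e_2[u]^2(x).
\]
The second summand is treated by $2$-convexity. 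The function $\sfd_\Y^2(\cdot,p)$ is $2$-convex on $\Y$ and, on the essentially bounded image of $u\restr{B'}$ (Proposition \ref{prop:boundlinfty}), is Lipschitz, so Theorem \ref{thm:lapcomp} gives $\bd\sfd_{u,p}^2\ge 2(d+2)\e_2[u]^2\mm$ on a bounded open $V\subset\subset B'$ containing $x$. Since $\sfd_{u,p}^2$ is bounded on $V$, continuous on the full-measure subset $\cont(u)\cap V$, and in $W^{1,2}(V)$ by \eqref{eq:dfu}, applying Lemma \ref{le:lapcomb2} to $-\sfd_{u,p}^2$ (whose distributional Laplacian is thus $\le -2(d+2)\e_2[u]^2\mm$ on $V$) at the Lebesgue point $x$ gives $\tilde\Delta(-\sfd_{u,p}^2)(x)\le -2(d+2)\e_2[u]^2(x)$, i.e.\
\[
\limi_{t\downarrow 0}\tfrac1t\bigl(\h_t\sfd_{u,p}^2(x)-\sfd_{u,p}^2(x)\bigr)\ge 2(d+2)\e_2[u]^2(x).
\]

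Combining the two via the elementary inequality $\lims_{t\downarrow 0}(A(t)-B(t))\le \lim_{t\downarrow 0}A(t)-\limi_{t\downarrow 0}B(t)$, valid here as the first limit exists, one obtains $\tilde\Delta\bigl[\sfd_u^2(\cdot,x)-\sfd_{u,p}^2\bigr](x)\le 0$, and dividing by $\sfd_u(x,y)>0$ concludes. The main delicate point is the need for the two constants $2(d+2)\e_2[u]^2(x)$ to match exactly at the same $x$: on the $\sfd_u^2(\cdot,x)$ side this forces one to use the sharp pointwise variance identity \eqref{eq:convenvar} and not merely the distributional subharmonicity of $\sfd_u^2(\cdot,x)$, while on the $\sfd_{u,p}^2$ side one needs Theorem \ref{thm:lapcomp} to produce the optimal constant $\lambda(d+2)$ with $\lambda=2$ and then Lemma \ref{le:lapcomb2} to transfer this sharp constant to a pointwise-at-Lebesgue-points lower bound.
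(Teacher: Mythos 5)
Your proof is correct and follows essentially the same route as the paper: the same choice of $E$ (points of $\cont(u)\cap B'$ that are Lebesgue points of $\e_2[u]^2$ and where \eqref{eq:convenvar} holds), the exact identity $\tilde\Delta\sfd_u^2(\cdot,x)(x)=2(d+2)\e_2[u]^2(x)$ from the heat-kernel variance formula, the matching bound $\tilde\Delta(-\sfd_{u,p}^2)(x)\leq-2(d+2)\e_2[u]^2(x)$ via $2$-convexity, Theorem \ref{thm:lapcomp} and Lemma \ref{le:lapcomb2}, and finally subadditivity of $\tilde\Delta$. Your added remarks on why $E$ can be chosen independently of $y$ (hence of $p$) and on the necessity of the sharp constant on both sides are accurate and consistent with the paper's argument.
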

\begin{proof} For any $p\in\Y$  the identity $\sfd_{u,p}(\cdot)=\sfd_\Y(u(\cdot),p))$ holds on $\cont(u)$, hence $\mm$-a.e.\ on $B'$. Then   \eqref{eq:convdacat3} and Theorem \ref{thm:lapcomp} give $\bd(-\sfd_{u,p}^2)\restr{B'}\leq- 2(d+2)\e_2[u]^2\mm$ and thus  Lemma \ref{le:lapcomb2} implies
\[
\tilde\Delta (-\sfd_{u,p}^2)(x)\leq -2(d+2)\e_2[u]^2(x)\qquad\forall  x\in B'\cap\cont(u)\text{ Lebesgue point of $\e_2[u]^2$ and  $\forall p\in \Y$.} 
\]
Similarly, for $x\in\cont(u)$ the identity $\sfd_u(x,y)=\sfd_\Y(u(x),u(y))$ holds for $\mm$-a.e.\ $y\in U$, thus for any such $x$ for which the limiting property  \eqref{eq:convenvar}  holds, since $\sfd_u(x,x)=0$ we have
\[
\tilde\Delta\sfd_u^2(x,\cdot)(x)= \lims_{t\downarrow0}\frac{\h_t(\sfd_u^2(x,\cdot))(x)}{t} = \lims_{t\downarrow0}\frac1t\int \sfd_\Y(u(x),u(y))\,\d\h_t\delta_x(y)\stackrel{\eqref{eq:convenvar} }=2(d+2)\e_2[u]^2(x).
\]

The conclusion  follows  from the subadditivity of $\tilde\Delta$ (which is obvious by definition).
\end{proof}

We then have the following crucial estimate (recall also the definition \eqref{eq:defdtns}):
\begin{proposition}[Laplacian bound for the $f_t$'s]\label{prop:lapft}
With the same assumptions and notations introduced above and for  $\bar T$ as given by Lemma \ref{le:partle}, we have  
\[
\bd f_t\restr{B''}\leq -K\frac{D_t^2}{t}\mm\qquad\forall  t\in(0,\bar T).
\]
\end{proposition}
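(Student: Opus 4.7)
The plan is to first establish a pointwise bound on $\tilde\Delta f_{t,n}$ via the splitting \eqref{eq:fsplit}, and then upgrade it to the desired distributional bound on $\bd f_t$ through Lemma \ref{le:lapcomb2} and a passage to the limit $n\to\infty$. To set up, I would fix a Borel set $E\subset B'$ of full measure on which both Lemma \ref{le:tdF} and the limiting property \eqref{eq:convenvar} hold, then apply the perturbation Lemma \ref{le:partle} with this $E$ to obtain, for each $t\in(0,\bar T)$, a point $\bar z_t\in B$ and, for $\mm$-a.e.\ $x\in B''$ and each $n$, a minimiser $\bar y_n(x)\in E\cap B'$ of $y\mapsto g_{t,n}(x,y,\bar z_t)$.

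For such $x$ and with $\bar y:=\bar y_n(x)$, I would next apply \eqref{eq:fsplit} with $(\bar x,\bar y)=(x,\bar y)$; a direct inspection shows $F_{x,\bar y}(x)=0=F_{\bar y,x}(\bar y)$, giving
\[
f_{t,n}(x')\leq f(x,\bar y)+F_{x,\bar y}(x')+Q_tg(x')\qquad\forall x'\in\X,
\]
where $g(y'):=F_{\bar y,x}(y')+\frac{\sfd^2(y',\bar z_t)}n$, with equality at $x'=x$. A harmless truncation $g\wedge M$ outside a large ball (which does not affect $Q_tg$ near $x$, since minimisers stay in a bounded region) makes $g$ bounded and thus legitimate for Lemma \ref{le:lappunt}. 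The equality at $x'=x$ forces $Q_tg(x)=g(\bar y)+\frac{\sfd^2(x,\bar y)}{2t}$, so the hypothesis of Lemma \ref{le:lappunt} is met. Monotonicity of the heat flow then yields $\tilde\Delta f_{t,n}(x)\leq\tilde\Delta F_{x,\bar y}(x)+\tilde\Delta Q_tg(x)$; Lemma \ref{le:tdF} kills the first term, while Lemma \ref{le:lappunt}, combined with Lemma \ref{le:tdF} applied with the roles swapped (for the $F_{\bar y,x}$ piece of $g$) and the Laplacian comparison Theorem \ref{thm:lapbd} read pointwise via Lemma \ref{le:lapcomb2} (for the $\sfd^2(\cdot,\bar z_t)/n$ piece), gives
\[
\tilde\Delta Q_tg(x)\leq\tilde\Delta g(\bar y)-K\tfrac{\sfd^2(x,\bar y)}{t}\leq\tfrac{C(K^-R^2,N)}{n}-K\tfrac{\sfd^2(x,\bar y)}{t}.
\]

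To upgrade to a distributional bound, I would invoke \eqref{eq:subopt} for the uniform estimate $\bd f_{t,n}\restr{B''}\leq C(K^-R^2,N)t^{-1}\mm$ and the converse direction of Lemma \ref{le:lapcomb2} to conclude $\bd f_{t,n}\leq\tilde\Delta f_{t,n}\mm$ on $B''$, hence
\[
\bd f_{t,n}\restr{B''}\leq\Big(\tfrac{C}{n}-K\tfrac{\sfd^2(\cdot,\bar y_n(\cdot))}{t}\Big)\mm.
\]
Testing against arbitrary $\varphi\in\tvar^+(B'')$ and letting $n\to\infty$, the left hand side converges to $\int f_t\Delta\varphi\,\d\mm$ by dominated convergence (using \eqref{eq:bft22} and $f_{t,n}\downarrow f_t$ everywhere). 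On the right, note that $\bar y_n(x)$ is an asymptotically minimising sequence for $y\mapsto f(x,y)+\frac{\sfd^2(x,y)}{2t}$, so $\liminf_n\sfd(x,\bar y_n(x))\geq D^-_t(x)$ and $\limsup_n\sfd(x,\bar y_n(x))\leq D^+_t(x)$ by \eqref{eq:defdpm}. Since the distances stay uniformly bounded, reverse Fatou's lemma gives
\[
\limsup_{n\to\infty}\int-K\tfrac{\sfd^2(\cdot,\bar y_n(\cdot))}{t}\varphi\,\d\mm\leq\int-K\tfrac{D^2_t}{t}\varphi\,\d\mm,
\]
where the case $K\geq 0$ produces $D^-_t$ (from $\liminf\sfd^2$) and the case $K<0$ produces $D^+_t$ (from $\limsup\sfd^2$), matching \eqref{eq:defdtns}.

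The main obstacle I anticipate is the bookkeeping around the pointwise-versus-distributional aspect of the Laplacian bounds: verifying that $g$ can be truncated to satisfy the boundedness hypothesis of Lemma \ref{le:lappunt} without altering the relevant identities, checking Borel measurability of the selection $x\mapsto\bar y_n(x)$ (handled as in the proof of Lemma \ref{le:partle} via the Borel selection theorem), and managing the correct identification of $D^\pm_t$ via reverse Fatou in the two sign regimes for $K$.
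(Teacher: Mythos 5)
Your proof is correct and follows essentially the same route as the paper: the perturbation via Lemma \ref{le:partle} with $E$ the good set of Lemma \ref{le:tdF}, the splitting \eqref{eq:fsplit} with its equality case, the pointwise bounds from Lemmas \ref{le:tdF} and \ref{le:lappunt}, and the pointwise-to-distributional upgrade through \eqref{eq:subopt} and Lemma \ref{le:lapcomb2} followed by the limit $n\to\infty$. The one point where you diverge --- keeping $-K\sfd^2(\cdot,\bar y_n(\cdot))/t$ for each fixed $n$ and identifying it with $-KD_t^2/t$ only in the limit, by observing that $(\bar y_n(x))_n$ is a minimizing sequence for $F(t,x;\cdot)$ and applying reverse Fatou separately in the two sign regimes of $K$ --- is a legitimate, and if anything more careful, treatment than the paper's direct pointwise substitution of $D_t^2(\bar x)$ for $\sfd^2(\bar x,\bar y)$.
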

\begin{proof} We apply Lemma \ref{le:partle} with $E\subset B'$ chosen as the set of points for which the conclusions of Lemma \ref{le:tdF} hold and then define the functions $f_{t,n}$ accordingly.  By the very definitions of $f_t$ and $f_{t,n}$ and keeping also in mind that for $x\in B''$ a minimizer for $f_{t,n}$  exists in $B'$ (by Lemma \ref{le:partle}), we have that $f_{t,n}\to f_t$ uniformly on $B''$ as $n\to\infty$. 

Hence taking into account the `bad' distributional bound \eqref{eq:subopt}, Lemma \ref{le:lapcomb2} and the stability property \eqref{eq:stablubloc},   to conclude it is sufficient to find $C(K^-R^2,N)>0$  such that
\begin{equation}
\label{eq:optb}
\tilde\Delta f_{t,n}\leq \frac{C(K^-R^2,N)}n-K\frac{D_t^2}{t}\qquad\mm-a.e.\   on\  B''\qquad\forall t\in(0,\bar T),\ n\in\N.
\end{equation}
%
By Lemma \ref{le:partle}  we know  that for $\mm$-a.e.\ $\bar x\in B''\cap E$ there is $\bar y\in E$ minimizer for $f_{t,n}(\bar x)$ (recall the definition \ref{eq:defftn}). Fix such $\bar x,\bar y$ and notice that  for every $x\in B''$ we have
\begin{equation}
\label{eq:bella}
\begin{split}
f_{t,n}(x)&=\inf_{y\in\Y}\Big( f(x,y)+\frac{\sfd^2(x,y)}{2t}+\frac{\sfd^2(y,\bar z)}n\Big),\\
\text{(by \eqref{eq:fsplit})}\qquad &\leq   f(\bar x,\bar y)+F_{\bar x,\bar y}(x)+\inf_{y\in \Y}\Big(F_{\bar y,\bar x}(y)+\frac{\sfd^2(x,y)}{2t}+\frac{\sfd^2(y,\bar z)}n\Big)\\
&= f(\bar x,\bar y)+F_{\bar x,\bar y}(x)+Q_t\big(F_{\bar y,\bar x}(\cdot)+\tfrac{\sfd^2(\cdot,\bar z)}n\big)(x)
\end{split}
\end{equation}
with equality for $x=\bar x$. By the monotonicity of the heat flow (or the representation formula \eqref{eq:reprht})  it follows that for every $s>0$ we have
\[
\h_sf_{t,n}(\bar x)\leq f(\bar x,\bar y)+\h_sF_{\bar x,\bar y}(\bar x)+\h_s\big(Q_t\big(F_{\bar y,\bar x}(\cdot)+\tfrac{\sfd^2(\cdot,\bar z)}n\big)\big)(\bar x)
\]
hence subtracting \eqref{eq:bella} written for the equality case $x=\bar x$, after passing to the limit  we obtain:
\[
\tilde\Delta f_{t,n}(\bar x)\leq \tilde\Delta F_{\bar x,\bar y}(\bar x)+\tilde\Delta \big(Q_t\big(F_{\bar y,\bar x}(\cdot)+\tfrac{\sfd^2(\cdot,\bar z)}n\big)\big)(\bar x).
\]
Now observe that Lemma \ref{le:tdF}  and the fact that $\bar x\in E$ give $ \tilde\Delta F_{\bar x,\bar y}(\bar x)\leq 0$. Also, by inspecting the equality case $x=\bar x$ in \eqref{eq:bella} we see that the inf in the definition of $Q_t\big(F_{\bar y,\bar x}(\cdot)+\tfrac{\sfd^2(\cdot,\bar z)}n\big)(\bar x)$ is realized in $\bar y$, therefore  Lemma \ref{le:lappunt}, the subadditivity of $\tilde\Delta$ and the definition \eqref{eq:defdtns} give
\[
\begin{split}
\tilde\Delta \big(Q_t\big(F_{\bar y,\bar x}(\cdot)+\tfrac{\sfd^2(\cdot,\bar z)}n\big)\big)(\bar x)
&\leq \tilde\Delta  F_{\bar y,\bar x}(\bar y)+\tfrac1n \tilde\Delta {\sfd^2(\cdot,\bar z_t)}(\bar y)-K\frac{D_t^2(\bar x)}{t}.
\end{split}
\]
Now observe that since  $\bar y\in E$ we have $ \tilde\Delta  F_{\bar y,\bar x}(\bar y)\leq 0$ by Lemma \ref{le:tdF}. To bound the second term we apply Lemma \ref{le:lapcomb2}  with $g\equiv C$ and $ C=C(K^-R^2,N)$ given by Theorem \ref{thm:lapbd}  (recall that $\bar x.\bar y,\bar z_t\in B'\subset B_R(\hat x)$): we obtain  $\tilde\Delta \sfd^2(\cdot,\bar z_t)\leq C$ on $B'$. The conclusion \eqref{eq:optb} follows.
\end{proof}
We are now ready to prove the main result of the paper: 
\begin{theorem}[Zhang-Zhong-Zhu inequality and Lipschitz continuity of harmonic maps]\label{thm:main}
With the same assumptions and notations introduced above, the following holds.  

The map $u:U\to\Y$ satisfies the Zhang-Zhong-Zhu inequality
\begin{equation}
\label{eq:ZZ}
\bd\frac{|\d u|^2}2\geq K|\d u|^2\mm\qquad on\ U
\end{equation}
 and  has a representative, still denoted by $u$, that is locally Lipschitz continuous and satisfies 
\begin{equation}
\label{eq:main}
\Lip(u\restr B)\leq \frac {C(K^-R^2,N)}r\inf_{\o\in\Y}\,\sqrt{\fint_{2B}\sfd_\Y^2(u(\cdot),\o)\,\d\mm}.
\end{equation}
\end{theorem}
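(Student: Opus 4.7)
The plan is to derive the ZZZ inequality from Proposition \ref{prop:lapft} by a limiting argument, then feed it into Moser iteration and combine with the reverse Poincar\'e inequality of Proposition \ref{prop:invP} to obtain the pointwise Lipschitz estimate. Concretely, starting from $\bd f_t\restr{B''}\leq -K D_t^2/t\cdot\mm$ I would test against $\varphi\in\tvar^+(B'')$ and divide by $t$ to get
\[
\int \frac{-f_t}{t}\Delta\varphi\,\d\mm \;\geq\; K\int\frac{D_t^2}{t^2}\varphi\,\d\mm.
\]
By Proposition \ref{prop:dualtilt} the pointwise $\lims_{t\downarrow 0}$ of both $-f_t(x)/t$ and $D_t^2(x)/(2t^2)$ equal $\tfrac12\tilt(f)^2(x)$ and are realized along a \emph{common} sequence; by \eqref{eq:liptilt} and Proposition \ref{prop:rad} this value is $\tfrac12\lip(u)^2(x)=\tfrac12|\d u|^2(x)$ $\mm$-a.e. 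A diagonal/measurable-selection procedure produces a single sequence $t_n\downarrow 0$ along which both convergences hold $\mm$-a.e. on $B''$, and Fatou applied to $K\int D_{t_n}^2/t_n^2\cdot\varphi$ on the right together with a dominated-convergence argument on the left (using the uniform local domination $-f_t/t\le\sup_{s\le 1}(-f_s)/s\in L^1_{\loc}$ provided by Proposition \ref{prop:dualtilt}) yields $\int \tfrac12|\d u|^2\Delta\varphi\geq K\int|\d u|^2\varphi$. Letting $B''$ exhaust $U$ and invoking locality \eqref{eq:localappr} gives \eqref{eq:ZZ} on all of $U$.

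Having \eqref{eq:ZZ}, I would rewrite it as $\bd|\d u|^2\geq -2K^-|\d u|^2\,\mm$ on $U$ and apply the elliptic regularity Theorem \ref{thm:ellrt} with $f=|\d u|^2$, $\alpha=2K^-R^2$, $\beta=0$, on the ball $B_{3r/2}(\hat x)$ with $\lambda=2/3$. This gives
\[
\||\d u|^2\|_{L^\infty(B)}\leq C(K^-R^2,N)\fint_{B_{3r/2}(\hat x)}|\d u|^2\,\d\mm.
\]
Next, applying Proposition \ref{prop:invP} with $\lambda=3/4$ and doubled ball $2B$ bounds the right-hand side by $c(d)r^{-2}\inf_{\o\in\Y}\fint_{2B}\sfd_\Y^2(u(\cdot),\o)\,\d\mm$. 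Combining:
\[
\||\d u|\|_{L^\infty(B)} \leq \frac{C(K^-R^2,N)}{r}\inf_{\o\in\Y}\sqrt{\fint_{2B}\sfd_\Y^2(u(\cdot),\o)\,\d\mm}.
\]

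Finally, I would upgrade this $L^\infty$ bound on $|\d u|$ to actual local Lipschitz continuity of $u$. By Proposition \ref{prop:rad}, $\lip(u)=|\d u|$ $\mm$-a.e., so each real-valued function $\sfd_\Y(u(\cdot),p)$ lies in $W^{1,2}(B)$ with $|\d\sfd_\Y(u,p)|\leq |\d u|$ essentially bounded. The $\RCD$ axiom (ii) then gives a Lipschitz representative of $\sfd_\Y(u(\cdot),p)$ on $B$. Taking $p$ in a countable dense subset of the essential image of $u\restr B$, intersecting the resulting full-measure sets, and using density to reconstruct $u$ via the standard identity $\sfd_\Y(u(x),u(y))=\sup_p|\sfd_\Y(u(x),p)-\sfd_\Y(u(y),p)|$ yields a Lipschitz representative of $u$ on $B$ satisfying \eqref{eq:main}.

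The main obstacle will be the limit passage in the first step: Proposition \ref{prop:dualtilt} provides $\lims$, not $\lim$, so one must select a common subsequence $t_n\downarrow 0$ realizing both limits $\mm$-a.e. and control the integrand uniformly. A secondary technical point is ensuring $|\d u|^2\in W^{1,2}_{\loc}(U)$ so that Theorem \ref{thm:ellrt} applies directly; this can be handled by a Caccioppoli-type bootstrap using the ZZZ inequality itself (multiplying by $\eta^2|\d u|^2$ with $\eta\in\Lip_\bs$ supported in $U$), or by first regularizing $|\d u|^2$ via the mollified heat flow $\tilde\h_n$ and passing to the limit in the Moser estimate using \eqref{eq:stabub2}.
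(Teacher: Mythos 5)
Your second half (Moser iteration on $|\d u|^2$ via Theorem \ref{thm:ellrt}, the Caccioppoli bootstrap to get $W^{1,2}_{loc}$ membership, the combination with the reverse Poincar\'e inequality, and the Sobolev-to-Lipschitz upgrade through the functions $\sfd_\Y(u(\cdot),p_n)$) matches the paper's argument essentially step by step; the only small imprecision there is that you need a \emph{local} Sobolev-to-Lipschitz property, since the bound $|\d u_n|\le C$ holds only on a ball and the global axiom (ii) of the $\RCD$ definition does not apply directly.

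The genuine gap is in your limit passage from $\bd f_t\leq -K D_t^2/t\,\mm$ to \eqref{eq:ZZ}. Proposition \ref{prop:dualtilt} identifies the $\lims_{t\downarrow0}$ of $-f_t(x)/t$ and of $D_t^2(x)/(2t^2)$ and says that a sequence realizing one realizes the other --- but that sequence depends on $x$. No diagonal or measurable-selection argument produces a \emph{single} sequence $t_n\downarrow0$ along which $-f_{t_n}(x)/t_n\to\tfrac12\tilt(f)^2(x)$ for $\mm$-a.e.\ $x$: along any fixed sequence the limit can drop strictly below the $\lims$ on a set of positive measure (nothing in the paper asserts that the limit exists as $t\downarrow0$, only the $\lims$), and since $\Delta\varphi$ changes sign you cannot compensate with Fatou on the left-hand side. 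This is precisely why the paper replaces your single-sequence scheme with the finite-minima device: writing $\lims_{t\downarrow0}(-f_t/t)=\inf_n\sup_{t\in\Q\cap(0,1/n)}(-f_t/t)$, it forms $\tilde f_n:=\min_{i\le N_n}f_{t_{n,i}}/t_{n,i}$ over finitely many rational times, uses the last claim of Lemma \ref{le:lapcomb2} to propagate the Laplacian upper bound to the finite minimum with the matched piecewise right-hand side $\tilde g_n:=\sum_i\nchi_{A_{n,i}}D_{t_{n,i}}^2/t_{n,i}^2$, and only then passes to the limit via \eqref{eq:stablubloc}. A second, related gap: the domination you invoke ($-f_t/t\le\sup_{s\le1}(-f_s)/s\in L^1_{loc}$) is not supplied by Proposition \ref{prop:dualtilt}. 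A priori one only has $-f_t/t\le \bar C\,\norm/t$, which blows up; the uniform bounds $\|f_t/t\|_{L^\infty(\frac12B)}+\|D_t^2/t^2\|_{L^\infty(\frac14B)}\le C$ are the content of the paper's separate ``rough estimates'' step, obtained by combining Theorem \ref{thm:hlvar} with a Caccioppoli-type gradient bound for $f_t$ and Moser iteration (Theorem \ref{thm:ellrt}) applied to $-f_t$ using the crude Laplacian lower bound coming from Proposition \ref{prop:lapft} and \eqref{eq:stimadqt2}. Without that step your dominated-convergence and Fatou arguments have nothing to stand on.
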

\begin{proof}  \ \\
\st{Rough estimates} Let $\tfrac12 B:=B_{\tfrac r2}(\hat x)$, $\tfrac14 B:=B_{\tfrac r4}(\hat x)$. We claim that for some   $C, T>0$ we have
\begin{equation}
\label{eq:roughclaim}
\big\|\tfrac{f_t}t\big\|_{L^\infty(\frac12B)}+\big\|\tfrac{D_t^2}{t^2}\big\|_{L^\infty(\frac14B)}\leq C \qquad\forall t\in(0, T).
\end{equation}
To see this, start noticing that $|\partial^-f_t|\leq\lip(f_t)$ holds everywhere, so by \eqref{eq:radelip} for $\Y=\R$ (or the analogous result in \cite{Cheeger00}) we get $|\partial^-f_t|\leq|\d f_t|$ $\mm$-a.e.. Then  Theorem \ref{thm:hlvar} and \eqref{eq:tilte2} give
\begin{equation}
\label{eq:l1contr}
\|f_t\|_{L^1( B)}=\|f_t-f_0\|_{L^1( B)}\leq \int_0^t\int _{ B}|\d f_t|^2+C\,\e_2^2[u]\,\d\mm\,\d t\leq tC+\int_0^t\int _{ B}|\d f_t|^2\,\d\mm\,\d t
\end{equation}
for every $t\in(0,\bar T)$. Now let  $\varphi:=(1-\tfrac{1}{3r}\sfd(\cdot, B))^+$ and notice that
\begin{equation}
\label{eq:anchdop}
\int\varphi^2|\d f_t|^2\,\d\mm=\int\d f_t\cdot\d(f_t\varphi^2)-f_t\varphi\d f_t\cdot\d\varphi\,\d\mm \leq\int\d f_t\cdot\d(f_t\varphi^2)+\tfrac12\varphi^2|\d f_t|^2\,\d\mm+C,
\end{equation}
where in the last inequality we used \eqref{eq:bft22} and the trivial bound $|\d\varphi|\leq \tfrac{1}{3r}\nchi_{2 B}$. Now observe that Proposition \ref{prop:lapft} and the bounds  \eqref{eq:stimadqt2}, \eqref{eq:fbdasotto}   give 
\begin{equation}
\label{eq:lapgrezzo}
\bd (-f_t)\restr {B''}\geq -(K^-\bar C\,\norm)\mm\qquad\forall  t\in(0,\bar T)
\end{equation}
and since $\varphi=0$ outside $B''$ and $f_t\varphi^2\leq 0$, by \eqref{eq:bft22} and \eqref{eq:anchdop} we get  the uniform bound
$\int_{ B}|\d f_t|^2\,\d\mm\leq C$ for any  $t\in (0,\bar T).$ We use this estimate in \eqref{eq:l1contr} to deduce $\|f_t\|_{L^1(B)}\leq tC$ and then apply Theorem \ref{thm:ellrt} using again   \eqref{eq:lapgrezzo} to get the first in \eqref{eq:roughclaim}.

For the second, we notice that arguing as for \eqref{eq:claimg} using the uniform bound \eqref{eq:bft22} we see that, possibly picking $\bar T $ smaller, for   $x\in \tfrac14B$ any minimizer $y_t$ for $f_t(x)$ belongs to $\tfrac12B$. Then the claim follows from what already proved and  the estimate \eqref{eq:stimadqt2}.
 
\st{Core argument} By the continuity in $t$ of $f_t(x)$ (item $(i)$ of Proposition \ref{prop:paseHL}) we see that $\lims_{t\downarrow0}\frac{-f_t}{t}=\inf_{n\in\N}\sup_{t\in \Q\cap(0,\frac1n)}\frac{-f_t}{t}$ pointwise on $\X$. Thus letting, for every $n\in\N$,  $\{t_{n,i}\}_{i\in\N}$ be an enumeration of $\Q\cap(0,\frac1n)$, by Proposition \ref{prop:dualtilt} we have that $\sup_i\frac{-f_{t_{n,i}}}{t_{n,i}}$ converges  to $\tfrac12\tilt(f)^2$ pointwise. Then by the first estimate in \eqref{eq:roughclaim} we see that the convergence is also in $L^1(\tfrac12B)$, hence   by a diagonalization argument we can find $N_n\in\N$ so that $\tilde f_n:=\frac{f_{t_{n,1}}}{t_{n,1}}\wedge \cdots\wedge \frac{f_{t_{n,N_n}}}{t_{n,N_n}}$ converge to $-\tfrac12\tilt(f)^2$ in $L^1(\tfrac12B)$, and then up to pass to a non-relabeled subsequence also pointwise $\mm$-a.e..

Let $A_{n,i}:=\{\frac{f_{t_{n,i}}}{t_{n,i}}=\tilde f_{n}\}\setminus \cup_{j<i}A_{n,j}\subset\X$, notice that for every $n\in\N$ the sets $(A_{n,i})_{i=1,\ldots,N_n}$ form a Borel partition of $\X$ and then put $\tilde g_n:=\sum_i\nchi_{A_{i,n}}\frac{D_{t_{n,i}}^2}{t_{n,i}^2}$. By the last claim in Proposition \ref{prop:dualtilt} and the construction we see that $\tilde g_n\to \tilt(f)^2$ $\mm$-a.e.\ on $B$, so that by the second estimate in \eqref{eq:roughclaim} the convergence is also in $L^1(\frac14B)$.

Now notice that Proposition \ref{prop:lapft} and the last claim in Lemma \ref{le:lapcomb2} (and  induction) give
\begin{equation}
\label{eq:perCacc}
\bd \tilde f_n\leq -K\tilde g_n\mm,\qquad\forall n\in\N,\qquad on \ \tfrac14B,
\end{equation}
so that the $L^1$ convergences, the uniform bounds \eqref{eq:roughclaim} and   the stability property \eqref{eq:stablubloc} imply
\begin{equation}
\label{eq:ZZlip}
\tfrac12\bd\lip^2(u)\geq  K\lip^2(u)\mm
\end{equation}
on $\frac14B$, having also  used \eqref{eq:liptilt}. By the arbitrariness of $B$ and the locality property \eqref{eq:localappr} we infer that \eqref{eq:ZZlip} holds on $U$. We now claim that $\lip^2(u)\in W^{1,2}_{loc}(U)$ and to this aim, by the arbitrariness of $B$  it is sufficient to prove that for any $V\subset\subset\frac14 B$ open we have  $\lip^2(u)\in W^{1,2}(\bar V,\sfd,\mm\restr V)$ (this is a direct consequence of the locality of the concepts of `being Sobolev' and `minimal weak upper gradients', see for instance \cite[Theorem 4.19]{AmbrosioGigliSavare11-2}). Thus let $\varphi\in\Lip_{\bs}(\X)$ be identically 1 on $V$ with support in $\frac14B$ and notice that \eqref{eq:perCacc} and the same computations in \eqref{eq:anchdop} give
\[
\begin{split}
\int\varphi^2|\d \tilde f_n|^2\,\d\mm&\leq \int -K\tilde g_n\varphi^2\tilde f_n +\tfrac12\varphi^2|\d \tilde f_n|^2+2\tilde f_n^2|\d\varphi|^2\,\d\mm.
\end{split}
\]
Since $\sup_n\|\tilde f_n\|_{L^\infty(\frac14B)},\|\tilde g_n\|_{L^\infty(\frac14B)}<\infty$ by \eqref{eq:roughclaim}, this is sufficient to derive a uniform bound on $\int\varphi^2|\d \tilde f_n|^2\,\d\mm$ and thus on $\int_{V}|\d \tilde f_n|^2\,\d\mm$ so that the lower semicontinuity of the Cheeger energy on the space $(\bar V,\sfd,\mm\restr V)$ gives our claim. 

We can thus apply Theorem \ref{thm:ellrt} to $f:=\lip^2(u)\in W^{1,2}_{loc}(U)$ using \eqref{eq:ZZlip} and conclude that
\begin{equation}
\label{eq:lipu}
\|\lip(u)\|_{L^\infty(B'')}\leq C\sqrt{\fint_{B'}\lip^2(u)\,\d\mm}\stackrel{\eqref{eq:tilte2}}\leq C\sqrt{\fint_{B'}\e_2^2[u]\,\d\mm}\stackrel{\eqref{eq:invP}}\leq  \frac {\sf C}r\,\norm,
\end{equation}
for some ${\sf C}={\sf C}(K^-R^2,N)$. Now let $(p_n)$ be a countable set dense in the essential image of $u$, put $u_n:=\sfd_\Y(u(\cdot),p_n)$ and notice that - trivially -  we have $\lip(u_n)\leq \lip(u)$ everywhere and  by item $(ii)$ in Proposition \ref{prop:baseKS}  we know that $u_n\in W^{1,2}(U)$. Hence \eqref{eq:lipu} and the simple Lemma \ref{le:lipdf} below ensure that $|\d u_n|\leq \frac {\sf C}r \,\norm$ $\mm$-a.e.\ on $B''$. We can then apply the local Sobolev-to-Lipschitz property (see \cite[Proposition 1.10]{GV21}) and deduce that $u_n$ has a representative $\tilde u_n$ that is locally Lipschitz on $B''$ with local Lipschitz constant uniformly bounded by $\frac {\sf C}r\,\norm$. Since any two points $x,y\in B$ can be joined by a curve lying on $B''$ with length bounded by $3\sfd(x,y)$ (if a geodesic $\gamma$ from $x$ to $y$ is so that $\gamma_t\notin B''$ for some $t$, its length, and thus $\sfd(x,y)$, must be at least  $\tfrac{2r}3$, while a curve passing through the center has length $\leq 2r$), we conclude that $\Lip(\tilde u_n\restr{B})\leq \frac {\sf C}r\,\norm$.

Since this is true for any $n\in\N$, it follows  that for some $N\subset B$ Borel negligible we have 
\[
|\sfd_\Y(u(x),p_n)-\sfd_\Y(u(y),p_n)|\leq \frac Cr\sfd(x,y)\,\norm\qquad\forall x,y\in B\setminus N.
\]
Taking the supremum in $n$ we conclude that the restriction of $u$ to $B\setminus N$ is $\frac {\sf C}r\,\norm$-Lipschitz, so that $u:U\to\Y$ has a locally Lipschitz representative satisfying \eqref{eq:main}, as desired. 

Finally, \eqref{eq:ZZ} follows from \eqref{eq:ZZlip} taking into account the identity \eqref{eq:radelip}.
\end{proof}
\begin{lemma}\label{le:lipdf}
Let $(\X',\sfd',\mm')$ be a complete separable metric space equipped with a non-negative and non-zero Radon measure finite on bounded sets. Let $U\subset \X'$ be open, $f\in W^{1,2}(U)$ and assume that $\lip(f)\in L^2(U)$. Then $|\d f|\leq \lip(f)$ $\mm$-a.e.\ on $U$.
\end{lemma}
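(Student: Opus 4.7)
The plan is to reduce to the case $U=\X'$ via a cutoff argument, and then to show that $\lip(f)$ is a weak upper gradient of $f$ in the test plan sense from \eqref{eq:defsob}; the desired inequality $|\d f|\leq\lip(f)$ then follows from the minimality property of $|\d f|$. For the reduction, I would use that, by definition of $W^{1,2}(U)$, any $\varphi\in\Lip_\bs(\X')$ with support contained in $U$ gives $\varphi f\in W^{1,2}(\X')$; the pointwise Leibniz-type inequality $\lip(\varphi f)\leq|\varphi|\lip(f)+|f|\Lip(\varphi)$ yields $\lip(\varphi f)\in L^2(\X')$. Since $|\d(\varphi f)|=|\d f|$ and $\lip(\varphi f)=\lip(f)$ $\mm'$-a.e.\ on the interior of $\{\varphi=1\}$, exhausting $U$ by such cutoffs reduces the statement to: every $g\in W^{1,2}(\X')$ with $\lip(g)\in L^2(\X')$ satisfies $|\d g|\leq\lip(g)$ $\mm'$-a.e.

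For the main step, I would fix a test plan $\ppi$ and establish
\[
\int|g(\gamma_1)-g(\gamma_0)|\,\d\ppi(\gamma)\leq\iint_0^1\lip(g)(\gamma_t)|\dot\gamma_t|\,\d t\,\d\ppi(\gamma).
\]
Since $g\in W^{1,2}(\X')$, for $\ppi$-a.e.\ $\gamma$ the composition $g\circ\gamma$ admits an absolutely continuous representative on $[0,1]$, while $\gamma$ is metrically absolutely continuous with metric speed $|\dot\gamma_t|$. A Fubini argument based on $(\e_t)_*\ppi\leq C\mm'$ shows that for $\ppi$-a.e.\ $\gamma$ and $\mathcal L^1$-a.e.\ $t\in[0,1]$, the point $\gamma_t$ simultaneously belongs to the full-measure Borel set where $\lip(g)<\infty$, the derivative $(g\circ\gamma)'(t)$ exists, and $|\dot\gamma_t|$ exists. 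At such $(\gamma,t)$, factorizing
\[
\frac{|g(\gamma_{t+h})-g(\gamma_t)|}{|h|}=\frac{|g(\gamma_{t+h})-g(\gamma_t)|}{\sfd'(\gamma_{t+h},\gamma_t)}\cdot\frac{\sfd'(\gamma_{t+h},\gamma_t)}{|h|}
\]
and passing to $\limsup$ via the definitions of $\lip$ and metric speed yields $|(g\circ\gamma)'(t)|\leq\lip(g)(\gamma_t)|\dot\gamma_t|$. Integrating via the fundamental theorem of calculus for the AC representative of $g\circ\gamma$ and then against $\ppi$ gives the displayed bound, so $\lip(g)$ is a weak upper gradient and the claim follows.

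The delicate point will be the compatibility between the Borel representative of $g$ used to define $\lip(g)$ pointwise and the absolutely continuous trace of $g\circ\gamma$ along $\ppi$-a.e.\ curve, since the numerator in the factorization involves both implicitly. This will be handled by the uniform bound $(\e_t)_*\ppi\leq C\mm'$, which transports the $\mm'$-a.e.\ agreement of the two representatives into a $\ppi\otimes\mathcal L^1$-a.e.\ statement along curves; this is the standard machinery from \cite{AmbrosioGigliSavare11} that makes the test plan formulation of weak upper gradients well-posed and is what allows the above pointwise computation to be run rigorously without any Lipschitz or continuity assumption on $g$.
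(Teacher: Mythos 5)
Your proposal is correct and follows essentially the same route as the paper: a cutoff reduction to $U=\X'$, the pointwise factorization of the difference quotient giving $\lims_{h\to0}|f(\gamma_{t+h})-f(\gamma_t)|/|h|\leq\lip f(\gamma_t)\,|\dot\gamma_t|$, and the fact that $f\circ\gamma\in W^{1,1}(0,1)$ for $\ppi$-a.e.\ $\gamma$ to bound the distributional derivative and conclude via the test-plan definition of $|\d f|$. The only adjustment worth noting is that the compatibility issue you flag at the end is resolved not by the compression bound (the AC representative of $f\circ\gamma$ is curve-dependent, so there is no single global representative to transport) but by a one-dimensional density-point argument along each curve: the set where $f\circ\gamma$ agrees with its absolutely continuous representative has full measure, hence density $1$ at a.e.\ $t$, which is enough to transfer the difference-quotient bound to the derivative of that representative.
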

\begin{proof}
Multiplying $f$ by a sequence of Lipschitz cut-off functions with support in $U$ we can, and will, reduce to the case $U=\X'$. Observe that for any curve $\gamma$ and $t\in[0,1]$ for which the metric speed $|\dot\gamma_t|$ of $\gamma$ exists and $\lip f(\gamma_t)$ is finite we have $\lims_{h\to 0}\frac{|f(\gamma_{t+h})-f(\gamma_t)|}{|h|}\leq \lip f(\gamma_t)|\dot\gamma_t|$. Now let $\ppi$ be a test plan and notice  that (by    \cite[Theorem 2.1.21]{GP19}) for $\ppi$-a.e.\ $\gamma$ we have $f\circ\gamma\in W^{1,1}(0,1)$, thus  by what just proved it is easy to see that its distributional derivative is bounded in modulus by  $\lip f(\gamma_t)|\dot\gamma_t|$. By    \cite[Theorem 2.1.21]{GP19} again this is sufficient to  conclude.
\end{proof}

\def\cprime{$'$} \def\cprime{$'$}


\begin{thebibliography}{100}

\bibitem{ACMCS21}
{\sc A.~Akdemir, A.~Colinet, R.~McCann, F.~Cavalletti, and F.~Santarcangelo},
  {\em Independence of synthetic curvature dimension conditions on transport
  distance exponent}, Trans. Amer. Math. Soc., 374 (2021), pp.~5877--5923.

\bibitem{Ambr90}
{\sc L.~Ambrosio}, {\em Metric space valued functions of bounded variation},
  Ann. Scuola Norm. Sup. Pisa Cl. Sci. (4), 17 (1990), pp.~439--478.

\bibitem{Ambrosio04}
\leavevmode\vrule height 2pt depth -1.6pt width 23pt, {\em Transport equation
  and {C}auchy problem for {$BV$} vector fields}, Invent. Math., 158 (2004),
  pp.~227--260.

\bibitem{AmbICM}
\leavevmode\vrule height 2pt depth -1.6pt width 23pt, {\em Calculus, heat flow
  and curvature-dimension bounds in metric measure spaces}, in Proceedings of
  the {I}nternational {C}ongress of {M}athematicians---{R}io de {J}aneiro 2018.
  {V}ol. {I}. {P}lenary lectures, World Sci. Publ., Hackensack, NJ, 2018,
  pp.~301--340.

\bibitem{ACM18}
{\sc L.~Ambrosio, A.~Carlotto, and A.~Massaccesi}, {\em Lectures on elliptic
  partial differential equations}, vol.~18 of Appunti. Scuola Normale Superiore
  di Pisa (Nuova Serie), Edizioni della Normale, Pisa, 2018.

\bibitem{AmbrosioGigliMondinoRajala12}
{\sc L.~Ambrosio, N.~Gigli, A.~Mondino, and T.~Rajala}, {\em Riemannian {R}icci
  curvature lower bounds in metric measure spaces with $\sigma$-finite
  measure}, Trans. Amer. Math. Soc., 367 (2012), pp.~4661--4701.

\bibitem{AmbrosioGigliSavare08}
{\sc L.~Ambrosio, N.~Gigli, and G.~Savar{\'e}}, {\em Gradient flows in metric
  spaces and in the space of probability measures}, Lectures in Mathematics ETH
  Z\"urich, Birkh\"auser Verlag, Basel, second~ed., 2008.

\bibitem{AmbrosioGigliSavare11}
\leavevmode\vrule height 2pt depth -1.6pt width 23pt, {\em Calculus and heat
  flow in metric measure spaces and applications to spaces with {R}icci bounds
  from below}, Invent. Math., 195 (2014), pp.~289--391.

\bibitem{AmbrosioGigliSavare11-2}
\leavevmode\vrule height 2pt depth -1.6pt width 23pt, {\em Metric measure
  spaces with {R}iemannian {R}icci curvature bounded from below}, Duke Math.
  J., 163 (2014), pp.~1405--1490.

\bibitem{AmbrosioGigliSavare12}
\leavevmode\vrule height 2pt depth -1.6pt width 23pt, {\em Bakry-\'{E}mery
  curvature-dimension condition and {R}iemannian {R}icci curvature bounds}, The
  Annals of Probability, 43 (2015), pp.~339--404.

\bibitem{AmbrosioMondinoSavare13-2}
{\sc L.~Ambrosio, A.~Mondino, and G.~Savar{\'e}}, {\em On the {B}akry-\'{E}mery
  condition, the gradient estimates and the {L}ocal-to-{G}lobal property of
  ${RCD}^*({K}, {N})$ metric measure spaces}, The Journal of Geometric
  Analysis, 26 (2014), pp.~1--33.

\bibitem{AmbrosioMondinoSavare13}
\leavevmode\vrule height 2pt depth -1.6pt width 23pt, {\em Nonlinear diffusion
  equations and curvature conditions in metric measure spaces}, Mem. Amer.
  Math. Soc., 262 (2015), p.~0.

\bibitem{Ambrosio-Trevisan14}
{\sc L.~Ambrosio and D.~Trevisan}, {\em Well posedness of {L}agrangian flows
  and continuity equations in metric measure spaces}, Anal. PDE, 7 (2014),
  pp.~1179--1234.

\bibitem{AT15}
\leavevmode\vrule height 2pt depth -1.6pt width 23pt, {\em Lecture notes on the
  {D}i{P}erna-{L}ions theory in abstract measure spaces}, Ann. Fac. Sci.
  Toulouse Math. (6), 26 (2017), pp.~729--766.

\bibitem{BacherSturm10}
{\sc K.~Bacher and K.-T. Sturm}, {\em Localization and tensorization properties
  of the curvature-dimension condition for metric measure spaces}, J. Funct.
  Anal., 259 (2010), pp.~28--56.

\bibitem{BakryEmery85}
{\sc D.~Bakry and M.~{\'E}mery}, {\em Diffusions hypercontractives}, in
  S\'eminaire de probabilit\'es, {XIX}, 1983/84, vol.~1123 of Lecture Notes in
  Math., Springer, Berlin, 1985, pp.~177--206.

\bibitem{BakryGentilLedoux14}
{\sc D.~Bakry, I.~Gentil, and M.~Ledoux}, {\em Analysis and geometry of
  {M}arkov diffusion operators}, vol.~348 of Grundlehren der Mathematischen
  Wissenschaften [Fundamental Principles of Mathematical Sciences], Springer,
  Cham, 2014.

\bibitem{Bacak14}
{\sc M.~Ba\v{c}\'{a}k}, {\em Convex analysis and optimization in {H}adamard
  spaces}, vol.~22 of De Gruyter Series in Nonlinear Analysis and Applications,
  De Gruyter, Berlin, 2014.

\bibitem{Bjorn-Bjorn11}
{\sc A.~Bj{\"o}rn and J.~Bj{\"o}rn}, {\em Nonlinear potential theory on metric
  spaces}, vol.~17 of EMS Tracts in Mathematics, European Mathematical Society
  (EMS), Z\"urich, 2011.

\bibitem{Bogachev07}
{\sc V.~Bogachev}, {\em Measure theory. {V}ol. {I}, {II}}, Springer-Verlag,
  Berlin, 2007.

\bibitem{Braides02}
{\sc A.~Braides}, {\em {$\Gamma$}-convergence for beginners}, vol.~22 of Oxford
  Lecture Series in Mathematics and its Applications, Oxford University Press,
  Oxford, 2002.

\bibitem{BS18}
{\sc E.~Bru\'{e} and D.~Semola}, {\em Constancy of the dimension for {${\rm
  RCD}(K,N)$} spaces via regularity of {L}agrangian flows}, Comm. Pure Appl.
  Math., 73 (2020), pp.~1141--1204.

\bibitem{Cabre97}
{\sc X.~Cabr\'{e}}, {\em Nondivergent elliptic equations on manifolds with
  nonnegative curvature}, Comm. Pure Appl. Math., 50 (1997), pp.~623--665.

\bibitem{CM15}
{\sc F.~Cavalletti and M.~Huesmann}, {\em Existence and uniqueness of optimal
  transport maps}, Ann. Inst. H. Poincar\'{e} C Anal. Non Lin\'{e}aire, 32
  (2015), pp.~1367--1377.

\bibitem{CavMil16}
{\sc F.~Cavalletti and E.~Milman}, {\em The globalization theorem for the
  curvature-dimension condition}, Invent. Math., 226 (2021), pp.~1--137.

\bibitem{Cheeger00}
{\sc J.~Cheeger}, {\em Differentiability of {L}ipschitz functions on metric
  measure spaces}, Geom. Funct. Anal., 9 (1999), pp.~428--517.

\bibitem{Cheeger-Colding96}
{\sc J.~Cheeger and T.~H. Colding}, {\em Lower bounds on {R}icci curvature and
  the almost rigidity of warped products}, Ann. of Math. (2), 144 (1996),
  pp.~189--237.

\bibitem{Cheeger-Colding97I}
\leavevmode\vrule height 2pt depth -1.6pt width 23pt, {\em On the structure of
  spaces with {R}icci curvature bounded below. {I}}, J. Differential Geom., 46
  (1997), pp.~406--480.

\bibitem{Cheeger-Colding97II}
\leavevmode\vrule height 2pt depth -1.6pt width 23pt, {\em On the structure of
  spaces with {R}icci curvature bounded below. {II}}, J. Differential Geom., 54
  (2000), pp.~13--35.

\bibitem{Cheng80}
{\sc S.~Y. Cheng}, {\em Liouville theorem for harmonic maps}, in Geometry of
  the {L}aplace operator ({P}roc. {S}ympos. {P}ure {M}ath., {U}niv. {H}awaii,
  {H}onolulu, {H}awaii, 1979), Proc. Sympos. Pure Math., XXXVI, Amer. Math.
  Soc., Providence, R.I., 1980, pp.~147--151.

\bibitem{Colding97}
{\sc T.~H. Colding}, {\em Ricci curvature and volume convergence}, Ann. of
  Math. (2), 145 (1997), pp.~477--501.

\bibitem{CvS16}
{\sc A.~Convent and J.~Van~Schaftingen}, {\em Intrinsic co-local weak
  derivatives and {S}obolev spaces between manifolds}, Ann. Sc. Norm. Super.
  Pisa Cl. Sci. (5), 16 (2016), pp.~97--128.

\bibitem{CEMCS01}
{\sc D.~Cordero-Erausquin, R.~J. McCann, and M.~Schmuckenschl\"{a}ger}, {\em A
  {R}iemannian interpolation inequality \`a la {B}orell, {B}rascamp and
  {L}ieb}, Invent. Math., 146 (2001), pp.~219--257.

\bibitem{DalMaso93}
{\sc G.~Dal~Maso}, {\em An introduction to {$\Gamma$}-convergence}, vol.~8 of
  Progress in Nonlinear Differential Equations and their Applications,
  Birkh\"{a}user Boston, Inc., Boston, MA, 1993.

\bibitem{Deng21}
{\sc Q.~Deng}, {\em Holder {C}ontinuity of {T}angent {C}ones and
  {N}on-{B}ranching in {RCD}({K},{N}) {S}paces}, ProQuest LLC, Ann Arbor, MI,
  2021.
\newblock Thesis (Ph.D.)--University of Toronto (Canada).

\bibitem{DMGSP18}
{\sc S.~Di~Marino, N.~Gigli, E.~Pasqualetto, and E.~Soultanis}, {\em
  Infinitesimal {H}ilbertianity of locally {${\rm CAT}(\kappa)$}-spaces}, J.
  Geom. Anal., 31 (2021), pp.~7621--7685.

\bibitem{DiPerna-Lions89}
{\sc R.~J. DiPerna and P.-L. Lions}, {\em Ordinary differential equations,
  transport theory and {S}obolev spaces}, Invent. Math., 98 (1989),
  pp.~511--547.

\bibitem{ES64}
{\sc J.~Eells, Jr. and J.~H. Sampson}, {\em Harmonic mappings of {R}iemannian
  manifolds}, Amer. J. Math., 86 (1964), pp.~109--160.

\bibitem{Erbar-Kuwada-Sturm13}
{\sc M.~Erbar, K.~Kuwada, and K.-T. Sturm}, {\em On the equivalence of the
  entropic curvature-dimension condition and {B}ochner's inequality on metric
  measure spaces}, Invent. Math., 201 (2014), pp.~1--79.

\bibitem{Freidin19}
{\sc B.~Freidin}, {\em A {B}ochner formula for harmonic maps into
  non-positively curved metric spaces}, Calc. Var. Partial Differential
  Equations, 58 (2019), pp.~Paper No. 121, 28.

\bibitem{FY20}
{\sc B.~Freidin and Y.~Zhang}, {\em A {L}iouville-type theorem and {B}ochner
  formula for harmonic maps into metric spaces}, Comm. Anal. Geom., 28 (2020),
  pp.~1847--1862.

\bibitem{FOM11}
{\sc M.~Fukushima, Y.~Oshima, and M.~Takeda}, {\em Dirichlet forms and
  symmetric {M}arkov processes}, vol.~19 of De Gruyter Studies in Mathematics,
  Walter de Gruyter \& Co., Berlin, extended~ed., 2011.

\bibitem{DGG}
{\sc N.~Gigli}, {\em De {G}iorgi and {G}romov working together}.
\newblock Preprint, arXiv:2306.14604.

\bibitem{Gigli10}
\leavevmode\vrule height 2pt depth -1.6pt width 23pt, {\em On the heat flow on
  metric measure spaces: existence, uniqueness and stability}, Calc. Var. PDE,
  39 (2010), pp.~101--120.

\bibitem{G11}
\leavevmode\vrule height 2pt depth -1.6pt width 23pt, {\em Introduction to
  optimal transport: theory and applications}, Publica\c{c}\~{o}es
  Matem\'{a}ticas do IMPA. [IMPA Mathematical Publications], Instituto Nacional
  de Matem\'{a}tica Pura e Aplicada (IMPA), Rio de Janeiro, 2011.
\newblock 28${^{{}}{\rm{o}}}$ Col\'{o}quio Brasileiro de Matem\'{a}tica. [28th
  Brazilian Mathematics Colloquium].

\bibitem{Gigli12a}
\leavevmode\vrule height 2pt depth -1.6pt width 23pt, {\em Optimal maps in non
  branching spaces with {R}icci curvature bounded from below}, Geom. Funct.
  Anal., 22 (2012), pp.~990--999.

\bibitem{Gigli13}
\leavevmode\vrule height 2pt depth -1.6pt width 23pt, {\em The splitting
  theorem in non-smooth context}.
\newblock Preprint, arXiv:1302.5555, 2013.

\bibitem{Gigli12}
\leavevmode\vrule height 2pt depth -1.6pt width 23pt, {\em On the differential
  structure of metric measure spaces and applications}, Mem. Amer. Math. Soc.,
  236 (2015), pp.~vi+91.

\bibitem{Gigli14}
\leavevmode\vrule height 2pt depth -1.6pt width 23pt, {\em Nonsmooth
  differential geometry---an approach tailored for spaces with {R}icci
  curvature bounded from below}, Mem. Amer. Math. Soc., 251 (2018), pp.~v+161.

\bibitem{GDP17}
{\sc N.~Gigli and G.~De~Philippis}, {\em Non-collapsed spaces with {R}icci
  curvature bounded from below}, J. \'{E}c. polytech. Math., 5 (2018),
  pp.~613--650.

\bibitem{Gigli-Kuwada-Ohta10}
{\sc N.~Gigli, K.~Kuwada, and S.-i. Ohta}, {\em Heat flow on {A}lexandrov
  spaces}, Communications on Pure and Applied Mathematics, 66 (2013),
  pp.~307--331.

\bibitem{GMS15}
{\sc N.~Gigli, A.~Mondino, and G.~Savar\'e}, {\em Convergence of pointed
  non-compact metric measure spaces and stability of {R}icci curvature bounds
  and heat flows}, Proc. Lond. Math. Soc. (3), 111 (2015), pp.~1071--1129.

\bibitem{Gigli-Mosconi12}
{\sc N.~Gigli and S.~Mosconi}, {\em The {A}bresch-{G}romoll inequality in a
  non-smooth setting}, Discrete Contin. Dyn. Syst., 34 (2014), pp.~1481--1509.

\bibitem{gn20}
{\sc N.~Gigli and F.~Nobili}, {\em A differential perspective on gradient flows
  on {CAT{$(k)$}}-spaces and applications}, J. Geom. Anal., 31 (2021),
  pp.~11780--11818.

\bibitem{GP19}
{\sc N.~Gigli and E.~Pasqualetto}, {\em Lectures on nonsmooth differential
  geometry}, vol.~2 of SISSA Springer Series, Springer, Cham, [2020] \copyright
  2020.

\bibitem{GP16}
\leavevmode\vrule height 2pt depth -1.6pt width 23pt, {\em Equivalence of two
  different notions of tangent bundle on rectifiable metric measure spaces},
  Comm. Anal. Geom., 30 (2022), pp.~1--51.

\bibitem{GPS18}
{\sc N.~Gigli, E.~Pasqualetto, and E.~Soultanis}, {\em Differential of metric
  valued {S}obolev maps}, J. Funct. Anal., 278 (2020), p.~108403.

\bibitem{GigliRajalaSturm13}
{\sc N.~Gigli, T.~Rajala, and K.-T. Sturm}, {\em Optimal {M}aps and
  {E}xponentiation on {F}inite-{D}imensional {S}paces with {R}icci {C}urvature
  {B}ounded from {B}elow}, J. Geom. Anal., 26 (2016), pp.~2914--2929.

\bibitem{GR17}
{\sc N.~Gigli and C.~Rigoni}, {\em Recognizing the flat torus among {${\rm
  RCD}^*(0,N)$} spaces via the study of the first cohomology group}, Calc. Var.
  Partial Differential Equations, 57 (2018), pp.~Art. 104, 39.

\bibitem{GigTam18}
{\sc N.~Gigli and L.~Tamanini}, {\em Second order differentiation formula on
  {$\sf{RCD}^*(K,N)$} spaces}, J. Eur. Math. Soc. (JEMS), 23 (2021),
  pp.~1727--1795.

\bibitem{GTT22}
{\sc N.~Gigli, L.~Tamanini, and D.~Trevisan}, {\em Viscosity solutions of
  {H}amilton-{J}acobi equation in {$\sf{RCD}(K,\infty)$} spaces and
  applications to large deviations}.
\newblock Preprint, arXiv:2203.11701.

\bibitem{GT18}
{\sc N.~Gigli and A.~Tyulenev}, {\em Korevaar-{S}choen's directional energy and
  {A}mbrosio's regular {L}agrangian flows}, Math. Z., 298 (2021),
  pp.~1221--1261.

\bibitem{GT20}
\leavevmode\vrule height 2pt depth -1.6pt width 23pt, {\em Korevaar-{S}choen's
  energy on strongly rectifiable spaces}, Calc. Var. Partial Differential
  Equations, 60 (2021), pp.~Paper No. 235, 54.

\bibitem{GV21}
{\sc N.~Gigli and I.~Y. Violo}, {\em Monotonicity formulas for harmonic
  functions in {${\rm RCD}$} spaces}.
\newblock Preprint, arXiv:2101.03331.

\bibitem{GT01}
{\sc D.~Gilbarg and N.~S. Trudinger}, {\em Elliptic partial differential
  equations of second order}, Classics in Mathematics, Springer-Verlag, Berlin,
  2001.
\newblock Reprint of the 1998 edition.

\bibitem{NCS14}
{\sc N.~Gozlan, C.~Roberto, and P.-M. Samson}, {\em Hamilton {J}acobi equations
  on metric spaces and transport entropy inequalities}, Rev. Mat. Iberoam., 30
  (2014), pp.~133--163.

\bibitem{Gregori98}
{\sc G.~Gregori}, {\em Sobolev spaces and harmonic maps between singular
  spaces}, Calc. Var. Partial Differential Equations, 7 (1998), pp.~1--18.

\bibitem{Gro81}
{\sc M.~Gromov}, {\em Structures m\'{e}triques pour les vari\'{e}t\'{e}s
  riemanniennes}, vol.~1 of Textes Math\'{e}matiques [Mathematical Texts],
  CEDIC, Paris, 1981.
\newblock Edited by J. Lafontaine and P. Pansu.

\bibitem{GS92}
{\sc M.~Gromov and R.~Schoen}, {\em Harmonic maps into singular spaces and
  {$p$}-adic superrigidity for lattices in groups of rank one}, Inst. Hautes
  \'Etudes Sci. Publ. Math.,  (1992), pp.~165--246.

\bibitem{Guo17v1}
{\sc C.-Y. Guo}, {\em Harmonic mappings between singular metric spaces}.
\newblock Preprint, arXiv:1702.05086v1.

\bibitem{Gu017v5}
\leavevmode\vrule height 2pt depth -1.6pt width 23pt, {\em Harmonic mappings
  between singular metric spaces}.
\newblock Preprint, arXiv:1702.05086v5.

\bibitem{HKST15}
{\sc J.~Heinonen, P.~Koskela, N.~Shanmugalingam, and J.~T. Tyson}, {\em Sobolev
  spaces on metric measure spaces}, vol.~27 of New Mathematical Monographs,
  Cambridge University Press, Cambridge, 2015.
\newblock An approach based on upper gradients.

\bibitem{JKO98}
{\sc R.~Jordan, D.~Kinderlehrer, and F.~Otto}, {\em The variational formulation
  of the {F}okker-{P}lanck equation}, SIAM J. Math. Anal., 29 (1998),
  pp.~1--17.

\bibitem{Jost1997}
{\sc J.~Jost}, {\em Generalized {D}irichlet forms and harmonic maps}, Calculus
  of Variations and Partial Differential Equations, 5 (1997), pp.~1--19.

\bibitem{Jost98}
\leavevmode\vrule height 2pt depth -1.6pt width 23pt, {\em Nonlinear
  {D}irichlet forms}, in New directions in Dirichlet forms, vol.~8 of AMS/IP
  Stud. Adv. Math., Amer. Math. Soc., Providence, RI, 1998, pp.~1--47.

\bibitem{Jost17}
{\sc J.~Jost}, {\em Riemannian geometry and geometric analysis}, Universitext,
  Springer, Cham, seventh~ed., 2017.

\bibitem{Kelley75}
{\sc J.~L. Kelley}, {\em General topology}.
\newblock Reprint of the 1955 edition [Van Nostrand, Toronto, Ont.], 1975.

\bibitem{Kir94}
{\sc B.~Kirchheim}, {\em Rectifiable metric spaces: Local structure and
  regularity of the hausdorff measure}, Proceedings of the American
  Mathematical Society, 121 (1994), pp.~113--123.

\bibitem{KS93}
{\sc N.~J. Korevaar and R.~M. Schoen}, {\em Sobolev spaces and harmonic maps
  for metric space targets}, Comm. Anal. Geom., 1 (1993), pp.~561--659.

\bibitem{KS03}
{\sc K.~Kuwae and T.~Shioya}, {\em Sobolev and {D}irichlet spaces over maps
  between metric spaces}, J. Reine Angew. Math., 555 (2003), pp.~39--75.

\bibitem{KuwSt08}
{\sc K.~Kuwae and K.-T. Sturm}, {\em On a {L}iouville type theorem for harmonic
  maps to convex spaces via {M}arkov chains}, in Proceedings of {RIMS}
  {W}orkshop on {S}tochastic {A}nalysis and {A}pplications, RIMS
  K\^{o}ky\^{u}roku Bessatsu, B6, Res. Inst. Math. Sci. (RIMS), Kyoto, 2008,
  pp.~177--191.

\bibitem{Lin97}
{\sc F.~H. Lin}, {\em Analysis on singular spaces}, in Collection of papers on
  geometry, analysis and mathematical physics, World Sci. Publ., River Edge,
  NJ, 1997, pp.~114--126.

\bibitem{Lott-Villani09}
{\sc J.~Lott and C.~Villani}, {\em Ricci curvature for metric-measure spaces
  via optimal transport}, Ann. of Math. (2), 169 (2009), pp.~903--991.

\bibitem{MaRockner92}
{\sc Z.~M. Ma and M.~R{\"o}ckner}, {\em Introduction to the theory of
  (nonsymmetric) {D}irichlet forms}, Universitext, Springer-Verlag, Berlin,
  1992.

\bibitem{MS22}
{\sc A.~Mondino and D.~Semola}, {\em Lipschitz continuity and
  {B}ochner-{E}ells-{S}ampson inequality for harmonic maps from {${\rm
  RCD}(K,N)$} spaces to {${\rm CAT}(0)$} spaces}.
\newblock Preprint, arXiv: 2202.01590.

\bibitem{MS21}
\leavevmode\vrule height 2pt depth -1.6pt width 23pt, {\em Weak laplacian
  bounds and minimal boundaries in non-smooth spaces with {R}icci curvature
  lower bounds}.
\newblock Preprint, arXiv:2107.12344, 2021.

\bibitem{OV00}
{\sc F.~Otto and C.~Villani}, {\em Generalization of an inequality by
  {T}alagrand and links with the logarithmic {S}obolev inequality}, J. Funct.
  Anal., 173 (2000), pp.~361--400.

\bibitem{Peressini67}
{\sc A.~L. Peressini}, {\em Ordered topological vector spaces}, Harper \& Row
  Publishers, New York, 1967.

\bibitem{Petrunin03}
{\sc A.~Petrunin}, {\em Harmonic functions on {A}lexandrov spaces and their
  applications}, Electron. Res. Announc. Amer. Math. Soc., 9 (2003),
  pp.~135--141.

\bibitem{Petrunin11}
\leavevmode\vrule height 2pt depth -1.6pt width 23pt, {\em Alexandrov meets
  {L}ott-{V}illani-{S}turm}, M\"unster J. Math., 4 (2011), pp.~53--64.

\bibitem{Rajala12}
{\sc T.~Rajala}, {\em Local {P}oincar\'e inequalities from stable curvature
  conditions on metric spaces}, Calc. Var. Partial Differential Equations, 44
  (2012), pp.~477--494.

\bibitem{RajalaSturm12}
{\sc T.~Rajala and K.-T. Sturm}, {\em Non-branching geodesics and optimal maps
  in strong ${CD(K,{\infty})}$-spaces}, Calc. Var. Partial Differential
  Equations, 50 (2012), pp.~831--846.

\bibitem{SC92}
{\sc L.~Saloff-Coste}, {\em A note on {P}oincar\'{e}, {S}obolev, and {H}arnack
  inequalities}, Internat. Math. Res. Notices,  (1992), pp.~27--38.

\bibitem{Savare13}
{\sc G.~Savar{\'e}}, {\em Self-improvement of the {B}akry-\'{E}mery condition
  and {W}asserstein contraction of the heat flow in {${\rm RCD}(K,\infty)$}
  metric measure spaces}, Discrete Contin. Dyn. Syst., 34 (2014),
  pp.~1641--1661.

\bibitem{SU82}
{\sc R.~Schoen and K.~Uhlenbeck}, {\em A regularity theory for harmonic maps},
  J. Differential Geometry, 17 (1982), pp.~307--335.

\bibitem{SU83}
\leavevmode\vrule height 2pt depth -1.6pt width 23pt, {\em Boundary regularity
  and the {D}irichlet problem for harmonic maps}, J. Differential Geom., 18
  (1983), pp.~253--268.

\bibitem{SU82corr}
\leavevmode\vrule height 2pt depth -1.6pt width 23pt, {\em Correction to: ``{A}
  regularity theory for harmonic maps''}, J. Differential Geometry, 18 (1983),
  p.~329.

\bibitem{SU84}
\leavevmode\vrule height 2pt depth -1.6pt width 23pt, {\em Regularity of
  minimizing harmonic maps into the sphere}, Invent. Math., 78 (1984),
  pp.~89--100.

\bibitem{Shanmugalingam00}
{\sc N.~Shanmugalingam}, {\em Newtonian spaces: an extension of {S}obolev
  spaces to metric measure spaces}, Rev. Mat. Iberoamericana, 16 (2000),
  pp.~243--279.

\bibitem{Sturm96II}
{\sc K.-T. Sturm}, {\em Analysis on local {D}irichlet spaces. {II}. {U}pper
  {G}aussian estimates for the fundamental solutions of parabolic equations},
  Osaka J. Math., 32 (1995), pp.~275--312.

\bibitem{Sturm96III}
\leavevmode\vrule height 2pt depth -1.6pt width 23pt, {\em Analysis on local
  {D}irichlet spaces. {III}. {T}he parabolic {H}arnack inequality}, J. Math.
  Pures Appl. (9), 75 (1996), pp.~273--297.

\bibitem{Sturm97}
\leavevmode\vrule height 2pt depth -1.6pt width 23pt, {\em Monotone
  approximation of energy functionals for mappings into metric spaces. {I}}, J.
  Reine Angew. Math., 486 (1997), pp.~129--151.

\bibitem{Sturm99}
\leavevmode\vrule height 2pt depth -1.6pt width 23pt, {\em Monotone
  approximation of energy functionals for mappings into metric spaces -- {II}},
  Potential Analysis, 11 (1999), pp.~359--386.

\bibitem{Sturm06I}
\leavevmode\vrule height 2pt depth -1.6pt width 23pt, {\em On the geometry of
  metric measure spaces. {I}}, Acta Math., 196 (2006), pp.~65--131.

\bibitem{Sturm06II}
\leavevmode\vrule height 2pt depth -1.6pt width 23pt, {\em On the geometry of
  metric measure spaces. {II}}, Acta Math., 196 (2006), pp.~133--177.

\bibitem{Villani09}
{\sc C.~Villani}, {\em Optimal transport. Old and new}, vol.~338 of Grundlehren
  der Mathematischen Wissenschaften, Springer-Verlag, Berlin, 2009.

\bibitem{Villani2017}
\leavevmode\vrule height 2pt depth -1.6pt width 23pt, {\em {I}n\'egalit\'es
  isop\'erim\'etriques dans les espaces m\'etriques mesur\'es [d'apr\`es {F}.
  {C}avalletti \& {A}. {M}ondino]}.
\newblock S\'eminaire {B}ourbaki, available at:
  http://www.bourbaki.ens.fr/TEXTES/1127.pdf, 2017.

\bibitem{vRS05}
{\sc M.-K. von Renesse and K.-T. Sturm}, {\em Transport inequalities, gradient
  estimates, entropy, and {R}icci curvature}, Comm. Pure Appl. Math., 58
  (2005), pp.~923--940.

\bibitem{WZ13}
{\sc Y.~Wang and X.~Zhang}, {\em An {A}lexandroff-{B}akelman-{P}ucci estimate
  on {R}iemannian manifolds}, Adv. Math., 232 (2013), pp.~499--512.

\bibitem{ZZZ19}
{\sc H.-C. Zhang, X.~Zhong, and X.-P. Zhu}, {\em Quantitative gradient
  estimates for harmonic maps into singular spaces}, Sci. China Math., 62
  (2019), pp.~2371--2400.

\bibitem{ZZ18}
{\sc H.-C. Zhang and X.-P. Zhu}, {\em Lipschitz continuity of harmonic maps
  between {A}lexandrov spaces}, Invent. Math., 211 (2018), pp.~863--934.

\end{thebibliography}
\end{document}